\theoremstyle{plain}
\newtheorem{mainthm}{Theorem}
\newtheorem{thm}{Theorem}[subsection]
\newtheorem{cor}[thm]{Corollary}
\newtheorem{lem}[thm]{Lemma}
\newtheorem{prop}[thm]{Proposition}
\theoremstyle{definition}
\newtheorem{dfn}[thm]{Definition}
\theoremstyle{remark}
\newtheorem{rem}[thm]{Remark}
\newtheorem*{remnonum}{Remark}
\newtheorem*{remsnonum}{Remarks}
\theoremstyle{plain}
\newcommand{\Qed}{\hfill \qedsymbol \medskip}
\newcommand{\hooklongrightarrow}{\lhook\joinrel\longrightarrow}
\newcommand{\cobto}{\leadsto}
\newcommand{\id}{\textnormal{id}}
\newcommand{\fk}{\mathcal{F}uk}
\newcommand{\mntlf}{strongly monotone}
\newcommand{\Mntlf}{Strongly monotone}
\newcommand{\R}{\mathbb{R}}
\newcommand{\Z}{\mathbb{Z}}
\newcommand{\Q}{\mathbb{Q}}
\newcommand{\N}{\mathbb{N}}
\newcommand{\C}{\mathbb{C}}
\newcommand{\fuk}{\mathcal{F}uk}
\newcommand{\mor}{{\textnormal{Mor\/}}}
\newcommand{\form}{\Theta}
\newcommand{\trl}{trail}
\newcommand{\thmb}{\mathscr{T}}
\newcommand{\barthmb}{\bar{\mathscr{T}}}
\newtheorem*{assumption-T}{Assumption $T_{\infty}$ (Triviality at infinity)}
\newcommand{\pbred}[1]{#1}
\newcommand{\pbgreen}[1]{#1}
\newcommand{\pbhl}[1]{#1}
\newcommand{\pbaddress}{biran@math.ethz.ch}
\newcommand{\ocaddress}{cornea@dms.umontreal.ca}
\begin{document}

\title[Lagrangian Cobordism in Lefschetz fibrations]{Lagrangian
  Cobordism in Lefschetz Fibrations.}

\date{\today}

\thanks{The second author was supported by an NSERC Discovery grant, a FQRNT Group Research grant,
a Simons Fellowship and an Institute for Advanced Study fellowship grant.}

\author{Paul Biran and Octav Cornea}

\address{Paul Biran, Department of Mathematics, ETH-Z\"{u}rich,
  R\"{a}mistrasse 101, 8092 Z\"{u}rich, Switzerland}\email{\pbaddress} 
 
 \address{Octav Cornea, Department of Mathematics
  and Statistics, University of Montreal, C.P. 6128 Succ.  Centre-Ville
  Montreal, QC H3C 3J7, Canada and Institute for Advanced Study, Einstein Drive, Princeton NJ 08540, USA} \email{\ocaddress}

\bibliographystyle{alphanum}

%

%

\begin{abstract}
   Given a symplectic manifold $(M^{2n},\omega)$ we study Lagrangian
   cobordisms $V\subset E$ where $E$ is the total space of a Lefschetz
   fibration having $M$ as generic fiber. We prove a generation result
   for these cobordisms in the appropriate derived Fukaya category.
   As a corollary, we analyze the relations among the Lagrangian
   submanifolds $L\subset M$ that are induced by these cobordisms.
   This leads to a unified treatment - and a generalization - of the
   two types of relations among Lagrangian submanifolds of $M$ that
   were previously identified in the literature: those associated to
   Dehn twists that were discovered by Seidel~\cite{Se:long-exact} and
   the relations induced by cobordisms in trivial symplectic
   fibrations described in our previous work \cite{Bi-Co:cob1,
     Bi-Co:lcob-fuk}.
\end{abstract}

\maketitle

%
%

\tableofcontents 


\section{Introduction}
\subsection{Motivation}

The derived Fukaya category $D\fuk (N)$ of a symplectic manifold
$(N,\omega)$ is a triangulated category whose objects are obtained as
the completion of a certain class - here denoted by $\mathcal{L}(N)$ -
of Lagrangian submanifolds of $N$. The completion can be summarized as
follows.  As a set, each Lagrangian $L$ can be described as a
collection of sets each consisting of intersection points $L'\cap L$
where $L'$ is a variable Lagrangian transverse to $L$.  This family of
intersection points can be assembled in a family of vector spaces
$\Z_{2} \langle L'\cap L \rangle$ again with $L'$ viewed as a
variable.  In the absence of some coherence relations among all these
vector spaces this is obviously not a useful description of $L$.
However, given some almost complex structure $J$, compatible with
$\omega$, there are natural relations among the vector spaces
$\Z_{2}\langle -\cap L\rangle$ that reflect the existence of
$J$-holomorphic curves with Lagrangian boundary conditions along
families $L_{1},\ldots, L_{k}\in \mathcal{L}(N)$ and $L$.  The formal
way to express this is to construct first an $A_{\infty}$-category
$\fuk(N)$ called the Fukaya category of $N$ with objects
$\mathcal{L}(N)$, with morphisms the vector spaces
$\hom (L',L'')=\Z_{2}\langle L'\cap L'' \rangle$ and so that the
higher multiplications $\mu_{k}$ are given by counts of
$J$-holomorphic polygons with boundary components along
$L_{1}, L_{2},\ldots L_{k+1}$.  In this formalism the family
$\Z_{2} \langle -\cap L \rangle$ becomes a module over $\fuk(N)$,
called the Yoneda module associated to $L$, $\mathcal{Y}(L)$. The
modules over an $A_{\infty}$-category are algebraic objects
that behave in ways very similar to chain complexes. In particular,
given a morphism between two modules $f:\mathcal{M}\to\mathcal{M'}$,
one can take the cone over it $\mathcal{M}''=\textnormal{cone}(f)$,
which is a module given by a formula similar to the cone over a chain
map. The category $D\fuk (N)$ has as objects all the modules that can
be obtained by iterated cones from the Yoneda modules. The morphisms
in this category are the homology classes of the module
morphisms. The exact triangles are the homology images of the
  chain-level triangles of morphisms that are quasi-isomorphic to the
  module-level cone attachments. We refer to~\cite[Section
3e]{Se:book-fukaya-categ} for the detailed construction. We remark
that our variant of the derived Fukaya category is not completed with
respect to idempotents, by contrast to other versions of this notion
that are present in the literature. Note also that in this
  paper we work with ungraded $A_{\infty}$-categories, in particular
  there are no shift operations.

Two closely inter-related types of results are key from this
perspective. The first is decomposition results, that show that all
objects in some class can be decomposed in $D\fuk (-)$ in terms of
basic objects, similarly to the way a $CW$-complex can be
decomposed into cells. The second one is constructive results
producing exact triangles in $D\fuk(-)$ out of geometric structures or
operations.

\subsection{Main result}
The main aim of this paper it to prove a decomposition result for a
class of Lagrangian submanifolds with cylindrical ends - called
cobordisms - that are embedded in the total space of a Lefschetz
fibration $\pi :E\to \C$.  We consider here such cobordisms $V$ with
``negative'' ends only: outside of a compact subset, the
  projection of $V$ to $\C$ is a union of rays of the type
  $\ell_{i}=(-\infty, a_{i}]\times \{i\}$, $i \in \mathbb{N}$. Such
cobordisms will be called negatively-ended.

We work with uniformly monotone Lagrangians and with a class of
Lefschetz fibrations that satisfy a strong variant of the monotonicity
condition - see~\S\ref{subsec:Fuk-fibr},~\S\ref{sb:monlef} for the
definitions. Let $\mathcal{L}^{\ast}(E)$ be the class of these
cobordisms in $E$. The superscript ${-}^{\ast}$ will denote at all
times below the monotonicity constraint imposed on the Lagrangians
involved. We denote by $\mathcal{A}$ the universal Novikov ring over
the base field $\Z_{2}$. The Fukaya categories in this paper
  will generally be over the field $\mathcal{A}$. Finally, recall that
  we work at all times in an ungraded context.

We state here the main decomposition result and refer to
\S\ref{subsec:main-decomp} where the result is restated after making
the various ingredients more precise. Our conventions and notation
regarding iterated cone decompositions are explained
in~\S\ref{sbsb:iter-cone}.  \pbhl{Henceforth we make the following
  standing assumption: all our Lefschetz fibrations $E$ are assumed to
  have a positive dimensional fiber (hence
  $\dim_{\mathbb{R}}E \geq 4$).}

\begin{mainthm}\label{thm:main-dec-gen0} 
   There exists a Fukaya category with objects the cobordisms in
   $\mathcal{L}^{\ast}(E)$.  Let $D\fuk^{\ast}(E)$ be the associated
   derived Fukaya category. Consider one object,
   $V\in\mathcal{L}^{\ast}(E)$, fix points $z_{i}\in \ell_{i}$ along
   the rays associated to $V$ and let $L_{i}=V\cap \pi^{-1}(z_{i})$.
   Let $T_{i}$ be the thimbles associated to the curves $t_{i}$ as in
   Figure \ref{fig:gamma-thimbles0}, and let $\gamma_i L_i \subset E$
   be obtained by the (union of) parallel transports of $L_{i}$ along
   the curve $\gamma_{i}$, in the same figure.

   There exist finite rank $\mathcal{A}$-modules $E_{k}$, $1\leq k\leq
   m$, and an iterated cone decomposition taking place in
   $D\fuk^{\ast}(E)$:
   $$V \cong (T_{1}\otimes E_{1}\to T_{2}\otimes E_{2}\to
   \ldots \to T_{m}\otimes E_{m}\to\gamma_{s} L_s \to
   \gamma_{s-1}L_{s-1} \to \ldots \to \gamma_2 L_2 )~.~$$
\end{mainthm}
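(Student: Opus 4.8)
The plan is to reduce the statement to a geometric picture in which a single cobordism $V$ is "taken apart" by sweeping the base curves $t_i$ and $\gamma_i$ across the singular values of $\pi$, and then to translate that picture into iterated cones via the cobordism formalism developed in our earlier work. The starting point is the observation that the derived Fukaya category $D\fuk^{\ast}(E)$ of negatively-ended cobordisms in $E$ is controlled, via restriction to fibers, by the Fukaya category of the generic fiber $M$ together with the thimbles $T_i$ attached to the vanishing cycles; the thimbles are the "basic objects" playing the role of cells. First I would set up the Fukaya category of $\mathcal{L}^\ast(E)$ carefully (this is where the standing monotonicity hypotheses on $E$ and on the fiber $M$ are used to guarantee compactness of the relevant moduli spaces and well-definedness of the $\mu_k$), and verify that parallel transport along an admissible curve $\gamma$ in $\C$ sends an object of $\mathcal{L}^\ast(M)$ to an object of $\mathcal{L}^\ast(E)$, so that the objects $T_i$, $\gamma_i L_i$ appearing in the statement genuinely lie in $D\fuk^\ast(E)$.

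The core of the argument is a cobordism-surgery statement: if $W\subset E$ is a negatively-ended cobordism whose base projection, outside a compact set, consists of rays $\ell_1,\dots,\ell_k$, and if one "bends" one of these rays across a single critical value $v$ of $\pi$ carrying vanishing cycle $C$, then the resulting configuration realizes $W$ as the cone of a morphism between the bent cobordism and the thimble $T$ over $C$ (tensored with a finite-rank $\mathcal{A}$-module recording the Floer intersection data of $W\cap\pi^{-1}(v)$ with $C$). This is the analogue, in the Lefschetz setting, of the cone-decomposition induced by a Lagrangian surgery in the trivial-fibration case from \cite{Bi-Co:cob1, Bi-Co:lcob-fuk}, combined with Seidel's long exact sequence for Dehn twists \cite{Se:long-exact}; indeed the two known families of relations should drop out as the two extreme cases of this single move. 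I would prove it by constructing an explicit elementary cobordism in $E\times\C$ (or in a Lefschetz fibration over a strip) interpolating between the two configurations — essentially a "handle" attached along the thimble — and then invoking the general principle, already established in our framework, that an elementary cobordism between Lagrangian configurations yields the corresponding cone attachment in the derived Fukaya category. The finite-rank modules $E_k$ and $E_j':=$ (the module on $\gamma_j L_j$) arise as the Floer complexes $CF(C_k, \cdot)$ and as the local intersection data, and finiteness follows from compactness of the Lagrangians involved together with monotonicity.

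With the elementary move in hand, the theorem follows by an induction on the number of critical values that separate the rays $\ell_i$ of $V$ from a standard "fanned-out" configuration in which all ends have been pushed to the far left past all critical values. Concretely: choose the curves $t_i$ and $\gamma_i$ of Figure \ref{fig:gamma-thimbles0} so that successively bending the ends of $V$ across the critical values, one at a time, peels off one thimble $T_i$ (with its multiplicity module $E_i$) at each step where a ray crosses a critical value, and leaves, at the end, the disjoint parallel-transported fibers $\gamma_s L_s,\dots,\gamma_2 L_2$; assembling the string of cones produced at each step gives exactly the displayed iterated cone decomposition. The main obstacle I anticipate is the cobordism-surgery step: one must ensure that the interpolating elementary cobordism can be made to satisfy the strong monotonicity condition imposed on Lefschetz fibrations (so that the cobordism category machinery applies to it), and that the count of holomorphic curves defining the attaching morphism genuinely matches the algebraic cone — i.e. a gluing/degeneration argument identifying the moduli spaces for $V$ with the fibered-sum of those for the thimble and for the bent cobordism. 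Controlling bubbling at the ends and along the singular fibers, and checking that the finite-rank modules $E_k$ are independent (up to quasi-isomorphism) of the auxiliary choices, is the technical heart; everything else is a bookkeeping induction over the critical values.
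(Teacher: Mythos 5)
Your proposal correctly identifies the overall goal (decomposing $V$ into thimbles and parallel-transported fiber Lagrangians by moving the ends across critical values), but the central geometric move you propose has a genuine gap, one which the paper itself discusses and which forced a different route.

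You propose to realize the effect of bending a ray of $V$ past a critical value $v$ (with vanishing cycle $C$ and thimble $T$) as a cone attachment obtained from an ``explicit elementary cobordism --- a handle attached along the thimble'' interpolating between the before and after configurations. This is exactly the surgery-with-thimbles idea that the paper considers and rejects (see the remark following Proposition~\ref{lem:multiple-surgery} in~\S\ref{subsubsec:multipl-surg} and the discussion in~\S\ref{subsubsec:simple-cob}). The obstruction is concrete: when the thimble $T'$ used in the surgery intersects $V$ in $r\ge 2$ points, the multiple surgery $T'\#_r V$ cannot be made simultaneously embedded and cylindrical at infinity --- one may have either property but not both. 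Without cylindricity (or embeddedness) the Fukaya-cobordism machinery for $\C\times M$ does not apply, so the handle you attach does not define an object of the relevant category and does not yield a cone. Your anticipation that ``one must ensure the interpolating elementary cobordism satisfies the strong monotonicity condition'' understates the problem: it is not a monotonicity issue to be checked, but a structural failure of the construction for $r\ge 2$. The paper's fix is to extend $E$ to a fibration $\hat E$ with mirror singularities over the lower half-plane so that the thimble compactifies to a closed matching sphere $\hat S_i$; one then applies the Dehn twist $\tau_{\hat S_i}$ (a well-defined compactly supported symplectomorphism) rather than a surgery, and the geometric content becomes Proposition~\ref{lem:multiple-surgery}, a disjunction statement, not a cone statement. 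The cone decomposition over $T_i\otimes E_i$ is then produced \emph{algebraically} from Seidel's exact triangle (Propositions~\ref{t:ex-tr-compact},~\ref{t:ex-tr-cob}), re-proven in~\S\ref{s:cob-vpt} in a form valid for cobordisms, rather than from any explicit elementary cobordism.

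A second gap is the endgame. You say that once $V$ has been pushed past all critical values one is ``left with the disjoint parallel-transported fibers $\gamma_s L_s,\dots,\gamma_2 L_2$,'' but turning that geometric picture into the iterated cone $(\gamma_s L_s\to\cdots\to\gamma_2 L_2)$ is the content of Proposition~\ref{lem:decomp-remote} on remote Yoneda modules, and its proof is not a formality: it rests on the ``snaky'' perturbation data of~\S\ref{subsec:dec-Yo}, the upper-triangularity of the resulting module structure, and a bottleneck/open-mapping argument for the projected holomorphic polygons. Your proposal treats this step as bookkeeping, but without an argument of this type the filtration by ends need not be a filtration by sub-$A_\infty$-modules and the displayed cone decomposition does not follow.

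So the skeleton (induction over critical values; thimbles as atoms) is right, but the key lemma you propose fails as stated, and the piece you treat as automatic is a real theorem. The paper's route --- extend to $\hat E$, disjoin by Dehn twists along matching spheres, then splice Seidel's exact triangle with the remote-cobordism decomposition --- exists precisely to circumvent these two issues.
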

The precise meaning of the notation in the last formula will be be
explained in~\S\ref{sbsb:iter-cone}. The $\mathcal{A}$-modules
  $E_{i}$ are made explicit in the proof - see~\eqref{eq:E-i-s}. For
  the time being, let us only mention that they are obtained as Floer
  homologies between $V$ and certain Lagrangian spheres constructed in
  an auxiliary Lefschetz fibration associated to $E$.

\begin{figure}[htbp]
   \includegraphics[scale=0.5]{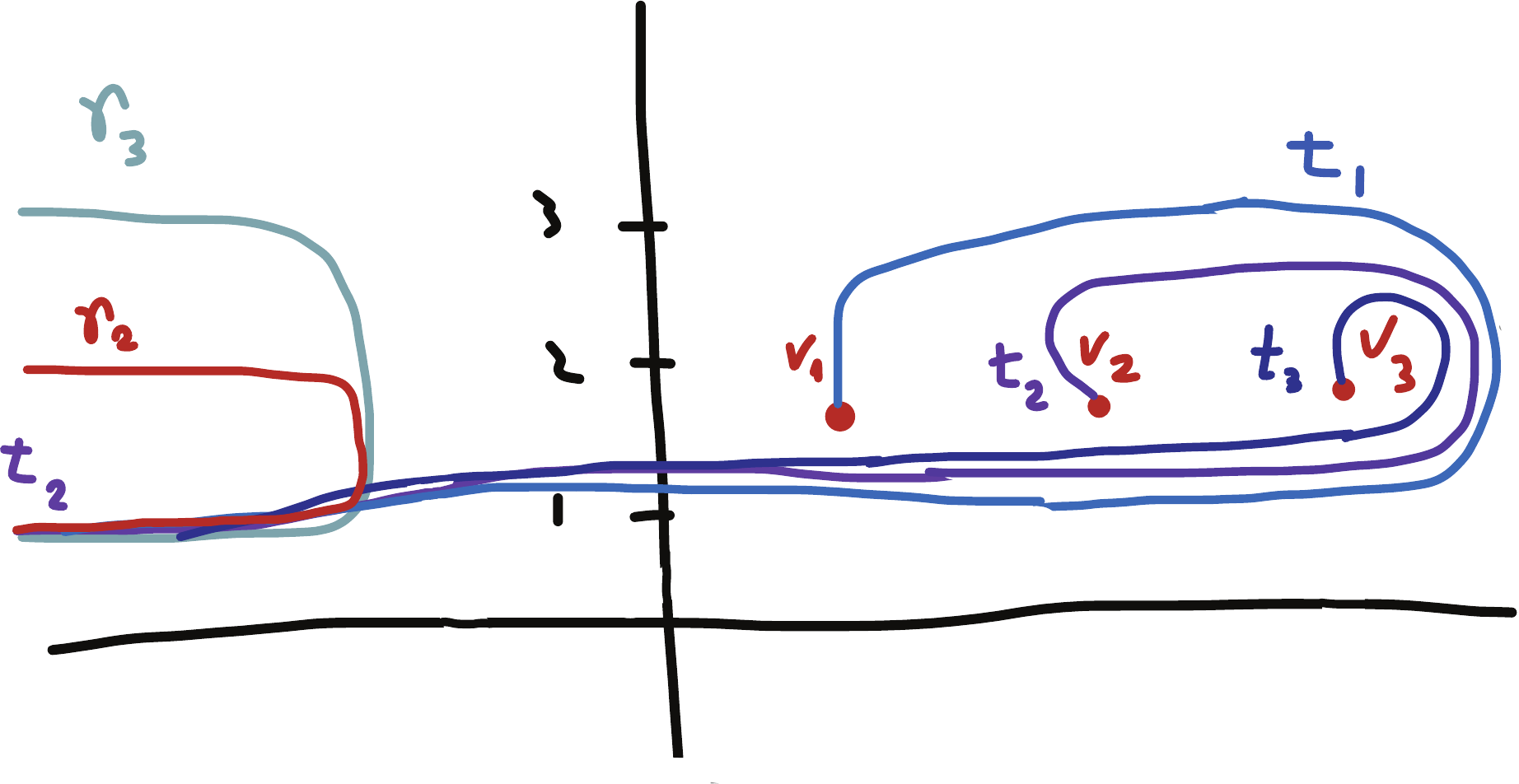}
   \caption{The curves $\gamma_{i}$, and the curves $t_{j}$
       emanating from the critical values $v_j$ of the Lefschetz
       fibration.
     \label{fig:gamma-thimbles0}}
\end{figure}

\subsection{Some consequences} Cobordisms are of interest not only for
their own sake but also because they can be viewed as relators among
their ends, in the sense of the usual cobordism relation. In this
direction, one of the main consequences of
Theorem~\ref{thm:main-dec-gen0} is that each such cobordism $V$
produces an iterated cone decomposition inside $D\fuk^{\ast}(M)$,
where $M=\pi^{-1}(z_{1})$ is the general fiber of $E$. This cone
decomposition expresses the end $L_{1}$ of $V$ as an iterated cone
involving the ends $L_{i}$, $i\geq 2$ and the vanishing cycles of the
singularities of $\pi$ - see~\S\ref{subsec:fibr-tot}.  Thus,
cobordisms in $E$ and the triangular decompositions in the
  (derived) Fukaya category of the fiber are intimately related - see
Corollary \ref{cor:dec-M}.

To discuss a further consequence, recall that to any triangulated
category $\mathcal{C}$ one can associate a Grothendieck group
$K_{0}\mathcal{C}$ defined as the quotient of the free abelian group
generated by the objects of $\mathcal{C}$ modulo the relations $B=A+C$
associated to each exact triangle $A\to B\to C$. We remark that in
this paper we work with ungraded categories, hence our Grothendieck
groups will always be $2$-torsion (i.e. $2 A=0$ for every
$A \in K_0 \mathcal{C}$).

Another application of Theorem \ref{thm:main-dec-gen0} - see
\S\ref{subsec:groth} - is to give a description of the Grothendieck
group $K_{0}D\fuk^{\ast}(M)$ as an ``algebraic'' cobordism group.  To
explain this result we focus here on the case of the trivial fibration
$E=\C\times M$ even if we establish the relevant results in more
generality in the paper.  Recall from \cite{Bi-Co:lcob-fuk} the
definition of the cobordism group $\Omega_{Lag}^{\ast}(M)$. It is the
quotient of the free abelian group generated by the objects in
$\mathcal{L}^{\ast}(M)$ modulo the relations
$L_{1}+ L_{2}+\ldots + L_{s}=0$ for each negatively-ended cobordism
$V\subset \C\times M$ whose ends are $L_{1},\ldots, L_{s}$. For
  every $i \in \mathbb{N}$ there is a natural restriction operation
  that associates to a cobordism $V$ its $i$-th end. These operations
  admit extensions to all objects of $D\fuk^{\ast}(\C\times M)$.  The
$i$-th end of an object $\mathcal{M}$ in $D\fuk^{\ast}(\C\times M)$ is
denoted by $[\mathcal{M}]_{i}\in\mathcal{O}b(D\fuk^{\ast}(M))$.  It is
natural to define an algebraic cobordism group
$\Omega_{Alg}^{\ast}(M)$ as the free abelian group generated by the
(isomorphism classes of) objects of $D\fuk^{\ast}(M)$ modulo the
relations $\sum_{i}[\mathcal{M}]_{i}=0$ for each object $\mathcal{M}$
of $D\fuk^{\ast}(\C\times M)$. Equivalently,
  $\Omega_{Alg}^{\ast}(M)$ is defined in a similar way to
  $\Omega^{\ast}_{Lag}(M)$ only that the generators and relations now
  come also from the non-geometric objects in $D\fuk^{\ast}(M)$ and
  $D\fuk^{\ast}(\C\times M)$. There is an obvious map
  $q:\Omega_{Lag}^{\ast}(M)\to \Omega_{Alg}^{\ast}(M)$. A consequence
of Theorem \ref{thm:main-dec-gen0}, Corollary \ref{cor:alg-cob}, is
that there exists a group isomorphism
$$\Theta_{Alg}: \Omega_{Alg}^{\ast}(M)\to K_{0} D\fuk^{\ast}(M)$$
so that the composition $\Theta_{Alg}\circ q$ coincides with the
Lagrangian Thom morphism
\begin{equation}\label{eq:theta}
  \Theta : \Omega_{Lag}^{\ast}(M)\to K_{0}D\fuk^{\ast}(M)
\end{equation} 
previously introduced in~\cite{Bi-Co:lcob-fuk}.  One of the reasons
why this is of interest is that this result should shed some light on
the kernel of $\Theta$ which is at present somewhat mysterious.
Another implication of the fact that $\Theta_{Alg}$ is an isomorphism
appears in Corollary \ref{cor:quantum-incl} which asserts that
the obvious map $\Omega^{\ast}_{Lag}(M)\to QH_{\ast}(M)$ admits an
extension to $\Omega^{\ast}_{Alg}(M)$. Here $QH_*(M)$ stands
  for the quantum homology of the ambient manifold $M$.

\

Finally, we also obtain a periodicity result for $K_{0}$ -
Corollary~\ref{cor:periodicity}:
\begin{equation}\label{eq:K0}
  K_{0}(D\fuk^{\ast}(\C \times M)) \cong \Z_{2}[t] 
  \otimes  K_{0} (D\fuk^{\ast}(M))~.~
\end{equation}
Here $t$ is a formal variable whose role will become clear in
  the proof (roughly speaking, different powers of $t$ are used to
  label the $K_0$-classes associated to different ends of a cobordism,
  or more generally, ``ends'' of an object of
  $D\fuk^{\ast}(\C \times M)$).

\

\subsection{Relation to previous work}
Theorem \ref{thm:main-dec-gen0} can be viewed as a simultaneous
generalization of the two previously known methods to produce exact
triangles in the derived Fukaya category.  

The first such method is due to
Seidel~\cite{Se:long-exact},~\cite[Chapter III,
Section~17]{Se:book-fukaya-categ} and, in its basic form, it
associates an exact triangle of the form:
\begin{equation}\label{eq:dehn-ex}
   \tau_{S} L\to L \to S\otimes HF(S,L)
\end{equation}
to the Dehn twist $\tau_{S}:M\to M$ corresponding to a Lagrangian
sphere $S$ and any $L\in \mathcal{L}^{\ast}(M)$ (Seidel works in an
exact setting, but as we will see below, this triangle remains
valid in the monotone context too. Other cases have been
  treated in the literature too, e.g. see~\cite{Oh:Seidel-triangle}
  for the case of Lagrangians with vanishing Maslov class in
  Calabi-Yau manifolds).  Seidel also considers a Fukaya category
$\fuk(\pi)$ associated to a Lefschetz fibration $\pi : E\to \C$,
\cite{Se:book-fukaya-categ,Se:Lefschetz-Fukaya}. In our setting, this
category corresponds to the full and faithful subcategory of
$\fuk^{\ast}(E)$ generated by the thimbles $T_{i}$. He also proves a
decomposition result for this category that, in our context,
essentially implies the statement of
Theorem~\ref{thm:main-dec-gen0} in the special case when $V$ has a
single end.  This category is related to mirror symmetry
questions~\cite{Se:Lefschetz-Fukaya-II} and, indeed,
cobordisms with a single end appear in relation to mirror symmetry,
see for instance~\cite{Ho-Iq-Va:ms}. Cobordisms with multiple ends as
well as a category somewhat similar to $\fuk^{\ast}(E)$ appear in the
recent paper \cite{Ab-Sm:Kho}.

The second method appears in our previous paper~\cite{Bi-Co:lcob-fuk}.
It is shown there that if $V\subset \C\times M$ is a cobordism, then
the ends of $V$ are related by a cone-decomposition in
$D\fuk^{\ast}(M)$. This decomposition coincides with the one in
  Corollary~\ref{cor:dec-M} below when $E$ is the trivial fibration
  $\C\times M$. Nevertheless, we remark that the statement of
Theorem~\ref{thm:main-dec-gen0} - which concerns decompositions of
cobordisms - is new even for the trivial fibration.

The exact triangle associated to a Dehn twist and the exact triangle
obtained through the cobordism machinery coincide when there is a
single and transverse intersection between $S$ and $L$. This
can be shown by methods already in the literature. For example, this
follows from a combination of the results from~\cite{Se:knotted}
and~\cite{Bi-Co:lcob-fuk} (see also~\cite{FO3:book-chap-10,
  Oh:Seidel-triangle} for an earlier approach). In this case, Seidel's
exact triangle coincides with the surgery exact sequence which is
associated to a specific cobordism (in $\mathbb{C} \times M$) whose
ends are $\tau_{S}L, L, S$. This cobordism is constructed as the trace
of the Lagrangian surgery at the intersection point $S\cap L$.
Theorem~\ref{thm:main-dec-gen0} and its proof go beyond this case and
further clarify the interplay between these two constructions.

From a technical standpoint, we rely heavily on Seidel's
work~\cite{Se:book-fukaya-categ} - in particular, the detailed
constructions of $D\fuk(-)$, which we adapt to the monotone
  setting. We also build on Seidel's set-up of Lefschetz fibrations in
  the symplectic framework in~\cite{Se:book-fukaya-categ,
    Se:long-exact}.  There is also a variety of other
specific points where our work is related to his and these are
mentioned along the text. We also make heavy use of the constructions
in our previous papers \cite{Bi-Co:cob1, Bi-Co:lcob-fuk}. At the same
time, in attempt to keep this text readable we will recall several
ingredients from~\cite{Bi-Co:cob1, Bi-Co:lcob-fuk} that are crucial
for the present paper.

\subsection{Outline of the paper}

Most of the paper is aimed towards the proof of
Theorem~\ref{thm:main-dec-gen0}.  This proof requires two
preliminaries.  The first is contained in~\S\ref{subsec:lef-fibr}.
That section contains the general set-up and terminology concerning
Lefschetz fibrations. We introduce a special type of such fibrations
called {\em tame} which are basically Lefschetz fibrations over $\C$
that are symplectically trivial  {\em outside} a $U$-like
  region in the plane. (See Definition~\ref{df:tame-lef-fib}. See also
  Figure~\ref{f:fib-NW} on page~\pageref{f:fib-NW}, where the
  complement of the $U$-like region is denoted by $\mathcal{W}$.)
Tame fibrations are much easier to handle in the technical parts of
the proof.  One of the reasons is that cylindrical ends can be
easily moved around in the trivial region since
  parallel transport is trivial over there. Additionally, the Fukaya
$A_{\infty}$ category with objects cobordisms in such fibrations can
be defined following closely the constructions in
\cite{Bi-Co:lcob-fuk}.  In \S\ref{sb:tame-vs-gnrl} we show that any
Lefschetz fibration with a finite number of (simple) singularities can
be transformed into a tame one. As a consequence, Theorem
\ref{thm:main-dec-gen0} follows from the corresponding result - stated
as Theorem \ref{thm:main-dec} - for tame fibrations.

The second preliminary is the construction of the Fukaya category
$\fuk^{\ast}(E)$. This is described in \S\ref{subsec:fuk-cat}. We
first give the main elements of the construction when the Lefschetz
fibration $\pi:E\to \C$ is tame. In this case, the construction that
appears in \cite{Bi-Co:lcob-fuk} applies essentially without change
and we review the main steps. We then indicate the modifications
needed to define such a category in the general case. In the
  discussion below we will mainly assume that all critical values of
  the Lefschetz fibration $E \to \mathbb{C}$ lie in the upper
  half-plane. Moreover, the objects in our categories will be
  cobordisms in $E$ whose projection to $\C$ is contained in the upper
  half-plane and that are cylindrical outside some fixed strip
  $[-a,a]\times
  \R$. (See~\S\ref{subsubsec:Fuk-cob},~\S\ref{subsec:main-decomp} for
  the precise setting.)

With this preparation, the actual proof of Theorem
\ref{thm:main-dec-gen0} is contained in \S\ref{sec:main} and it
consists of three main ingredients.  The first one deals with
decompositions of cobordisms $V'$ - called remote with respect to $E$
- that are included in the total space $E'$ of a Lefschetz fibration
that coincides with $E$ over the upper half-plane. The defining
property of such a $V'$ is that it can be moved inside $E'$ away from
the critical points of $E \longrightarrow \mathbb{C}$, so that
its only intersection with an object $X$ of $\fuk^{\ast}(E)$ occurs in
the region where both $V'$ and $X$ are cylindrical. We show in
\S\ref{subsec:dec-Yo} that such a remote cobordism viewed as a module
over $\fuk^{\ast}(E)$ admits a decomposition just as the one in the
statement of Theorem \ref{thm:main-dec-gen0} but without any of terms
$T_{i}\otimes E_{i}$.  The second step, in \S\ref{subsec:Dehn-disj},
shows how to transform a general cobordism $V$ into a remote one.
This is a geometric step, potentially of independent interest.  It is
done, roughly speaking, by placing $V$ inside a new Lefschetz
fibration $E'$ obtained from $E$ by adding singularities over
the lower half-plane and showing that the cobordism $V' \subset E'$
obtained as an iterated Dehn twist of $V$,
$V'=(\tau_{S_{m}}\circ\ldots \circ \tau_{S_{i}}\circ \ldots \circ
\tau_{S_{1}})(V)$, where $S_{i}$ are certain matching cycles in $E'$,
is remote with respect to $E$.  The third ingredient -
in~\S\ref{s:cob-vpt} - is Seidel's exact triangle for which we provide
a new proof reflecting our cobordism perspective.  These ingredients
are put together in~\S\ref{subsec:prof-main-t}. In short, the
cobordism $V'=(\tau_{S_{m}}\circ \ldots \circ \tau_{S_{1}})(V)$ is
remote with respect to $E$ and thus, by the first step, it admits a
certain decomposition involving the ends of $V$, but as it is obtained
by an iterated Dehn twist from $V$, it can be related to $V$ by
another decomposition, involving the matching cycles $S_{i}$, by using
the relevant Seidel exact triangles.  The two decompositions combine
as in the statement of Theorem~\ref{thm:main-dec-gen0}.

The Corollaries of Theorem~\ref{thm:main-dec-gen0} described above are
proven in~\S\ref{sec:conseq}.

The paper ends with \S \ref{S:examples} that consists of examples and
related discussion. The main part of the section - \S\ref{sb:real-lef}
- is focused on a class of Lagrangian cobordisms in real Lefschetz
fibrations.

\subsection*{Acknowledgments} The first author thanks Jean-Yves Welschinger for
useful discussions concerning the examples in real algebraic geometry.
Part of this work was accomplished
during a stay at the Simons Center for Geometry and Physics. We thank
the SCGP and its staff for their gracious hospitality.  We thank the referee for carefully reading an earlier version of the 
paper and for remarks that were helpful to improve the exposition.

\section{Lefschetz fibrations} \label{subsec:lef-fibr}

\subsection{Basic definitions} \label{sb:defs-lef-fibr} Lefschetz
fibrations will play a central role in this paper.  From the
symplectic viewpoint there are several versions of this notion in the
literature. Our setup is similar to~\cite{Se:book-fukaya-categ,
  Se:long-exact} but with some modifications.

We begin with Lefschetz fibrations having a compact fiber.
\begin{dfn} \label{df:lef-fib} A Lefschetz fibration with compact
   fiber consists of the following data:
   \begin{enumerate}
     \item[i.] A symplectic manifold $(E, \Omega_E)$ without boundary,
      endowed with a compatible almost complex structure $J_E$.
     \item[ii.] A Riemann surface $(S, j)$ (which is generally not
      assumed to be compact; typically we will have $S =
        \mathbb{C}$).
       \item[iii.] A proper $(J_E, j)$-holomorphic map $\pi: E
        \longrightarrow S$. (In particular all fibers of $\pi$ are
        closed manifolds.)
      \item[iv.] We assume that $\pi$ has a finite number of critical
        points. Moreover, we assume that every critical value of $\pi$
        corresponds to precisely one critical point of $\pi$. We
        denote the set critical points of $\pi$ by
        $\textnormal{Crit}(\pi)$ and by
        $\textnormal{Critv}(\pi) \subset S$ the set of critical values
        of $\pi$. {\em Below we will use the words ``critical
            points of $\pi$'' and ``singularities of $E$''
            interchangeably.}
       \item[v.] All the critical point of $\pi$ are ordinary double
        points in the following sense. For every $p \in
        \textnormal{Crit}(\pi)$ there exist a local $J_E$-holomorphic
        chart around $p$ and a $j$-holomorphic chart around $\pi(p)$
        with respect to which $\pi$ is a holomorphic Morse function.
   \end{enumerate}

   For $z \in S$ we denote by $E_z = \pi^{-1}(z)$ the fiber over $z$.
   We will sometimes fix a base-point $z_0 \in S \setminus
   \textnormal{Critv}(\pi)$ and refer to the symplectic manifold $(M
   := \pi^{-1}(z_0), \omega_M := \Omega_E|_{M})$ as ``the'' fiber of
   the Lefschetz fibration. We will also use the following notation:
   for a subset $\mathcal{S} \subset S$ we denote $V|_{\mathcal{S}} =
   \pi^{-1}(\mathcal{S}) \cap V$.
\end{dfn}

Our constructions work for the most part also when the fiber is not
compact.  To this end we will need some adjustments to the preceding
definition as follows. Let $(M, \omega_M)$ be a (non-compact)
symplectic manifold which is convex at infinity. We define {\em a
  Lefschetz fibration $\pi: E \longrightarrow S$ with fiber $(M,
  \omega_M)$} to be as in Definition~\ref{df:lef-fib} with the
following modifications.  Firstly, properness in condition~iii is
removed (thus allowing, in particular, for the fibers to be
non-compact).  Secondly, the map $\pi: E \setminus
\pi^{-1}(\textnormal{Critv}(\pi)) \longrightarrow S \setminus
\textnormal{Critv}(\pi)$ is now explicitly assumed to be a smooth
locally trivial fibration. Finally, $E$ is assumed to satisfy the
following additional condition.
\begin{assumption-T} 
   Let $\pi: E \longrightarrow S$ be as above. We say that $E$ is
   trivial at infinity if there exists a subset $E^0 \subset E$ with
   the following properties: \label{pg:assumption-T-infty}
   \begin{enumerate}
     \item For every compact subset $K \subset S$, $E^0 \cap
      \pi^{-1}(K)$ is also compact. (In other words, $\pi|_{E^0}
      \longrightarrow S$ is a proper map.)
     \item Set $E^{\infty} = E \setminus E^0$ and $E^{\infty}_{z_0} =
      E^{\infty} \cap \pi^{-1}(z_0)$, where $z_0 \in S \setminus
      \textnormal{Critv}(\pi)$ is a fixed base-point. Then there
      exists a trivialization $\phi: S \times E^{\infty}_{z_0}
      \longrightarrow E^{\infty}$ of $\pi|_{E^{\infty}}: E^{\infty}
      \longrightarrow S$ such that
      $$\phi^* \Omega_{E} = \omega_S \oplus 
      \omega_{M}|_{E^{\infty}_{z_0}},\ \mathrm{and} \ \phi^{\ast}
      J_{E}= j\oplus J_{0}$$ where $\omega_S$ is a positive (with
      respect to $j$) symplectic form on $S$ and $J_{0}$ is a fixed
      almost complex structure on $M=\pi^{-1}(z_{0})$, compatible with
      $\omega_{M}$ .
   \end{enumerate}
\end{assumption-T}
This extended definition in fact generalizes the preceding one: if $M$
is compact we take $E^0 = E$ and $E^{\infty} = \emptyset$. From now
on, unless otherwise stated, by a Lefschetz fibration we mean one with
compact fiber that satisfies Definition \ref{df:lef-fib} or, more
generally, with a non-compact fiber that is convex at infinity and
satisfies the conditions above, including $T_{\infty}$.

\pbgreen{Before we go on, we recall again that {\em in this paper all
    Lefschetz fibrations are assume to have positive dimensional
    fibers.}}

\begin{rem}\label{rem:triv-infty}
  \begin{enumerate}
  \item[a.] The assumption that the fiber of $E$ is either closed or
    symplectically convex was made in order to assure that the fiber
    is amenable to techniques of symplectic topology such as
    pseudo-holomorphic curves and Floer theory. (Specifically,
      these conditions assure that holomorphic curves and Floer
      trajectories cannot ``escape to infinity'', hence standard
      compactness results hold for them.) Nevertheless in one
    instance later on in the paper we will drop this assumption and
    assume instead that $M$ is itself the total space of another
    Lefschetz fibration.
  \item[b.] Assumption $T_{\infty}$ is a variant of boundary
    horizontality that appears in \cite{Se:long-exact} and
    \cite{Se:book-fukaya-categ}.
  \end{enumerate}
\end{rem}

\subsubsection{Connections, parallel transport and trails of
  Lagrangians} \label{sbsb:con-ptrans-trail} To a Lefschetz fibration
as above we can associate a connection $\Gamma = \Gamma(\Omega_E)$ on
$E \setminus \textnormal{Crit}(\pi)$ as follows. The connection
$\Gamma$ is defined by setting its horizontal distribution
$\mathcal{H} \subset T(E)$ to be the $\Omega_E$-orthogonal complement
of the tangent spaces to the fibers.  More specifically, for every $x
\in E \setminus \textnormal{Crit}(\pi)$ we set
$$\mathcal{H}_x = \bigl\{ u \in T_x(E) 
\mid \Omega_E(\xi, u) = 0 \;\; \forall \; \xi \in T_x^v(E) \bigr\},$$
where $T_x^v(E)$ stands for the vertical tangent space at $x$.

The connection $\Gamma$ induces parallel transport maps. Let
$\lambda: [a,b] \longrightarrow \mathbb{C} \setminus
\textnormal{Critv}(\pi)$ be a smooth path. We denote by
$\Pi_{\lambda}: E_{\lambda(a)} \longrightarrow E_{\lambda(b)}$ the
parallel transport along $\lambda$ with respect to the connection
$\Gamma$. Notice that even when the fiber of $E$ is not compact,
parallel transport is still well defined. Indeed, thanks to assumption
$T_{\infty}$, the connection $\Gamma$ is trivial at infinity with
respect to the trivialization $\phi$. In particular, relative to the
trivialization $\phi$, parallel transport becomes the identity at
infinity in the sense that
$\phi^{-1} \circ \Pi_{\lambda} \circ \phi (\lambda(a), x) =
(\lambda(b), x)$ for every $x \in E^{\infty}_{z_0}$.

It is well known that $\Pi_{\lambda}$ is a symplectomorphism, where we
endow the fibers of $\pi$ with the symplectic structure induced by
$\Omega_E$) (See
e.g.~\cite[Chapter~8]{McD-Sa:jhol},~\cite[Chapter~6]{McD-Sa:Intro}.)
If $\lambda$ is a loop starting and ending at
$z \in \mathbb{C} \setminus \textnormal{Critv}(\pi)$ then the
symplectomorphism $\Pi_{\lambda}: E_z \longrightarrow E_z$ is also
called the holonomy of $\Gamma$ along $\lambda$. If the loop $\lambda$
is contractible (within
$\mathbb{C} \setminus \textnormal{Critv}(\pi)$) then the holonomy
$\Pi_{\lambda}$ is in fact a Hamiltonian diffeomorphism of $E_z$
(see~\cite[Section~6.4]{McD-Sa:Intro}).

Let $\lambda: [a,b] \longrightarrow \mathbb{C} \setminus
\textnormal{Critv}(\pi)$ be a smooth embedding and $L \subset
E_{\lambda(a)}$ a Lagrangian submanifold. Consider the images of $L$
under the parallel transport along $\lambda$, namely $L_t :=
\Pi_{\lambda|_{[a,t]}}(L) \subset E_{\lambda(t)}$, $t \in [a,b]$ and
set
$$\lambda L : = \cup_{t \in [a,b]} L_t.$$ 
Then $\lambda L$ is a Lagrangian submanifold of $(E, \Omega_E)$.
We call $\lambda L$ the {\em \trl} of $L$ along $\lambda$.

We refer the reader to~\cite{Se:book-fukaya-categ} for the foundations
of the symplectic theory of Lefschetz fibrations and
to~\cite[Chapter~6]{McD-Sa:Intro} and~\cite[Chapter~8]{McD-Sa:jhol}
for symplectic fibrations.


\subsection{Lagrangians with cylindrical ends} \label{subsec:cyl-lag}

Let $\pi: E \longrightarrow \mathbb{C}$ be a Lefschetz fibration and
$\mathcal{U} \subset \mathbb{C}$ an open subset containing
$\textnormal{Critv}(\pi)$.  The following terminology is useful.  A
horizontal ray $\ell \subset \mathbb{C}$ is a half-line of the type
$(-\infty, -a_{\ell}]\times \{b_{\ell}\}$ or $[a_{\ell}, \infty)\times
\{b_{\ell}\}$ with $a_{\ell}>0$, $b_{\ell}\in \R$.  The imaginary
coordinate $b_{\ell}$ is also referred to as the ``height'' of $\ell$.

\begin{dfn}\label{def:cyl-ge}
  A Lagrangian submanifold (without boundary)
  $V \subset (E, \Omega_E)$ is said to have {\sl cylindrical ends
    outside of $\mathcal{U}$} if the following conditions are satisfied:
  \begin{enumerate}
  \item[i.] For every $R>0$, the subset $V \cap \pi^{-1}([-R,R] \times
    \mathbb{R})$ is compact.
  \item[ii.] \pbred{$\pi(V) \cap \mathcal{U}$ is bounded.}
  \item[iii.] $\pi(V) \setminus \mathcal{U}$ consists of a finite union
    of horizontal rays, $\ell_i \subset \mathbb{C}$, $i=1, \ldots, r$.
    Moreover, for every $i$ we have
    \pbred{$V|_{\ell_i} = \ell_i L_{i}$ for some Lagrangian
      $L_i \subset E_{\sigma_i}$,} where $\sigma_i \in \mathbb{C}$
    stands for the starting point of the ray $\ell_i$, and
      $\ell_i L_{i}$ is the trail of $L_i$ along $\ell_i$ as defined
      above. (Note that we do allow $r=0$, i.e. that $V$ has no ends
    at all.)
  \end{enumerate}
  In case all the heights of the rays $\ell_{i}$ are positive integers
  $b_{l_{i}}\in \N^{\ast}$ the Lagrangian $V$ is called a {\em
    cobordism} in $E$.
\end{dfn}
In short, over each of the rays appearing in
$\pi(V) \setminus \mathcal{U}$ the Lagrangian submanifold $V$ is the
{\trl} under parallel transport of $L_{i}$ along $\ell_{i}$ - see
Figure~\ref{fig:rays}.

The role of the condition~ii above is to exclude the
  possibility that $\pi^{-1}(\mathcal{U})$ entirely covers some of the
  ends of $V$. For most of the time we will work with subsets
  $\mathcal{U}$ that are $U$-shaped (see Figure~\ref{fig:null-cob} on
  page~\pageref{fig:null-cob}), and then condition~ii is automatically
  satisfied (in view of condition~i). However, occasionally we will
  have to consider $\mathcal{U}$'s that are not compact in the
  horizontal direction (see e.g.~\S\ref{subsec:Dehn-disj} and
  Figure~\ref{fig:complex-thimble}), and then condition~ii is
  necessary.

\begin{figure}[htbp]
   \includegraphics[scale=0.5]{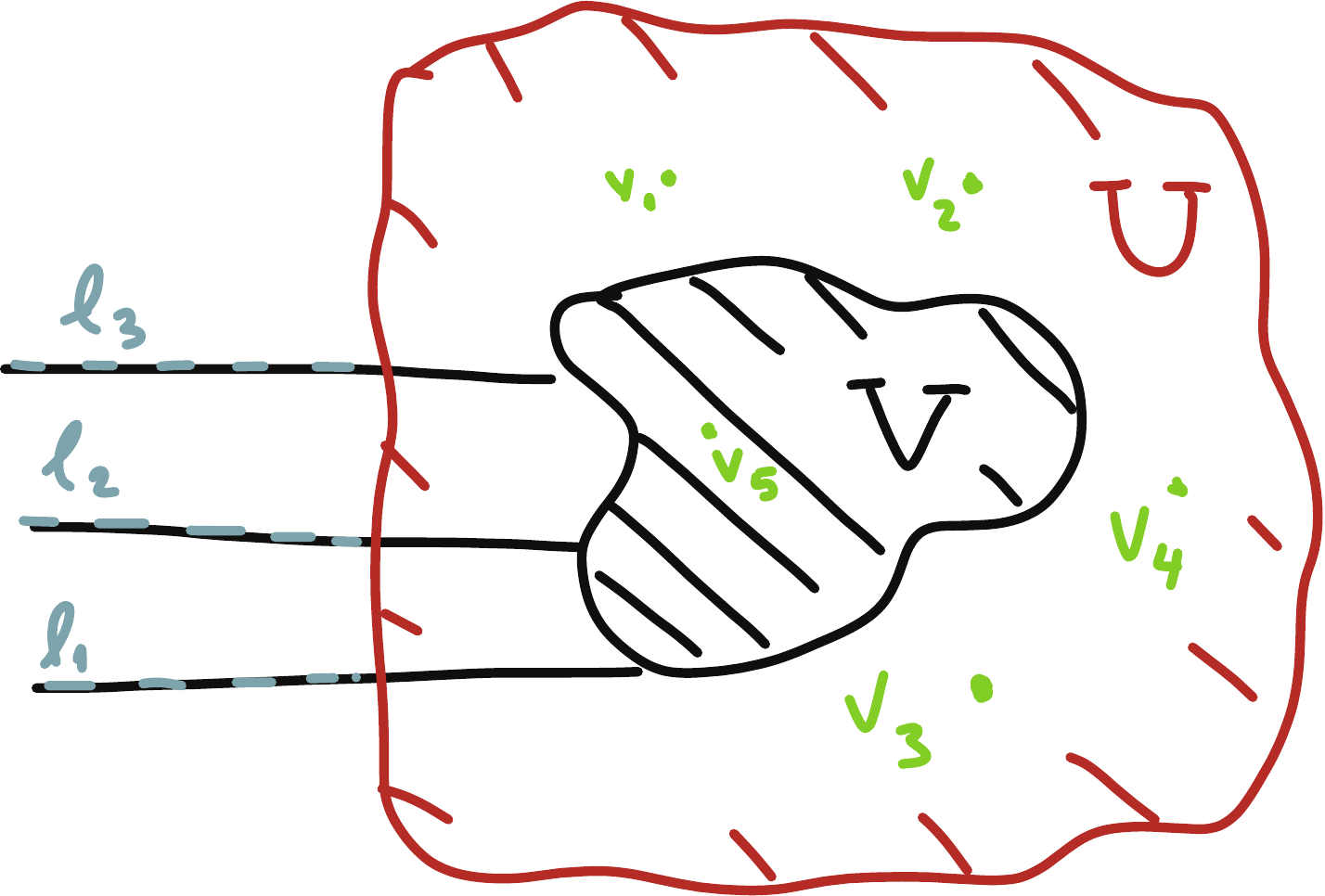}
   \caption{A Lagrangian $V$ with cylindrical ends outside $\mathcal{U}$ in a
     Lefschetz fibration $\pi:E\to \C$ with critical values $v_{i}$.
     \label{fig:rays}}
\end{figure}

The above notion of cobordism extends the definition of Lagrangian
cobordism as given for the trivial fibration $E$ in \cite{Bi-Co:cob1}.
Note however that this terminology is slightly imprecise
  because we have not specified a (topological) trivialization of the
  fibration $E \longrightarrow \mathbb{C}$ at infinity (and in general
  there is no canonical trivialization). Moreover, even when one fixes
  such a trivialization the parallel transport along a ray $\ell_{i}$
  might not be trivial (even not at infinity), hence the actual ends
  of $V$ at infinity are not well defined. In view of that, we will
  often work with a restricted type of Lefschetz fibrations, called
  tame, where this imprecision is not present and that have a number
  of additional technical advantages. We will see later on that this
  does not restrict the generality of our theory.


\begin{dfn} \label{df:tame-lef-fib} Let $\pi: E \longrightarrow
  \mathbb{C}$. Let $U \subset \mathbb{C}$ be a closed subset, let $z_0
  \in \mathbb{C} \setminus U$ be a base point and $(M, \omega_M)$ be
  the fiber over $z_0$. We say that this Lefschetz fibration is tame
  outside of $U$ if there exists a trivialization
  $$\psi_{E,\mathbb{C} \setminus B}: 
  (\mathbb{C} \setminus U) \times M \longrightarrow E|_{\mathbb{C}
    \setminus U}$$ such that $\psi_{E,\mathbb{C} \setminus
    U}^*(\Omega_E) = c \omega_{\mathbb{C}} \oplus \omega_M$, where
  $\omega_{\mathbb{C}}$ is the standard symplectic structure on
  $\mathbb{C} \cong \mathbb{R}^2$ and $c > 0$ is a constant. The
  manifold $(M,\omega_{M})$ is called the generic fiber of $\pi$.
\end{dfn}
It follows from the definition that all the critical values of $\pi$
must be contained inside $U$. Sometimes it will be more natural to fix
the complement of $U$, say $\mathcal{W} = \mathbb{C} \setminus U$, and
say that the fibration is tame over $\mathcal{W}$. Given a tame
Lefschetz fibration, the set $U=U_{E}$, the point $z_{0}$ and the
symplectic trivialization $\psi_{E,\mathbb{C} \setminus B}$, are all
viewed as part of the fixed data associated to the fibration.
     
Moreover, we will assume that the set $U=U_{E}$ is so that there
exists $a_{U}>0$ sufficiently large with the property that $U$ is
disjoint from both quadrants:
\begin{equation}\label{eq:quadr}
  Q_{U}^{-}=(-\infty, -a_{U}]\times [0,+\infty)\ , 
  \ Q_{U}^{+}=[a_{U},\infty)\times [0,+\infty)
\end{equation} 

The cobordism relation, as defined in \cite{Bi-Co:cob1}, admits an
obvious extension in a tame Lefschetz fibration.

\begin{dfn}\label{def:Lcobordism} Fix a Lefschetz fibration that is
   tame outside $U\subset \C$ with fiber $(M,\omega)$ over $z_{0}\in
   \C\setminus U$.  Let $(L_{i})_{1\leq i\leq k_{-}}$ and
   $(L'_{j})_{1\leq j\leq k_{+}}$ be two families of closed Lagrangian
   submanifolds of $M$. We say that that these two families are
   Lagrangian cobordant in $E$, if there exists a Lagrangian
   submanifold $V \subset E$ with the following properties:
  \begin{itemize}
    \item[i.] There is a compact set $K\subset E$ so that $V\cap
     U\subset V\cap K$ and $V \setminus K\subset
     \pi^{-1}(Q^{+}_{U}\cup Q^{-}_{U})$.
  \item[ii.] $V\cap \pi^{-1}(Q^{+}_{U})= \coprod_{j} 
    ([a_{U},+\infty) \times \{j\})  \times L'_j $     
  \item[iii.] $V\cap \pi^{-1}(Q^{-}_{U})= \coprod_{i} 
    ((-\infty,- a_{U}] \times \{i\})  \times L_i $
  \end{itemize}
  The formulas at ii and iii are written with respect to the
  trivialization of the fibration over the complement of $U$.
\end{dfn}

The manifold $V$ is obviously a Lagrangian cobordism in the sense of
Definition \ref{def:cyl-ge} and - because of tameness - its ends at
$\infty$ are well defined so that we can say that $V$ is a cobordism
from the Lagrangian family $(L'_{j})$ to the family $(L_{i})$. We
write $V:(L'_{j}) \cobto (L_{i})$ or $(V; (L_{i}), (L'_{j}))$.

\subsection{From general Lefschetz fibrations to tame ones}
\label{sb:tame-vs-gnrl}

We will now see that it is always possible to pass from a general
Lefschetz fibration $\pi: E \longrightarrow \mathbb{C}$, as in
\S \ref{sb:defs-lef-fibr}, to a tame one.

\begin{prop} \label{p:from-gnrl-to-tame} Let $\pi: E \longrightarrow
  \mathbb{C}$ be a Lefschetz fibration and let $\mathcal{N} \subset
  \mathbb{C}$ be an open subset that contains all the critical values
  of $\pi$ and has the shape depicted in Figure~\ref{f:fib-NW}. Let
  $\mathcal{W} \subset \mathbb{C}$ be another open subset of the shape
  depicted in Figure~\ref{f:fib-NW} with $\overline{\mathcal{W}} \cap
  \overline{\mathcal{N}} = \emptyset$ and
  $\textnormal{dist}(\overline{\mathcal{W}}, \overline{\mathcal{N}}) >
  0$. Then there exists a symplectic structure $\Omega' = \Omega'_{E,
    \mathcal{N}, \mathcal{W}}$ on $E$ and a trivialization $\varphi:
  \mathcal{W} \times M \longrightarrow E|_{\mathcal{W}}$ with the
  following properties:
   \begin{enumerate}
     \item On $\mathcal{W} \times M$ we have $\varphi^* \Omega' = c
      \omega_{\mathbb{C}} \oplus \omega_M$ for some $c> 0$.
     \item $\Omega'$ coincides with $\Omega_{E}$ on all the fibers of
      $\pi$.
     \item $\Omega' = \Omega_{E}$ on $\pi^{-1}(\mathcal{N})$.
     \item There exists an $\Omega'$-compatible almost complex
      structure $J'_E$ on $E$ which coincides with $J_E$ on
      $\pi^{-1}(\mathcal{N})$ and such that the projection $\pi: E
      \longrightarrow \mathbb{C}$ is $(J'_E, i)$-holomorphic.
   \end{enumerate}
   In particular, when endowed with the symplectic structure
   $\Omega'$, the Lefschetz fibration $\pi: E \longrightarrow
   \mathbb{C}$ is tame over $\mathcal{W}$.
\end{prop}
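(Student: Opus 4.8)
The plan is to leave $(\Omega_E,J_E)$ unchanged on every fibre and over a slightly shrunk neighbourhood of $\textnormal{Critv}(\pi)$, and to deform $\Omega_E$ to a split form over $\mathcal{W}$ by a version of Thurston's construction for symplectic fibrations. First I would fix an open set $\mathcal{N}_{0}$ with $\textnormal{Critv}(\pi)\subset\mathcal{N}_{0}$ and $\overline{\mathcal{N}_{0}}\subset\mathcal{N}$, and put $\mathcal{R}:=\mathbb{C}\setminus\mathcal{N}_{0}$; then $\overline{\mathcal{W}}\subset\mathcal{R}$, $\mathcal{R}$ contains no critical values, and --- by the shape of $\mathcal{N}$ in Figure~\ref{f:fib-NW} --- every connected component of $\mathcal{R}$ is simply connected. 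Over $\mathcal{R}$ the map $\pi$ is a locally trivial symplectic fibration, so parallel transport for the connection $\Gamma(\Omega_E)$ of~\S\ref{sbsb:con-ptrans-trail}, performed along a fixed smooth system of paths, yields a symplectic trivialization $\Phi\colon\mathcal{R}\times M\longrightarrow E|_{\mathcal{R}}$ with $\Phi(z,\cdot)^{*}(\Omega_E|_{E_z})=\omega_M$ for all $z$, which one arranges to coincide with the trivialization supplied by Assumption $T_{\infty}$ near fibrewise infinity (this is vacuous if $M$ is compact). Write $\alpha:=\Phi^{*}\Omega_E$ and $\beta_{0}:=\omega_{\mathbb{C}}\oplus\omega_M$ on $\mathcal{R}\times M$: both are closed and both restrict to $\omega_M$ on each fibre.

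Next, since every component of $\mathcal{R}$ is a simply connected open surface, the K\"unneth formula shows that restriction to a fibre induces an isomorphism $H^{2}(\mathcal{R}\times M;\mathbb{R})\cong H^{2}(M;\mathbb{R})$, so the closed form $\alpha-\beta_{0}$, which vanishes on fibres, is exact: $\alpha-\beta_{0}=d\nu$ for some $1$-form $\nu$, which may be taken to vanish near fibrewise infinity. Pick $\chi\colon\mathbb{C}\to[0,1]$ equal to $1$ on a neighbourhood of $\overline{\mathcal{N}}$ and to $0$ on $\overline{\mathcal{W}}$, pull $\chi$ and the $2$-form $(1-\chi)\,\omega_{\mathbb{C}}$ back to $\mathcal{R}\times M$ via the projection $\textnormal{pr}\colon\mathcal{R}\times M\to\mathcal{R}\subset\mathbb{C}$, fix a constant $K>0$, and set
$$\Lambda_{K}\ :=\ \beta_{0}+d(\chi\,\nu)+K\,(1-\chi)\,\textnormal{pr}^{*}\omega_{\mathbb{C}}\qquad\text{on }\mathcal{R}\times M,$$
writing $\widetilde{\omega}:=\beta_{0}+d(\chi\,\nu)$ for its $K$-independent part. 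Since $d\chi$ and $\omega_{\mathbb{C}}$ are pulled back from $\mathbb{C}$ we have $d\chi\wedge\textnormal{pr}^{*}\omega_{\mathbb{C}}=0$, so $\Lambda_{K}$ is closed; it equals $\alpha=\Phi^{*}\Omega_E$ where $\chi=1$ (in particular near $\partial\mathcal{R}\subset\mathcal{N}_{0}$), equals $(K+1)\,\omega_{\mathbb{C}}\oplus\omega_M$ on $\mathcal{W}\times M$, and restricts to $\omega_M$ on every fibre (because $d\nu$, hence $d(\chi\nu)$, and $(1-\chi)\textnormal{pr}^{*}\omega_{\mathbb{C}}$ all vanish on fibres). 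Setting $\Omega':=\Omega_E$ on $\pi^{-1}(\mathbb{C}\setminus\mathcal{R})$ and $\Omega':=\Phi_{*}\Lambda_{K}$ on $E|_{\mathcal{R}}$ therefore gives, by the overlap near $\pi^{-1}(\partial\mathcal{R})$, a well-defined closed $2$-form satisfying condition~(2), satisfying~(3) (as $\chi\equiv 1$ on $\mathcal{N}$), and satisfying~(1) with $c=K+1$ and $\varphi:=\Phi|_{\mathcal{W}\times M}$.

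The step I expect to be the real obstacle is nondegeneracy of $\Omega'$, and the point is Thurston's trick: $\Lambda_{K}$ depends on $K$ only through its last summand, and since $(\textnormal{pr}^{*}\omega_{\mathbb{C}})^{2}=0$ we get $\Lambda_{K}^{\,n}=\widetilde{\omega}^{\,n}+nK(1-\chi)\,\widetilde{\omega}^{\,n-1}\wedge\textnormal{pr}^{*}\omega_{\mathbb{C}}$, where $\widetilde{\omega}^{\,n-1}\wedge\textnormal{pr}^{*}\omega_{\mathbb{C}}=\omega_M^{\,n-1}\wedge\textnormal{pr}^{*}\omega_{\mathbb{C}}$ is a positive volume form (using $\widetilde{\omega}|_{E_z}=\omega_M$), the $K$-independent form $\widetilde{\omega}^{\,n}$ is already positive where $\chi$ is close to $1$ (there $d\chi\to 0$, so $\widetilde{\omega}$ is $C^{0}$-close to the symplectic form $\alpha$), and $\widetilde{\omega}^{\,n}$ stays bounded on the remaining region, which is compact modulo the infinity of $E$ (where everything is standard by Assumption $T_{\infty}$). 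Hence $\Lambda_{K}^{\,n}>0$ for $K$ large enough, so $\Omega'$ is symplectic. For condition~(4) I would take $J'_E:=J_E$ wherever $\Omega'=\Omega_E$ (in particular on $\pi^{-1}(\mathcal{N})$), take $J'_E:=\Phi_{*}(i\oplus J_M)$ over $\mathcal{W}$ for a fixed $\omega_M$-compatible $J_M$, and interpolate between them inside the nonempty and contractible space of $\Omega'$-compatible almost complex structures $J$ with $d\pi\circ J=i\circ d\pi$, which is possible since $\pi$ is a submersion away from $\textnormal{Critv}(\pi)$. It then remains to observe that the new data is again a Lefschetz fibration as in Definition~\ref{df:lef-fib}: $\textnormal{Crit}(\pi)$ is unchanged and, lying inside $\pi^{-1}(\mathcal{N}_{0})$ where $(\Omega',J'_E)=(\Omega_E,J_E)$, keeps its holomorphic Morse charts, while Assumption $T_{\infty}$ persists; by condition~(1) it is tame over $\mathcal{W}$ in the sense of Definition~\ref{df:tame-lef-fib}. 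The essential difficulty that the Thurston argument circumvents is the clash between nondegeneracy of $\Omega'$ and the rigidity imposed by conditions~(2) and~(3) --- which rules out a naive convex interpolation --- resolved by inserting the base area form with a large coefficient cut off to vanish near the critical values.
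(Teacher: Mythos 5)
Your proposal follows the same overall scheme as the paper --- trivialize over the complement of $\mathcal{N}$, compare the pulled-back form to the split form, cut off the difference, and apply Thurston's trick to recover nondegeneracy --- but you implement the closedness step differently, and more cleanly: where the paper decomposes $\varphi^*\Omega_E$ explicitly as $f\,dy_1\wedge dy_2+\alpha\wedge dy_1+\beta\wedge dy_2+\omega$, picks \emph{fiberwise} primitives $F,G$ of the vertical $1$-forms $\alpha,\beta$, and must introduce the correction term $g=\partial_{y_2}(\sigma)F-\partial_{y_1}(\sigma)G$ to preserve closedness after cutting off $\alpha,\beta$, you instead take a \emph{global} primitive $\nu$ of $\alpha-\beta_0$ on $\mathcal{R}\times M$ and cut it off directly, so that $d(\chi\nu)$ is automatically closed with no correction needed. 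This is a genuine simplification of the bookkeeping, and the rest of your plan (gluing near $\partial\mathcal{R}$, interpolating in the contractible space of compatible almost complex structures for which $\pi$ is holomorphic) is sound.

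However, there is a real gap in the nondegeneracy step. You assert that a single large constant $K$ suffices because ``$\widetilde{\omega}^{\,n}$ stays bounded on the remaining region, which is compact modulo the infinity of $E$.'' This is not true. The transition region $\{0<\chi<1\}$ is an \emph{unbounded} subset of $\mathbb{C}$: it is a band of positive width that runs along the two arms of the $U$-shaped domain $\mathcal{N}$ out to infinity. Assumption $T_\infty$ controls only the fiber direction at infinity, not the base direction, so nothing prevents the trivialization $\Phi$, the primitive $\nu$, and hence $d(\chi\nu)$ and $\widetilde{\omega}^{\,n}$ from growing without bound as $z\to\infty$ within this band. Consequently no single constant $K$ need work. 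The paper's proof is structured precisely so as to avoid this: the function $A(z)$ there (replacing the role of $K$) is subject only to the pointwise lower bound (A.4) on $\mathbb{C}\setminus\mathcal{N}_{2\epsilon}$ and to being constant on $\mathcal{W}$; since the quantity on the right-hand side of (A.4) tends to $0$ near $\mathcal{W}$ (where $\sigma=0$ and $g=0$), $A$ is allowed to grow unboundedly in the transition band while still being chopped down to a constant on $\mathcal{W}$. Your construction can be repaired in the same spirit by replacing the constant $K$ with a smooth function $K(z)$ on $\mathbb{C}$, chosen large enough on the transition band, tapering to the constant $c-1$ on $\mathcal{W}$ within the region where $\chi\equiv 0$ (there $\widetilde{\omega}=\beta_0$ is already split and symplectic, so the taper is harmless). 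Since $dK\wedge\textnormal{pr}^*\omega_{\mathbb{C}}=0$, closedness of $\Lambda_{K(\cdot)}$ persists and the Thurston computation goes through term by term. A smaller issue of the same flavor: in the non-compact-fiber case you assert $\nu$ can be taken to vanish near fiberwise infinity, but near $E^\infty$ the difference $\alpha-\beta_0$ equals $(\omega_S-\omega_{\mathbb{C}})\oplus 0$ which is pulled back from the base and generally nonzero; you should instead arrange $\nu$ to be pulled back from the base there, which is what the argument actually needs.
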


\begin{figure}[htbp]
   \begin{center}
      \includegraphics[scale=0.5]{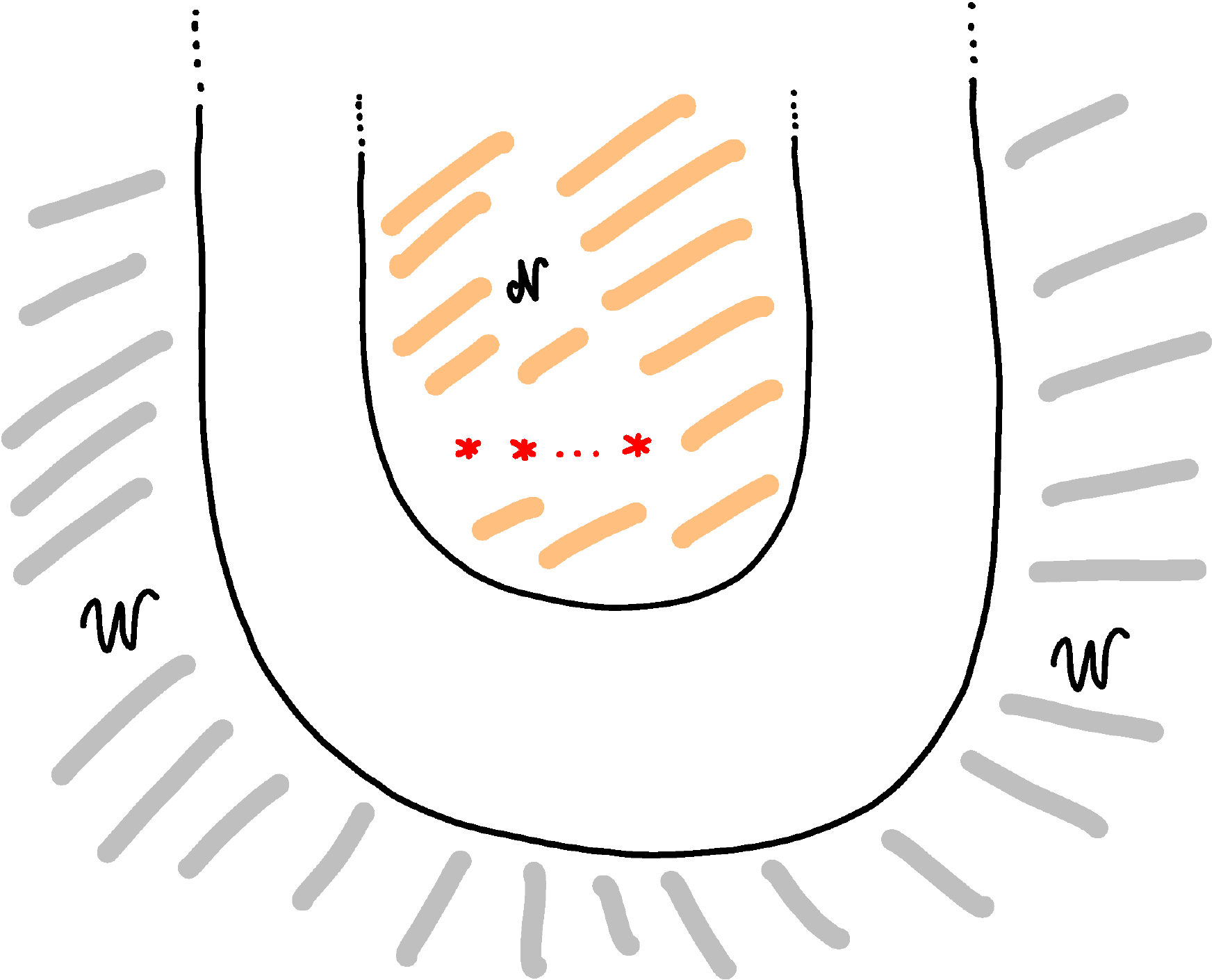}
   \end{center}
   \caption{A Lefschetz fibration $\pi:E\to \C$; the domains 
     $\mathcal{N}$ and $\mathcal{W}$ and, 
     in red, the critical values of $\pi$.
     \label{f:fib-NW}}
\end{figure}

\begin{rem}\label{rem:gen-tame-cob}
It is easy to pass from a cobordism in a general
Lefschetz fibration to a cobordism in a tame fibration.

Indeed, let $\pi: E \longrightarrow \mathbb{C}$ be a Lefschetz
fibration and $V \subset E$ a
Lagrangian submanifold with cylindrical ends. Let $\mathcal{N} \subset
\mathbb{C}$ be a subset as in Proposition~\ref{p:from-gnrl-to-tame}
and assume that $V$ has cylindrical ends outside of $\mathcal{N}'$,
where $\mathcal{N}' \subset \mathcal{N}$ is a slightly smaller subset
than $\mathcal{N}$ which contains $\textnormal{Critv}(\pi)$ and is of
the same shape as $\mathcal{N}$. Denote the horizontal rays
corresponding to the ends of $V$ by $\ell_i \subset \mathbb{C}$, $i=1,
\ldots, r$ and by $L_i \subset E_{\sigma_i}$ the corresponding
Lagrangians over the starting points of these rays. Let $\mathcal{W}
\subset \mathbb{C}$ be a subset as in
Proposition~\ref{p:from-gnrl-to-tame} and consider the new symplectic
structure $\Omega'$ on $E$ provided by that proposition.  By
performing parallel transport of the $L_i$'s along the horizontal rays
$\ell_i$, but this time with respect to the connection corresponding
to $(E, \Omega')$ we obtain a new Lagrangian submanifold $V' \subset
(E, \Omega')$ with the following properties:
\begin{enumerate}
  \item[i.] $V'$ coincides with $V$ over $\mathcal{N}$.
  \item[ii.] $V'$ has cylindrical ends outside of $\mathcal{N}$.
  \item[iii.] Over $\mathcal{W}$, $V'$ looks like
   $$V'|_{\mathcal{W}} = \cup_{i=1}^r \ell'_i \times L'_i,$$ where
   $\ell'_i = \ell_i \cap \mathcal{W}$ and $L'_i$ is the image of the
   parallel transport of $L_i$ (with respect to the connection
   $\Gamma(\Omega')$) along the portion of $\ell_i$ that connects
   $\mathcal{N}'$ with $\mathcal{W}$.
\end{enumerate}
\end{rem}

\subsubsection{Preparation for the proof of
  Proposition~\ref{p:from-gnrl-to-tame}} \label{sbsb:prep-prf-gnrl-tame}

Let $(M, \omega)$ be a symplectic manifold, $Q \subset \mathbb{C}$ an
open subset and $f: Q \times M \longrightarrow \mathbb{R}$ a smooth
function. We denote by $z = y_1 + iy_2$ the standard complex
coordinate in $\mathbb{C}$. Let $\alpha = \{\alpha\}_{z \in Q}, \beta
= \{\beta_z\}_{z \in Q}$ be two families of $1$-forms on $M$,
parametrized by $z \in Q$ (alternatively we can view $\alpha, \beta$
as differential forms on $Q \times M $ with
$\alpha(\tfrac{\partial}{\partial y_j}) =
\beta(\tfrac{\partial}{\partial y_j}) = 0$). For $z \in Q$, $p \in M$
we write $\alpha_{z,p}$ for the restriction of $\alpha_z$ to $T_p(M)$
and similarly for $\beta$. We denote by $d^v$ the exterior derivative
of differential forms on $Q \times M$ in the $M$-direction (i.e. $(d^v
\alpha)_z = d^M(\alpha_z)$, where $d^M$ is the exterior derivative in
$M$.) Below we will abbreviate the partial derivatives
$\tfrac{\partial}{\partial y_1}$, $\tfrac{\partial}{\partial y_2}$ by
$\partial_{y_1}$, $\partial_{y_2}$.

Consider now the following $2$-form on $Q \times M$
$$\Omega^{f,\alpha,\beta} : = \omega + f dy_1 \wedge dy_2 + \alpha
\wedge dy_1 + \beta \wedge dy_2.$$

A simple calculation shows that:
\begin{lem} \label{l:Om-closed} $\Omega^{f,\alpha,\beta}$ is closed
   iff $d^v \alpha = d^v \beta = 0$ and $d^v f =
   \partial_{y_2}\alpha - \partial_{y_1}\beta$.
\end{lem}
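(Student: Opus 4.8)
The plan is to expand $\Omega^{f,\alpha,\beta}$ by exterior derivative and collect the terms by type according to how many $dy_j$ factors they contain; the closedness condition will then be equivalent to the vanishing of each homogeneous component separately. First I would write $d = d^v + dy_1 \wedge \partial_{y_1} + dy_2 \wedge \partial_{y_2}$ acting on forms on $Q \times M$ (this is just the decomposition of the total exterior derivative into the $M$-direction and the two base directions, valid because $Q \subset \mathbb{C}$ has the global flat coordinates $y_1, y_2$). Applying this to each of the four summands of $\Omega^{f,\alpha,\beta}$ and using that $\omega$ is a fixed (closed) form on $M$ pulled back to $Q \times M$, so $d^v \omega = 0$, while $\partial_{y_j}\omega = 0$ since $\omega$ does not depend on $z$, gives $d\omega = 0$ outright.

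Next I would handle the remaining three terms. For $f\,dy_1 \wedge dy_2$: since $dy_1, dy_2$ are closed, $d(f\,dy_1\wedge dy_2) = df \wedge dy_1 \wedge dy_2 = d^v f \wedge dy_1 \wedge dy_2$ (the $\partial_{y_j}f\,dy_j$ pieces die against $dy_1\wedge dy_2$). For $\alpha \wedge dy_1$: $d(\alpha \wedge dy_1) = d\alpha \wedge dy_1 = d^v\alpha \wedge dy_1 + (\partial_{y_2}\alpha)\,dy_2 \wedge dy_1 = d^v\alpha\wedge dy_1 - (\partial_{y_2}\alpha)\wedge dy_1\wedge dy_2$; the $\partial_{y_1}\alpha \wedge dy_1 \wedge dy_1$ term vanishes. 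Similarly $d(\beta\wedge dy_2) = d^v\beta \wedge dy_2 + (\partial_{y_1}\beta)\wedge dy_1\wedge dy_2$. Summing, $d\Omega^{f,\alpha,\beta} = d^v\alpha\wedge dy_1 + d^v\beta\wedge dy_2 + \bigl(d^v f - \partial_{y_2}\alpha + \partial_{y_1}\beta\bigr)\wedge dy_1\wedge dy_2$.

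Finally I would argue that these three groups of terms are linearly independent as forms on $Q\times M$: the first lies in $\Lambda^2 T^*M \otimes \langle dy_1\rangle$, the second in $\Lambda^2 T^*M \otimes \langle dy_2\rangle$, and the third in $\Lambda^1 T^*M \otimes \langle dy_1\wedge dy_2\rangle$ (here $d^v f$ is a $1$-form in the $M$-direction), so $d\Omega^{f,\alpha,\beta}=0$ holds iff each of $d^v\alpha$, $d^v\beta$, and $d^v f - \partial_{y_2}\alpha + \partial_{y_1}\beta$ vanishes, which is exactly the claimed system $d^v\alpha = d^v\beta = 0$, $d^v f = \partial_{y_2}\alpha - \partial_{y_1}\beta$. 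There is no real obstacle here; the only point requiring a little care is keeping track of signs in the wedge reorderings $dy_2\wedge dy_1 = -dy_1\wedge dy_2$ and making sure the mixed pieces $\partial_{y_j}\alpha$ versus $\partial_{y_j}\beta$ land with the correct signs, which is why the stated identity is $\partial_{y_2}\alpha - \partial_{y_1}\beta$ rather than the reverse.
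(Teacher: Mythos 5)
Your overall strategy is exactly right, and it is what the paper has in mind when it says ``a simple calculation shows'': decompose $d$ on $Q\times M$ into a fiber part $d^v$ and the two base directions, apply it to each summand of $\Omega^{f,\alpha,\beta}$, and collect by type. The decomposition you state, $d=d^v+dy_1\wedge\partial_{y_1}+dy_2\wedge\partial_{y_2}$, is also correct (with $dy_j$ on the \emph{left}, which matters once it acts on forms of positive degree).

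However, there is a sign slip exactly at the point you yourself flag as the one ``requiring a little care.'' When you compute $d\alpha\wedge dy_1$ you write the $y_2$-component of $d\alpha$ as $(\partial_{y_2}\alpha)\,dy_2$, i.e.\ with $dy_2$ on the right of the one-form $\partial_{y_2}\alpha$. But your own decomposition formula puts $dy_2$ on the left, and for a $1$-form $\eta$ one has $dy_2\wedge\eta=-\eta\wedge dy_2$. If you carry the $dy_2$ on the left consistently you get
\begin{equation*}
d(\alpha\wedge dy_1) \;=\; d^v\alpha\wedge dy_1 \;+\;\bigl(dy_2\wedge\partial_{y_2}\alpha\bigr)\wedge dy_1 \;=\; d^v\alpha\wedge dy_1 \;+\;\partial_{y_2}\alpha\wedge dy_1\wedge dy_2 ,
\end{equation*}
with a $+$ sign, and similarly
\begin{equation*}
d(\beta\wedge dy_2) \;=\; d^v\beta\wedge dy_2 \;-\;\partial_{y_1}\beta\wedge dy_1\wedge dy_2 ,
\end{equation*}
with a $-$ sign. (Both are opposite to what you wrote.) Adding the $d^v f\wedge dy_1\wedge dy_2$ term then gives
\begin{equation*}
d\Omega^{f,\alpha,\beta} \;=\; d^v\alpha\wedge dy_1 \;+\; d^v\beta\wedge dy_2 \;+\; \bigl(d^v f+\partial_{y_2}\alpha-\partial_{y_1}\beta\bigr)\wedge dy_1\wedge dy_2,
\end{equation*}
so closedness is equivalent to $d^v\alpha=d^v\beta=0$ and $d^v f=\partial_{y_1}\beta-\partial_{y_2}\alpha$, i.e.\ the \emph{negative} of the expression you ended up with. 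You can double-check the sign directly on the example $\omega=dx_1\wedge dx_2$, $\alpha=y_2\,dx_1$, $\beta=0$, $f=-x_1$: then $\Omega^{f,\alpha,\beta}=-x_1\,dy_1\wedge dy_2+dx_1\wedge dx_2+y_2\,dx_1\wedge dy_1$ satisfies $d\Omega^{f,\alpha,\beta}=-dx_1\wedge dy_1\wedge dy_2+dy_2\wedge dx_1\wedge dy_1=0$, while $d^v f=-dx_1$ and $\partial_{y_2}\alpha-\partial_{y_1}\beta=+dx_1$, so the stated relation $d^v f=\partial_{y_2}\alpha-\partial_{y_1}\beta$ fails on this closed form.

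This means the sign in the lemma as printed is itself off, and your compensating sign error reproduces it rather than catches it. The discrepancy is harmless downstream, since the paper subsequently uses the relation and the auxiliary function $g$ from~\eqref{eq:g-funct-1} in a mutually consistent way (one would simply flip the sign of $g$ to match the corrected lemma), but you should be aware that a careful, self-consistent application of your own formula $d=d^v+dy_1\wedge\partial_{y_1}+dy_2\wedge\partial_{y_2}$ gives $d^v f=\partial_{y_1}\beta-\partial_{y_2}\alpha$, not what you (and the lemma) wrote.
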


Define now two families of vector fields $u_0, v_0$ on $M$
(parametrized by the points of $Q$) as follows.  For every $z \in Q$,
$p \in M$, define $u_0(z,p), v_0(z,p) \in T_p(M)$ by requiring that
for every $\xi \in T_p(M)$ we have:
\begin{equation} \label{eq:u0-v0} \omega_p(\xi, u_0(z,p)) +
   \alpha_{z,p}(\xi) = 0, \quad \omega_p(\xi, v_0(z,p)) +
   \beta_{z,p}(\xi) = 0.
\end{equation}
Denote by $\mathcal{H} \subset T(Q \times M)$ the following
$2$-dimensional distribution:
\begin{equation} \label{eq:hdist}
   \mathcal{H}_{z,p} := \mathbb{R}
   \bigl(\tfrac{\partial}{\partial y_1} + u_0(z,p) \bigr) +
   \mathbb{R}\bigl(\tfrac{\partial}{\partial y_2} + v_0(z,p) \bigr).
\end{equation}
Note that $\mathcal{H}$ depends on $\omega$, $\alpha, \beta$ but not
on $f$.

The following two lemmas can be proved by direct calculation.

\begin{lem} \label{l:H-alpha-beta} For every $(z,p) \in Q \times M$,
  $\xi \in T_p(M)$ and $w \in \mathcal{H}_{z,p}$ we have
  $\Omega^{f,\alpha,\beta}(\xi, w) = 0$. 
  In particular, if $\Omega^{f,\alpha,\beta}$ is non-degenerate then
  $\mathcal{H}$ is the horizontal distribution of the connection
  induced by $\Omega^{f,\alpha,\beta}$.
\end{lem}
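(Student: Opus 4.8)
The plan is to prove the pointwise identity by a direct bilinear computation, and then to deduce the ``in particular'' statement from a dimension count. Fix $(z,p) \in Q \times M$. Since $\Omega^{f,\alpha,\beta}(\xi, -)$ is linear in its second slot and, by \eqref{eq:hdist}, every $w \in \mathcal{H}_{z,p}$ is a real combination of $\partial_{y_1} + u_0(z,p)$ and $\partial_{y_2} + v_0(z,p)$, it suffices to check that $\Omega^{f,\alpha,\beta}(\xi, \partial_{y_1} + u_0(z,p)) = 0$ and $\Omega^{f,\alpha,\beta}(\xi, \partial_{y_2} + v_0(z,p)) = 0$ for every $\xi \in T_p(M)$.

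For the first of these I would evaluate the four summands of $\Omega^{f,\alpha,\beta} = \omega + f\,dy_1\wedge dy_2 + \alpha\wedge dy_1 + \beta\wedge dy_2$ separately on the pair $(\xi, \partial_{y_1} + u_0(z,p))$, using that $\xi$ is vertical (so $dy_1(\xi) = dy_2(\xi) = 0$) and that the families $\alpha, \beta$ annihilate $\partial_{y_1}$ and $\partial_{y_2}$. The $f\,dy_1\wedge dy_2$ term and the $\beta \wedge dy_2$ term vanish outright; the surviving contributions are $\omega_p(\xi, u_0(z,p))$ from $\omega$ and $\alpha_{z,p}(\xi)$ from $\alpha \wedge dy_1$, and their sum is zero by the defining relation \eqref{eq:u0-v0} for $u_0$. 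The computation on $(\xi, \partial_{y_2} + v_0(z,p))$ is entirely symmetric and yields $\omega_p(\xi, v_0(z,p)) + \beta_{z,p}(\xi) = 0$, again by \eqref{eq:u0-v0}. This proves $\Omega^{f,\alpha,\beta}(\xi, w) = 0$ for all $\xi \in T_p(M)$ and $w \in \mathcal{H}_{z,p}$.

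For the ``in particular'' clause, assume $\Omega^{f,\alpha,\beta}$ is non-degenerate. The vertical tangent space at $(z,p)$ is $T_p(M)$, of codimension $2$ in $T_{(z,p)}(Q \times M)$, so its $\Omega^{f,\alpha,\beta}$-orthogonal complement is $2$-dimensional. The first part shows $\mathcal{H}_{z,p}$ is contained in this complement; and $\mathcal{H}_{z,p}$ is itself $2$-dimensional, since its spanning vectors $\partial_{y_1} + u_0(z,p)$ and $\partial_{y_2} + v_0(z,p)$ differ from $\partial_{y_1}$ and $\partial_{y_2}$ by vertical vectors and are hence linearly independent. Therefore $\mathcal{H}_{z,p}$ coincides with the $\Omega^{f,\alpha,\beta}$-orthogonal complement of the vertical space, which is by definition the horizontal distribution of the connection induced by $\Omega^{f,\alpha,\beta}$ in the sense recalled in \S\ref{sbsb:con-ptrans-trail}.

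There is no genuine obstacle here: the whole argument is fiberwise linear algebra. The only point demanding a little care is the bookkeeping of which of the forms $dy_1, dy_2, \alpha, \beta$ pair trivially with vertical vectors and with the horizontal lifts, together with a consistent sign convention for the wedge product; once that is pinned down, the relations \eqref{eq:u0-v0} do all the work.
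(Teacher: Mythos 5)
Your proof is correct and is exactly the direct calculation the paper indicates (it omits the proof, saying only that the lemma ``can be proved by direct calculation''). The fiberwise evaluation of each of the four summands of $\Omega^{f,\alpha,\beta}$ on the pair $(\xi, \partial_{y_i} + \cdot)$, using \eqref{eq:u0-v0}, and the dimension count for the ``in particular'' clause are all sound.
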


\begin{lem} \label{l:Om-non-deg} Assume that $f(z,p) \neq
   \omega_p(u_0(z,p), v_0(z,p))$ for some $(z,p) \in Q \times M$. Then
   $\Omega^{f,\alpha,\beta}$ is non-degenerate at $(z,p)$. Moreover,
   there exists an $\Omega_{z,p}^{f,\alpha,\beta}$-compatible complex
   structure $J_{z,p}$ on $T_{z,p}(Q \times M)$ such that the
   projection $Q \times M \longrightarrow Q$ is $(J_{z,p},
   i)$-holomorphic at $(z,p)$ if and only if $f(z,p) >
   \omega_p(u_0(z,p), v_0(z,p))$.
\end{lem}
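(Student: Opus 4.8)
The plan is to reduce everything to a pointwise linear-algebra computation on the vector space $W := T_{z,p}(Q\times M)$, equipped with the skew form $\Omega := \Omega^{f,\alpha,\beta}_{z,p}$, and then invoke the standard compatibility lemma for symplectic vector spaces (existence of a compatible complex structure once nondegeneracy holds) together with a \emph{positivity} refinement that keeps track of the projection to $Q$. First I would fix coordinates: write $V_M := T_p(M)$ with its symplectic form $\omega := \omega_p$, and note $W = V_M \oplus \mathbb{R}\partial_{y_1}\oplus\mathbb{R}\partial_{y_2}$. By Lemma~\ref{l:H-alpha-beta}, the horizontal plane $\mathcal{H}_{z,p} = \mathbb{R}(\partial_{y_1}+u_0)+\mathbb{R}(\partial_{y_2}+v_0)$ is $\Omega$-orthogonal to $V_M$; so using the splitting $W = V_M \oplus \mathcal{H}_{z,p}$ the form $\Omega$ block-diagonalizes as $\omega$ on $V_M$ and a $2$-form on the plane $\mathcal{H}_{z,p}$. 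A direct computation gives the value of $\Omega$ on the basis $e_1 := \partial_{y_1}+u_0$, $e_2 := \partial_{y_2}+v_0$ of $\mathcal{H}_{z,p}$: expanding $\Omega^{f,\alpha,\beta}$ and using \eqref{eq:u0-v0} one finds $\Omega(e_1,e_2) = f(z,p) - \omega_p(u_0(z,p),v_0(z,p))$.

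With that block form in hand, nondegeneracy of $\Omega$ on $W$ is equivalent to nondegeneracy of the restriction to $\mathcal{H}_{z,p}$ (since $\omega$ on $V_M$ is already nondegenerate), i.e.\ to $\Omega(e_1,e_2)\neq 0$, which is exactly the hypothesis $f(z,p)\neq\omega_p(u_0(z,p),v_0(z,p))$; this proves the first assertion. For the second, I would argue as follows. A compatible $J$ on $(W,\Omega)$ for which the projection $\mathrm{pr}:W\to T_z Q \cong \mathbb{R}^2$ is $(J,i)$-holomorphic at $(z,p)$ must send the $\Omega$-complement of $V_M$ to itself — but that complement is precisely $\mathcal{H}_{z,p}$ — and must act on $\mathcal{H}_{z,p}$ as the pullback of the standard complex structure $i$ on $\mathbb{R}^2$ under the isomorphism $\mathrm{pr}|_{\mathcal{H}}$, i.e.\ $J e_1 = e_2$. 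Conversely, given any compatible $J_M$ on $(V_M,\omega)$, the operator $J$ that equals $J_M$ on $V_M$ and sends $e_1\mapsto e_2$, $e_2\mapsto -e_1$ on $\mathcal{H}_{z,p}$ is automatically $\Omega$-tame/compatible iff $\Omega(e_1, Je_1) = \Omega(e_1,e_2) > 0$, i.e.\ iff $f(z,p) > \omega_p(u_0(z,p),v_0(z,p))$. So such a $J$ exists in the positive case, and no such $J$ exists in the negative case (there $\Omega(e_1, Je_1) < 0$, contradicting compatibility). This gives the ``if and only if''.

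The routine but slightly fiddly part is the direct computation of $\Omega(e_1,e_2)$: one must expand
$$\Omega^{f,\alpha,\beta}(e_1,e_2) = \omega(u_0,v_0) + f\, dy_1\wedge dy_2(\partial_{y_1},\partial_{y_2}) + (\alpha\wedge dy_1)(e_1,e_2) + (\beta\wedge dy_2)(e_1,e_2)$$
and use $\alpha(\partial_{y_j}) = \beta(\partial_{y_j}) = 0$ to collapse the mixed wedge terms, getting $(\alpha\wedge dy_1)(e_1,e_2) = -\alpha(v_0)$ and $(\beta\wedge dy_2)(e_1,e_2) = \beta(u_0)$; then \eqref{eq:u0-v0} rewrites $\alpha(v_0) = -\omega(v_0,u_0) = \omega(u_0,v_0)$ and $\beta(u_0) = -\omega(u_0,v_0)$, and everything combines to $f(z,p) - \omega_p(u_0,v_0)$. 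The main conceptual obstacle — really the only place one has to be careful — is making precise the claim that $(J,i)$-holomorphicity of the projection forces $J(\mathcal{H}_{z,p}) = \mathcal{H}_{z,p}$ and forces $J$ to be the pullback of $i$ on that plane: this uses that $\mathrm{pr}$ kills $V_M$ (so $V_M\subseteq\ker d\,\mathrm{pr}$), that $\dim\ker d\,\mathrm{pr} = \dim V_M$ hence $\ker d\,\mathrm{pr} = V_M$, that $J$-invariance of $V_M$ (forced by $i$-linearity of $\mathrm{pr}\circ J = i\circ\mathrm{pr}$ on $\ker$) plus $\Omega$-compatibility forces $J$ to preserve the $\Omega$-orthogonal complement $\mathcal{H}_{z,p}$, and finally that $\mathrm{pr}|_{\mathcal{H}}$ is an $\mathbb{R}$-linear iso intertwining $J|_{\mathcal{H}}$ with $i$. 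Once this identification is made, the sign bookkeeping above finishes the proof.
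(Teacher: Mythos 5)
Your proof is correct, and it is precisely the ``direct calculation'' that the paper invokes without writing out: split $T_{z,p}(Q\times M)=T_pM\oplus\mathcal{H}_{z,p}$ using Lemma~\ref{l:H-alpha-beta}, compute $\Omega^{f,\alpha,\beta}(e_1,e_2)=f(z,p)-\omega_p(u_0,v_0)$ on the horizontal block, and observe that holomorphicity of the projection pins down $J|_{\mathcal{H}}$ up to the positivity requirement. The computation and the linear-algebra reduction are both accurate; no gaps.
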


\subsubsection{Proof of Proposition~\ref{p:from-gnrl-to-tame}}
\label{sbsb:prf-gnrl-tame} 

\

To fix ideas, we first provide the proof in the case of compact fibre.

\noindent \textbf{Step 1.} Using parallel transport with respect to
the connection $\Gamma_{\Omega_{E}}$ along a system of curves in
$\mathbb{C} \setminus \overline{\mathcal{N}}$ emanating from a fixed
point $z_0\in\mathcal{W}$, and using the fact that $\mathbb{C}
\setminus \overline{\mathcal{N}}$ is contractible we obtain a
trivialization
$$\varphi : (\mathbb{C} \setminus \overline{\mathcal{N}}) \times M 
\longrightarrow E|_{\mathbb{C} \setminus \overline{\mathcal{N}}}$$
with $M=\pi^{-1}(z_{0})$ and with the property that the form $\Omega_1
:= \varphi^* \Omega_E$ admits the following form
\begin{equation} \label{eq:Omega_1}
   \Omega_1 = f dy_1\wedge dy_2 + \alpha
   \wedge dy_1 + \beta \wedge dy_2 + \omega,
\end{equation}
where $\omega  = \Omega|_M$ and $f:
(\mathbb{C} \setminus \overline{\mathcal{N}}) \times M \longrightarrow
\mathbb{R}$ is a smooth function, and $\alpha, \beta$ are vertical
$1$-forms on $(\mathbb{C} \setminus \overline{\mathcal{N}}) \times M$
with the property that for every $z \in \mathbb{C} \setminus
\overline{\mathcal{N}}$ the $1$-forms $\alpha_z = \alpha|_{z \times
  M}$, $\beta_z = \beta|_{z \times M}$ are exact (see~\S~8.2
of~\cite{McD-Sa:jhol} and~\S~6.4 of~\cite{McD-Sa:Intro} for a proof of
that). Fix two functions $F, G : (\mathbb{C} \setminus
\overline{\mathcal{N}}) \times M \longrightarrow \mathbb{R}$ such that
$\alpha = d^v F$, $\beta = d^v G$.

By Lemma~\ref{l:Om-closed} we have:
\begin{equation} \label{eq:dvf} d^v f = \partial_{y_2} \alpha -
   \partial_{y_1} \beta.
\end{equation}

\ \\

Apart from $\mathcal{W}$ and $\mathcal{N}$ we will fix three
additional open subsets
$\mathcal{W}_{\epsilon}, \mathcal{N}_{\epsilon},
\mathcal{N}_{2\epsilon}$ with
$$\overline{\mathcal{W}} \subset \mathcal{W}_{\epsilon}, \quad
\overline{\mathcal{N}} \subset \mathcal{N}_{\epsilon}, \quad
\overline{\mathcal{N}}_{\epsilon} \subset \mathcal{N}_{2\epsilon},$$
and with shapes as described in Figure~\ref{f:fib-NW-eps}.  To be more
precise, consider the curves $\gamma_1, \gamma_2, \gamma_3 \subset
\mathbb{C}$ depicted in Figure~\ref{f:fib-NW-eps}. The domain
$\mathcal{N}_{\epsilon}$ is defined to be the connected component of
$\mathbb{C} \setminus \gamma_1$ in which all the points have bounded
real coordinate. The domain $\mathcal{N}_{2\epsilon}$ is defined
similarly but with the curve $\gamma_1$ replaced by $\gamma_2$. The
domain $\mathcal{W}_{\epsilon}$ is defined as the connected component
of $\mathbb{C} \setminus \gamma_3$ in which the real coordinate of the
points is unbounded. We also require that
$\textnormal{dist}(\overline{\mathcal{W}}_{\epsilon},
\overline{\mathcal{N}}_{2 \epsilon}) > 0$.

\begin{figure}[htbp]
   \begin{center}
      \includegraphics[scale=0.5]{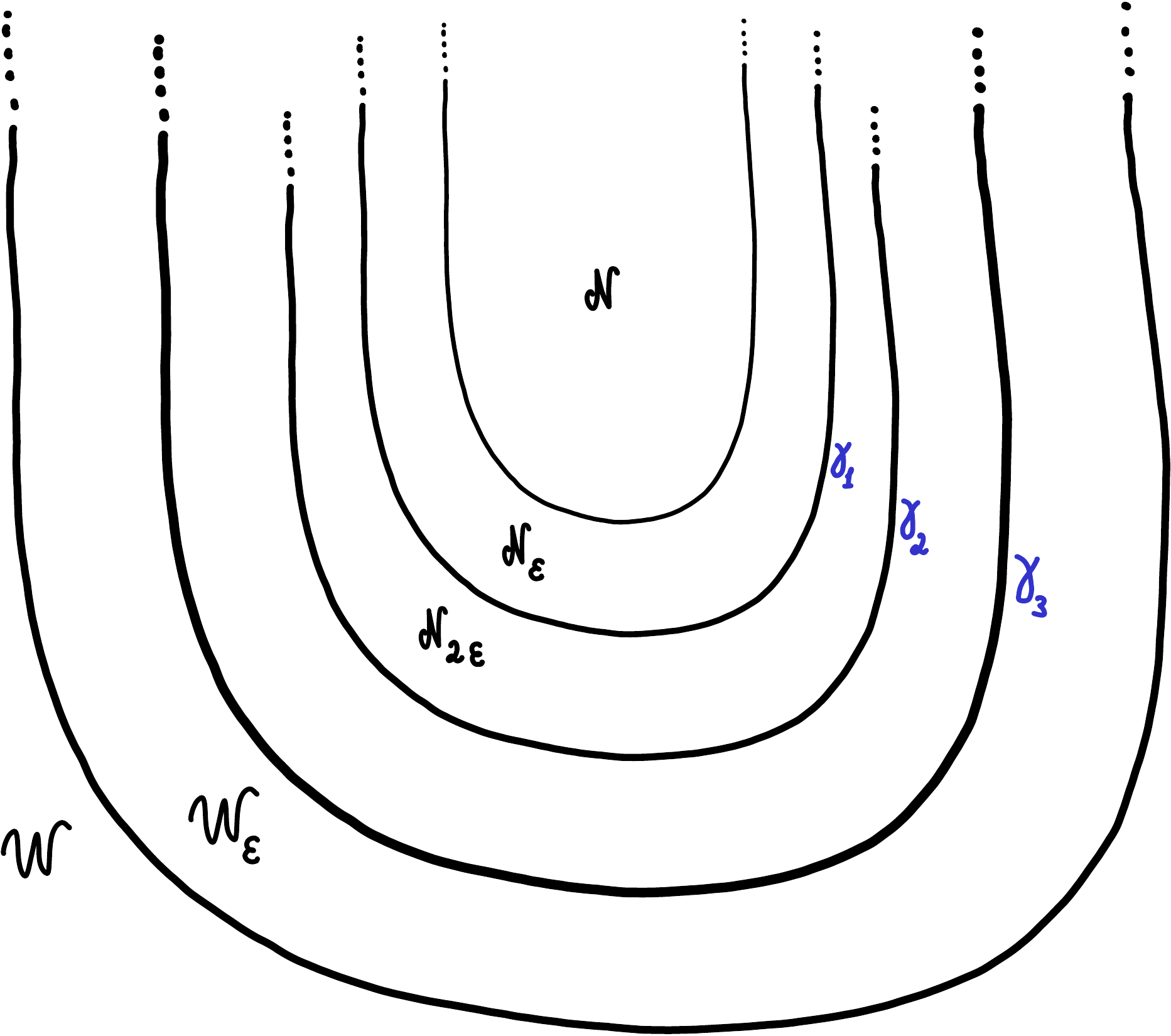}
   \end{center}
   \caption{The domains $\mathcal{N}_{\epsilon}$,
     $\mathcal{N}_{2\epsilon}$, and $\mathcal{W}_{\epsilon}$.
     \label{f:fib-NW-eps}}
\end{figure}

\ \\
\noindent \textbf{Step 2.} We will modify now the form $\Omega_1$ in
the following way. Fix a smooth function
$\sigma: \mathbb{C} \longrightarrow [0,1]$ such that:
\begin{equation} \label{eq:sig-funct}
   \sigma(z) = 
   \begin{cases}
      1 & z \in \mathcal{N}_{2\epsilon}, \\
      0 & z \in \mathcal{W}_{\epsilon}.
   \end{cases}
\end{equation}
Define $g: \mathbb{C} \times M \longrightarrow \mathbb{R}$ by
\begin{equation} \label{eq:g-funct-1} g(z,p) = \partial_{y_2}(\sigma)
   F(z,p) - \partial_{y_1}(\sigma) G(z,p).
\end{equation}
Then we have:
\begin{equation} \label{eq:g-funct-2} g(z,p) = 0 \quad \forall \, z \in
   \mathcal{N}_{2\epsilon} \cup \mathcal{W}_{\epsilon}.
\end{equation}

%

Next, choose a function $A: \mathbb{C} \longrightarrow \mathbb{R}$
with the following properties:
\begin{enumerate}[label={(A.\arabic*)}]
  \item $A(z) \geq 0$ for every $z \in \mathbb{C}$.
  \item $A(z) = 0$ for every $z \in \mathcal{N}_{\epsilon}$.
  \item $A(z) \geq |g(z,p)|$ for every $z \in \mathbb{C}$, $p\in M$.
  \item \label{i:cond-A-4} Let $u_0, v_0$ be the vector fields
   associated to the form $\Omega_1 = \Omega^{f, \alpha, \beta}$
   from~\eqref{eq:Omega_1} using the recipe from~\eqref{eq:u0-v0}. We
   require that
   $$A(z) > \sigma(z) \bigl | f(z,p) - \sigma(z) \omega_p \bigl(u_0(z,p),
   v_0(z,p) \bigr) \bigr | + |g(z,p)|$$ for every $z \in \mathbb{C}
   \setminus \mathcal{N}_{2\epsilon}$, $p \in M$.
  \item $A(z) = C$ for every $z \in \mathcal{W}$, for some constant
   $C>0$.
\end{enumerate}
\pbred{The role of the function $A$ is to flatten the form $\Omega_1$
  on $\mathcal{W}$, so it is split there, while ensuring
  non-degeneracy.} Such a function $A$ can be constructed as
follows. We start by defining a function
$A': \mathbb{C} \longrightarrow \mathbb{R}$ which is positive
\pbred{and satisfies condition~(A.4) (with $A'(z)$ on the left-hand
  side of the inequality).} Such a function obviously exists because
$M$ is compact.  We then cut $A'$ off to make it $0$ on
$\mathcal{N}_{\epsilon}$ and constant on $\mathcal{W}$, where the
cutting off takes place within
$\mathcal{N}_{2 \epsilon} - \mathcal{N}_{\epsilon}$ and within
$\mathcal{W}_{\epsilon} - \mathcal{W}$, where the function $g$ is $0$
anyway. \pbred{It is easy to see that the cutting off can be done in
  such that the inequality in~(A.4) continues to hold and similarly
  for~(A.3).} The function resulting from $A'$ after this procedure
can be taken to be the desired function $A$. See
Figure~\ref{f:fib-A-sigma-g}.

\begin{figure}[htbp]
   \begin{center}
      \includegraphics[scale=0.5]{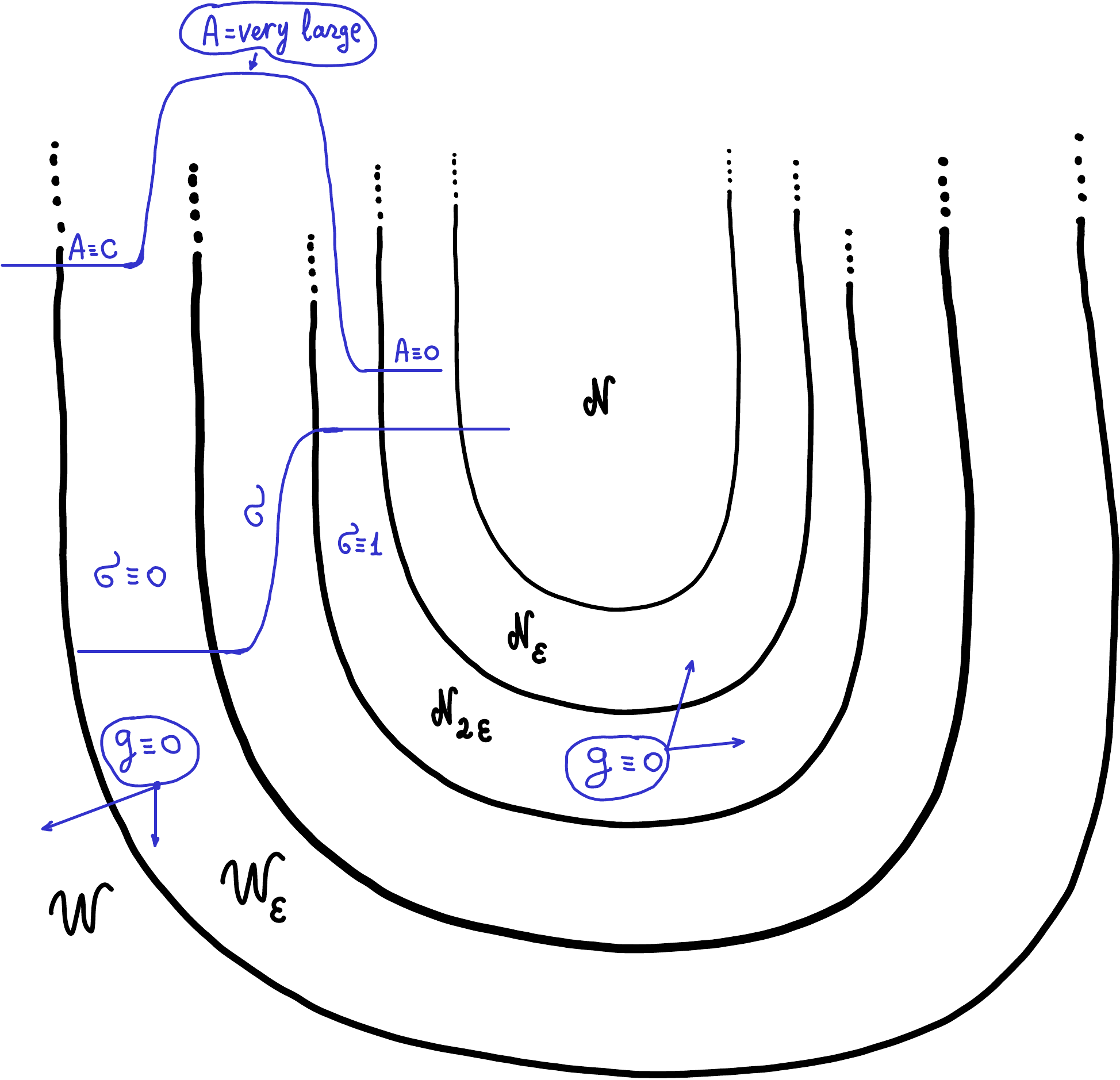}
   \end{center}
   \caption{The functions $\sigma$, $A$ and $g$.
     \label{f:fib-A-sigma-g}}
\end{figure}

Finally, define:
\begin{equation}
   \begin{aligned}
      & f'(z,p) := \sigma(z) f(z,p) + g(z,p)+ A(z), \\
      & \alpha'_{z,p} := \sigma(z) \alpha_{z,p} = d^v (\sigma(z) F)_{z,p}, \\
      & \beta'_{z,p} := \sigma(z) \beta_{z,p} = d^v (\sigma(z)
      G)_{z,p}.
   \end{aligned}
\end{equation}

Consider now the form
\begin{equation} \label{eq:omega-2}
  \Omega_2 : = \Omega^{f',\alpha', \beta'} = f' dy_1 \wedge dy_2 + 
  \alpha' \wedge dy_1 + \beta' \wedge dy_2 + \omega.
\end{equation}

Note that $\Omega_2$ coincides with $\Omega_1$ over a small
neighborhood of $\overline{\mathcal{N}}$ and therefore $\Omega_2$
gives rise via the trivialization $\varphi$ to a well defined $2$-form
$\Omega'$ over the \pbred{whole} of $E$. Moreover $\Omega'$ coincides
with $\Omega$ on $\pi^{-1}(\mathcal{N})$.

We claim that $\Omega'$ is a symplectic form on $E$ and that it
satisfies all the properties claimed by
Proposition~\ref{p:from-gnrl-to-tame}. 

We first show that $\Omega_2$ is closed using Lemma~\ref{l:Om-closed}.
Indeed
\begin{align*}
   d^v f' & = \sigma d^vf + d^v g = \sigma \partial_{y_2}(\alpha) -
   \sigma \partial_{y_1}(\beta) + d^v g \\
   & = \partial_{y_2} (\sigma \alpha) - \partial_{y_1}(\sigma \beta) +
   \bigl(d^v g -
   \partial_{y_2}(\sigma) \alpha + \partial_{y_1}(\sigma) \beta \bigr) \\
   & = \partial_{y_2}(\alpha') - \partial_{y_1}(\beta').
\end{align*}
Here the last term (between the brackets) on the second line vanishes
by~\eqref{eq:g-funct-1}.

We now prove that $\Omega_2$ is non-degenerate and moreover admits a
compatible almost complex structure $J'$ for which the projection
$\mathbb{C} \times M \longrightarrow \mathbb{C}$ is
$(J',i)$-holomorphic. Note that with the notation
from~\eqref{eq:u0-v0} and~\eqref{eq:hdist} the effect of replacing
$\alpha$ and $\beta$ by $\alpha' = \sigma \alpha$ and $\beta' = \sigma
\beta$ results in changing the vector fields $u_0, v_0$ to $u'_0 =
\sigma u_0$, $v'_0 = \sigma v_0$. Thus by Lemma~\ref{l:Om-non-deg} we
only need to check that:
\begin{equation} \label{eq:f'-omega-1} f'(z,p) > \omega_p(u'_0(z,p),
   v'_0(z,p)) \quad \forall \,  p \in M, \; z \in \mathbb{C} \setminus
   \overline{\mathcal{N}}.
\end{equation}

We have:
\begin{equation} \label{eq:f'-omega-2}
   \begin{aligned}
      f'(z,p) - \omega_p(u'_0(z,p), v'_0(z,p)) & = \sigma(z)f(z,p) +
      g(z,p) + A(z) - \sigma^2(z) \omega_p(u_0, v_0) \\
      & =\sigma(z) \bigl(f(z,p) - \sigma(z) \omega_p(u_0, v_0) \bigr)
      + \bigl( g(z,p) + A(z) \bigr).
   \end{aligned}
\end{equation}
We denote by $T_1 = \sigma(z) \bigl(f(z,p) - \sigma(z) \omega_p(u_0,
v_0) \bigr)$ the first term on the last line of~\eqref{eq:f'-omega-2}
and by $T_2 = g(z,p) + A(z)$ the second one.

We first verify~\eqref{eq:f'-omega-1} over
$\pi^{-1}(\mathcal{W}_{\epsilon})$. Indeed, when $z \in
\mathcal{W}_{\epsilon}$ we have $\sigma(z)=0$ hence $T_1 = 0$. By the
construction of the function $A$ we have $T_2 > 0$, hence $T_1 + T_2 >
0$.

Next we check~\eqref{eq:f'-omega-1} over
$\pi^{-1}(\mathcal{N}_{2\epsilon} \setminus \overline{\mathcal{N}})$.
Let $z \in \mathcal{N}_{2\epsilon} \setminus \overline{\mathcal{N}}$
and $p \in M$. Note that $\sigma(z) = 1$ hence
$T_1 = f(z,p) - \omega_p(u_0(z,p), v_0(z,p)) > 0$ by
Lemma~\ref{l:Om-non-deg}. Since $T_2 \geq 0$ we have $T_1 + T_2 > 0$.

Finally, the inequality~\eqref{eq:f'-omega-1} for $z \in \mathbb{C}
\setminus (\mathcal{N}_{2\epsilon} \cup \mathcal{W}_{\epsilon})$
follows easily from requirement~\ref{i:cond-A-4} in the construction
of the function $A$.

\

To finish the proof, we turn to the case of a non-compact fibre. Thus
we assume the conditions in~\S\ref{sb:defs-lef-fibr} and, in
particular, assumption $T_{\infty}$.  The proof above applies in this
case too, and we will preserve all the notation above, but there are a
number of adjustments that we describe below. Recall the set
$E^{\infty}$ that appears in the assumption $T_{\infty}$ and put
$M^{\infty}= M\cap E^{\infty}$. Recall also that, as before,
$M=\pi^{-1}(z_{0})$. Let
$$\phi: \C\times M^{\infty}\to E^{\infty}$$ be the 
trivialization provided by $T_{\infty}$.  Consider also the
restriction of this trivialization to $\C\setminus
\overline{\mathcal{N}}$:
\begin{equation}
   \phi : (\C \setminus \overline{\mathcal{N}})
   \times M^{\infty}\to E^{\infty}|_{\C\setminus \overline{\mathcal{N}}}
\end{equation} and put $\phi_{0}:M^{\infty}\to M^{\infty}$,
$\phi_{0}(p)=\phi (z_{0},p)$.

Consider also the map $\varphi$ constructed at the {\bf Step 1} above
and its restriction:
$$\varphi: (\C\setminus \overline{\mathcal{N}})\times M^{\infty}
\to E^{\infty}|_{\C\setminus \overline{\mathcal{N}}}$$ which is well
defined due to Assumption $T_{\infty}$.

\pbred{For brevity, write $\Omega = \Omega_{E}$.}  Given that the
connection associated to $\phi^{\ast}\Omega$ is trivial on
$(\C\setminus \overline{\mathcal{N}})\times M^{\infty}$, we deduce
that $\varphi(z,p)=\phi(z,\phi_{0}^{-1}(p))$ for all
$z\in \C\setminus \overline{\mathcal{N}}$, $p\in
M^{\infty}$. Therefore
$\phi^{\ast}\Omega|_{(\C\setminus \overline{\mathcal{N}})\times
  M^{\infty}}=\omega_{\C}\oplus \omega$.

Recall that over $(\C\setminus\overline{\mathcal{N}})\times M$ the
form \pbred{$\Omega_1 = \varphi^*\Omega$} can be written as
$$\Omega_1 = 
\omega + \alpha\wedge dy_{1}+\beta\wedge dy_{2} + fdy_{1} \wedge
dy_{2}~.~$$ This means that $\alpha,\beta$ vanish over
$(\C\setminus \overline{\mathcal{N}})\times M^{\infty}$ and $f$ is
constant there.  Therefore, we can choose the functions $F$, $G$ so
that they both vanish on
$(\C\setminus\overline{\mathcal{N}})\times M^{\infty}$.  Starting from
this point the remainder of the proof continues as in the compact
fibre case by using the fact that $g(z,p)$, as well as $\alpha'$,
$\beta'$, $u_{0}(z,p)$, $v_{0}(z,p)$ all vanish over
$(\C\setminus \overline{\mathcal{N}})\times M^{\infty}$.

\pbred{Recall now the forms $\Omega_2$ and $\Omega'$ (defined by
  formula~\eqref{eq:omega-2} and the paragraph following it). Summing
  up the preceding discussion,} the form $\Omega_{2}$ hence also
$\Omega'$ satisfies $\phi^{\ast}\Omega'=B(z)\omega_{\C}\oplus \omega$
over $\C\times M^{\infty}$, where $B(z)$ is positive and bounded. By
adding to $\Omega'$ another term of the form
$D(z)\pi^{\ast}\omega_{\C}$ we obtain a form that satisfies all the
properties claimed in Proposition~\ref{p:from-gnrl-to-tame} as well as
the assumption $T_{\infty}$. \pbred{(The role of adding the last term
  is to ensure that property~$(1)$ in
  Proposition~\ref{p:from-gnrl-to-tame} is satisfied.)}  \Qed


\section{Fukaya categories} \label{subsec:fuk-cat}

The purpose of this section is to introduce the various Fukaya
categories that play a role in the paper.  We start with a brief
sketch of the construction of the Fukaya category $\fuk^{\ast}(M)$ of
uniformly monotone, closed Lagrangian submanifolds of a symplectic
manifold $(M,\omega)$ which is assumed to be either closed or
\pbred{convex at infinity}. The full construction in the exact case
can be found \pbred{in~\cite[Sections~8-12]{Se:book-fukaya-categ}}
(the minor adjustments required in the monotone case are described,
for instance, in \cite{Bi-Co:lcob-fuk}).  In
\S\ref{subsubsec:Fuk-cob}, we pursue with the construction of the
Fukaya category $\fuk^{\ast}(E)$ of uniformly monotone cobordisms in a
tame Lefschetz fibration $\pi:E\to \C$ of generic fiber $(M,\omega)$.
This follows closely \S 3 of \cite{Bi-Co:lcob-fuk} where this
construction is implemented for the trivial fibration $E=\C\times M$.
The passage from a trivial fibration to a tame one is quite
straightforward but we provide enough details on this construction as
required for further arguments later in the paper and also to ensure
that the notions involved are accessible to a reader without prior
detailed knowledge of the techniques in~\cite{Bi-Co:lcob-fuk}.
In~\S\ref{subsubsec:fuk-cob-gen} we use the construction in the tame
setting together with the results in~\S\ref{sb:tame-vs-gnrl} to define
a Fukaya category associated to a general Lefschetz fibration.

In the definition of the various algebraic objects used in the paper
there are two coefficient rings of interest, $\Z_{2}$ and the
universal Novikov ring $\mathcal{A}$ over $\Z_{2}$:
$$\mathcal{A}=\{ \ \sum_{k=0}^{\infty} 
a_{k}T^{\lambda_{k}} \ : \ a_{k}\in \Z_{2}, \ \lambda_{k}\in \R, \
\lim_{k_{\to \infty}}\lambda_{k}\to \infty \ \}~.~$$ We work over
$\mathcal{A}$ at all times except if otherwise indicated.

\subsection{The Fukaya category of $M$}\label{subsec:Fuk-fibr}
The main structures in use in the paper are the Fukaya category,
$\fk^{\ast}(-)$, and the derived Fukaya category, $D\fuk^{\ast}(-)$.
Here $*$ encodes a uniform monotonicity constraint imposed to the
objects of $\fuk^{\ast}(M)$.  This constraint is necessary to define
the $A_{\infty}$-operations.

The book \cite{Se:book-fukaya-categ} is a comprehensive reference for
the basic definitions of the $A_{\infty}$ machinery as well as the
construction of the Fukaya category and its derived
version. \pbred{Our notation - which is
  homological}\footnote{\pbred{Since we work in an ungraded setting,
    the difference between homological and cohomological might seem
    invisible. However, our Floer homologies correspond to Morse
    homology rather than cohomology. In particular the unity in
    $HF(L,L)$ corresponds to the fundamental class of $L$ etc. Apart
    from that, the ordering of the terms in the higher operations
    $\mu_k$ is opposite to Seidel's and our conventions for the Yoneda
    embedding differs from Seidel's. This is all described in detail
    in the Appendix to~\cite{Bi-Co:lcob-fuk}.}}, \pbred{in contrast to
  Seidel's which is cohomological - is the same as
  in~\cite{Bi-Co:lcob-fuk}, see in particular the Appendix to that
  paper}. There is a single difference with respect
to~\cite{Bi-Co:lcob-fuk} which is that we use here the universal
Novikov ring $\mathcal{A}$ in the place of $\Z_{2}$.  As we shall see,
this is not a matter of choice, rather a requirement for a certain
part of our results to hold.  We emphasize that in the construction of
$D\fuk^{\ast}(-)$ we do not complete with respect to idempotents.
Moreover, as in \cite{Bi-Co:lcob-fuk} we work in an ungraded context.

Fix a symplectic manifold $(M,\omega)$, compact or \pbred{convex} at
infinity.  Given a \pbred{closed} Lagrangian submanifold $L\subset M$
there are two morphisms
$$\mu :\pi_{2}(M,L)\to \Z\ , \ \omega :\pi_{2}(M,L)\to \R$$ 
given, the first, by the Maslov index and, the second, by integration
of $\omega$.  We say that $L$ is monotone if
$\omega (\alpha) = \rho \mu(\alpha)$ for some constant $\rho \geq 0$
and if the number
$$N_{L}=\min\{\mu(\alpha) : \alpha \in \pi_{2}(M,L) \ , \
\omega(\alpha)>0 \}$$ is at least $2$.

\pbhl{Note that we do allow $\rho=0$ in the definition of
  monotonicity. This means that $\omega$ vanishes on $\pi_2(M,L)$
  (such Lagrangians are sometime called weakly exact). In this case we
  set $N_L= \infty$.}

\label{pg:dL}
For a connected monotone Lagrangian $L$ and for a generic almost
complex structure $J$ compatible with $\omega$, the number (mod $2$)
of $J$-holomorphic disks of Maslov number $2$ that pass through a
generic point of $L$ is an invariant (in the sense that it does not
depend either on the point or on the choice of $J$). It is denoted by
$d_{L}$ (and is defined in detail, for instance, in
\cite{Bi-Co:rigidity}). Note that in case $\rho=0$ we set $d_L=0$ by
definition.

In order to define the Fukaya category of $M$ we first need to specify
its underlying class of Lagrangian submanifolds. In what follows we
will mainly consider two classes of Lagrangians $\mathcal{L}^{(0)}(M)$
and $\mathcal{L}^{(\rho,1)}$, which are defined as
follows: \label{pg:mon-class-M}
\begin{enumerate}
\item[a.] The class $\mathcal{L}^{(0)}(M)$: this class consists of all
  closed monotone Lagrangians $L \subset M$ with $d_L = 0$. This
  includes in particular \pbred{all} Lagrangians with $N_L \geq 3$ as
  well as the case $\rho=0$.
\item[b.] Class $\mathcal{L}^{(\rho, 1)}(M)$: consists of all the
  closed monotone Lagrangians $L \subset M$ with $d_L = 1$ and with
  monotonicity constant $\rho$, where $\rho>0$ is a prescribed
  positive real number.
\end{enumerate}
Of course one could restrict also to some subclasses of the above. For
example, when $M$ is exact it makes sense to restrict to the subclass
$\mathcal{L}^{(\textnormal{ex})}(M) \subset \mathcal{L}^{(0)}(M)$ of
exact Lagrangian submanifolds.
  
To simplify the notation will denote any of these two choices by
$\mathcal{L}^*(M)$, where the symbol $*$ stands for either $(0)$ in
the first case, or for $(\rho, 1)$ in the second case.  Lagrangians in
the class $\mathcal{L}^*(M)$ will be called {\em uniformly monotone}
of class $*$.

In what follows we will work also with uniformly monotone {\em
  negatively-ended} Lagrangian cobordisms in the total space of a
Lefschetz fibration $E \longrightarrow \mathbb{C}$. Similarly to the
Lagrangians in $M$ we will denote the various classes of uniformly
monotone Lagrangian cobordisms in $E$ by $\mathcal{L}^*(E)$, where the
definition of these classes is the same as above except that the
Lagrangians in $E$ are not assumed to be compact.

Floer homology will be taken in this paper with coefficients in the
Novikov ring $\mathcal{A}$ and its definition will be shortly reviewed
below. It was introduced by Floer in \cite{Fl:Morse-theory} and, in
this monotone setting, by Oh \cite{Oh:HF1, Oh:HF1-add}.

\begin{remsnonum} 
   \begin{enumerate}
     \item[a.] In contrast to~\cite{Bi-Co:lcob-fuk} there is no injectivity
      condition on the inclusions $\pi_{1}(L)\to \pi_{1}(M)$ (this is
      because the coefficient ring is $\mathcal{A}$ and not $\Z_{2}$).
    \item[b.] In case there exists a spherical class $A \in \pi_2(M)$
      with $\omega(A)>0$, the monotonicity constant $\rho$ is
      determined by the proportionality constant between $[\omega]$
      and the first Chern class of the ambient symplectic
      manifold. Thus in this case there is only one class of the type
      $\mathcal{L}^{(\rho,1)}$.
   \end{enumerate}
\end{remsnonum}

The Fukaya $A_{\infty}$-category $\fuk^{\ast}(M)$ has as objects the
Lagrangians in $\mathcal{L}^{\ast}(M)$,
$$\mathcal{O}b(\fuk^{\ast}(M))=\mathcal{L}^{\ast}(M)~.~$$

Let $L,L'\in \mathcal{L}^{\ast}(M)$ and assume for the moment that $L$
and $L'$ intersect transversely.  In this case, the Floer complex,
$(CF(L,L'; J), d)$, associated to $L$ and $L'$ is defined by choosing
a regular almost complex structure $J$ compatible with $\omega$ and is
a free $\mathcal{A}$-module with generators the intersection points of
$L$ and $L'$. In this paper $CF(L,L')$ is a complex without grading.
 
The differential $d$ is defined in terms of $J$-holomorphic strips
$u:\R\times [0,1]\to M$ \pbred{with} $u(\R\times \{0\})\subset L$,
$u(\R\times \{1\})\subset L'$ \pbred{and}
$\lim_{s\to-\infty}u(s,t)=x\in L\cap L'$,
$\lim_{s\to +\infty}u(s,t)=y\in L\cap L'$.  We have:
$$d(x)=\sum_{y}\sum_{u\in\mathcal{M}_{0}(x,y)} T^{\omega(u)}y$$ where 
the sum is over all the intersection points $y\in L\cap L'$ and
$\mathcal{M}_{0}(x,y)$ is the $0$-dimensional subspace of the moduli
space of $J$-strips $u$ joining $x$ to $y$.  Uniform monotonicity is
used to show that $d^{2}=0$.

The homology of this complex, $HF(L,L')$, is the Floer homology of $L$
and $L'$. It is independent of $J$ as well as of Hamiltonian
perturbation of $L$ and of $L'$.

\

The morphisms in $\fuk^{\ast}(M)$ are
$\mor_{\fuk^{\ast}(M)}(L,L')=CF(L,L')$. The $A_{\infty}$ structural
maps are, by the definition of an $A_{\infty}$-category, multilinear
maps
$$\mu_{k}: CF(L_{1},L_{2})\otimes CF(L_{2}, L_{3})\otimes \ldots
\otimes CF(L_{k},L_{k+_1})\to CF(L_{1},L_{k+1})$$ that satisfy the
relation $\mu\circ \mu=\sum \mu(-,-,\ldots, \mu,\ldots, -,-)=0$.
\pbred{In our case, these maps are such that $\mu_{1}=d$ = the Floer
  differential and, for $k>1$, $\mu_{k}$ is defined by:
\begin{equation} \label{eq:mu-k-no-pert}
  \mu_{k}(x_{1}, \ldots, x_{k})=
  \sum_{y} \sum_{u\in \mathcal{M}_{0}(x_{1}, \ldots, x_{k}; y)
  }T^{\omega(u)} y.
\end{equation}
Here, at least when the $L_{i}$'s and $L$ are in general position,
$x_{i}\in L_{i}\cap L_{i+1}$, $y \in L_{1}\cap L_{k+1}$ and
$\mathcal{M}_{0}(x_{1},\ldots, x_{k}; y)$ is the $0$-dimensional
moduli space of (perturbed) $J$-holomorphic polygons with $k+1$ sides
that have $k$ ``inputs'' asymptotic - in order - to the intersection
points $x_{i}$ and one ``exit'' asymptotic to $y$. Monotonicity is
used to show that the sums in~\eqref{eq:mu-k-no-pert} are well defined
over $\mathcal{A}$.  The relation $\mu\circ\mu=0$ extends the relation
$d^{2}=0$.}

This is just a rough summary of the construction as, in particular,
the operations $\mu_{k}$ have to be defined for all families
$L_{1},\ldots, L_{k+1}$ and not only when $L_{i}, L_{i+1}$, etc., are
transverse. \pbred{In reality one has to add perturbation terms to the
  Cauchy-Riemann equation that come from Hamiltonian functions
  associated to each vertex of the polygon and the asymptotic
  conditions $x_i$, $y$, are replaced by trajectories $\gamma_i$,
  $\gamma$ of the flows of these Hamiltonian functions that start on
  $L_i$ and end on $L_{i+1}$, respectively start on $L_1$ and end on
  $L_k$.} Moreover, the regularity of these moduli spaces depends on a
number of choices of auxiliary data, basically a coherent system of
{\em strip-like ends} and coherent {\em perturbation data}. We refer
to \cite{Se:book-fukaya-categ} for the actual implementation of the
construction which is considerably more involved.  Additionally, these
notions are made more precise in \S\ref{subsubsec:Fuk-cob} where we
discuss in more detail some of the ingredients used in the
construction of a Fukaya category $\fuk^{\ast}(E)$ with objects
certain cobordisms in $E$.

Consider next the category of $A_{\infty}$-modules over the Fukaya
category
$$mod (\fuk^{\ast}(M)):=fun(\fuk^{\ast}(M), Ch^{opp})$$ where 
$Ch^{opp}$ is the opposite of the dg-category of chain complexes over
$\mathcal{A}$.  The category of $A_{\infty}$-modules is an
$A_{\infty}$-category in itself \pbred{(in fact a dg-category)} and is
triangulated in the $A_{\infty}$-sense with the triangles being
inherited from the triangles in $Ch$ (where they correspond to the
usual cone-construction for chain complexes). There is a Yoneda
embedding $\mathcal{Y}:\fuk^{\ast}(M)\to mod (\fuk^{\ast}(M))$, the
functor associated to an object $L\in \mathcal{L}^{\ast}(M)$ being
$CF(-,L)$.  The derived Fukaya category $D\fuk^{\ast}(M)$ is the
homology category associated to the triangulated completion of the
image of the Yoneda embedding inside $mod(\fuk^{\ast}(M))$.

\subsubsection{Iterated cone decompositions} \label{sbsb:iter-cone}

We now briefly fix the notation for writing iterated
cone-decompositions in a triangulated category $\mathcal{C}$.  Suppose
that there are exact triangles:
$$C_{i+1}\to Z_{i}\to Z_{i+1}$$
with $1\leq i\leq n$ and with $X=Z_{n+1}$, $Z_{0}=C_{0}$. We write
such an iterated cone-decomposition as $$ X=(C_{n+1}\to (C_{n}\to (
C_{n-1}\to \ldots \to C_{0}))\ldots )~.~$$ With this notation
$$Z_{k}=(C_{k}\to (C_{k-1}\to \ldots \to C_{0}))\ldots )~.~$$ We also
notice that we can in fact omit the parentheses in this notation
without ambiguity.  This follows from the following equality of the
two iterated cones:
$$((A\to B)\to C) = (A\to (B\to C))~.~$$
In turn, this follows immediately from the axioms of a triangulated
category together with the fact that we work here in an ungraded
setting (the formula can also be easily adjusted to the graded case).
In short, we will write:
$$X= (C_{n+1}\to C_{n}\to  C_{n-1}\to \ldots \to C_{0})~.~$$
There is a slight abuse of notation in the above formula in
  that, in the absence of the relevant parentheses, the arrows in the formula do not
  independently correspond to 
  morphisms in the category $\mathcal{C}$. The formula should
  be interpreted as saying that $X$ can be expressed as an iterated
  cone attachment with the objects $C_0, \ldots, C_{n+1}$ as described above.

\
\subsubsection{The Grothendieck group} \label{sbsb:Groth-grp} The
Grothendieck group of a triangulated category $\mathcal{C}$ is the
abelian group generated by the objects of $\mathcal{C}$ modulo the
relations generated by $B=A+C$ as soon as
$$A\to B\to C$$ is an exact triangle. 
We denote the Grothendieck group of $\mathcal{C}$ by
$K_0(\mathcal{C})$. Notice that, with our terminology, if
$$L_{1}= (L_{n}\to L_{n-1}\to  L_{n-2}\to \ldots \to L_{2}),$$ then,
because we work in an
  ungraded setting, in $K_{0}(\mathcal{C})$ we have the relation
$L_{n}+L_{n-1}+\ldots +L_{1}=0$.  Notice also that, due to the same reason, our
  version of $K_0(\mathcal{C})$ is always $2$-torsion, i.e. $2A=0$ for
  every $A \in K_0(\mathcal{C})$.

The main Grothendieck groups of interest in this paper will be those
of derived Fukaya categories $K_0D\fuk^{\ast}(-)$.

\subsection{{\Mntlf} Lefschetz fibrations} \label{sb:monlef}

In order to define a Fukaya category of cobordisms in a Lefschetz
fibration that is suitable for our needs we need to impose additional
conditions on the Lefschetz fibration. These will ensure that all the
thimbles and vanishing spheres are monotone Lagrangian submanifolds
(with the right monotonicity paramters) in their respective ambient
manifolds and so can be included as objects in the same Fukaya
categories.

Let $\pi: E \longrightarrow \mathbb{C}$ be a Lefschetz fibration as in
Definition~\ref{df:lef-fib}. Fix a base point $z_0 \in \mathbb{C}$ and
let $M = \pi^{-1}(z_0)$ be the fiber over $z_0$, endowed with the
symplectic structure $\omega = \Omega_E|_M$ induced from $E$. Denote
by $x_1, \ldots, x_m \in E$ the critical points of $\pi$ and by
$v_1, \ldots, v_m \in \mathbb{C}$ the corresponding critical values of
$\pi$.  Fix $m$ smooth paths
$\lambda_1, \ldots, \lambda_m \subset \mathbb{C}$ such that for every
$k$ $\lambda_k$ starts at $v_k$ and ends at $z_0$ and such that except
of their end points none of the paths $\lambda_k$ passes through the
critical values of $\pi$. Denote by $S_1, \ldots, S_m \subset M$ the
Lagrangian vanishing spheres associated to the paths
$\lambda_1, \ldots, \lambda_m$.

\begin{dfn}[{\Mntlf} Lefschetz fibrations] \label{df:monlef} We say
  that $\pi:E \longrightarrow \mathbb{C}$ is a {\em \mntlf} Lefschetz
  fibration if the following conditions holds:
  \begin{enumerate}
  \item In case $\dim_{\mathbb{R}} M \geq 4$ we require that $M$ is a
    monotone symplectic manifold, that is $\omega = 2\rho c_1$ on
    $\pi_2(M)$ \pbhl{for some $\rho \geq 0$.} \label{i:monot-ge4}
  \item In case $\dim_{\mathbb{R}} M =2$ we require that
    $(E,\Omega_E)$ is a monotone symplectic manifold. Note that this
    implies that $M$ is monotone too and we define $\rho$ as in
    point~(\ref{i:monot-ge4}) above.
  \end{enumerate}
  In addition to the above we also make the following
  assumptions. Denote by $c_1^{\textnormal{min}} \in \mathbb{Z}_{>0}$
  the minimal Chern number of $M$. Then:
    \begin{enumerate}
    \item[(i)] \pbhl{If $\rho=0$ set $d_E=0$ and $*=(0)$.}
    \item[(ii)] If $\rho>0$ and $c_1^{\min}=1$ then we require that
      $d_{S_1} = \cdots = d_{S_m}$ (see Page~\pageref{pg:dL} for what
      $d_{S_k}$ is). Denote the latter number by
      $d_E \in \mathbb{Z}_2$. In case $d_E=0$ set $*=(0)$ and if
      $d_E=1$ set $*=(\rho,1)$.
    \item[(iii)] If $c_1^{\min}>1$ set $d_E=0$ and $*=(0)$.
    \end{enumerate}
\end{dfn}

\pbhl{We will refer to $*$ from} Definition~\ref{df:monlef} as the
monotonicity class of the Lefschetz fibration $E$. By
Proposition~\ref{p:monot-E} below it depends only on the fibration
$E$.  In~\S\ref{subsubsec:Fuk-cob} \pbhl{below we will set up the
  Fukaya category of (negative ended) cobordisms in $E$ and the
  monotonicity class $*$ will be used in order to constrain the class
  of Lagrangian cobordisms that are objects of this category.}

\label{pg:mon-exception}
\pbhl{We will make one exception to the definition above, namely when
  $E$ has no critical values at all, i.e.
  $E \approx \mathbb{C} \times M$ is the trivial fibration. In this
  case we only assume that $M$ is a monotone symplectic manifold and
  will choose the monotonicity class $*$ to be arbitrary subject to
  the restrictions made on} page~\pageref{pg:mon-class-M}
in~\S\ref{subsec:Fuk-fibr} above. See also
Remark~\ref{r:remote-monotone} below.

\begin{rem} \label{r:S_k-monot} It is easy to see that when
  $\dim_{\mathbb{R}}M \geq 4$, $(M, \omega)$ is monotone iff
  $(E, \Omega_E)$ is monotone and in that case
  $c_1^{\min}(E) = c_1^{\min}(M)$. This is so because under this
  dimension assumption, the map induced by inclusion
  $\pi_2(M) \to \pi_2(E)$ is surjective. Apart from that we also have
  $c_1(E)|_{H^2(M)} = c_1(M)$. Moreover, as will be seen in the proof
  of Proposition~\ref{p:monot-E} below, the monotonicity of the
  symplectic manifold $(E, \Omega_E)$ implies that the spheres
  $S_1, \ldots, S_k \subset M$ are all monotone (even when
  $\dim_{\mathbb{R}}M=2$).
\end{rem}

\begin{prop} \label{p:monot-E} The Definition~\ref{df:monlef} is
  independent of the choice of paths $\lambda_1, \ldots, \lambda_m$.

  Let $E$ be a {\mntlf} Lefschetz fibration and $T$ a thimble over any
  path $\gamma$ (that starts at a critical value of $\pi$). Then $T$
  is monotone with minimal Maslov number $2c_1^{\min}(E)$ and
  monotonicity ratio $\rho$. If moreover, $\gamma$ is horizontal at
  $-\infty$ (or $+\infty$) and $S$ is the Lagrangian sphere associated
  to the end of $T$ then we also have $d_T = d_E = d_S$. In
  particular, both $T$ and $S$ are monotone of class $*$ in their
  respective ambient manifolds.
\end{prop}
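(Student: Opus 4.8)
The plan is to prove the three assertions of the proposition in sequence, reducing everything to a monotonicity computation in the fiber $M$ together with standard facts about thimbles in Lefschetz fibrations. First I would address the independence of Definition~\ref{df:monlef} from the choice of paths. The monotonicity of $M$ (resp.\ of $E$ when $\dim_{\mathbb{R}}M = 2$) is obviously path-independent, so the only thing to check is that in case (ii), when $\rho>0$ and $c_1^{\min}=1$, the numbers $d_{S_k}$ do not change when the path $\lambda_k$ is modified. Since any two paths from $v_k$ to $z_0$ (avoiding the other critical values) differ by a composition with a loop, and the corresponding vanishing spheres differ by applying a sequence of Dehn twists (an ambient symplectomorphism), and $d_S$ is a symplectic invariant of a monotone Lagrangian, the number $d_{S_k}$ is unchanged. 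Moreover, even when we move the endpoint $z_0$ or pass through a continuation of paths, the vanishing spheres stay Hamiltonian isotopic or are carried into each other by symplectomorphisms of $M$, so $d_{S_k}$ is well-defined; this also shows the notion ``$d_{S_1}=\cdots=d_{S_m}$'' makes sense and the common value $d_E$ is intrinsic to $E$.

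Next I would prove that any thimble $T$ over a path $\gamma$ starting at a critical value is monotone with minimal Maslov number $2c_1^{\min}(E)$ and monotonicity ratio $\rho$. The key input is that a thimble $T$ is diffeomorphic to a disk (or more precisely to $\mathbb{R}^{n+1}$ when the fiber has real dimension $2n$), so $\pi_2(E,T) \cong \pi_2(E)/\,\mathrm{im}\,\pi_2(T) = \pi_2(E)$ (as $\pi_2(T)=0$), and the Maslov index of a class $A\in\pi_2(E,T)$ coincides with twice the first Chern number $2c_1(E)\cdot A$ evaluated on the corresponding spherical class; this is the standard computation for the relative Maslov class of a thimble (the Lagrangian Gauss map along $T$ is, up to homotopy, determined by the ambient Chern class because $T$ is contractible). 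Combining this with the monotonicity $\omega = 2\rho c_1$ on $\pi_2(E)$ (which holds under Definition~\ref{df:monlef} — directly when $\dim M = 2$, and via Remark~\ref{r:S_k-monot} when $\dim M \ge 4$, since $\pi_2(M)\to\pi_2(E)$ is onto and $c_1(E)|_{H_2(M)}=c_1(M)$) yields $\Omega_E(A)=\rho\,\mu(A)$ for all $A\in\pi_2(E,T)$, and the minimal Maslov number is $2c_1^{\min}(E)$. Here one must be slightly careful at infinity: when $\gamma$ is horizontal at $\pm\infty$, assumption $T_\infty$ (or tameness) ensures $T$ is cylindrical and the relevant relative homotopy/homology groups are still controlled by the fiber and the ambient $E$; I would note this and proceed.

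Finally, for the disk-count identity $d_T = d_E = d_S$ when $\gamma$ is horizontal at infinity with associated vanishing sphere $S\subset M$, the idea is to relate the Maslov-$2$ disks with boundary on $T$ to those with boundary on $S$ in the fiber. When $\rho=0$ all these numbers are $0$ by definition, so assume $\rho>0$ and $c_1^{\min}(E)=c_1^{\min}(M)=1$ (otherwise $d_E=0$ and one argues $d_T=0$ as well since the minimal Maslov number is $\ge 4$, leaving no Maslov-$2$ disks). In the remaining case the Maslov-$2$ disks with boundary on $T$ that pass through a generic point near the cylindrical end of $T$ are, by a neck-stretching / fiberwise degeneration argument (or by using the trivialization near infinity and the fact that parallel transport is symplectic), in bijection with the Maslov-$2$ disks with boundary on $S$ in $M$ through a generic point — the thimble $T$ contributes no ``new'' disks through a generic point because a disk on $T$ that is not contained in a fiber must project to a holomorphic disk in $\mathbb{C}$ with boundary on the horizontal ray, and such a disk has to be constant on the part of the ray where $T$ is cylindrical. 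Hence $d_T = d_S$; and $d_S = d_E$ by definition of $d_E$ in case~(ii) (together with the path-independence from the first part). The last sentence of the proposition is then immediate: $T$ is monotone with the same ratio $\rho$ and the same $d$-invariant as dictated by the monotonicity class $*$, and likewise for $S$, so both lie in $\mathcal{L}^{*}$ of their respective ambient manifolds.

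The step I expect to be the main obstacle is the disk-counting identity $d_T = d_S$: making rigorous the claim that Maslov-$2$ disks on the thimble through a generic point correspond exactly to Maslov-$2$ disks on the vanishing sphere in the fiber requires a careful compactness/gluing argument controlling how holomorphic disks in $E$ interact with the fibration structure near the cylindrical end (ruling out bubbling into the fiber over the critical value and ruling out disks with nontrivial projection to $\mathbb{C}$), and this is where one genuinely uses tameness / assumption $T_\infty$ together with an open mapping type argument for $\pi\circ u$.
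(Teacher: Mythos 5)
Your proposal is correct and follows essentially the same route as the paper: path-independence via the fact that different choices of vanishing paths give vanishing spheres related by ambient symplectomorphisms (compositions of Dehn twists), hence the same $d_{S_k}$; monotonicity of $T$ from the simple connectivity of $T$ together with monotonicity of $(E,\Omega_E)$; and the identity $d_T = d_S$ coming from a projection/open-mapping argument that confines Maslov-$2$ disks through a generic point on the cylindrical end of $T$ to the fiber. The only presentational difference is that the paper cites Chekanov's result (and Remark~2.2.4 of the earlier cobordism paper) for the $d_T = d_S$ step, where you sketch the underlying argument directly — but that is precisely the argument behind the cited references, so the two proofs are substantively identical.
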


\begin{proof}
  That all thimbles are monotone follows easily from the fact that $T$
  is simply connected and that $(E,\Omega_E)$ is a monotone symplectic
  manifold.

  Denote now by $T_{\lambda_k}$ the thimble over the path
  $\lambda_k$. Since $T_{\lambda_k}$ is monotone then so is $S_k$
  because $c_1(E)|_{H^2(M)} = c_1(M)$.

  We now turn to the first statement in the proposition. This follows
  from the fact that if we change the given set of paths
  $\lambda_1, \ldots, \lambda_m$ by another set
  $\lambda'_1, \ldots, \lambda'_m$ then each of the new vanishing
  spheres $S'_k$ is the image of $S_k$ under some symplectic
  diffeomorphism of $M$ (which is in fact, up to symplectic isotopy, a
  certain composition of Dehn twists and their inverses along the
  spheres $S_1, \ldots, S_m$). Therefore, the monotonicity of $S'_k$
  is preserved and so is the value of $d_{S'_k}$.

  Finally, let $T$ be a thimble over a path $\gamma$ which is
  horizontal at $\pm \infty$. By the results of~\cite{Chek:cob} (see
  also~\cite[Remark 2.2.4]{Bi-Co:cob1}) with obvious adaption to
  Lefschetz fibrations it follows that $d_T = d_{S}$, where $S$ is the
  Lagrangian sphere associated to the end of $T$. Since $S$ is a
  vanishing sphere we have $d_S=d_E$.
\end{proof}

\begin{rem} \label{r:tvsg-mntlf} The procedure from
  Proposition~\ref{p:from-gnrl-to-tame}, that modifies the symplectic
  structure on a Lefschetz fibration to render it tame, does not
  affect the property of being {\mntlf}. This is so because, in the
  notation of Proposition~\ref{p:from-gnrl-to-tame}, the map induced
  by the inclusion $\pi_2(\pi^{-1}(\mathcal{N})) \to \pi_2(E)$ is an
  isomorphism.
\end{rem}

{\em From now on, we will generally assume that our Lefschetz
  fibrations are {\mntlf}.}

\subsection{The Fukaya category of negative ended cobordisms in tame
  Lefschetz fibrations} \label{subsubsec:Fuk-cob} We consider a
\pbgreen{{\mntlf}} Lefschetz fibration $\pi:E\to\C$ that is tame
outside $U\subset \C$ and has as generic fibre the symplectic manifold
$(M,\omega)$.  We will also assume that \pbred{$U$ is $U$-shaped, as
  in Figure~\ref{fig:null-cob}, and that} \pbred{
  \begin{equation} \label{eq:u-tame}
  \overline{U}\subset \R\times [0,+\infty).
\end{equation}
} The main object of study in this paper is the Fukaya category
$\fk^{\ast}(E)$, \pbgreen{where $*$ is the monotonicity class of $E$
  and has been set in Defintion~\ref{df:monlef}.}  It has as objects
the cobordisms $V$ as in Definition~\ref{def:Lcobordism} such that the
following additional conditions are satisfied:
\begin{itemize}
  \item[i.] $V$ is monotone in the class $\ast$.
  \item[ii.] $V\subset \pi^{-1}(\R\times [\frac{1}{2},+\infty))$
  \item[iii.] $V$ has only negative ends that all belong to
   $\mathcal{L}^{\ast}(M)$. In particular, with the notation from
   Definition \ref{def:Lcobordism}, $k_{+}=1$ and $L'_{1}=\emptyset$.
\end{itemize}

This family of Lagrangians of $E$ with the properties above will be
denoted by $\mathcal{L}^{\ast}(E)$. In other words,
$\mathcal{O}b(\fk^{\ast}(E))=\mathcal{L}^{\ast}(E)$. Such an object is
represented schematically in Figure \ref{fig:null-cob}.

\begin{figure}[htbp]
   \begin{center}
      \includegraphics[scale=0.4]{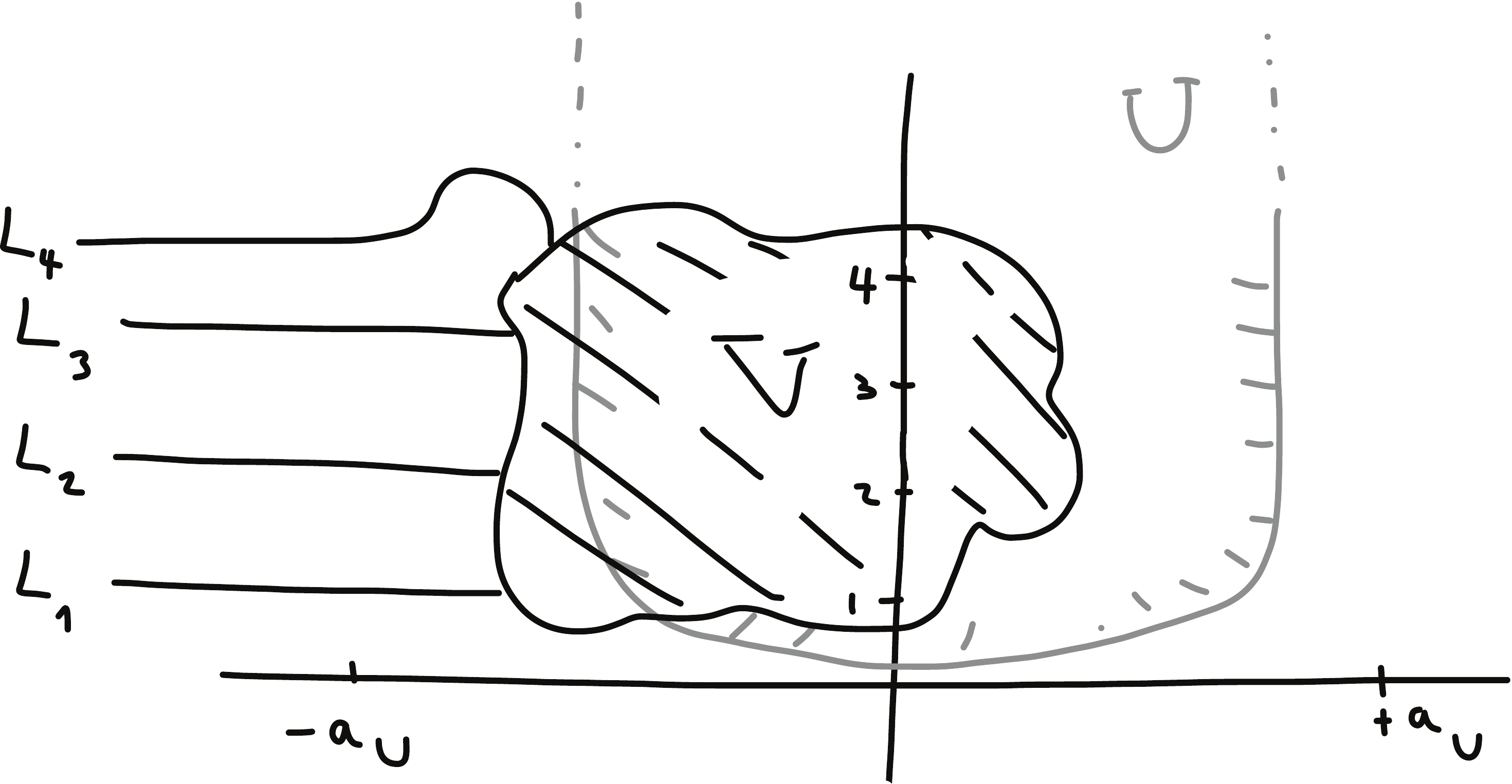}
   \end{center}
   \caption{\pbred{The projection on $\C$ of an object
       $V\in \mathcal{O}b(\fk^{\ast}(E))$ together with the set $U$
       outside which $E$ is tame.} \label{fig:null-cob}}
\end{figure}

We call the objects $V\in \mathcal{L}^{\ast}(E)$ {\em negatively-ended
  cobordisms}: they are cobordisms from the void set to a family
$(L_{1},\ldots, L_{s})$.

\begin{rem} \label{rem:null-cob}
   \begin{enumerate}
   \item[a.] In this paper we restrict \pbred{ourselves} to
     negatively-ended cobordisms but this is more a matter of
     convenience than of necessity. Some of the arguments in the paper
     are simpler in this setting but the same type of constructions
     allow the definition of a Fukaya category with both negative and
     positive ends.  Similarly, our decomposition results can also be
     adapted to this more general setting. We do not require $V$ to be
     connected.  Notice also that every Lagrangian cobordism
     $V \subset E$ that contains positive ends can be transformed to a
     negatively-ended cobordism by e.g. bending its positive ends
     along curves that turn to the left, then go above the
     singularities of $E$ and continue horizontally to $-\infty$.
   \item[b.] We remark that our notation $\mathcal{L}^*(E)$ and
     $\fk^*(E)$ somewhat differs from the one used
     in~\cite{Bi-Co:lcob-fuk}. In that paper we studied Lagrangian
     cobordisms in trivial fibrations $E = \mathbb{C} \times M$ and
     denoted by $\mathcal{C}\mathcal{L}_d(\mathbb{C} \times M)$ the
     collection of monotone Lagrangian cobordisms in
     $\mathbb{C} \times M$ (with possibly negative and positive
     ends). The corresponding Fukaya category was denoted by
     $\fk^d_{cob}(\mathbb{C} \times M)$.  Thus, in the present paper,
     we could have denoted our $\mathcal{L}^*(E)$ by
     $\mathcal{C}\mathcal{L}_{\ast}^{null}(E)$ and $\fk^*(E)$ by
     $\fk^{*, null}_{cob}(E)$, but we have decided to drop the
     additional decorations in order to keep the notation simpler.
   \end{enumerate}
\end{rem}

The operations $\mu_{k}$ of the Fukaya category $\fuk^{\ast}(E)$ are
defined following closely the construction in~\cite{Bi-Co:lcob-fuk}
which is basically a variant of the set-up in \pbred{Seidel's
  book~\cite[Sections~8-12]{Se:book-fukaya-categ}.} We review here the
technical points that will be needed later in the paper.  We will
first focus on the case when $M$ is compact and we will discuss the
additional modifications required when $M$ is convex at infinity at
the end of the construction.  There are two structures that need to be
added compared to the construction of the category $\fuk^{\ast}(M)$:
{\em transition functions} associated to a system of strip-like ends
and {\em profile functions}. As always, the operations $\mu_{k}$ are
defined in terms of counting (with coefficients in $\mathcal{A}$)
perturbed $J$-holomorphic polygons $u$. The role of the transition
functions is to allow such $u$ to be transformed by a change of
variables into curves $v$ that project holomorphically onto certain
regions of $\C$. The role of the profile functions - and particularly
that of their {\em bottlenecks} - is to ensure compactness at infinity
for the Floer complexes $CF(V,V')$ and to further restrict the
behavior of the $J$-polygons $u$.  We explain this point, which is
crucial for the arguments used later in the paper, at the end
of~\S\ref{subsubsec:Fuk-cob}.

\subsubsection{Transition functions} \label{subsubsec:transition} We
first recall the notion of a consistent choice of strip-like ends
\pbred{from~\cite[Sections~8d,~9g]{Se:book-fukaya-categ}.} Fix
$k \geq 2$. Let $\mathrm{Conf}_{k+1}(\partial D)$ be the space of
configurations of $(k+1)$ distinct points $(z_1, \ldots, z_{k+1})$ on
$\partial D$ that are ordered clockwise. Denote by
$Aut(D) \cong PLS(2,\mathbb{R})$ the group of holomorphic
automorphisms of the disk $D$. Let
$$\mathcal{R}^{k+1}=\mathrm{Conf}_{k+1}
(\partial D)/Aut(D)\ ,\
\widehat{\mathcal{S}}^{k+1}=\bigl(\mathrm{Conf}_{k+1}(\partial
D)\times D\bigr)/Aut(D)~.~$$ The projection
$\widehat{\mathcal{S}}^{k+1} \to \mathcal{R}^{k+1}$ has sections
$\zeta_i[z_1, \ldots, z_{k+1}] = [(z_1, \ldots, z_{k+1}), z_i]$, $i=1,
\ldots, k+1$ and let $\mathcal{S}^{k+1} = \widehat{\mathcal{S}}^{k+1}
\setminus \bigcup_{i=1}^{k+1} \zeta_i(\mathcal{R}^{k+1}).$ The fiber
bundle $\mathcal{S}^{k+1} \to \mathcal{R}^{k+1}$ is called a universal
family of $(k+1)$-pointed disks. Its fibers $S_r$, $r \in
\mathcal{R}^{k+1}$, are called $(k+1)$-pointed (or punctured) disks.

Let $Z^{+}= [0,\infty)\times [0,1]$, $Z^{-}=(-\infty, 0]\times [0,1]$
be the two infinite semi-strips and let $S$ be a $(k+1)$ pointed disk
with punctures at $(z_1, \ldots, z_{k+1})$. A choice of strip-like
ends for $S$ is a collection of embeddings: $\epsilon^{S}_{i}:Z^{-}\to
S$, $ 1\leq i \leq k$, $\epsilon^{S}_{k+1}:Z^{+}\to S$ that are proper
and holomorphic and
\begin{equation*} \label{eq:strip-like-ends}
   \begin{aligned}
     & (\epsilon^{S}_{i})^{-1}(\partial S)  = (-\infty, 0] \times \{0,
     1\}, \quad && \lim_{s \to -\infty}
     \epsilon^{S}_{i}(s, t) = z_i, \quad \forall \, 1 \leq i \leq k, \\
     & (\epsilon^{S}_{k+1})^{-1}(\partial S)  = [0, \infty) \times \{0,
     1\}, \quad && \lim_{s \to \infty} \epsilon^{S}_{k+1}(s, t) =
     z_{k+1}.
   \end{aligned}
\end{equation*}
such that the $\epsilon^{S}_i$'s have pairwise disjoint images. A
universal choice of strip-like ends for
$\mathcal{S}^{k+1}\to \mathcal{R}^{k+1}$ is a choice of $k+1$ proper
embeddings
$\epsilon^{\mathcal{S}}_i: \mathcal{R}^{k+1} \times Z^- \to
\mathcal{S}^{k+1}$, $i=1, \ldots, k$,\;
$\epsilon^{\mathcal{S}}_{k+1}: \mathcal{R}^{k+1} \times Z^+ \to
\mathcal{S}^{k+1}$ such that for every $r \in \mathcal{R}^{k+1}$ the
restrictions ${\epsilon^{\mathcal{S}}_i}|_{r \times Z^{\pm}}$ consists
of a choice of strip-like ends for $S_r$. \pbred{See~\cite[Section
  9c]{Se:book-fukaya-categ}} for more details.  In the case $k=1$, we
put $\mathcal{R}^2 = \textnormal{pt}$ and
$\mathcal{S}^{2} = D \setminus \{-1,1\}$. We endow
$D \setminus \{-1, 1\}$ with strip-like ends by identifying it
holomorphically with the strip $\mathbb{R} \times [0,1]$, \pbred{where
  the latter is endowed with its standard complex structure. The
  identification is done such that $-1 \in D$ corresponds to
  $-\infty \times [0,1]$ and $+1 \in D$ to $+\infty \times [0,1]$.}

Pointed disks with strip-like ends can be glued in a natural way.
Further, the space $\mathcal{R}^{k+1}$ has a natural compactification
$\overline{\mathcal{R}}^{k+1}$ described by parametrizing the elements
of $\overline{\mathcal{R}}^{k+1} \setminus \mathcal{R}^{k+1}$ by
trees~\cite{Se:book-fukaya-categ}. The family
$\mathcal{S}^{k+1} \to \mathcal{R}^{k+1}$ admits a partial
compactification
$\overline{\mathcal{S}}^{k+1} \to \overline{\mathcal{R}}^{k+1}$ which
can be endowed with a smooth structure. Moreover, the fixed choice of
universal strip-like ends for $\mathcal{S}^{k+1}\to \mathcal{R}^{k+1}$
admits an extension to
$\overline{\mathcal{S}}^{k+1}\to
\overline{\mathcal{R}}^{k+1}$. Further, these choices of universal
strip-like ends for the spaces $\mathcal{R}^{k+1}$ for different $k$'s
can be made in a way consistent with these compactifications
\pbred{(see~\cite[Sections~9d,~9e]{Se:book-fukaya-categ} and Lemma~9.3
  in that book).}

Our construction requires the additional auxiliary structure of {\em
  transition functions}. This structure can be defined once a choice
of universal strip-like ends is fixed.  It consists of a smooth
function $\mathbf{a}^{k+1}: \mathcal{S}^{k+1} \to [0,1]$ with the
following properties.  First let $k=1$. In this case
$\mathcal{S}^2 = D \setminus \{-1, 1\} \cong \mathbb{R} \times [0,1]$
\pbred{and we define $\mathbf{a}^2(s,t) = t$,} where
$(s,t)\in \R\times [0,1]$. To describe $\mathbf{a}^{k+1}$ for
$k \geq 2$ write $a_r := \mathbf{a}^{k+1}|_{S_r}$,
$r \in \mathcal{R}^{k+1}$. We require the functions $a_r$ to satisfy
the following for every $r \in \mathcal{R}^{k+1}$ - see
Figure~\ref{fig:transition}:
\begin{figure}[htbp]
   \begin{center}
      \includegraphics[scale=0.5]{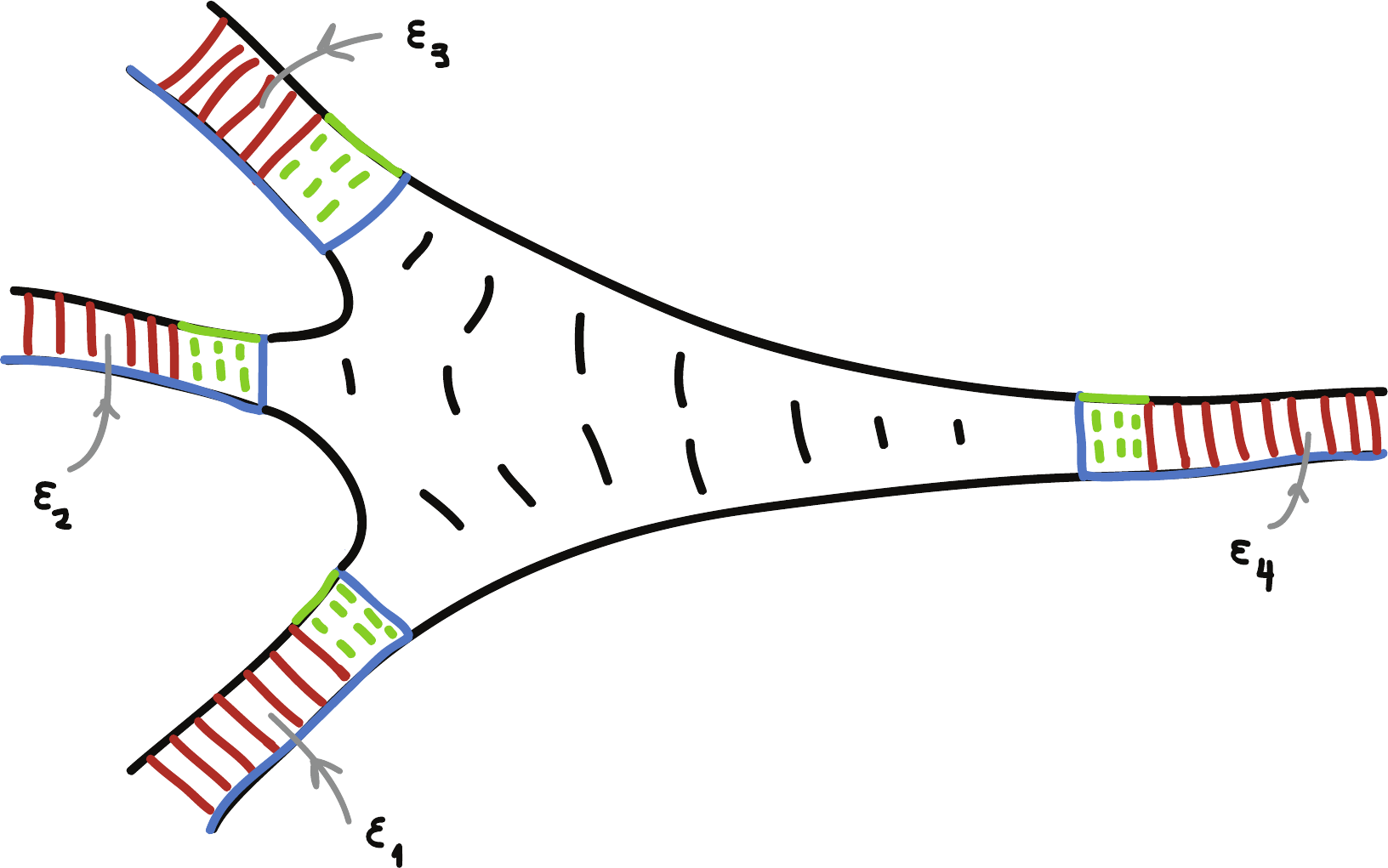}
   \end{center}
   \caption{\label{fig:transition} The constraints imposed on a
     transition function for a domain with three entries and one exit:
     in the red region the function $a$ equals $(s,t)\to t$; along the
     blue arcs the function $a$ vanishes; the green region is a
     transition region. There  are no additional constraints in the
     black region.}
\end{figure}

\begin{itemize}
  \item[i.] For each entry strip-like end $\epsilon_{i}:Z^{-}\to
   S_{r}$, $1\leq i\leq k$, we have:
   \begin{itemize}
     \item[a.]$a_{r} \circ \epsilon_{i}(s,t)=t$, $\forall \ (s,t)\in
      (-\infty, -1]\times [0,1]$.
     \item[b.] $\frac{\partial }{\partial
        s}(a_{r}\circ\epsilon_{i})(s,1)\leq 0$ for $s\in [-1,0]$.
     \item[c.] $a_{r}\circ \epsilon_{i}(s,t)=0$ for $(s,t)\in
      ((-\infty, 0]\times \{0\})\cup (\{0\}\times [0,1])$.
   \end{itemize}
  \item[ii.] For the exit strip-like end $\epsilon_{k+1}:Z^{+}\to
   S_{r}$ we have:
   \begin{itemize}
     \item[a'.]$a_{r} \circ \epsilon_{k+1}(s,t)=t$, $\forall \
      (s,t)\in [1, \infty)\times [0,1]$.
     \item[b'.] $\frac{\partial }{\partial
        s}(a_{r}\circ\epsilon_{k+1})(s,1)\geq 0$ for $s\in [0,1]$.
     \item[c'.] $a_{r}\circ \epsilon_{k+1}(s,t)=0$ for $(s,t)\in
      ([0,+\infty)\times \{0\}) \cup (\{0\}\times [0,1])$.
   \end{itemize}
\end{itemize} 
The total function $\mathbf{a}^{k+1}: \mathcal{S}^{k+1} \to [0,1]$
will be called a global transition function.  The functions
$\mathbf{a}^{k+1}$ can be picked consistently for different values of
$k$ in the sense that $\mathbf{a}$ extends smoothly to
$\overline{\mathcal{S}}^{k+1}$ and along the boundary $\partial
\overline{\mathcal{S}}^{k+1}$ it coincides with the corresponding
pairs of functions $\mathbf{a}^{k'+1}: \mathcal{S}^{k'+1} \to [0,1]$,
$\mathbf{a}^{k''+1}:\mathcal{S}^{k'+1} \to [0,1]$ with $k'+k'' = k+1$,
associated to trees of split pointed disks.

\subsubsection{Profile function} \label{subsubsec:profile} We now
discuss the second special ingredient in our construction: profile
functions.

\pbred{To fix ideas we suppose from now on in this construction that
\begin{equation}\label{eq:U-restriction}
  U\subset [-\frac{1}{2},\frac{1}{2}]\times [0,\infty).
\end{equation}
According to the notation in~\eqref{eq:quadr} and together
with~\eqref{eq:u-tame} this means that $a_{U}\leq \frac{1}{2}$. (The
real number $a_{U}$ from~\eqref{eq:quadr} should not be confused with
the functions $a_r$ from the preceding section.)}

We will use a {\em profile function}: $h:\R^{2}\to \R$ which, by
definition, has the following properties (see Figure~\ref{fig:kinks}):
\begin{itemize}
  \item[i.] The support of $h$ is contained in the union of the sets
   $$W_{i}^{+}= [2,\infty)\times [i-\epsilon, i+\epsilon] 
   \quad \textnormal{and } \; W_{i}^{-}= (-\infty, -1]\times
   [i-\epsilon, i+\epsilon], \; i \in \mathbb{Z}~,~$$ where $0<
   \epsilon<1/4$.
 \item[ii.] \pbred{The restriction of $h$ to each set
     $F_{i}^{+}=[2,\infty)\times [i-\epsilon/2, i+\epsilon/2]$ and
     $F_{i}^{-}=(-\infty, -1] \times [i-\epsilon/2, i+\epsilon/2]$ is}
   respectively of the form $h(x,y)=h_{\pm}(x)$, where the smooth
   functions $h_{\pm}$ satisfy:
   \begin{itemize}
     \item[a.] $h_{-}:(-\infty, -1]\to \R$ has a single critical point
      in $(-\infty,-1]$ at $-\frac{3}{2}$ and this point is a
      non-degenerate local maximum. Moreover, for all $x\in (-\infty,
      -2)$, we have $h_{-}(x)=\alpha^{-}x+\beta^{-}$ for some
      constants $\alpha^{-},\beta^{-}\in\R$ with $\alpha^->0$.
     \item[b.] $h_{+}:[2,\infty)\to \R$ has a single critical point in
      $[2,\infty)$ at $\frac{5}{2}$ and this point is also a
      non-degenerate maximum.  Moreover, for all $x\in (3,\infty)$ we
      have $h_{+}(x)=\alpha^{+}x+\beta^{+}$ for some constants
      $\alpha^{+},\beta^{+}\in\R$ with $\alpha^+ < 0$.
   \end{itemize}
 \item[iii.] The Hamiltonian isotopy
   $\phi_t^h: \mathbb{R}^2 \to \mathbb{R}^2$ associated to $h$ exists
   for all $t \in \mathbb{R}$; the derivatives of the functions
   $h_{\pm}$ are sufficiently small such that the Hamiltonian isotopy
   $\phi^{h}_{t}$ keeps the sets $[2,\infty)\times \{i\}$ and
   $(-\infty, -1]\times \{{i}\}$ inside \pbred{the respective
     $F_{i}^{\pm}$} for $-1\leq t\leq 1$.
  \item[iv.] The Hamiltonian isotopy $\phi_t^h$ preserves the strip
   $[-\tfrac{3}{2}, \tfrac{5}{2}] \times \mathbb{R}$ for all $t$, in
   other words $\phi_t^h \bigl([-\tfrac{3}{2}, \tfrac{5}{2}] \times
   \mathbb{R} \bigr) = [-\tfrac{3}{2}, \tfrac{5}{2}] \times
   \mathbb{R}$ for every $t$.
\end{itemize}
\begin{figure}[htbp]\vspace{0.0in}
   \begin{center}
     \includegraphics[scale=0.75]{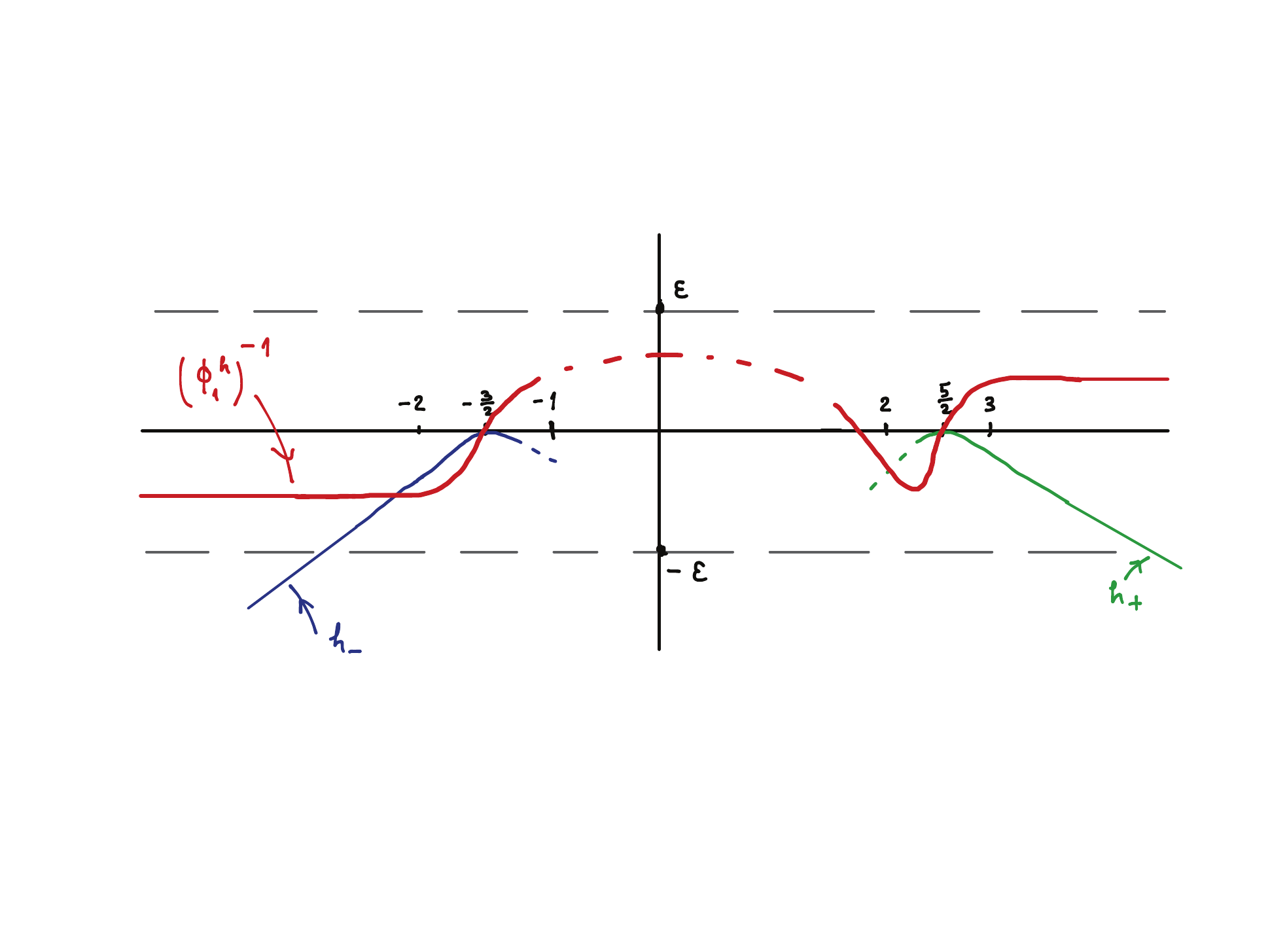}
   \end{center}
   \vspace{-1.5in}\caption{ \label{fig:kinks} The graphs of $h_{-}$
     and $h_{+}$ and the image of $\R$ by the Hamiltonian
     diffeomorphism $(\phi^{h}_{1})^{-1}$. The profile of the
     functions $h_{-}$ at $-3/2$ and $h_{+}$ at $5/2$ are the
     ``bottlenecks''.}
\end{figure}
Such functions $h$ are easy to construct. Their main role is to
disjoin the ends corresponding to two (or more) cobordisms at
$\pm\infty$. The critical points $(-3/2, i)$ and $(5/2,i)$ are called
{\em bottlenecks}.

\subsubsection{Perturbation data, $J$-holomorphic polygons and
  $\mu_{k}$} \label{subsubsec:perturb}
At this step we describe the (perturbed) $J$-holomorphic polygons that
define the $\mu_{k}$'s.

The construction of $\mu_{k}$ starts with $\mu_{1}$ and the so-called
Floer datum. For each pair of cobordisms $V,V'\subset E$ the Floer
datum $\mathscr{D}_{V,V'} = (\bar{H}_{V, V'}, J_{V, V'})$ consists of
a Hamiltonian $\bar{H}_{V,V'}:[0,1]\times E \to \R$ and a (possibly
time dependent) almost complex structure $J_{V,V'}$ on $E$ which is
compatible with $\Omega_{E}$.  We will also assume that each Floer
datum $(\bar{H}_{V,V'},J_{V,V'})$ satisfies the following conditions:
\begin{itemize}
  \item[i.] $\phi^{\bar{H}_{V,V'}}_{1}(V)$ is transverse to $V'$.
  \item[ii.] Write points of $E \setminus \pi^{-1}(U)$ as $(x,y,p)$
    with $x+iy \in \mathbb{C}$, $p \in M$. We require that there
    exists a compact set
    $K_{V,V'}\subset (-\frac{5}{4},\frac{9}{4})\times \R \subset \C$
    such that $\bar{H}_{V,V'}(t,(x,y,p))= h(x,y)+H_{V,V'}(t,p)$ for
    $(x+iy, p)$ outside of $\pi^{-1}(K_{V,V'})$, for some
    $H_{V,V'}: [0,1]\times M\to \R$.
  \item[iii.] The projection $\pi: E \to \mathbb{C}$ is $(J_{V,V'}(t),
   (\phi_t^h)_* i)$-holomorphic outside of $\pi^{-1}(K_{V,V'})$ for
   every $t \in [0,1]$. \label{pg:pi-hol-1}
\end{itemize}

\begin{rem}\label{rem:J-perturb}
  The almost complex structure $J_{V,V'}$ can be viewed \pbred{in some
    sense} as a perturbation of the almost complex structure $J_{E}$
  that is part of the Lefschetz fibration structure as in
  Definition~\ref{df:lef-fib}.  \pbred{Indeed, if the profile function
    $h$ is taken to be arbitrarily small then $J_{V,V'}$ can be chosen
    to be arbitrarily close to $J_E$. In practice we will not take
    this viewpoint and will not insist that $J_{V,V'}$ is a good
    approximation of $J_E$.}  
\end{rem}

The time-$1$ Hamiltonian chords $\mathcal{P}_{\bar{H}_{V,V'}}$ of
$\bar{H}_{V,V'}$ that start on $V$ and end on $V'$, form a finite set.

For a $(k+1)$-pointed disk $S_{r}$, let $C_{i}\subset
\partial S_{r}$ be the connected components of $\partial S_{r}$
indexed so that $C_{1}$ goes from the exit to the first entry, $C_{i}$
goes from the $(i-1)$-th entry to the $i$, $1\leq i\leq k$, and
$C_{k+1}$ goes from the $k$-th entry to the exit.

Following Seidel's scheme from~\cite[Section~9]{Se:book-fukaya-categ},
we now need to choose additional perturbation data.

For every collection of cobordisms $V_{i}$, $1\leq i\leq k+1$ we
choose a perturbation datum
$\mathscr{D}_{V_1, \ldots, V_{k+1}} = (\form, \mathbf{J})$ consisting
of:
\begin{enumerate}
\item[I.] A family $\form = \{ \form^r \}_{r \in \mathcal{R}^{k+1}}$,
  where $\form^r \in \Omega^{1}(S_{r}, C^{\infty}(E))$ is a $1$-form
  on $S_r$ with values in smooth functions on $E$.  We write
  $\form^r(\xi): E \to \mathbb{R}$ for the value of $\form^r$ on
  $\xi \in T S_r$.
\item[II.] $\mathbf{J} = \{J_z \}_{z \in \mathcal{S}^{k+1}}$ is a
  family of $\Omega_{E}$-compatible almost complex structure on $E$,
  parametrized by $z \in S_r$, $r \in \mathcal{R}^{k+1}$.
\end{enumerate}
The forms $\form^r$ induce forms
$Y^r=Y^{\form^r} \in \Omega^{1}(S_{r}, C^{\infty}(TE))$ with values in
(Hamiltonian) vector fields on $E$ via the relation
$Y(\xi)=X^{\form(\xi)}$ for each $\xi\in T S_{r}$ (i.e. $Y(\xi)$ is
the Hamiltonian vector field on $E$ associated to the autonomous
Hamiltonian function $\form(\xi): E \to \mathbb{R}$).

The relevant Cauchy-Riemann equation associated to
$\mathscr{D}_{V_1, \ldots, V_{k+1}}$ is:
\begin{equation}\label{eq:jhol1} \ u:S_{r}\to E, \quad Du +
   J(z,u)\circ Du\circ j = Y + J(z,u)\circ Y\circ j, \quad 
   u(C_{i})\subset V_{i} ~.~
\end{equation}
Here $j$ stands for the complex structure on $S_r$. The $i$-th entry
of $S_r$ is labeled by a time$-1$ Hamiltonian orbit
$\gamma_{i}\in \mathcal{P}_{\bar{H}_{V_{i},V_{i+1}}}$ and the exit is
labeled by a time$-1$ Hamiltonian orbit
$\gamma_{k+1}\in \mathcal{P}_{\bar{H}_{V_{1}, V_{k+1}}}$. The map $u$
satisfies $u(C_{i})\subset V_{i}$ and $u$ is required to be asymptotic
- in the usual Floer sense - to the Hamiltonian orbits $\gamma_{i}$ on
each respective strip-like
end. \pbred{See~\cite[Section~8f]{Se:book-fukaya-categ} for more
  details on this equation, the boundary conditions and the
  asymptotics.}

The perturbation data $\mathscr{D}_{V_1, \ldots, V_{k+1}}$ are
constrained by a number of additional conditions that we now describe.
First, denote by $s_{V_{1}, \ldots, V_{k+1}}\in \mathbb{N}$ the
smallest $l \in \mathbb{N}$ such that
$\pi(V_1 \bigcup \cdots \bigcup V_{k+1})\subset \R\times (0,l)$. Write
$\bar{h} = h \circ \pi: E\to \mathbb{R}$, where
$h: \mathbb{R}^2 \to \mathbb{R}$ is the profile function fixed before.
We also write
\begin{equation*}
   \begin{aligned}
      & U^r_i = \epsilon_i^{S_r} \bigl( (-\infty, -1] \times [0,1]
      \bigr) \subset S_r, \quad i = 1, \ldots, k, \\
      & U^r_{k+1} = \epsilon_{k+1}^{S_r} \bigl( [1, \infty) \times [0,1]
      \bigr) \subset S_r, \\
      & \mathcal{W}^r = \bigcup_{i=1}^{k+1} U^r_i.
   \end{aligned}
\end{equation*}

The conditions on $\mathscr{D}_{V_1, \ldots, V_{k+1}}$ are the
following: \label{pg:conditions-abc}
\begin{itemize}
  \item[a.] {\em Asymptotic conditions.} For every $r \in
   \mathcal{R}^{k+1}$ we have $\form|_{U_i^r} = \bar{H}_{V_{i},
     V_{i+1}} dt$, $i=1, \ldots, k$ and $\form|_{U_{k+1}^r} =
   \bar{H}_{V_{1}, V_{k+1}} dt$. (Here $(s,t)$ are the coordinates
   parametrizing the strip-like ends.) Moreover, on each $U_i^r$,
   $i=1, \ldots, k$, $J_z$ coincides with $J_{V_{i},V_{i+1}}$ and on
   $U_{k+1}^r$ it coincides with $J_{V_{1}, V_{k+1}}$, i.e.
   $J_{\epsilon^{\mathcal{S}_r}_i(s,t)} = J_{V_i, V_{i+1}}(t)$ and
   similarly for the exit end. Thus, over the part of the strip-like
   ends $\mathcal{W}^r$ the perturbation datum $\mathscr{D}_{V_1,
     \ldots, V_{k+1}}$ is compatible with the Floer data
   $\mathscr{D}_{V_i, V_{i+1}}$, $i=1, \ldots, k$ and
   $\mathscr{D}_{V_1, V_{k+1}}$.
  \item[b.]{\em Special expression for $\form$.} The
   restriction of $\form$ to $S_r$ equals $$\form|_{S_r} = da_r \otimes
   \bar{h} +\form_0$$ for some $\form_0 \in \Omega^{1}(S_{r},
   C^{\infty}(E))$ which depends smoothly on $r \in
   \mathcal{R}^{k+1}$. Here $a_r:S_r \to \mathbb{R}$ are the
   transition functions  fixes at the point 1. The form
   $\form_0$ is required to satisfy the following two conditions:
   \begin{itemize}
     \item[1.] $\form_0(\xi)=0$ for all $\xi\in TC_{i}\subset
      T\partial S_{r}$.
    \item[2.]  There exists a compact set
      $K_{V_{1},\ldots , V_{k+1}}\subset
      (-\frac{3}{2},\frac{5}{2})\times \R$ which is independent of
      $r \in \mathcal{R}^{k+1}$ such that
      $\pi^{-1}(K_{V_{1},\ldots , V_{k+1}})$ contains all the sets
      $K_{V_{i}, V_{j}}$ involved in the Floer datum
      $\mathscr{D}_{V_i, V_j}$, and with
      $$K_{V_{1},\ldots, V_{k+1}}\supset\ ([-\frac{5}{4},
      \frac{9}{4}]\times [-s_{V_{1},\ldots, V_{k+1}},+s_{V_{1},\ldots,
        V_{k+1}}])$$ such that outside of
      $\pi^{-1}(K_{V_{1},\ldots , V_{k+1}})$ we have $D \pi (Y_0)=0$
      for every $r$, where $Y_{0}=X^{\form_0}$.
   \end{itemize}
  \item[c.] Outside of $\pi^{-1}(K_{V_{1},\ldots , V_{k+1}})$
   the almost complex structure $\mathbf{J}$ has the property that the
   projection $\pi$ is
   $(J_z,(\phi_{a_r(z)}^{h})_{\ast}(i))$-holomorphic for every $r \in
   \mathcal{R}^{k+1}$, $z \in S_r$.  \label{pg:pi-hol-2}
\end{itemize}

Using the above choices of data we construct the $A_{\infty}$-category
$\fuk^{\ast}(E)$ by the construction
\pbred{from~\cite[Section~9]{Se:book-fukaya-categ} with the
  modifications described in~\cite{Bi-Co:lcob-fuk} that are needed due
  to the fact that the Lagrangians are not compact.} As mentioned
before, the objects of this category are Lagrangians cobordisms
$V\subset E$ without positive ends that are uniformly monotone of
class $\ast$, the morphisms space between the objects $V$ and $V'$ are
$CF(V, V'; \mathscr{D}_{V,V'})$, the $\mathcal{A}$-vector space
generated by the Hamiltonian chords $\mathcal{P}_{\bar{H}_{V,V'}}$.
The $A_{\infty}$ structural maps
$$\mu_k :CF(V_{1},V_{2})\otimes CF(V_{2},V_{3})\otimes \ldots \otimes
CF(V_{k},V_{k+1})\to CF(V_{1}, V_{k+1})$$ are defined by summing -
with coefficients in $\mathcal{A}$ - pairs $(r,u)$ with
$r \in \mathcal{R}^{k+1}$ and $u$ a finite energy solution
of~\eqref{eq:jhol1} that belongs to a $0$-dimensional moduli space.
The coefficient in front of a perturbed $J$-holomorphic polygon $u$ is
$T^{\omega(u)}$.  The Gromov compactness and regularity arguments work
just as in~\cite{Bi-Co:lcob-fuk}. \pbred{(The fact that in that paper
  the the total space was $E = \mathbb{C} \times M$ whereas here $E$
  is a Lefschetz fibration plays no role in these arguments.)} In
fact, as we work here over the universal Novikov ring compactness is
easier to establish in this case (and we do not require the vanishing
of the inclusions $\pi_{1}(V)\to \pi_{1}(E)$ as
in~\cite{Bi-Co:lcob-fuk}).

The choice of strip-like ends, transition functions and profile
function (in particular, the placement of the bottlenecks) changes the
resulting $A_{\infty}$-category only up to quasi-equivalence.

Once the category $\fuk^{\ast}(E)$ is constructed the derived category
$D\fuk^{\ast}(E)$ is defined by again considering the
$A_{\infty}$-modules $mod(\fuk^{\ast}(M)):=fun(\fuk^{\ast}(E),
Ch^{opp})$ and by letting $D\fuk^{\ast}(E)$ be the homological
  category associated to the triangulated closure of the image of the
Yoneda functor $\mathcal{Y}:\fuk^{\ast}(E)\to mod(\fuk^{\ast}(E))$.

\subsubsection{The naturality transformation} \label{sbsb:nat-transf}
Assume that $u:S_{r}\to E$ is a solution of~\eqref{eq:jhol1},
\pbred{where the Floer and perturbation data satisfy the conditions
  discussed at the points~\emph{a, b, c} on
  page~\pageref{pg:conditions-abc}.} Define $v:S_r \to E$ by the
formula:
\begin{equation} \label{eq:nat-v} u(z) =
   \phi_{a_r(z)}^{\bar{h}}(v(z)),
\end{equation}
where $a_r:S_r \to [0,1]$ is the transition function. 

The Floer equation~\eqref{eq:jhol1} for $u$ transforms into the
following equation for $v$:
\begin{equation}\label{eq:jhol2}
   Dv + J'(z,v)\circ Dv\circ j = Y' + J'(z,v)\circ Y'\circ j.
\end{equation}
Here $Y'\in \Omega^{1}(S_{r}, C^{\infty}(TM))$ and $J'$ are defined
by:
\begin{equation} \label{eq:nat-Y'-J'} Y=D
   \phi_{a(z)}^{\bar{h}}(Y')+da_r \otimes X^{\bar{h}}, \quad
   J_z=(\phi_{a_r(z)}^{\bar{h}})_{\ast} J'_z.
\end{equation}
The map $v$ satisfies the following moving boundary conditions:
\begin{equation}\label{eq:mov-bdry} \forall \ z \in C_{i}, \quad 
   v(z)\in (\phi_{a(z)}^{\bar{h}})^{-1}(V_{i}).
\end{equation}
   
The asymptotic conditions for $v$ at the punctures of $S_r$ are as
follows. For $i=1, \ldots, k$, $v(\epsilon_i(s,t))$ tends as $s \to
-\infty$ to a time-$1$ chord of the flow $(\phi_t^{\bar{h}})^{-1}
\circ \phi_t^{\bar{H}_{V_i, V_{i+1}}}$ starting on $V_i$ and ending on
$(\phi_1^{\bar{h}})^{-1}(V_{i+1})$.  (Here $\epsilon_i(s,t)$ is the
parametrization of the strip-like end at the $i$'th puncture.)
Similarly, $v(\epsilon_{k+1}(s,t))$ tends as $s \to \infty$ to a chord
of $(\phi_t^{\bar{h}})^{-1} \circ \phi_t^{\bar{H}_{V_1, V_{k+1}}}$
starting on $V_1$ and ending on $(\phi_1^{\bar{h}})^{-1}(V_{k+1})$.

\pbred{It might be useful to spell out more geometrically the effect
  of the moving boundary conditions~\eqref{eq:mov-bdry} on the ends of
  the Lagrangians $V_i$. Identify a neighborhood of puncture number
  $i$, $1\leq i\leq k$, in $S_r$ with
  $Z^{-} = (-\infty, 0] \times [0,1]$ via the strip-like ends
  construction as in~\S\ref{subsubsec:transition}. Then for every
  $x \in (-\infty, 0]$, we have $v(x,0) \in V_i$ and
  $v(x,1) \in (\phi_{\alpha(x)}^{\bar{h}})^{-1}(V_{i+1})$, where
  $\alpha:(-\infty, 0] \to [0,1]$ is a function that equals $1$ on
  $(-\infty, -1]$ and on the interval $[-1,0]$ it decreases from $1$
  to $0$. Note that the part of
  $(\phi_{\alpha(x)}^{\bar{h}})^{-1}(V_{i+1})$ that lies over
  $(-\infty, -2] \times \mathbb{R}$ is just
  $(\phi_1^{\bar{h}})^{-1}(\textnormal{ends of }V_{i+1})$ hence
  coincides with the ends of $V_{i+1}$ after being pushed downwards
  (in the $y$-direction of the $\mathbb{C}$-factor) by a small
  amount. See the left-hand side of Figure~\ref{fig:kinks}.  Note also
  that for each $s \in \mathbb{N}$ such that both $V_i$ and $V_{i+1}$
  have an $s$-end, i.e. an end over $(-\infty, -a_U]\times \{s\}$, the
  following happens: the projections
  $\pi(s \textnormal{-end of } V_i)$ and
  $\pi\Bigl((\phi_{\alpha(x)}^{\bar{h}})^{-1}(s \textnormal{-end of }
  V_{i+1})\Bigr)$ intersect transversly at the points
  $(-\tfrac{3}{2},s)$. See again Figure~\ref{fig:kinks}. A similar
  description holds also for the exit strip-like end $Z^{+}$.}

Let now $v' = \pi \circ v: S_r \to \mathbb{C}$.  It is then easy to
see - \pbred{as in~\cite[Page~1766]{Bi-Co:lcob-fuk}} - that $v'$ is
holomorphic over
$\mathbb{C} \setminus ([-\tfrac{3}{2} + \delta', \tfrac{5}{2}-\delta']
\times \R)$ for small enough $\delta'>0$.

As discussed in~\cite{Bi-Co:lcob-fuk}, there are many useful
consequences of the holomorphicity of $v'$ around a bottleneck and we
will see some more later in this paper. To give a typical simple
example, assume that the bottleneck in question is
$a=(-\frac{3}{2}, 0)$ and that the regions $A$ and $B$ in
Figure~\ref{fig:bottleneck-quadr} are unbounded. In this case, the
image of $v'$ can not switch from region $D$ to region $C$ (or
vice-versa). More precisely, it is impossible to have that
$Image(v')\cap C\not=\emptyset$ and $Image(v')\cap D\not=\emptyset$
with the regions $C,D$ as in the picture.

\begin{figure}[htbp]\vspace{0in}
   \begin{center}
      \includegraphics[scale=0.5]{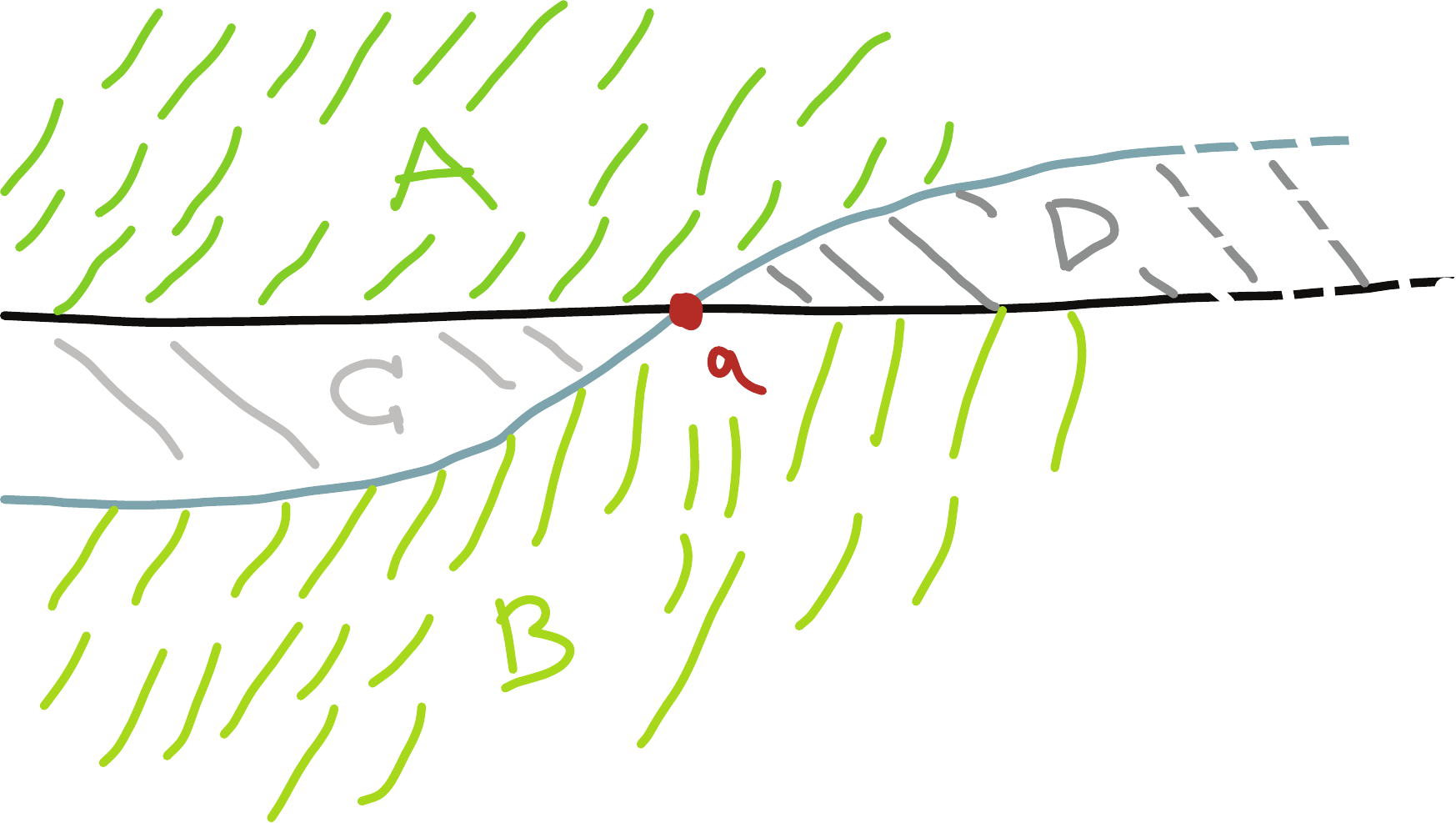}
   \end{center}
   \vspace{0in}\caption{ \label{fig:bottleneck-quadr} 
   The bottleneck $a$ and the regions $A$, $B$, $C$ and  $D$.}
\end{figure}

The argument is as follows: assume that $Image(v')$ intersects both
$C$ and $D$ and is disjoint from the interiors of both $A$ and $B$.
Let $x_{1}\in Image(v')\cap C$ and $x_{2}\in Image(v')\cap D$. Let $c$
be a curve inside the domain of $v'$ that connects $x_{1}$ to $x_{2}$.
It follows that $a\in v'(c)$. But as there are infinitely many
distinct curves $c$ joining $x_{1}$ to $x_{2}$ this means that there
are infinitely many interior points $z$ with $v'(z)=a$. But this
implies $Image(v')=a$. Thus $Image(v')$ has to intersect at least one
of $A$ and $B$ and, by the open mapping theorem, this contradicts the
fact that the closure of $Image(v')$ is compact.

This argument is used \pbred{in several instances
  in~\cite{Bi-Co:lcob-fuk}, for example to show the compactness of the
  moduli spaces required to define $\mu_{k}$ as well as those used to
  show $\mu\circ\mu=0$.}
 
Besides this compactness implication, the holomorphicity of $v'$ has
an important role in the proof of the main decomposition result in
\cite{Bi-Co:lcob-fuk} as well as in the main result of the current
paper.  Both these results are consequences of writing certain
$A_{\infty}$-module structures $\mu_{k}$ in an ``upper triangular''
form. In turn, this form is deduced from the fact that the planar
projections of the $J$-holomorphic polygons giving the module
multiplications are holomorphic (over an appropriate region in $\C$)
and a ``bottleneck-type'' argument is used repeatedly to show the
vanishing of the relevant components of the $\mu_{k}$'s. \pbred{See
  for example~\cite[Sections~4.2,~4.4]{Bi-Co:lcob-fuk}.}

\subsubsection{The case of a non-compact
  fibre}\label{subsubsec:non-comp-J}
We now assume that $(M,\omega)$ is non-compact and convex at infinity
{\pbred and that the Lefschetz fibration $E$ satisfies the conditions
  in~\S\ref{sb:defs-lef-fibr}} \pbred{as well as the
  Assumption~$T_{\infty}$ from
  page~\pageref{pg:assumption-T-infty}. Additionally, we continue to
  assume that $E$ is tame outside a $U$-shaped subset
  $U \subset \mathbb{C}$ as in~\S\ref{subsubsec:Fuk-cob}.} 

From Assumption $T_{\infty}$ we deduce that there is a trivialization
$\phi: \C\times M^{\infty}\to E^{\infty}$ with respect to which both
the symplectic form and the almost complex structure split so that, in
particular, $\phi^{\ast} J_{E}=j\oplus J_{0}$ where $J_{0}$ is a fixed
almost complex structure on $M$ compatible with $\omega$ and with the
symplectic convexity of $M$.  Recall also that
$E^{0}=E\setminus E^{\infty}$.

The objects of the category $\fuk^{\ast}(E)$ are the same as before.
Notice that, by Definition \ref{def:Lcobordism}, any cobordism $V$ has
the property that $V\cap \pi^{-1}(z)$ is compact for any $z\in \C$.
Furthermore, all the construction of the category $\fuk^{\ast}(E)$
proceeds exactly in the same fashion as in the compact case with an
additional requirement: all the almost complex structures involved are
required to coincide with $J_{E}$ outside a large enough neighborhood
of $E^{0}$.  More precisely, for any two objects $V,V'\in
\mathcal{O}b(\fuk^{\ast}(E))$ we require that $J_{V,V'}$ coincide with
$J_{E}$ outside a neighborhood of $E^{0}$ that contains both $V$ and
$V'$.  Similarly, each almost complex structure $J_{z}$ in the family
$\mathbf{J}$ that is part of the perturbation data associated to the
collection of cobordisms $V_{1},\ldots, V_{k+1}$ has to coincide with
$J_{E}$ outside of a neighborhood of $E^{0}$ that contains all of the
$V_{i}$'s.
 
Finally, notice that as explained in \S\ref{sbsb:nat-transf} the
actual curves $u$ that appear in the $\mu_{k}$'s are transformed into
curves $v$ \pbred{which} satisfy equations that are holomorphic with
respect to almost complex structures of the form
$J'_{z}=(\phi^{\overline{h}}_{a_{r}(z)})_{\ast}^{-1} J_{z}$. Due to
the splitting provided by the trivialization $\phi$ and because
$\overline{h}=h\circ \pi$ these structures are also split at $\infty$
(along the fibre) and, by using the trivialization $\phi$, it follows
that $J'_{z}$ restricted to the fiber direction coincides with $J_{0}$
(away from a compact \pbred{subset}). Therefore, over $E^{\infty}$ one
can again use $\phi$ to project such a curve $v$ on $M^{\infty}$ thus
getting a new curve $v'$ that way from a compact is
$J_{0}$-holomorphic.  The usual compactness arguments for manifolds
that are symplectically convex at infinity apply to this $v'$ and thus
compactness is achieved without issues.

\begin{rem}
  In~\cite{Se:book-fukaya-categ} (see also~\cite{Se:Lefschetz-Fukaya})
  Seidel introduced a Fukaya category associated to a Lefschetz
  fibration $\pi :E\to \C$.  By neglecting for a moment some technical
  points that will be revisited below, the relation between this
  category and the category $\fuk^{\ast}(E)$ introduced above is that
  Seidel's category is quasi-equivalent to the subcategory of
  $\fuk^{\ast}(E)$ with objects the thimbles $T_{i}$ covering the
  curves $t_{i}$ in Figure~\ref{fig:gamma-thimbles0}. The technical
  points are that, firstly, we work in a monotone and ungraded setting
  and Seidel's work is in the exact and graded case (and the grading
  plays an important role in his work).  Secondly, the type of
  perturbations at infinity that Seidel uses - see in particular
  \cite{Se:Lefschetz-Fukaya} - are different from ours. Despite these
  differences, \pbred{it is possible to show that Seidel's approach
    can also be implemented in the monotone case and the resulting
    category is quasi-equivalent to the subcategory of
    $\fuk^{\ast}(E)$ as mentioned above. One reason for not pursuing
    this direction in this paper is that in the construction of
    $\fuk^{\ast}(E)$ above we use the perturbations employing
    bottlenecks etc. These are very convenient if one uses the
    naturality transformation - as explained
    in~\S\ref{sbsb:nat-transf} - to reduce key steps of the proofs in
    this paper (as well as in~\cite{Bi-Co:lcob-fuk}) to properties of
    holomorphic planar curves.}  
\end{rem}

\subsection{Fukaya categories of negative ended cobordisms in general
  Lefschetz fibrations} \label{subsubsec:fuk-cob-gen} In this section
we use the construction in \S\ref{subsubsec:Fuk-cob} to associate a
Fukaya $A_{\infty}$-category to a general Lefschetz fibration.  Let
$\pi :E\to \C$ be a Lefschetz fibration as in
\S\ref{sb:defs-lef-fibr}.  The category we intend to construct will
depend on a tame Lefschetz fibration $\pi :E_{\tau}\to \C$ associated
to $E$ and will be denoted by $\fuk^{\ast}(E;\tau)$.  The parameter
$\tau$ indicates the choice of a tame symplectic structure on $E$ with
the properties described in the construction below.

\

We first fix an additional notation. For two constants $r<0<s$, put
$S_{r,s}=[r, s]\times \R\subset \C$.  Fix constants $x<0 <y$ such that all the
singularities of the fibration $E$ are contained in the interior of
$\pi^{-1}(S_{x,y})$. We also assume that the critical values of $\pi$ are included
in the upper half plane.

The construction is now the following.  The objects of the category
$\fuk^{\ast}(E;\tau)$ are cobordisms $V$ in $E$ - in the sense of
Definition \ref{def:cyl-ge} - that are cylindrical outside $S_{x-3,y+3}$
and satisfy the following additional constraints:
\begin{itemize}
\item[i.] $V$ is monotone of class $\ast$.
  \item[ii.] $V\subset \pi^{-1}(\R\times [\frac{1}{2},+\infty))$
  \item[iii.] $V$ has only negative ends belonging to
   $\mathcal{L}^{\ast}(M)$.
  
\end{itemize}
Condition {\em iii} means in this case that for some point $z$ along one of
the rays $\ell_{i}$ associated to the ends of $V$ we have that the
Lagrangian $V\cap \pi^{-1}(z)$ belongs to $\mathcal{L}^{\ast}(M)$. For
a fixed ray $\ell_{i}$ it is easy to see that this condition does not
depend on the choice of the point $z$.

To define the morphisms and the operations $\mu_{k}$ we proceed as
follows.  We fix a Lefschetz fibration $\pi : E_{\tau}\to \C$ that is
tame outside a set $U$ whose interior  contains $[x-4,y+4]\times (-1,\infty)$ and
coincides with $E$ over $[x-4,y+4]\times [-\frac{1}{2}, \infty)$. Such
a fibration exists due to the results from \S\ref{sb:tame-vs-gnrl}.
Recall from \S\ref{subsubsec:Fuk-cob} the construction of the category
$\fuk^{\ast}(E_{\tau})$. Each object $V\in
\mathcal{O}b(\fuk^{\ast}(E;\tau))$ corresponds to an object
$\overline{V}\in \mathcal{O}b(\fuk^{\ast}(E_{\tau}))$ that is
obtained, as in Remark \ref{rem:gen-tame-cob}, by cutting off the ends
of $V$ along the line $\{x-4\}\times \R\subset \C$ and extending them
horizontally by parallel transport in the fibration $E_{\tau}$. It is
easy to see that the subcategory of $\fuk^{\ast}(E_{\tau})$ that
consists of all the objects $\overline{V}$ obtained in this way is
quasi-equivalent to $\fuk^{\ast}(E_{\tau})$ itself because each object
of this larger category is quasi-isomorphic to one of the
$\overline{V}$'s.  Notice however that the category $\fuk^{\ast}(E_{\tau})$ contains
more objects than those of the form $\overline{V}$, an example is provided in
Figure \ref{fig:u-and-x}. We now put $\mor_{\fuk^{\ast}(E;\tau)}(V,V')=
\mor_{\fuk^{\ast}(E_{\tau})}(\overline{V},\overline{V}')$ and
similarly we define all operations in $\fuk^{\ast}(E;\tau)$ associated
to $V_{1},\ldots, V_{k+1}$ by means of the corresponding operations
associated to $\overline{V}_{1},\ldots, \overline{V}_{k+1}$ in
$\fuk^{\ast}(E_{\tau})$.

It is clear, by construction, that there is an inclusion:
$$ \fuk^{\ast}(E;\tau)\to \fuk^{\ast}(E_{\tau})$$
which is a quasi-equivalence.

The $A_{\infty}$-category in the statement of Theorem
\ref{thm:main-dec-gen0} can be taken to be any of the categories
$\fuk^{\ast}(E;\tau)$ described above. We will see later in the paper
that the derived category $D\fuk^{\ast}(E;\tau)$ is independent of
$\tau$ up to equivalence. Therefore, the omission of $\tau$ in the
statement of Theorem \ref{thm:main-dec-gen0} is justified.

\begin{rem}\label{rem:indep-tau}
  We believe that any two $A_{\infty}$-categories
  $\fuk^{\ast}(E;\tau)$ and $\fuk^{\ast}(E;\tau')$ are
  quasi-equivalent. Indeed, we expect that our construction of the
  Fukaya category of a tame fibration adapts to the case of a general
  Lefschetz fibration and the resulting fibration $\fuk^{\ast}(E)$ is
  expected to be quasi-equivalent to $\fuk^{\ast}(E;\tau)$ for all
  $\tau$. The technical ingredients required in the definition of
  $\fuk^{\ast}(E)$ go beyond the construction in the tame case so that
  we prefer not to further explore this issue here. In a different
  direction, we also expect that there is a derived Fukaya category of
  cobordisms with ends of arbitrary heights in $\R^{+}$ and not only
  with integral heights, as described in this paper.  First, given any
  infinite sequence of strictly increasing positive reals
  $S=\{a_{1}, \ldots, a_{n},\ldots\}$ there is a Fukaya category of
  cobordisms with ends in $S$ that is defined just as in the case of
  $S=\N^{\ast}$. The sets $S$ are ordered by inclusion in an obvious
  way and this order implies the existence of comparison maps among
  the corresponding categories.  The category in question is expected
  to be defined as an appropriate limit over $S$. Again, we do not
  pursue this construction here as it is not significant for the
  purpose of this paper.
\end{rem}


\section{Decomposing cobordisms} \label{sec:main}

Fix a Lefschetz fibration $\pi : E\to \C$ and a Fukaya category
$\fuk^{\ast}(E; \tau)$ as defined in~\S\ref{subsubsec:fuk-cob-gen}. This
section contains the main result of the paper. It claims that each
object $V$ of $D\fk^{\ast}(E;\tau)$ admits an iterated cone decomposition
in terms of simpler objects. We will also see later in the paper that $D\fk^{\ast}(E;\tau)$
is independent of $\tau$.

\subsection{Statement of the main result} \label{subsec:main-decomp}

We will restate here Theorem \ref{thm:main-dec-gen0} after providing
the precise definitions of the objects involved.

To fix ideas, we assume that $\pi$ has $m$ critical points $x_{k}\in
E$, $k=1,\ldots, m$ of corresponding critical values $v_{k}=(k,
\frac{3}{2})\in \C$.  Consider a Fukaya category $\fuk^{\ast}(E;\tau)$ of
uniformly monotone negative ended cobordisms $V\subset E$ that are
cylindrical outside $\pi^{-1}(S_{x-3,y+3})$ with $x<0<y$ and
so that all the singularities of $\pi$ are contained in
$\pi^{-1}(S_{x,y})$. See \S\ref{subsubsec:fuk-cob-gen} for the
definition. In particular, $\tau$ indicates that the morphisms and operations in $\fuk^{\ast}(E;\tau)$
are defined by means of the Fukaya $A_{\infty}$-category $\fuk^{\ast}(E_{\tau})$ associated to a tame Lefschetz fibration $\pi:E_{\tau}\to \C$ that agrees with $E$ over
$[x-4,y+4]\times [-\frac{1}{2}, \infty)$.

The objects of $\fuk^{\ast}(E; \tau)$ are collected
in the set $\mathcal{L}^{\ast}(E)$. 

\subsubsection{The ``atoms'' of the decomposition }
\label{subsubsec:atoms}

Our first task is to describe the simpler objects that form the basic
pieces of our decomposition.

We will make use of two types of smooth curves in the plane.
\begin{enumerate}
  \item[(I)] These curves are denoted by $\gamma_{i}$, $i\geq 2$ and
   are so that $\gamma_{i}:\R\to \C$ is a smooth embedding with
   $$\gamma_{i}(\R)\subset (-\infty, x)\times [\frac{1}{2},+\infty) 
   \ , \ \gamma_{i}(-1,1)\subset [x-2,x-1]\times [1,i]$$ and:
   $$\gamma_{i}((-\infty, -1])=(-\infty, x-2]\times\{1\}\
   ,\ \gamma_{i}([+1,+\infty)) =(-\infty,x-2]\times \{i\}\ , \ ~.~$$
  \item[(II)] The second type of curve is denoted by $t_{k}$.  For
   $1\leq k\leq m$ the curve $t_{k}$ is given by a smooth embedding
   $t_{k}:(-\infty, 0]\to \C$ so that we have
   $$t_{k}(0)=v_{k} \ , \ t_{k}((-\infty, -2])=
   (-\infty,x-2]\times \{1\} \ , \ t_{k}((-\infty,0))\subset (-\infty,
   m+1)\times [1,3]$$ and $t_{k}$ turns once around all the points
   $v_{k+1}$, $v_{k+2}, \ldots, v_{m}$.
\end{enumerate}
 
Both types of curves are pictured in Figure~\ref{fig:spec-curves}.
 \begin{figure}[htbp]
   \begin{center}
  \includegraphics[scale=0.5]{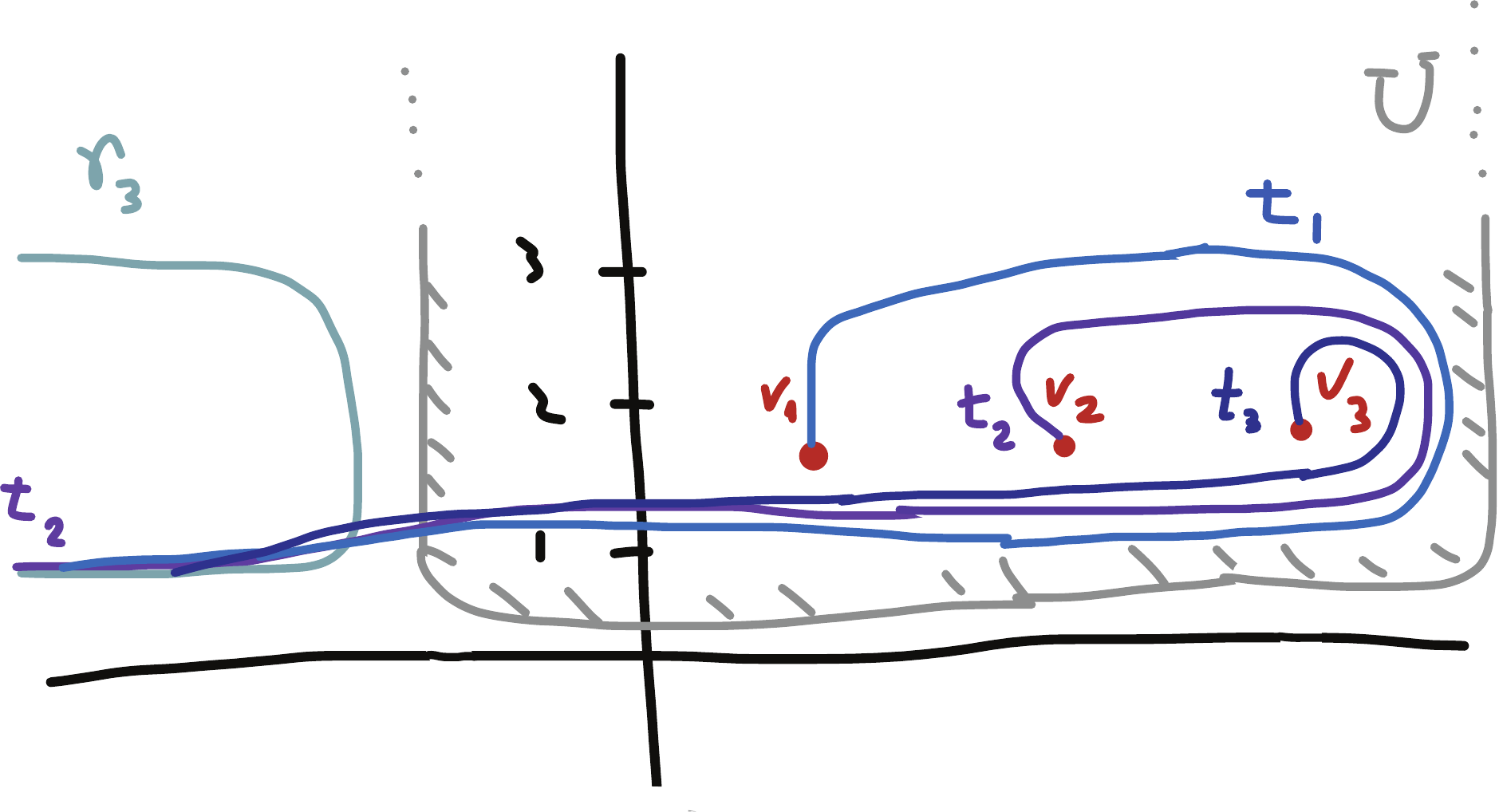}
   \end{center}
   \caption{The special curves $\gamma_{3}$ and $t_{1}, t_{2}, t_{3}$
     for a fibration $E$ with three critical
     points. \label{fig:spec-curves}}
\end{figure}
 
Let $x-3<a <x-2$ and fix the points $z_{i}=(a,i)\in \mathbb{R}^2 \approx
\C$, $i\in \N$.  Set also $z_* = (a, 1) \in \mathbb{R}^2$ (of course, $z_{1}=z_{\ast}$, we use this double notation because we want to view $z_{\ast}$ as a base-point). Let
$(M_{z_{i}},\omega_{z_{i}})$ be the fiber of $\pi$ over the point
$z_{i}$. There are two  families of Lagrangian cobordisms   in
$\mathcal{L}^{\ast}(E)$ that are associated to the geometric data given above.

\begin{enumerate}
  \item[(I')] For each Lagrangian in $L\in
   \mathcal{L}^{\ast}(M_{z_{i}})$ we consider the trail
     $\gamma_k L$ of $L$ along the curve $\gamma_{k}$.  This is a
     well-defined Lagrangian in $E$ and, further, $\gamma_k L \in
     \mathcal{L}^{\ast}(E)$.
   \item[(II')] \label{pg:Ti} Denote by $T_{i}$ the thimble associated
     to the singularity $x_{i}$ and the curve $t_{i}$. Denote by
     $S_{i}\subset M_{z_*}$ the vanishing sphere associated to the
     singularity $x_{i}$ such that $T_{i}$ is the trail of $S_{i}$
     along $t_{i}$. \pbgreen{Since $E$ is {\mntlf} it follows from
       Proposition~\ref{p:monot-E} that
       $T_{i}\in \mathcal{L}^{\ast}(E)$}.
\end{enumerate}

\subsubsection{The decomposition} We
now reformulate Theorem~\ref{thm:main-dec-gen0} in the setting and
notation above. Recall that we use the Novikov ring
  $\mathcal{A}$ as coefficients at all times. 

\begin{thm}[Theorem~\ref{thm:main-dec-gen0} reformulated]
   \label{thm:A-rfm} Let $V\in\mathcal{L}^{\ast}(E)$ be a Lagrangian with $s$ cylindrical ends
   $L_{i}=V|_{z_{i}}$, $1\leq i \leq s$ (as in Definition \ref{def:cyl-ge}).  There exist finite rank $\mathcal{A}$-modules
   $E_{k}$, $1\leq k\leq m$, and an iterated cone decomposition taking
   place in $D\fuk^{\ast}(E;\tau)$:
   $$V \cong (T_{1}\otimes E_{1}\to T_{2}\otimes E_{2}\to
   \ldots \to T_{m}\otimes E_{m}\to \gamma_s L_s \to \gamma_{s-1}
   L_{s-1} \to \ldots \to \gamma_2 L_2 )~.~$$
Moreover, the category $D\fuk^{\ast}(E;\tau)$ is independent of $\tau$ (up to equivalence).
\end{thm}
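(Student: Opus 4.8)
The plan is to reduce to a tame Lefschetz fibration and then to manufacture, out of $V$, a closely related cobordism that no longer interacts with the singularities of $E$. For the reduction, Proposition~\ref{p:from-gnrl-to-tame} together with Remark~\ref{rem:gen-tame-cob} replaces $\Omega_E$ by a cohomologous symplectic form that is split outside a $U$-shaped region, and the quasi-equivalence $\fuk^{\ast}(E;\tau)\to\fuk^{\ast}(E_\tau)$ from~\S\ref{subsubsec:fuk-cob-gen} reduces the statement to the tame fibration $E_\tau$, which remains {\mntlf} by Remark~\ref{r:tvsg-mntlf}. So assume $\pi:E\to\C$ is tame. The key geometric move, carried out in~\S\ref{subsec:Dehn-disj}, is to embed $V$ into a larger Lefschetz fibration $\pi':E'\to\C$ that agrees with $E$ over the upper half-plane but carries $m$ extra critical values in the lower half-plane, chosen so that the associated vanishing cycles are matching cycles $S_1,\ldots,S_m$ adapted to the ends of $V$; after the iterated Dehn twist $V'=(\tau_{S_m}\circ \ldots \circ \tau_{S_1})(V)$ the resulting cobordism $V'\subset E'$ is \emph{remote} with respect to $E$, i.e.\ it can be isotoped in $E'$ off $\textnormal{Crit}(\pi)$ so that its only interaction with an object of $\fuk^{\ast}(E)$ occurs where both are cylindrical.

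Two iterated cone decompositions are then produced and spliced. First, for the remote cobordism $V'$ (\S\ref{subsec:dec-Yo}): viewing the Yoneda module of $V'$ over $\fuk^{\ast}(E)$, the $J$-holomorphic polygons counted by the module multiplications project, via the naturality transformation of~\S\ref{sbsb:nat-transf}, to planar curves that are holomorphic off the bottleneck strip, and the repeated ``bottleneck'' argument — precisely the mechanism used in~\cite[Sections~4.2,~4.4]{Bi-Co:lcob-fuk} — forces this module structure into upper-triangular form with respect to the ordering of the ends of $V$. This yields in $D\fuk^{\ast}(E;\tau)$ a decomposition
$$V'\cong(\gamma_s L_s\to\gamma_{s-1}L_{s-1}\to \ldots \to\gamma_2 L_2)~,~$$
i.e.\ the conclusion of Theorem~\ref{thm:A-rfm} with the $T_i\otimes E_i$ terms absent.

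Second (\S\ref{s:cob-vpt} and~\S\ref{subsec:prof-main-t}), one relates $V'$ back to $V$ through the chain of Dehn twists. Seidel's exact triangle, which we reprove in the monotone setting from the cobordism viewpoint (the trace of the relevant Lagrangian surgery being realized by a thimble), reads in this context
$$\tau_{S_i}(W)\to W\to T_i\otimes HF(T_i,W)~,~$$
with $T_i$ the thimble over $t_i$; iterating over $i=1,\ldots,m$ with $W$ the successive partial twists of $V$ collects the terms $T_i\otimes E_i$, where $E_i$ is the indicated Floer homology between $V$ and the corresponding auxiliary sphere — a finite-rank $\mathcal{A}$-module (finiteness being clear since $S_i$ and $V$ meet in a compact region), made explicit in~\eqref{eq:E-i-s}. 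Concatenating this decomposition — which expresses $V$ in terms of $V'$ and the $T_i\otimes E_i$ — with the one for $V'$, and using the parenthesis-free calculus of~\S\ref{sbsb:iter-cone}, gives the asserted decomposition of $V$. I expect the main obstacle to be the geometric step of~\S\ref{subsec:Dehn-disj}: one must build $E'$ and the matching cycles $S_i$ so that the iterated twist genuinely displaces $V$ from $\textnormal{Crit}(\pi)$ while preserving the heights of the ends of $V$, and one must check that $V'$ stays in the monotone class $\ast$, which requires care with the monotonicity bookkeeping for matching cycles and with the effect of the Dehn twists on $d_V$.

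For the independence of $D\fuk^{\ast}(E;\tau)$ on $\tau$: given two admissible tame structures $\tau,\tau'$, both $E_\tau$ and $E_{\tau'}$ coincide with $E$ over $[x-4,y+4]\times[-\tfrac{1}{2},\infty)$, hence over a region containing all the curves $t_k$, all the curves $\gamma_k$, and all the critical values. The decomposition just proved shows that $D\fuk^{\ast}(E;\tau)$ is the triangulated closure of the full subcategory generated by the thimbles $T_i$ and the trails $\gamma_k L$, $L\in\mathcal{L}^{\ast}(M)$; since all these objects, and — by the same bottleneck and remoteness arguments — all the Floer-theoretic data among them, can be computed within the region over which the various $E_\tau$ agree with $E$, this generating subcategory is the same $A_{\infty}$-category for $\tau$ and $\tau'$. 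Hence their triangulated closures, and therefore $D\fuk^{\ast}(E;\tau)$ and $D\fuk^{\ast}(E;\tau')$, are equivalent; this is spelled out in~\S\ref{subsec:prof-main-t}.
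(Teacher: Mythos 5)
Your proposal follows the paper's proof in all essentials: reduce to the tame case via Proposition~\ref{p:from-gnrl-to-tame}, then splice the remote-cobordism decomposition of~\S\ref{subsec:dec-Yo} (bottleneck/upper-triangularity) with the iterated Seidel triangles arising from the Dehn-disjunction construction of~\S\ref{subsec:Dehn-disj}, exactly as in~\S\ref{subsec:prof-main-t}, and you correctly flag the monotonicity bookkeeping that the paper handles via Proposition~\ref{p:strong-mon-Ehat}. The one spot where you are looser than the paper is the independence of $\tau$: the generating subcategories of thimbles and trails inside $\fuk^{\ast}(E_{\tau})$ and $\fuk^{\ast}(E_{\tau'})$ are not literally ``the same $A_{\infty}$-category'' (the ambient manifolds and the Floer/perturbation data differ, so at best one gets quasi-equivalent subcategories after a nontrivial argument), which is why the paper instead proves each $D\fuk^{\ast}(E_{\tau})$ equivalent to the intrinsic model $D(\N_{+m}\otimes\fuk^{\ast}(M))$ in Corollary~\ref{cor:cat-eq}, and defers the independence claim to~\S\ref{sec:conseq} rather than establishing it in~\S\ref{subsec:prof-main-t}.
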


The proof of Theorem~\ref{thm:A-rfm} follows 
from an analogue result - Theorem \ref{thm:main-dec}, stated in the first subsection below - which
applies to tame Lefschetz fibrations.  The three subsequent
subsections \S\ref{subsec:dec-Yo} - \S\ref{s:cob-vpt} form the
technical heart of the paper.  They provide the arguments that are put
together in \S \ref{subsec:prof-main-t} to show Theorem \ref{thm:main-dec}. 
The decomposition in the statement of Theorem \ref{thm:A-rfm} follows directly
from that provided by Theorem \ref{thm:main-dec}. The modules $E_{i}$ are explicitly
identified along the proof - see equation (\ref{eq:E-i-s}). 
The independence of $D\fuk^{\ast}(E;\tau)$ from the choice of $\tau$
 is postponed to \S \ref{sec:conseq} as it is an immediate consequence of Corollary \ref{cor:cat-eq}
which is itself deduced from Theorem \ref{thm:main-dec}.

\subsection{Decomposition of cobordisms in tame fibrations}
\label{subsec:dec-tame}

Assume now that the Lefschetz fibration $\pi:E\to \C$ is tame outside
the set $U$ - as in Definition \ref{df:tame-lef-fib} - and is so that:
\begin{itemize}
  \item[i.] the set $U$ contains $[0,m+1]\times [\frac{1}{2},K]$ and,
   as in (\ref{eq:u-tame}), $U\subset \R \times [0,+\infty)$.
  \item[ii.] as before, $\pi$ has $m$ critical points $x_{k}\in E$ of
   corresponding critical values $v_{k}=(k, \frac{3}{2})$.
  \item[iii.]  we fix $a_{U}>0$
   sufficiently large so that the set $\{z\pm d\ |\ z\in U, \ d\in [0,4]\subset\R\}$ is disjoint from both quadrants
   $$Q_{U}^{-}=(-\infty, -a_{U}]\times [0,+\infty)\ , 
   \ Q_{U}^{+}=[a_{U},\infty)\times [0,+\infty).$$
\end{itemize}

In this setting we again first define the ``simple'' pieces that
appear in the relevant decomposition. They again involve two types of
curves, again denoted by $\gamma_{i}$ and $t_{j}$, and are defined as
at the points~(I) and~(II) in~\S\ref{subsubsec:atoms} but by using
instead of the constant $x$ the value $-a_{U}+3$.  As a consequence,
the position of these curves relative to the set $U$ is as in
Figure~\ref{fig:spec-curves}.  With this definition we then define the
two families of associated Lagrangians as at the points (I') and
(II'). Notice that the Lagrangian $\gamma_k L$ is a product
$\gamma_{k}L=\gamma_{k}\times L$. This is because the fibration is
trivial over the complement of $U$ and $\gamma_{k}$ is entirely
contained in this complement. At the same time, because of condition
{\em iii} above, $\gamma_{k}L$ as well as $T_{j}$ are cobordisms in
the sense of Definition \ref{def:Lcobordism} (relative to the constant
$a_{U}$). \pbgreen{Finally, assume that $L\in \mathcal{L}^{\ast}(M)$.
  Thus the $\gamma_{k}L$'s are objects of $\mathcal{L}^{\ast}(E)$, and
  by Proposition~\ref{p:monot-E} the same holds for the $T_{j}$'s.}

We reformulate again Theorem~\ref{thm:main-dec-gen0} in this context:

\begin{thm}\label{thm:main-dec} Let $V\in\mathcal{L}^{\ast}(E)$,
   $V:\emptyset \to (L_{1},\ldots, L_{s})$.
   There exist finite rank $\mathcal{A}$-modules $E_{k}$, $1\leq k\leq
   m$, and an iterated cone decomposition taking place in
   $D\fuk^{\ast}(E)$:
   $$V \cong (T_{1}\otimes E_{1}\to T_{2}\otimes E_{2}\to \ldots \to
   T_{m}\otimes E_{m}\to\gamma_{s}\times L_{s}\to 
   \gamma_{s-1}\times L_{s-1}\to\ldots \to\gamma_{2}\times L_{2})~.~$$
\end{thm}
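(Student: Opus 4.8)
The plan is to assemble Theorem~\ref{thm:main-dec} from the three technical pillars that the introduction has already advertised, and which occupy the subsections \S\ref{subsec:dec-Yo}--\S\ref{s:cob-vpt}. The overall strategy is: (1) take a cobordism $V \subset E$ and enlarge $E$ to a tame Lefschetz fibration $E'$ by adjoining $m$ new critical points over the lower half-plane, with vanishing/matching cycles $S_1,\dots,S_m$ chosen so that $V' := (\tau_{S_m}\circ\cdots\circ\tau_{S_1})(V)$ becomes \emph{remote} with respect to $E$ --- that is, it can be displaced away from the singularities of the original fibration so that it only meets an arbitrary object $X$ of $\fuk^*(E)$ in the region where both are cylindrical; (2) use the remote decomposition result to write $V'$, as a module over $\fuk^*(E)$, as an iterated cone involving only the ends $\gamma_i\times L_i$, $i\geq 2$ (no thimble terms appear because $V'$ does not interact with the singularities of $E$); (3) relate $V'$ back to $V$ by peeling off one Dehn twist at a time, invoking Seidel's exact triangle~\eqref{eq:dehn-ex} --- for which \S\ref{s:cob-vpt} supplies a cobordism-theoretic proof --- to obtain an iterated cone decomposition of $V$ in terms of $V'$ and the matching cycles $S_i$ (equivalently, the thimbles $T_i$, via the parallel-transport identification of $T_i$ with the trail of $S_i$ along $t_i$ from \S\ref{subsubsec:atoms}). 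Splicing the decomposition from (2) into the bottom term of the decomposition from (3) --- using the associativity of iterated cones noted in \S\ref{sbsb:iter-cone} --- yields exactly the shape claimed, with the modules $E_k$ being the Floer homologies $HF(S_k, \cdot)$ that arise as the ``error terms'' in the Seidel triangles; these are recorded explicitly at~\eqref{eq:E-i-s}.

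More concretely, first I would set up the auxiliary fibration. Starting from the tame fibration $\pi : E \to \mathbb{C}$ which over $\mathcal{W} = \mathbb{C}\setminus U$ is the split product $\mathcal{W}\times M$, I construct $E'$ by gluing in standard local models of ordinary double points over $m$ points placed in the lower half-plane, arranged along a horizontal line to the left of the original critical values; the matching cycles $S_1,\dots,S_m$ are taken to be isotopic (after parallel transport) to the ends $L_1, \dots, L_m$ of $V$ sitting over the rays $\ell_i$. The key geometric claim of \S\ref{subsec:Dehn-disj} is that performing the iterated Dehn twist $\tau_{S_m}\circ\cdots\circ\tau_{S_1}$ on $V$ drags each end past its matching cycle in such a way that the resulting $V'$ can be Hamiltonian-isotoped (within $E'$) to lie in the complement of $\pi^{-1}(U_E)$, i.e. to be remote. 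The bottleneck/holomorphicity argument from \S\ref{sbsb:nat-transf} is what guarantees that, once $V'$ is remote, the only $\mu_k$-contributions between $V'$ and any thimble $T_i$ or any object $X$ occur in the trivial cylindrical region, so the module structure of $V'$ over $\fuk^*(E)$ is ``supported on the ends''; \S\ref{subsec:dec-Yo} then converts this into the explicit iterated cone $V' \cong (\gamma_s\times L_s \to \cdots \to \gamma_2\times L_2)$, which is precisely the statement of Theorem~\ref{thm:main-dec-gen0} with all $T_i\otimes E_i$ removed --- essentially the cobordism decomposition theorem of~\cite{Bi-Co:lcob-fuk} transplanted into the tame Lefschetz setting.

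Then I would run the Dehn-twist reduction in reverse. Writing $V^{(0)} = V$ and $V^{(j)} = \tau_{S_j}(V^{(j-1)})$, so that $V^{(m)} = V'$, each step is governed by Seidel's exact triangle $\tau_{S_j}L \to L \to S_j \otimes HF(S_j, L)$ applied with $L = V^{(j-1)}$, which in our ungraded monotone context reads as a cone attachment expressing $V^{(j-1)}$ in terms of $V^{(j)}$ and $S_j\otimes HF(S_j, V^{(j-1)})$. The novelty here, supplied by \S\ref{s:cob-vpt}, is that this triangle is itself realized by a cobordism --- the trace of the Lagrangian surgery --- so it lives naturally inside $\fuk^*(E')$ and restricts correctly to $\fuk^*(E)$; this is what lets the matching cycles $S_j$ be replaced by the thimbles $T_j$ covering the curves $t_j$ of Figure~\ref{fig:spec-curves}. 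Iterating over $j = m, m-1, \dots, 1$ and setting $E_j := HF(S_j, V^{(j-1)})$ (a finite-rank $\mathcal{A}$-module by monotonicity and the compactness results), I obtain $V \cong (T_1\otimes E_1 \to \cdots \to T_m\otimes E_m \to V')$; substituting the decomposition of $V'$ into the final slot gives the theorem.

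\emph{The main obstacle} I anticipate is step (1): proving that the iterated Dehn twist actually makes $V'$ remote. This is the geometric content of \S\ref{subsec:Dehn-disj} and it is genuinely delicate --- one must choose the positions of the new singularities, the matching paths, and the isotopy of $V'$ compatibly, keeping careful track of how parallel transport around the new critical values interacts with the cylindrical structure at infinity, and one must verify that monotonicity (with the correct constants and the correct value of $d_E$) is preserved throughout, so that $V'$ remains an object of the relevant Fukaya category. The second, more bookkeeping-intensive, difficulty is ensuring that the two iterated cone decompositions --- the ``horizontal'' one from remoteness and the ``vertical'' one from the Dehn twists --- are compatible at the level of module morphisms, so that they can be concatenated; here one leans on the fact (\S\ref{sbsb:iter-cone}) that in the ungraded setting iterated cones associate freely, together with a check that the connecting morphism between the $V'$-block and the $T_j$-blocks is the expected one. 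Everything else --- finiteness of the $E_j$, independence of the auxiliary choices up to quasi-equivalence, and the passage from Theorem~\ref{thm:main-dec} back to Theorem~\ref{thm:A-rfm} via Remark~\ref{rem:gen-tame-cob} --- is routine given the machinery already in place.
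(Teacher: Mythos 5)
Your high-level plan matches the paper's proof exactly: construct an auxiliary tame fibration $\hat{E}$ extending $E$ with $m$ additional critical points, build matching cycles $\hat{S}_1,\dots,\hat{S}_m$ so that the iterated Dehn twist $V' = \tau_{\hat{S}_m}\circ\cdots\circ\tau_{\hat{S}_1}(V)$ is remote (Corollary~\ref{cor:moving-cob}), decompose $V'$ via Proposition~\ref{lem:decomp-remote} as $(\gamma_s\times L_s\to\cdots\to\gamma_2\times L_2)$, peel back the Dehn twists one at a time with Proposition~\ref{t:ex-tr-cob} to get $V\cong(\hat{S}_1\otimes E_1\to\cdots\to\hat{S}_m\otimes E_m\to V')$ with $E_j = HF(\hat{S}_j, \tau_{\hat{S}_{j-1}}\circ\cdots\circ\tau_{\hat{S}_1}(V))$, replace each $\hat{S}_j$ by $T_j$ using the pullback along $\mathrm{Incl}^{E,\hat{E}}$, and splice. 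So the strategy, the order of the three ingredients, and the identification of the modules $E_j$ are all correct.

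There is, however, a genuine error in your ``more concrete'' description of the auxiliary fibration. You write that the matching cycles $S_1,\dots,S_m$ are ``taken to be isotopic (after parallel transport) to the ends $L_1,\dots,L_m$ of $V$ sitting over the rays $\ell_i$.'' This cannot be right on two counts. First, the ends of $V$ are indexed $1\le i\le s$ while the matching cycles are indexed $1\le j\le m$; these are unrelated counts (one per end versus one per singularity of $\pi$), so the correspondence you suggest is not even well-typed. Second, and more fundamentally, the matching cycle $\hat{S}_j$ is a Lagrangian \emph{sphere} fibered over a path in $\mathbb{C}$ joining the old critical value $v_j$ to the new critical value $v'_j$, and its vanishing cycle is dictated by the Lefschetz structure at $x_j$ — it is completely unrelated to the ends $L_i\in\mathcal{L}^*(M)$ of $V$, which are arbitrary monotone Lagrangians of the prescribed class and in general not spheres at all. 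What does depend on $V$ is the \emph{path} $\gamma$ (equivalently the position of $v'_j$ and the subsequent Hamiltonian isotopy producing $S'_j$ from $\hat{S}_j$), chosen in Proposition~\ref{lem:multiple-surgery} precisely so that $\tau_{S'_j}$ disjoins the cobordism from the vertical thimble $\thmb_j$; see Step 1 of that proof. If you tried to build matching cycles ``isotopic to the ends of $V$'' the construction would simply not exist. Related to this, your justification for the identification $\hat{S}_j\rightsquigarrow T_j$ — ``the trace of the Lagrangian surgery ... restricts correctly to $\fuk^*(E)$'' — is also not the right mechanism: the surgery trace realizes the exact triangle only in the special case of a single transverse intersection point, whereas the paper's replacement of $\hat{S}_j$ by $T_j$ is the module-level statement that $(\mathrm{Incl}^{E,\hat{E}})^*\mathcal{Y}(\hat{S}_j)\cong\mathcal{Y}(T_j)$ in $D\fuk^*(E)$, an open-mapping-theorem consequence of the fact that the two objects cannot be distinguished by holomorphic curves whose boundaries lie over the upper half-plane.
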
 

\subsection{Decomposition of remote Yoneda modules}
\label{subsec:dec-Yo}

In this subsection we assume the ``tame'' setting of
\S\ref{subsec:dec-tame} and we consider a particular class of
$A_{\infty}$-modules over $\fk^{\ast}(E)$ associated to certain
cobordisms $W$ included in Lefschetz fibrations that extend $E$.

Specifically, fix a large constant $K>0$ and consider a Lefschetz
fibration $\hat{\pi}:\hat{E}\to \C$ so that:
\begin{itemize}
  \item[i.] $\hat{\pi}$ is tame outside $\hat{U}$, with $U\subset
   \hat{U}$ and is so that condition (\ref{eq:quadr}) is satisfied for
   some constant $a_{\hat{U}}>a_{U}$.
  \item[ii.] $\hat{U}\subset \R\times [-K,+\infty)$.
  \item[iii.] $\hat{E}|_{\R\times [-\frac{1}{2},+\infty)}=E|_{\R\times
     [-\frac{1}{2},+\infty)}$ including their symplectic
     structures.
\end{itemize}

Similarly to the definition of the category $\fuk^{\ast}(E)$ in
\S\ref{subsubsec:Fuk-cob} we consider a Fukaya category
$\fuk^{\ast}(\hat{E})$ whose objects are cobordisms $W\subset \hat{E}$
as in Definition \ref{def:Lcobordism} so that $W$ is monotone of class
$\ast=(\rho,d)$, $W$ has only negative ends $L_{1},\ldots, L_{s}$ (all
in $\mathcal{L}^{\ast}(M)$) and, similarly to ii
in~\S\ref{subsubsec:Fuk-cob},
$$W\subset \hat{\pi}^{-1}(\R\times [-K+\frac{1}{2},\infty))~.~$$ 
Following Definition \ref{def:Lcobordism}, the cobordism
$W$ is cylindrical and the ends of $W$ project to rays of the form $(-\infty,
-a_{\hat{U}}]\times \{k\}$ with $k\in \N^{\ast}$. 

A cobordism $W$ as before is called {\em remote} relative to $E$ if,
in addition,
\begin{equation}\label{eq:remote-cob}
   W\subset  \hat{\pi}^{-1}(\R\times (-\infty,0]\cup Q^{-}_{U})~.~
\end{equation}
In this case, we deduce, in particular, that $W\cap
\pi^{-1}(U)=\emptyset$ (this explains the terminology, in the sense
that $W$ is remote from all the singularities of $\pi$).  See
Figure~\ref{fig:remote-cob}. It is important to note that because
$\hat{U}$ might contain an unbounded region disjoint from the upper
half plane (in the figure this region goes through the third
quadrant, it could as well also intersect the fourth quadrant but that is irrelevant for the argument), the conditions i,ii,iii allow for $\hat{E}$ to have more
singularities than $E$.

 \begin{figure}[htbp]
   \begin{center}
\includegraphics[scale=0.35]{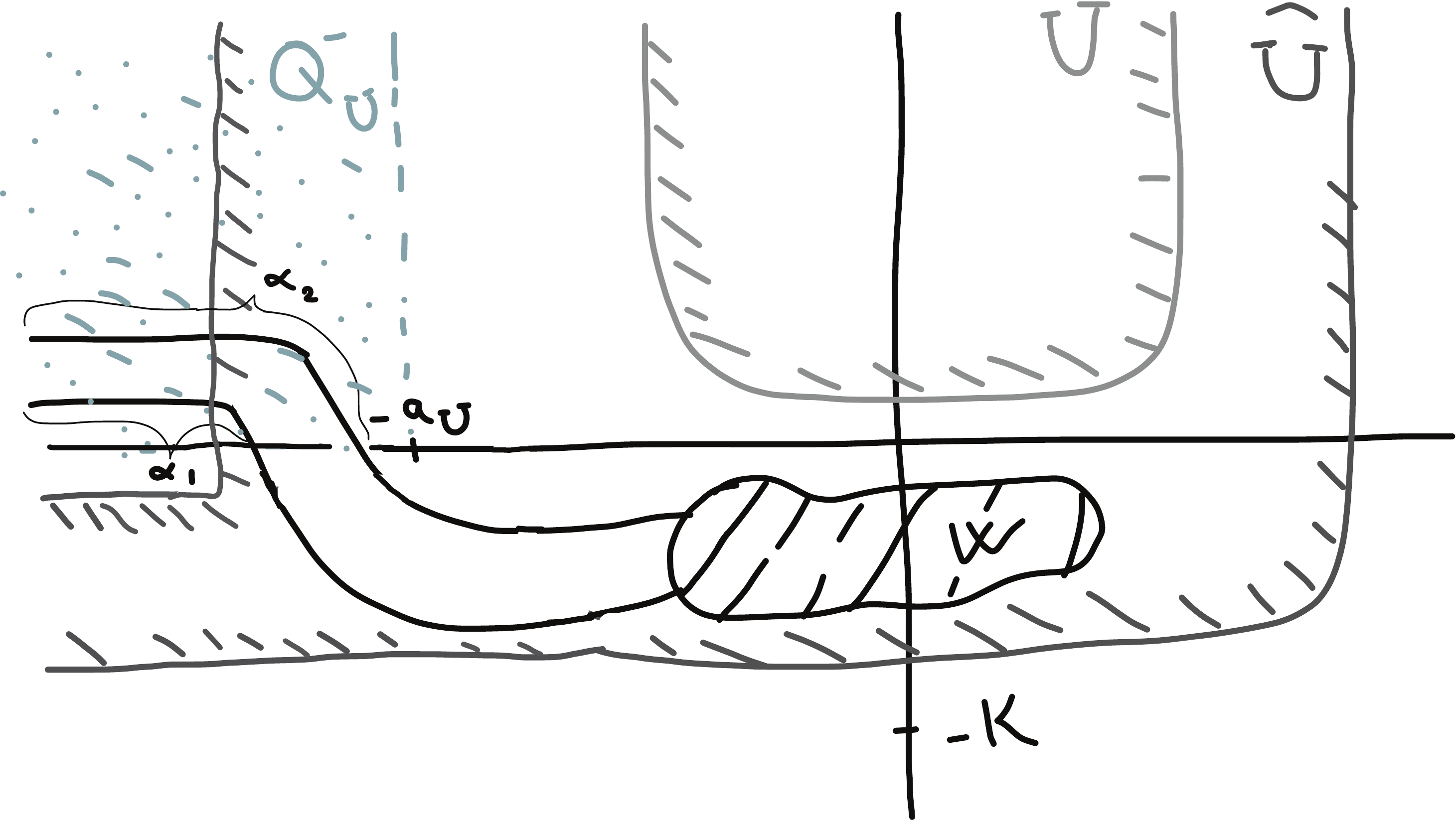}
   \end{center}
   \caption{The domains $\hat{U}$, $U$, the quadrant $Q_{U}^{-}$ and
     the cobordism $W$ that is remote relative to
     $E$.\label{fig:remote-cob}}
\end{figure}

Given property ii from \S\ref{subsubsec:Fuk-cob}, it is clear that
such remote cobordisms $W$ are not objects of $\fuk^{\ast}(E)$.  On
the other hand, each object of $\fk^{\ast}(E)$ is an object of
$\fk^{\ast}(\hat{E})$.  Moreover, by a simple application of the open
mapping theorem, we see
that there is an inclusion of $A_{\infty}$-categories
\begin{equation} \label{eq:Lef-inclusion}
   \mathrm{Incl}\/^{E,\hat{E}}:\fk^{\ast}(E)\to \fk^{\ast}(\hat{E})~.~
\end{equation} 
The relevant argument is as follows.  All objects of
$\fuk^{\ast}(E)$ project to the upper half plane so that the
$J$-polygons that compute the operations $\mu^{k}$ of
$\fuk^{\ast}(\hat{E})$ (for objects that are in $\fuk^{\ast}(E)$)
project to curves $v$ in $\C$ with boundary inside the upper half
plane. Our choice of almost complex structures imply that such a curve
$v$ can be assumed - after applying the change of coordinates as in \S\ref{sbsb:nat-transf} -
to be holomorphic outside (possibly a slightly
bigger set containing) $U$ and, by the open mapping theorem, we deduce
that $v$ can not extend outside of the region where $E$ and $\hat{E}$
coincide. Thus, for objects picked in $\fuk^{\ast}(E)$, the operations
$\mu_{k}$ are the same in $\fuk^{\ast}(\hat{E})$ and in
$\fuk^{\ast}(E)$. 

Let $\mathcal{Y}(W)$ be the Yoneda module associated to an object
$W\in \mathcal{O}b(\fk^{\ast}(\hat{E}))$. We denote by $W_E$ the
pull-back module:
\begin{equation} \label{eq:rem-module}
   W_E=(\mathrm{Incl}^{E,\hat{E}})^{\ast}(\mathcal{Y}(W))
\end{equation}

In case $W$ is remote with respect to $E$ we say that the module $W_E$
is a remote $\fk^{\ast}(E)$-module.

\begin{prop} \label{lem:decomp-remote} With the terminology above,
   assume that $W\in\mathcal{O}b(\fk^{\ast}(\hat{E}))$ is remote
   relative to $E$, $W:\emptyset \cobto (L_{1},\ldots, L_{s})$, then
   $W_E \in \mathcal{O}b(D\fk^{\ast}(E))$ and it admits a
   decomposition in $D\fk^{\ast}(E)$ of the following form:
   \begin{equation} \label{eq:cone-dec} W_E=(\gamma_{s}\times L_{s}\to
      \gamma_{s-1}\times L_{s-1}\to \ldots \to \gamma_{2}\times L_{2})
   \end{equation}
\end{prop}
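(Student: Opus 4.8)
The plan is to reduce the statement to the main decomposition result of \cite{Bi-Co:lcob-fuk} for cobordisms in the trivial fibration $\mathbb{C} \times M$, exploiting the fact that a remote cobordism $W$ lives entirely in the region $\hat{\pi}^{-1}(\mathbb{R} \times (-\infty,0] \cup Q_U^-)$, which — after possibly shrinking $\hat{U}$ around the part of $\hat{E}$ that carries the extra singularities — can be arranged so that $W$ itself misses all critical points of $\hat{\pi}$. First I would observe that since $W$ avoids all singularities, a neighborhood of $W$ in $\hat{E}$ is symplectomorphic to an open subset of $\mathbb{C} \times M$ (using parallel transport over the contractible region swept out), and the ends $L_1, \dots, L_s$ of $W$ make sense as honest Lagrangians in $M$. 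In particular $W$, viewed through this identification, is a negatively-ended Lagrangian cobordism in $\mathbb{C} \times M$ in the sense of \cite{Bi-Co:cob1}, with ends $(L_1, \dots, L_s)$. The main theorem of \cite{Bi-Co:lcob-fuk} then gives a cone decomposition of the Yoneda module of $W$ over $\fuk^{\ast}(\mathbb{C} \times M)$ of exactly the shape $(L_s \to L_{s-1} \to \ldots \to L_2)$ (with $L_1$ being the module itself, up to quasi-isomorphism).

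The key step is then to transport this decomposition from $\fuk^{\ast}(\mathbb{C} \times M)$-modules to $\fuk^{\ast}(E)$-modules via the restriction $W_E = (\mathrm{Incl}^{E,\hat{E}})^{\ast}(\mathcal{Y}(W))$. Here I would argue as follows. All objects of $\fuk^{\ast}(E)$ project to the upper half-plane, while $W$ projects into the lower half-plane together with the ray region $Q_U^-$; the two only interact over $Q_U^-$, where both $W$ and every object $X$ of $\fuk^{\ast}(E)$ are cylindrical. This is precisely the ``remote'' situation: by the bottleneck/open-mapping-theorem argument recalled in \S\ref{sbsb:nat-transf}, the $J$-holomorphic polygons $u$ contributing to the module operations $\mu_k(-, \dots, -, w)$ (with all but the last input a morphism in $\fuk^{\ast}(E)$ and the last a morphism into $W$) have planar projections $v'$ that are holomorphic away from the strip $[-\tfrac{3}{2}+\delta', \tfrac{5}{2}-\delta']\times\mathbb{R}$, hence confined to the region where everything is a product; consequently the module structure on $W_E$ only ``sees'' the $L_i$'s as cylindrical ends and factors through the corresponding structure over $\fuk^{\ast}(M)$, which is exactly the structure of the trail modules $\gamma_k \times L_k$. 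More precisely, one identifies $W_E$ with the module obtained by the same pull-back construction applied to the $\fuk^{\ast}(\mathbb{C}\times M)$-module of $W$, and then checks that each trail Lagrangian $\gamma_k \times L_k$ (an object of $\mathcal{L}^{\ast}(E)$ by \S\ref{subsec:dec-tame}) represents, under the analogous restriction, the object $L_k$ used in the \cite{Bi-Co:lcob-fuk} decomposition. Since a cone decomposition in a module category is preserved by any pull-back functor $(\mathrm{Incl})^{\ast}$ (such functors are exact, being restriction along an $A_\infty$-functor), the decomposition \eqref{eq:cone-dec} follows, and simultaneously $W_E \in \mathcal{O}b(D\fuk^{\ast}(E))$ because it is built from Yoneda modules of genuine objects $\gamma_k \times L_k$.

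The main obstacle I anticipate is the careful bookkeeping needed to justify that the module $W_E$ really is quasi-isomorphic to the pull-back of the $\fuk^{\ast}(\mathbb{C}\times M)$-Yoneda module of $W$ — i.e. that passing from $\hat{E}$ to the model $\mathbb{C}\times M$ near $W$, and from there restricting to objects of $\fuk^{\ast}(E)$, does not change the $A_\infty$-module structure. This requires identifying the relevant moduli spaces of polygons in $\hat{E}$ (with one boundary component on $W$ and the rest on objects of $\fuk^{\ast}(E)$) with the corresponding moduli spaces computed in $\mathbb{C}\times M$; the bottleneck argument confines these curves to a product region, but one must verify that the perturbation data can be chosen compatibly on both sides and that no curve escapes toward the extra singularities of $\hat{E}$ — this last point is again handled by the open mapping theorem applied to $v'$, using that the boundary of $v'$ lies in the product region and $\hat{E}$ agrees with $\mathbb{C}\times M$ there. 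A secondary technical point is to confirm that the constant $K$ can be taken large enough, and the curves $\gamma_k$ placed appropriately in $Q_U^-$, so that the trail Lagrangians $\gamma_k \times L_k$ are legitimately objects of $\mathcal{L}^\ast(E)$ and are positioned exactly as the ends of $W$ demand; this is essentially the geometric set-up already fixed in \S\ref{subsec:dec-tame} and should cause no real difficulty.
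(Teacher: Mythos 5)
Your plan hinges on invoking ``the main theorem of \cite{Bi-Co:lcob-fuk}'' to produce a cone decomposition of the Yoneda module of $W$ over $\fuk^{\ast}(\mathbb{C}\times M)$ of the shape $(L_s\to\cdots\to L_2)$. No such theorem exists there: the decomposition result in \cite{Bi-Co:lcob-fuk} takes place in $D\fuk^{\ast}(M)$, not in $D\fuk^{\ast}(\mathbb{C}\times M)$. It expresses one \emph{end} $L_1$ of a cobordism as an iterated cone of the other ends, viewed as objects of the derived Fukaya category of the \emph{fiber} $M$; it does not decompose the cobordism itself as a module over the Fukaya category of cobordisms. The introduction of the present paper is explicit on this point: ``the statement of Theorem~\ref{thm:main-dec-gen0} -- which concerns decompositions of cobordisms -- is new even for the trivial fibration.'' So your reduction is circular: for $E=\mathbb{C}\times M$ the theorem you want to cite is exactly the statement of Proposition~\ref{lem:decomp-remote}. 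There is also a secondary problem with identifying a neighborhood of $W$ in $\hat{E}$ with an open set in $\mathbb{C}\times M$: the enlarged fibration $\hat{E}$ may carry extra singularities in the lower half-plane (inside $\hat{U}\setminus U$), and ``remote'' only guarantees $W\cap\pi^{-1}(U)=\emptyset$, not that $W$ stays away from those extra critical points.

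The actual proof supplies precisely the piece that your outline is missing. After a repositioning step (shift the ends of $W$ down by $\tfrac12$ so that, for every $X\in\mathcal{O}b(\fuk^{\ast}(E))$, the intersections $X\cap W'$ project to the discrete array of points $b_{ij}$), one chooses a ``snaky'' profile function with alternating maxima $o_i$ and minima $o'_i$ and defines an explicit increasing filtration of the module $W'_E$ by submodules $W'_{E,i}$, where $W'_{E,i}(X)$ is spanned by the intersections of $X$ with the first $i$ branches of $W'$. The decomposition~\eqref{eq:cone-dec} then falls out of the short exact sequences $0\to W'_{E,i-1}\to W'_{E,i}\to\widetilde L_i\to 0$ once one proves the crucial fact~\eqref{eq:restr} that the $A_\infty$-module operations preserve the filtration. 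This is established by the bottleneck/open-mapping-theorem technique you allude to, but in the form of the iterated quadrant argument of Lemma~\ref{lem:quadr-ext} around each $b_{ij}$; that combinatorial step is where the content of the proposition actually sits, and it cannot be absorbed into an appeal to \cite{Bi-Co:lcob-fuk}.
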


To unwrap a bit the meaning of this Proposition consider a cobordism
$W$ in $E$.  If there is a horizontal hamiltonian isotopy
$\phi:\hat{E}\to \hat{E}$ that pushes $W$ away from the singularities
of $\pi$, in the sense that $\pi(\phi(W))\cap U=\emptyset$, then the
Proposition implies that $W$ admits a decomposition as claimed in
Theorem \ref{thm:main-dec} but with all the modules $E_{i}=0$. As a
particular case that is already of interest, if $\pi$ has no
singularities $E=\C\times M$ ($U=\emptyset$ and $m=0$), then
Proposition \ref{lem:decomp-remote} applies to any cobordism
$W\subset E=\C\times M$. Thus, for $E=\C\times M$,
Proposition~\ref{lem:decomp-remote} implies
Theorem~\ref{thm:main-dec}.

\begin{rem} \label{r:remote-monotone} \pbhl{In this paper we mostly
    assume that our Lefschetz fibrations are strongly monotone, which
    in turn determines a monotonicity class $*$ for the associated
    Fukaya categories.} However, Proposition~\ref{lem:decomp-remote}
  continues to hold for remote cobordisms of arbitrary monotonicity
  classes $*$ (subject to the restrictions on $*$ made on
  page~\pageref{pg:mon-class-M} in~\S\ref{subsec:Fuk-fibr}). \pbhl{The
    point is that we can analyze remote cobordisms as if they live in
    a trivial Lefschetz fibration, and so there is no need to take
    into account monotonicity properties of the thimbles and vanishing
    spheres. See the ``exception'' to} Definition~\ref{df:monlef} on
  page~\pageref{pg:mon-exception}.
\end{rem}

\begin{proof}[Proof of Proposition~\ref{lem:decomp-remote}]
  We start by repositioning $W$ by using a horizontal Hamiltonian
  isotopy in $\hat{E}$.  By definition, this is an isotopy possibly
  not with compact support, whose support contains a neighborhood of
  the singularities of $\hat{E}$, and which slides the ends of $W$
  along themselves just as in Definition 2.2.3 in
  \cite{Bi-Co:lcob-fuk}. It is immediate to see that such isotopies do
  not change the isomorphism type of objects in
  $\fuk^{\ast}(\hat{E})$.
   
   By applying such an isotopy to $W$ we may assume that
   not only $W\subset \hat{\pi}^{-1}(\R\times (-\infty,0]\cup
   Q^{-}_{U})$ as in the definition of remote cobordisms but that,
   moreover, the intersection
   $$W^{-}=W\cap Q^{-}_{U}$$
   coincides with a disjoint union of cylindrical ends of $W$. In
   other terms
   $$W^{-}=\cup_{i=1}^{s} \alpha_{i} \times L_{i}$$
   where $\alpha_{i}$ are curves in $\C$ as in Figure
   \ref{fig:remote-cob}.  In particular, for any object $X\in
   \mathcal{O}b(\fk^{\ast}(E))$, the intersection $W\cap X$ consists
   of a union of intersections of the ends of $W$ with the ends of $X$
   and is included in the quadrant $Q^{-}_{U}$.

   The main part of the proof makes essential use of constructions
   that appear in~\cite{Bi-Co:lcob-fuk}. It consists of three main
   steps.

   \smallskip

   \noindent \textbf{Step 1:} {\em Repositioning $W$}. Here we replace
   the module $W_E$ with a quasi-isomorphic module corresponding to a
   cylindrical Lagrangian that can be handled easier geometrically.
   For this purpose we include the two $A_{\infty}$-categories
   $\fuk^{\ast}(E)$ and $\fuk^{\ast}(\hat{E})$ in two other
   $A_{\infty}$-categories, respectively, $\fuk^{\ast}_{\frac{1}{2}}
   (E)$ and $\fuk^{\ast}_{\frac{1}{2}}(\hat{E})$.  These two
   categories have objects that are again cobordisms as before with
   the difference that their ends have heights $\in
     \frac{1}{2}\Z\subset \Q$. In other words, compared with
   Definition \ref{def:Lcobordism}, the difference is that $V\cap
   \pi^{-1}(Q_{U}^{-})= \cup_{i\in\N^{\ast}}((-\infty, -a_{U}]\times
   \{\frac{i}{2}\})\times L_{i}$.  The inclusion $\fuk^{\ast}(E)\to
   \fuk^{\ast}_{\frac{1}{2}}(E)$ is obvious and is clearly full and
   faithful and similarly for the two categories associated to
   $\hat{E}$.  We now perturb $W$ by a (non-horizontal) Hamiltonian
   isotopy so as to obtain an object $W'$ of
   $\fuk^{\ast}_{\frac{1}{2}}(\hat{E})$ that differs from $W$ only
   inside $(-\infty, -a_{U}-2]\times [\frac{1}{2},+\infty)$ and is so
   that the ends of $W'$ restricted to $(-\infty, -a_{U}-4 -s]\times
   [\frac{1}{2},+\infty)$ are of the form $(-\infty,-a_{U}-4 -s]\times
   \{i-\frac{1}{2}\}\times L_{i}$ (for all the definitions involved to
   be coherent we might need to enlarge here the set $\hat{U}$).  In
   other words, the ends of $W'$ are shifted down by $\frac{1}{2}$
   compared to the ends of $W$.  Let $W'_E$ be the
   $\fuk^{\ast}(E)$-module obtained as pull-back over the inclusions
   $$\fuk^{\ast}(E)\to \fuk^{\ast}(\hat{E})\to
   \fuk^{\ast}_{\frac{1}{2}}(\hat{E})$$ from the
   $\fuk^{\ast}_{\frac{1}{2}}(\hat{E})$-module $\mathcal{Y}(W')$.
   \begin{figure}[htbp]
      \begin{center}
         \includegraphics[scale=0.5]{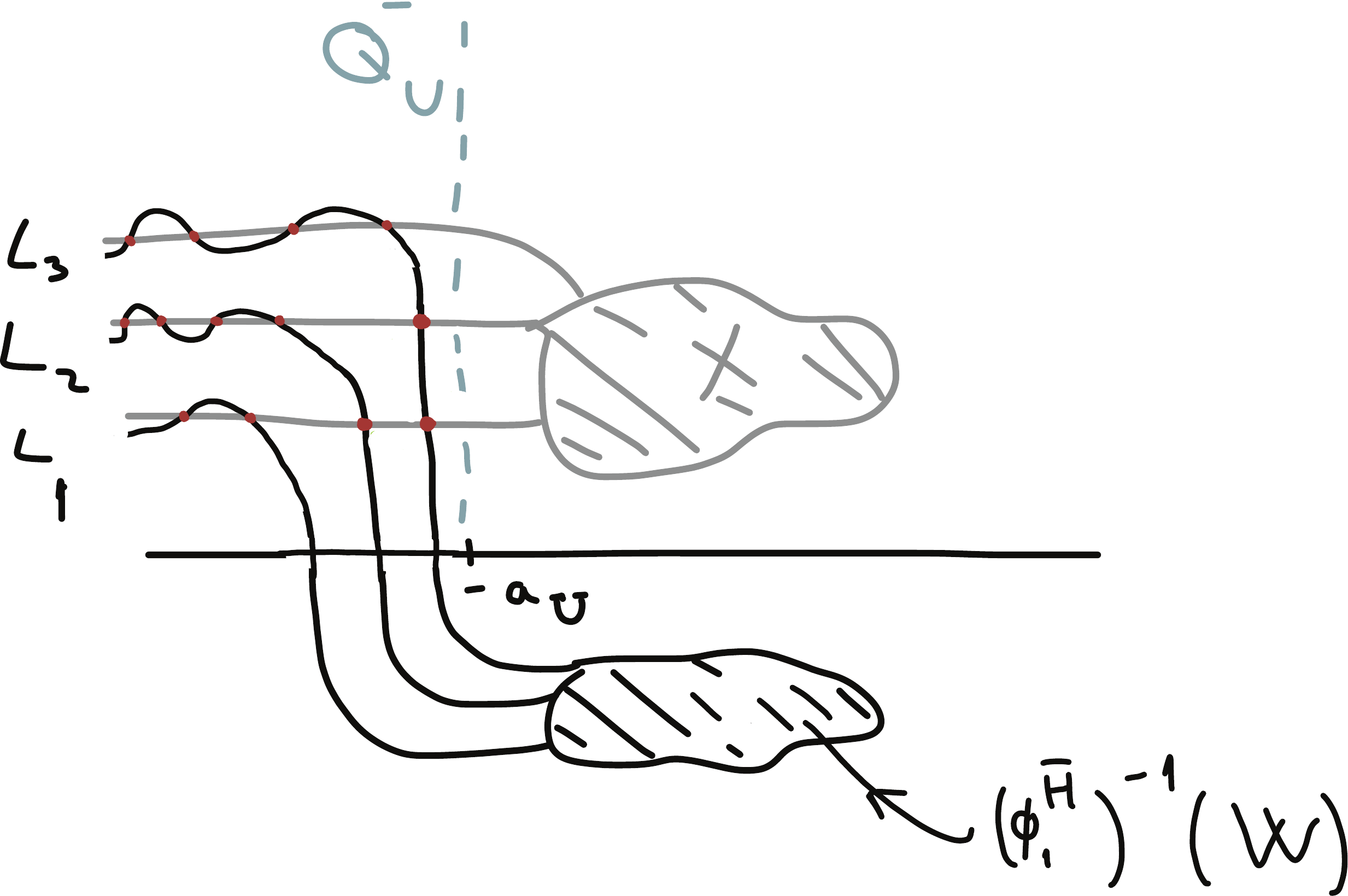}
      \end{center}
      \caption{The projections on $\C$ of
        $(\phi^{\bar{H}_{X,W}}_{1})^{-1}(W)$ and of $X$.The ends of $(\phi^{\bar{H}_{X,W}}_{1})^{-1}(W)$ are below those of $X$ at infinity.
        \label{fig:proj-W}}
   \end{figure}
   The two modules $W_E$ and $W'_E$ are quasi-isomorphic. This
   is a direct consequence of the definition of
   $\mor_{\fk^{\ast}(\hat{E})}(X,W) = CF(X,W)$. This uses a perturbation
   of $W$ in which its negative ends are ``moved'' down compared
   to those of $X$. More precisely, recall from \S 3 in \cite{Bi-Co:lcob-fuk} (see also Figure 8 there) that
   $CF(X,W)$ is defined by using a specific profile function $h$ and
   an associated Hamiltonian $\bar{H}_{X,W}$. With these choices
   $CF(X,W)$ is identified with
   $CF(X,(\phi^{\bar{H}_{X,W}}_{1})^{-1}(W))$ (under the assumption
   that $X$ and $(\phi^{\bar{H}_{X,W}}_{1})^{-1}(W)$ intersect
   transversely).  The projection of
   $(\phi^{\bar{H}_{X,W}}_{1})^{-1}(W)$ to $\mathbb{C}$ is as in
   Figure~\ref{fig:proj-W}. On the other hand the ends of $W'$ are, by construction, below
   the horizontal lines $\R\times \{i\}$ and therefore the complexes $CF(X,W)$ and $CF(X,W')$ are 
   quasi-isomorphic. Further, this quasi-isomorphism extends to a quasi-isomorphism of
   the modules $W_{E}$ and $W'_{E}$. 
 
   To summarize this first step, we have replaced in our argument the
   cobordism $W$ by the cobordism $W'$.  Moreover, by a further horizontal
   Hamiltonian isotopy, we may assume that $W'$ has a projection as in
   Figure \ref{fig:inter-pert}. More precisely, we assume that
   $(W')^{-}=W'\cap Q_{U}^{-}$ is a disjoint union of components
   $\alpha_{i}\times L_{i}$ so that $\alpha_{i}$ is obtained by
   rounding the corner of the union of two intervals $(-\infty,
   -a_{U}-4-s+i]\times \{i-\frac{1}{2}\}\cup \{-a_{U}-4-s+i\}\times
   [0,i-\frac{1}{2}]$.  In
   particular, the intersections of $X$ and $W'$ project onto $\C$ to
   the points $b_{ij}=\{-a_{U}-4-s+i\}\times \{j\}$ with $i> j$,
   $i,j\in \N^{\ast}$, $i=1,2,\ldots, s$; $b_{ij}$ is precisely the
   projection of the intersection of the $i$-th end of $W'$ with the
   $j$-th end of $X$.

   \begin{figure}[htbp]
      \begin{center}
      \includegraphics[scale=0.5]{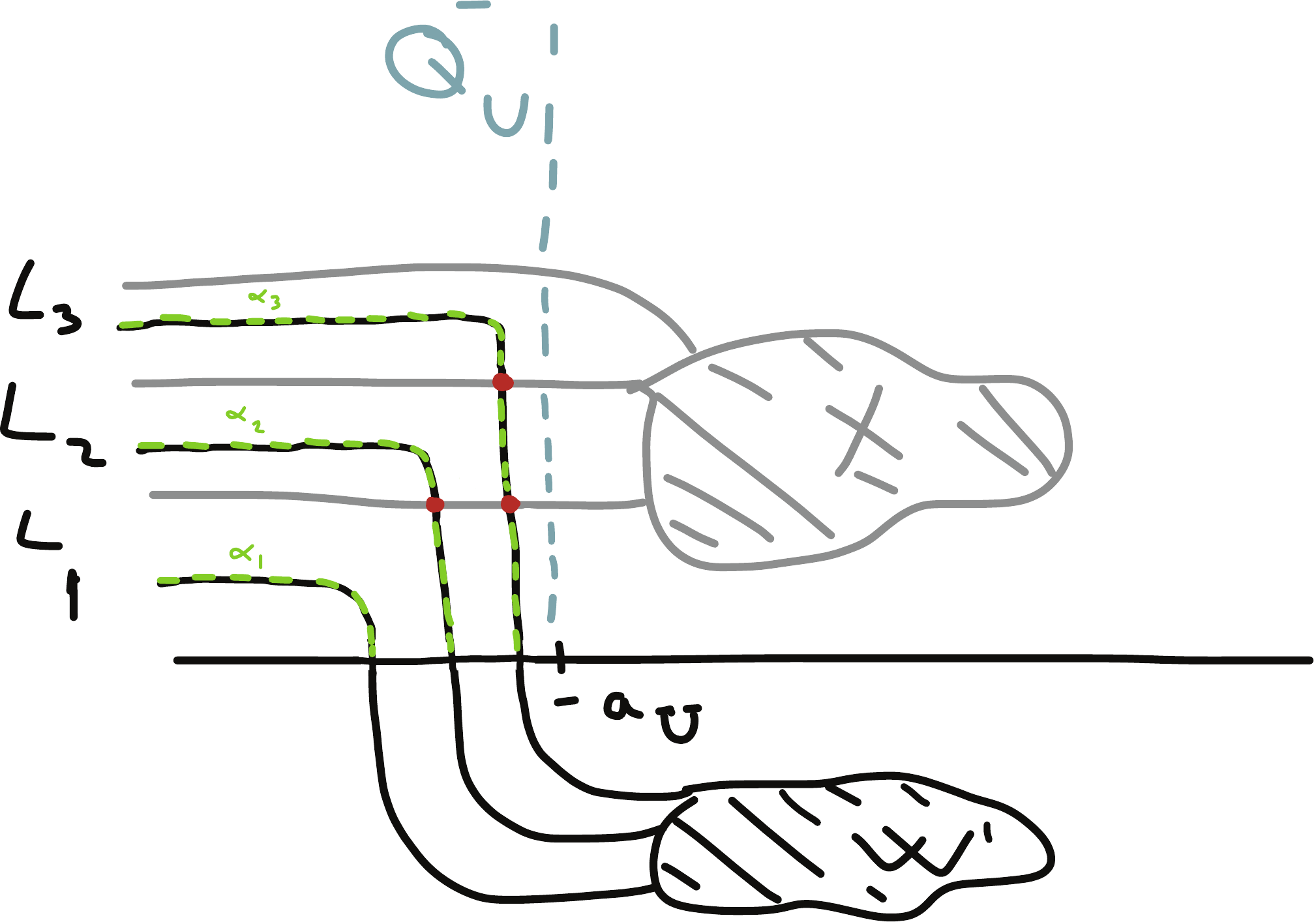}
      \end{center}
      \caption{The remote cobordism $W'\subset \hat{E}$, the object
        $X\in \mathcal{O}b(\fk^{\ast}(E))$ and the curves
        $\alpha_{i}$. The height of the $i$-th end of $W'$ is
        $i-\frac{1}{2}$ while the $i$-th end of $X$ has height $i$.
        \label{fig:inter-pert}}
   \end{figure}

   We may also assume, by a slight additional horizontal isotopy, that
   $W'\cap \pi^{-1}(\R\times [-\frac{1}{2},\infty))$ is a union of
   cylindrical ends.

   \

   \noindent \textbf{Step 2 :} {\em ``Snaky'' perturbation data.}
   This step of the proof consists in choosing the perturbation data
   used in the definition of $\fuk^{\ast}(E)$ and
   $\fuk^{\ast}(\hat{E})$ in a convenient way. Recall that $W'$ is
   already fixed as discussed at step 1. The perturbation data in
   question are chosen as described in \S\ref{subsubsec:Fuk-cob}
   except that the profile function $h$ as well as the almost complex
   structure $\mathbf{J}$ will be picked with some additional
   properties described below.

   We start with the choice of the profile function $h$. As can be
   seen from \S\ref{subsubsec:Fuk-cob} the fundamental ingredients in
   the definition of $h$ are the functions $h_{\pm}$.  We start with
   $h_{+}$: the only requirement in this case is that $h_{+}:
   [a_{U}+\frac{3}{2},\infty)\to \R$ has its single critical point
   (the bottleneck) at $a_{U}+2$. In other words the difference with
   respect to the construction at ~\S\ref{subsubsec:profile} is that
   the value $\frac{1}{2}$ is replaced with $a_{U}$.  In fact, as we
   only consider cobordisms without positive ends the choice of
   $h_{+}$ is not particularly important as long as the bottlenecks
   are away from $U$.  We now discuss the function $h_{-}$.  This is a
   smooth function $h_{-}: (-\infty, -a_{U}-1]\to \R$ with the
   following additional properties - see
   Figure~\ref{fig:snaky-perturb}:

   \begin{itemize}
     \item[$\textnormal{a}^{\prime}$.] The function $h_{-}$ has
      critical points $o_{i}=-a_{U}-3-i$, $i=0, 1, \ldots, s$
      that are non-degenerate local maxima.
     \item[$\textnormal{a}^{\prime \prime}$.] The function $h_{-}$ has
      critical points $o'_{i}=-a_{U}-\frac{7}{2}-i$,
        $i=0,1,\ldots, s-1$ that are non-degenerate local minima.
     \item[$\textnormal{a}^{\prime \prime \prime}$.] $h_{-}$ has no
      other critical points than those at $\textnormal{a}^{\prime}$,
      $\textnormal{a}^{\prime \prime}$ above and for all $x\in
      (-\infty, a_{U}-4-s]$ we have $h_{-}(x)=\alpha^{-}x+\beta^{-}$
      for some constants $\alpha^{-}$, $\beta^{-}$, $\alpha^{-}>0$.
   \end{itemize}

   \begin{figure}[htbp]
      \begin{center}
         \includegraphics[scale=0.8]{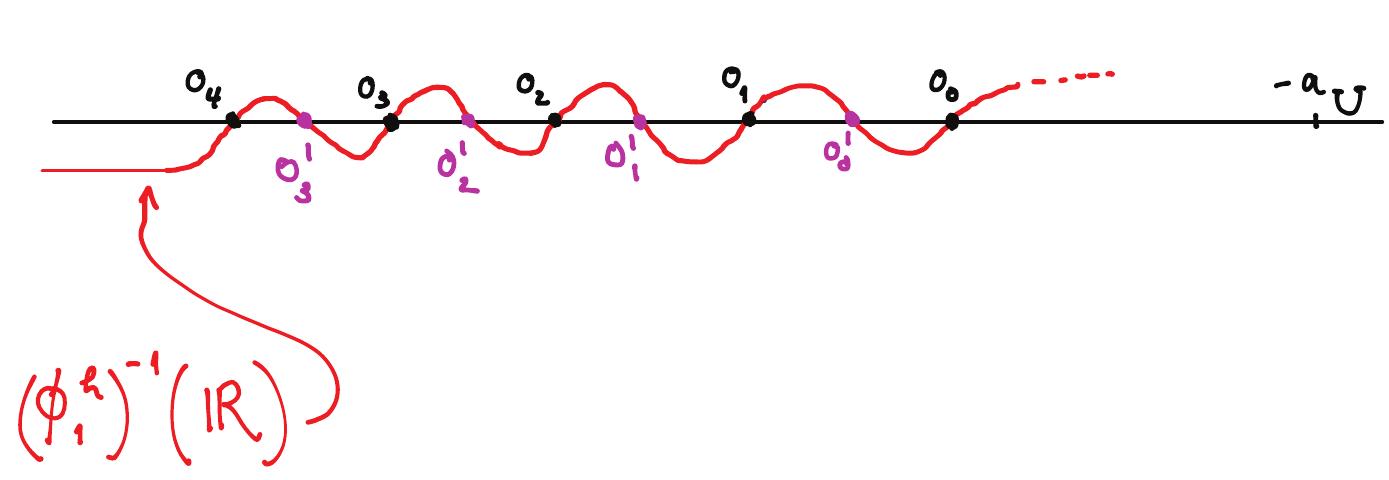}
      \end{center}
      \caption{The graph of $(\phi^{h}_{1})^{-1}(\R)$ for
        $s=4$.\label{fig:snaky-perturb}}
   \end{figure}

   Beyond this, the properties of the function $h$ are obtained by
   direct analogy with those given at the points i, ii, iii, iv
   in~\S\ref{subsubsec:profile} but with the point {\em
     a} replaced by the three conditions $\textnormal{a}^{\prime},
   \textnormal{a}^{\prime \prime}, \textnormal{a}^{\prime \prime
     \prime}$ above.  In particular, the set $W_{i}^{-}$ now
   becomes $W_{i}^{-}=(-\infty, -a_{U}-1]\times
   [i-\epsilon,i+\epsilon]$ and $T_{i}^{-}=(-\infty,-a_{U}-1]\times
   [i-\epsilon/2, i+\epsilon/2]$.  From this point on, the
   construction continues along the same approach as in
\S\ref{subsubsec:Fuk-cob}. In particular, the
   properties of the family $\Theta$ and those of $\mathbf{J}$ are
   just the same as properties \emph{a,b,c}
 in~\S\ref{subsubsec:perturb} but they are relative to sets
     $K_{V_{1},\ldots, V_{k+1}}$ that satisfy different requirements
     compared to those in~\S\ref{subsubsec:perturb}.

   We now discuss the two properties required of $K_{V_{1},\ldots,
     V_{k+1}}$. We start by underlining that, because we care here about a module structure,
     while $V_{1},\ldots,
   V_{k}$ are elements of $\mathcal{L}^{\ast}(E)$, $V_{k+1}$ is either
   an element of $\mathcal{L}^{\ast}(E)$ or $V_{k+1}=W'$.  Further, we
   fix small disks $D_{ij}\subset \C$ of radius smaller than
   $\frac{1}{8}$ that are respectively centered at the points
   $(o'_{i}, j)$, $i=0,\ldots, s-1$, $j\in \{1,\ldots,
   s_{V_{1},\ldots, V_{k+1}}\}$.  We denote by $D'_{ij}\subset D_{ij}$
   the disk with the same center but with radius half of that of
   $D_{ij}$.  Recall, that $s_{V_{1},\ldots, V_{k+1}}$ is the smallest
   $l\in \N$ so that $\pi (V_{1}\cup V_{2}\cup \ldots \cup
   V_{k+1})\subset [\frac{1}{2},l)$. We also pick a compact set
   $Z\subset \R\times (-\infty, -\frac{1}{4}]$ which contains in its
   interior $\pi (W') \cap \R\times (-\infty,-\frac{1}{2}]$ (recall
   that $W'$ is cylindrical outside $\pi^{-1}(\R\times
   (-\infty,-\frac{1}{2}]$) as well as a slightly bigger set
   $Z'\subset \R\times (-\infty, -\frac{1}{4}]$.  We require:
   \begin{equation} \label{eq:cpct1} K_{V_{1},\ldots, V_{k+1}}\supset
      \cup_{i,j} D'_{ij}\cup [-a_{U}-\frac{11}{4}, a_{U}+ \frac{7}{4}]
      \times [\frac{1}{4}, s_{V_{1},\ldots, V_{k+1}}+1] \cup Z~.~
   \end{equation}
   and
   \begin{equation}\label{eq:cpct2}
      K_{V_{1},\ldots, V_{k+1}}\subset \cup_{i,j} D_{ij}
      \cup [-a_{U}-\frac{13}{4}, a_{U}+2)\times 
      [\frac{1}{8},+\infty)\cup Z'~.~
   \end{equation}
   
   We now will see that this class of perturbation data is sufficient
   to insure the regularity and the compactness of the moduli spaces
   appearing in the definition of the category $\fuk^{\ast}(E)$ and of
   the $\fuk^{\ast}(E)$-module $W'_E$. In the next section we will use
   these specific perturbations to extract the exact triangles claimed
   in the statement.
 
   Let $u:S_{r}\to E$ be a solution of~\eqref{eq:jhol1} that satisfies
   the boundary and asymptotic conditions required to define the
   multiplications $\mu_{k}$ for $\fuk^{\ast}(E)$ or for the
   definition of the module $W_E$. In the first case the boundary
   conditions are along cobordisms $V_{1}, \ldots, V_{k+1}$ ($V_{i}\in
   \mathcal{L}^{\ast}(E)$, in particular, $V_{i}$ projects on the
   upper half plane). In the second case, the curve is defined on a
   punctured polygon so that the component $C_{i}$ of the boundary of
   the polygon is mapped to $V_{i}$ for $1\leq i\leq k$ and the
   $k+1$-th component $C_{k+1}$ is mapped to $W'$.

   By the change of variables in~\S\ref{sbsb:nat-transf}, (and by
   taking $h$ sufficiently small) we deduce that there exists some
   small $\delta >0$ so that if $u:S_{r}\to E$
   satisfies~\eqref{eq:jhol1} with the choice of perturbation data as
   just above and if $v:S_{r}\to E$ is defined by
   $u(z)=\phi^{\bar{h}}_{a_{r}(z)}(v(z))$, then $v'=\pi\circ v$ is
   holomorphic outside of the set
   \begin{equation} \label{eq:region-K-hat} \widehat{K}=\cup_{i,j}
      D''_{ij}\cup [-a_{U}-\frac{13}{4}-\delta, a_{U}+2 +\delta]\times
      [\frac{1}{8}-\delta,+\infty)\cup Z'',
   \end{equation}
   where $D''_{ij}$ is a disk with the same center as $D_{ij}$ but
   slightly bigger and, similarly, $Z''$ is a set slightly bigger than
   $Z'$ - see Figure \ref{fig:boxes}.
   \begin{figure}[htbp]
      \begin{center}
         \includegraphics[scale=0.8]{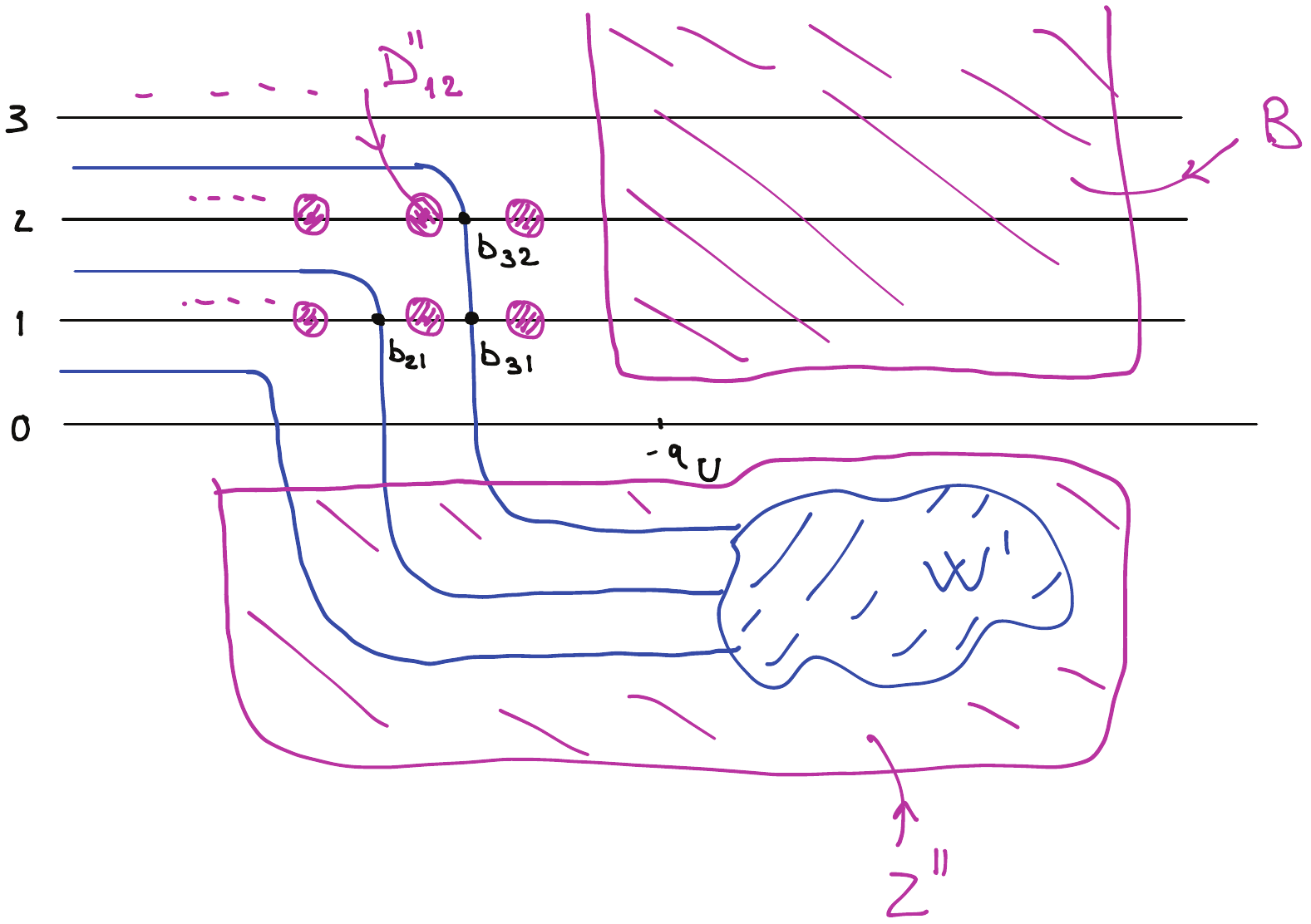}
      \end{center}
      \caption{The set $\widehat{K}$ outside which $v'$ is holomorphic
        is the union of all the regions in pink: the disks $D''_{ij}$,
        the box $$B=[-a_{U}-\frac{13}{4}-\delta, a_{U}+2
        +\delta]\times [\frac{1}{8}-\delta,+\infty)$$ and the
        neighborhood $Z''$ of the non-cylindrical part of $\pi(W')$.
        Are also pictured the points $b_{ij}$. Here $s=3$.  The
        non cylindrical part of the cobordisms $X\in
        \mathcal{L}^{\ast}(E)$ projects inside $B$.
        \label{fig:boxes}}
   \end{figure}
   In view of this transformation, compactness for the relevant moduli
   spaces follows without difficulty by the usual bottleneck argument \S 3.3 \cite{Bi-Co:lcob-fuk}.
   Thus, the only issue that requires some attention is regularity.
   Denote $$K'=\cup_{i,j} D'_{ij}\cup [-a_{U}-\frac{11}{4}, a_{U}+
   \frac{7}{4}] \times [\frac{1}{4}, s_{V_{1},\ldots, V_{k+1}}+1]\cup
   Z~.~$$ Given that $K'\subset K_{V_{1},\ldots, V_{k}}$, the
   perturbation data can be chosen freely over $K'$ and
   thus, for all moduli spaces consisting of curves whose image
   intersects $\pi^{-1}(K')$ regularity can be handled in the standard
   fashion as in \cite{Se:book-fukaya-categ}. Therefore, we are left
   to analyze the curves $u:S_{r}\to E$ so that $\pi(u)$ has an image
   disjoint from $K'$. Assume first that $u$ appears in the definition
   of the higher structures of $\fuk^{\ast}(E)$.  In this case, the
   condition $\pi^{-1}(K')\cap Image(u)=\emptyset$ implies that all
   the boundary of $u$ projects onto $\C$ along a single line
   $(-\infty, -a_{U}-2]\times\{j\}$. Given that $(o'_{i},j)\in K'$, it
   follows that the image of $\pi(u)$ can not cross any of the points
   $(o'_{i},j)$, nor can it have one of these points as asymptotic
   limit. As a consequence, the asymptotic limits of $u$ have to
   project to just one of the points $(o_{i},j)$. But by now taking a
   look to $v'$ which is holomorphic around $(o_{i},j)$ one sees
   immediately that $v'$ and thus $\pi(u)$ has to be constant (indeed,
   $(o_{i},j)$ can not be the exit point of $v'$ by an application of
   the open mapping theorem).  The second possibility to consider is
   if $u$ appears in the definition of the module structure of $W'_E$.
   It is immediate, in this case too that $\pi^{-1}(K')\cap
   Image(u)=\emptyset$ implies that all asymptotic limits of $u$
   coincide with a single point $b_{ij}$ (which is, of course, also of
   the from $(o_{i}, j)$).  It is easy to see by an application of the
   open mapping theorem that in this case $\pi(u)$ has again to be
   constant. To conclude this argument, the only moduli spaces for
   which regularity is in question consist of curves $u$ so that
   $\pi(u)$ is constant equal to one of the point $(o_{i},j)$. That
   means that these curves take values in the fiber over $(o_{i},j)$
   and, because $o_{i}$ is a local maximum of $h_{-}$, one can see, as
   in \S 4.2 \cite{Bi-Co:lcob-fuk} that by picking regular data in the
   fiber these moduli spaces are regular too.

   Thus the regularity of all the moduli spaces involved can be
   achieved by generic choices of data. We work from now on with such
   data associated to the ``snaky'' perturbations constructed at this
   step.
  
   \

   \noindent \textbf{Step 3:} {\em The proof of (\ref{eq:cone-dec}).}
   We will show now that there is a sequence of
     $\fuk^{\ast}(E)$-modules $\widetilde{L}_i$, $W'_{E,i}$, $i=1,
     \ldots, s$, with $W'_{E,i}$ being submodules of $W'_E$, so that:
   \begin{itemize}
     \item[i.] $W'_{E,1} = 0$, $W'_{E, s}=W'_E$ and for $i\geq 2$
      there exist exact sequences of $\fuk^{\ast}(E)$-modules
      $$0\to W'_{E,i-1}\to W'_{E,i}\to \widetilde{L}_{i}
      \to 0$$
     \item[ii.] there exists a quasi-isomorphism of
      $\fk^{\ast}(E)$-modules $$\widetilde{L}_{i} \simeq
      \mathcal{Y}(\gamma_i \times L_i),$$ where $\mathcal{Y}$ is the
      Yoneda embedding for $\fk^{\ast}(E)$.
   \end{itemize}
 
 These points immediately imply the statement of
 Proposition~\ref{lem:decomp-remote}. We now proceed to the
 construction of $W'_{E,i}$ and to prove the points i, ii above.

 Let $X\in \mathcal{L}^{\ast}(E)$ and let $W'$ be the remote cobordism
 as discussed at the first step.  We now assume ``snaky''
 perturbations picked as described at the second step. In particular,
 the complex $CF(X,W')$ is well defined. The generators of this
 complex are identified with the intersection $X\cap
 (\phi_{1}^{\bar{h}})^{-1}(W')$. Notice that due to the choice of
 snaky perturbations $\pi(X\cap
 (\phi_{1}^{\bar{h}})^{-1}(W'))=\pi(X\cap
 (\phi_{1}^{\bar{h}})^{-1}(W'))=\{b_{rs}\}_{r,s}$ see
 Figure~\ref{fig:perturbW}.
   \begin{figure}[htbp]
      \begin{center}
         \includegraphics[scale=0.8]{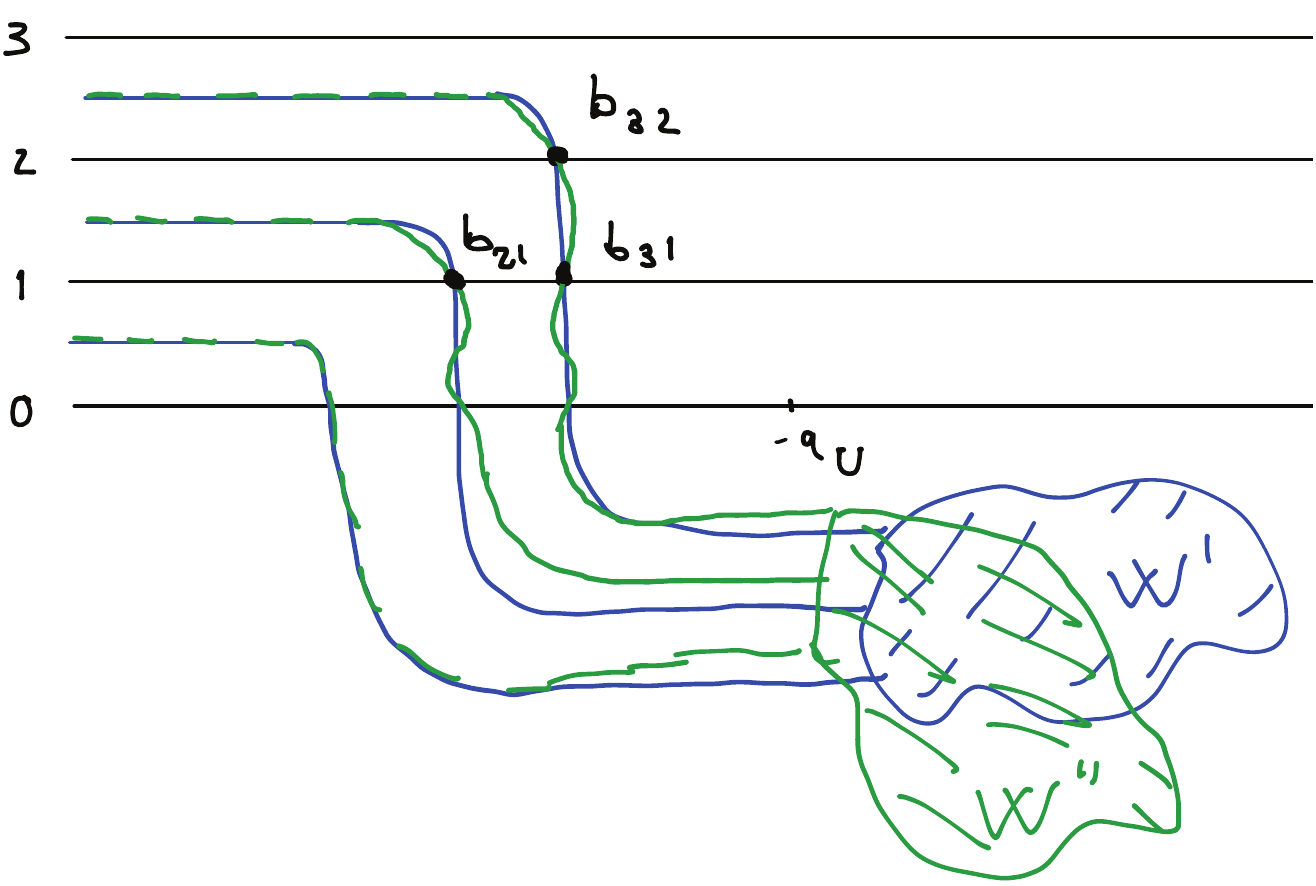}
               \end{center}
      \caption{The cobordism $W'$ and its perturbation
        $W''=(\phi^{\bar{h}}_{1})^{-1}(W')$. \label{fig:perturbW}}
   \end{figure}
   We now put
   $$P_{rs}(X)= X\cap (\phi_{1}^{\bar{h}})^{-1}(W') \cap \pi^{-1}(b_{rs})$$
   and we define $$W'_{E,i}(X)=\mathcal{A}\langle \cup_{1\leq r\leq i;
     s<r} P_{rs} \rangle \subset CF(X,W')~.~$$ In other words, the
   generators of $W'_{E,i}(X)$ are the intersection points of $X$ with
   the first $i$ branches of the $W'$. It is clear from the
   construction that $W'_{E,1}=0$ and that $W'_{E,s}=W'_E$.  We will
   show now that, for each $1\leq i \leq s$, the structural maps
   $\mu_{k}$ of $W'_E$ when restricted to $W'_{E,i}$ have values into
   $W'_{E,i}$. In other words
   \begin{equation}\label{eq:restr}
      \mu_{k}|_{W'_{E,i}} : 
      CF(V_{1}, V_{2})\otimes \ldots \otimes CF(V_{k-1}, V_{k}) 
      \otimes W'_{E,i}(V_{k}) \to W'_{E,i}(V_{1}) ~.~
   \end{equation}
   This property immediately implies that the $W'_{E,i}$ are indeed
   $A_{\infty}$-modules and moreover that the inclusions of vector
   spaces $W'_{E,i-1}(-)\subset W'_{E,i}(-)$ are actually inclusions
   of $\fuk^{\ast}(E)$-modules. The modules $\widetilde{L}_{i}$
   defined as the respective quotients.  With these definition for
   $W'_{E,i}$ and assuming~\eqref{eq:restr},  point ii follows because the quotient
   $\widetilde{L}_{i}$ is naturally identified (up to quasi-isomorphism) with $\mathcal{Y}(\gamma_{i}\times L_{i})$. In summary, to conclude the proof of the proposition
   it remains to show~\eqref{eq:restr}.

 Our argument is based on properties of the curve $v'=\pi(v)$
 where $v$ is related to a curve $u:S_{r}\to E$ by equation
 (\ref{eq:nat-v}) and $u$ is a solution of (\ref{eq:jhol1})
 contributing to the module structural map $\mu_{k}$. Here $S_{r}$ is
 the disk with $k+1$ boundary punctures, of which $k$ are the entries
 and the last one is an exit puncture. The last entry, denoted $m$, is
 the ``module'' entry and is asymptotic to a generator of $CF(V_{k-1},
 W'_{E,i})$. The exit, denoted $e$, is asymptotic to a generator of
 $CF(V_{1}, W'_{E,i})$. 
 
 \
 
 We will make the following simplifying
 assumption: we assume that the transition
 functions used in the definition of moduli spaces associated to the
 module operations are so that:
 \begin{equation}\label{eqn:1-on-last} a_{r}(z)=1 \  \ \ \forall z\in
 C_{k+1}, 
 \end{equation}
 where $C_{k+1}$ is the component of the boundary of the punctured
 disk $S_{r}$ that joins $m$ to $e$. (See Figure~\ref{fig:transition}
 for an illustration of the case $k=3$, where $C_{4}$ bounds both
 $\epsilon_{3}$ and $\epsilon_{4}$.)  In other words we use transition
 functions as in \S\ref{subsubsec:transition} except that we add
 (\ref{eqn:1-on-last}) and we modify conditions {\em i. c} and {\em
   ii. c'} in \S\ref{subsubsec:transition} such as to no longer
 require $a_{r}\circ \epsilon (s,t)=0$ for $(s,t)\in \{0\}\times
 [0,1]$ for $\epsilon$ for the strip like ends associated to $m$ and
 to $e$.  By imposing (\ref{eqn:1-on-last}) {\em just to the moduli
   spaces appearing in the definition of modules} over
 $\fuk^{\ast}(E)$ (and not to those defining the multiplication in
 $\fuk^{\ast}(E)$ itself) we easily see that, on one hand, condition
 (\ref{eqn:1-on-last}) is compatible with gluing and splitting and,
 moreover, it does not contradict the definition of the operations
 in $\fuk^{\ast}(E)$ itself.  At the same time, this means that we get
 two presumptive definitions for the Yoneda modules of objects in
 $\fuk^{\ast}(E)$: one using the conditions in
 \S\ref{subsubsec:transition} and the other making use of
 (\ref{eqn:1-on-last}).  However, it is easy to see that the two
 resulting modules are quasi-isomorphic and thus our simplifying
 condition does not affect any further arguments.
 
 \
 
 The geometric advantage of this simplifying assumption on $a_{r}$ is
 that $v$ no longer satisfies a moving boundary condition along
 $C_{k+1}$, rather $v$ maps all of $C_{k+1}$ to
 $W''=(\phi^{\bar{h}}_{1})^{-1}(W')$. We also remark that, by the
 definition of $h$, and the position of $\pi(W')$ relative to the ends
 of cobordisms $\in \mathcal{L}^{\ast}(E)$ - as in Figure
 \ref{fig:perturbW} - we have that $W''$ is just a close perturbation
 of $W'$ and $\pi(W'')$ intersects the horizontal lines of positive,
 integral imaginary coordinates transversely and in the same points as
 $\pi(W')$.

   Our claim (\ref{eq:restr}) reduces to showing that if
   $v'(m)=b_{\alpha\beta}$ and $v'(e)=b_{rs}$, then $r\leq \alpha$.
 
   We first fix some notation relative to certain regions in
   $Q_{U}^{-}$. First we denote by $F$ the region given as
   $$F=\bigcup_{0\leq t\leq 1, \, j\in\Z} \phi^{h}_{-t}( (-\infty,
   -a_{U}]\times \{j\})\cup W''~.~$$ In short, $F$ is the set swiped
   by all the potential boundary conditions of the curves $v'$.
   Further, we denote $\widehat{F}=F\cup \widehat{K}$
   (see~\eqref{eq:region-K-hat}) and we put $G=\C \setminus
   \widehat{F}$ - see Figure~\ref{fig: hol-region}.

   \begin{figure}[htbp]
      \begin{center}
  \includegraphics[scale=0.8]{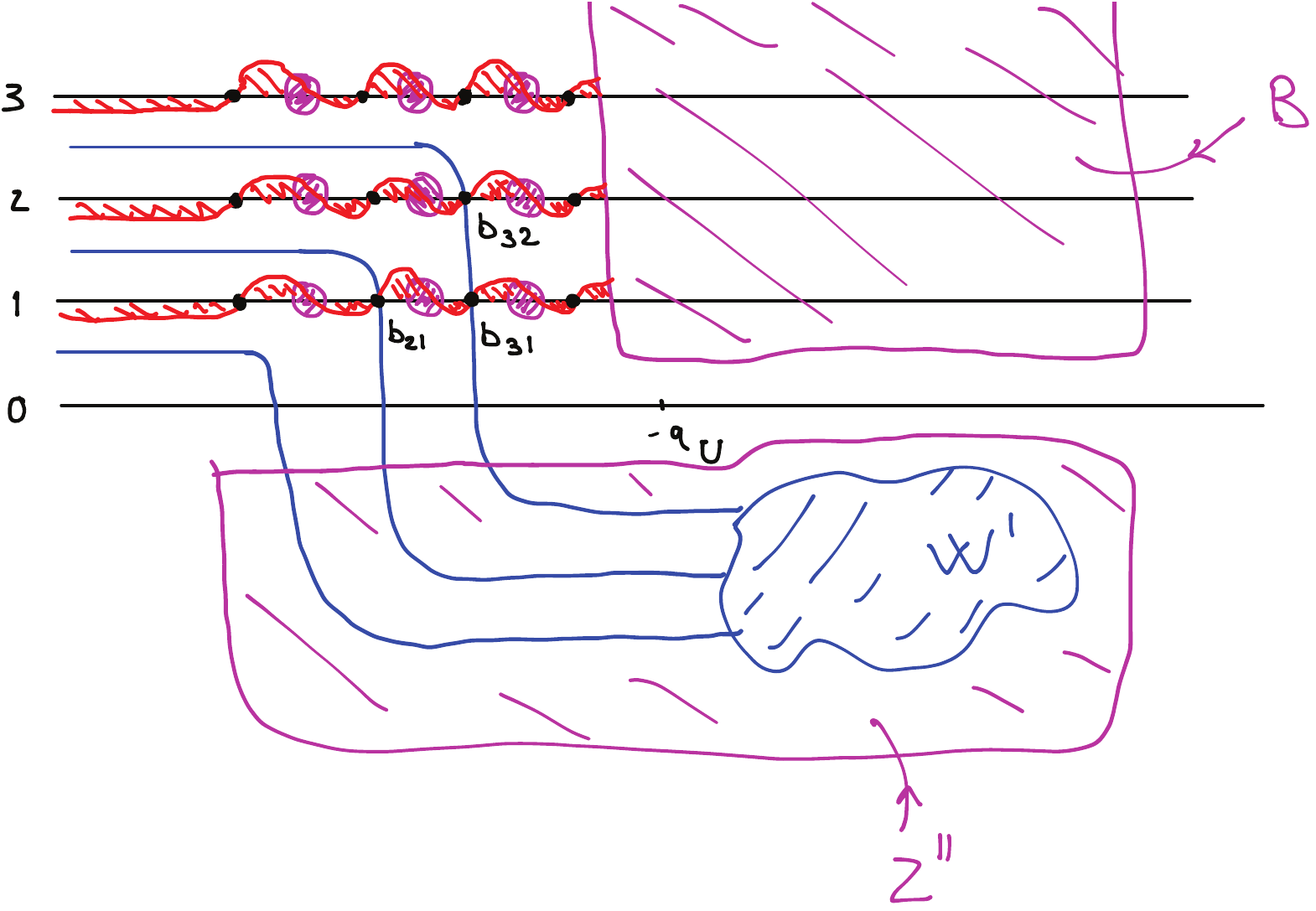}
      \end{center}
      \caption{The region $\widehat{F}$ is the union of $\widehat{K}$
        (the union of all the pink regions) and $F$ (the region in
        red). \label{fig: hol-region}}
   \end{figure}
   From step 2 we know that $v'$ is holomorphic over $G$ and clearly,
   the boundary of $S_{r}$ is so that $v'(\partial S_{r})\cap
   G=\emptyset$.  It is an elementary fact (see for instance
   Proposition 3.3.1 in \cite{Bi-Co:lcob-fuk}) that as soon as $Image(v')$
   intersects a connected component of $G$, the full component has to
   be contained in $Image(v')$. In particular, this means that
   $Image(v')$ can not intersect an unbounded component of $G$.
 
   Each point $b_{ij}$ is in the closure of four components of $G$
   that meet, basically, as four quadrants at $b_{ij}$.  Our argument
   will make use of the following:

   \begin{lem} \label{lem:quadr-ext} Suppose that $b_{ij}$ is
      different from both $v'(e)$ and $v'(m)$ and that the component
      corresponding to the fourth quadrant at $b_{ij}$ is in the image
      of $v'$, then at least one among the first or third quadrants
      are also in the image of $v'$.
   \end{lem}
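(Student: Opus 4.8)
The plan is to prove the lemma by producing a point of $S_{r}$ near which $v'$ is non-constant and holomorphic and whose local image, together with the fourth--quadrant component, forces one of the first or third quadrant components into $Image(v')$. First I would observe that $b_{ij}$ lies in the closure of $Image(v')$: this is immediate since the fourth--quadrant component is contained in $Image(v')$ and has $b_{ij}$ on its boundary. Then, using Gromov compactness for the curves $u$ (equivalently for $v$), I would produce a limit point $z_{*}\in\overline{S_{r}}$ of a sequence $z_{n}$ with $v'(z_{n})\to b_{ij}$ and $v'(z_{n})$ in the fourth quadrant. The first case to dispose of is $z_{*}$ being a puncture: at the module entry $m$ or the exit $e$ this would force $b_{ij}\in\{v'(m),v'(e)\}$, which is excluded by hypothesis, while at a remaining entry puncture it would force $v'$ to be asymptotic there to a Hamiltonian chord projecting onto $b_{ij}$, a situation I would handle together with the boundary case below.

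When $z_{*}$ is not a puncture one has $v'(z_{*})=b_{ij}$. Here I would use that, by the choice of ``snaky'' perturbation data, $b_{ij}\notin\widehat{K}$ (see Figure~\ref{fig:boxes}), so $v'$ is holomorphic on a neighbourhood of $z_{*}$; moreover $v'$ cannot be locally constant equal to $b_{ij}$ near $z_{*}$, since $v'(z_{n})$ lies in the open fourth--quadrant component, which does not contain $b_{ij}$. If $z_{*}$ is interior, the open mapping theorem puts a full neighbourhood of $b_{ij}$ in $Image(v')$, hence all four quadrant components, and we are done. If $z_{*}\in\partial S_{r}$, then near $z_{*}$ the boundary of $S_r$ maps into a single curve $\gamma$ — either (a close perturbation of) the vertical branch of $\pi(W'')$ through $b_{ij}$, or (a close perturbation of) the horizontal end through $b_{ij}$ — and these two curves meet transversally at $b_{ij}$, cutting a small disk into the four sectors. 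By the Schwarz reflection principle $v'$ extends holomorphically across $\gamma$, so in suitable local holomorphic coordinates it has the form $w\mapsto b_{ij}+c\,w^{k}+O(w^{k+1})$ with $c\neq 0$, $k\geq 1$; for $k\geq 2$ the image of a half--disk at $z_{*}$ already covers a punctured neighbourhood of $b_{ij}$, and for $k=1$ it covers a half--neighbourhood of $b_{ij}$ lying on one definite side of $\gamma$. A half--neighbourhood of $b_{ij}$ bounded by the vertical curve meets the first and fourth, or the second and third, sectors, and one bounded by the horizontal curve meets the first and second, or the third and fourth, sectors — in every case it meets the first or the third quadrant component, which then lies entirely in $Image(v')$ by the ``full component'' property (Proposition~3.3.1 in \cite{Bi-Co:lcob-fuk}).

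It remains to treat the leftover puncture case, where $z_{*}=\epsilon_{\ell}$ is an entry puncture whose asymptotic chord projects onto $b_{ij}$. Near $\epsilon_{\ell}$ the image of $v'$ is a sector at $b_{ij}$ bounded by the two (nearly horizontal) ends of $V_{\ell}$ and of $(\phi^{\bar h}_{1})^{-1}(V_{\ell+1})$, which cross transversally at $b_{ij}$; here I would use that, because $h_{-}$ has a \emph{local maximum} at the corresponding bottleneck, the perturbed end crosses the unperturbed one with positive slope, so that this sector — whichever of the left, right, upper or lower one it is — always meets the first or the third quadrant component. Combining all cases, $Image(v')$ always meets one of those two components, and again contains all of it, which is the assertion of the lemma.

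The main obstacle is not any single estimate but the geometric bookkeeping around $b_{ij}$: identifying exactly which curves pass through $b_{ij}$, deciding on which side of $\gamma$ the local image of $v'$ sits in the boundary case, and pinning down the sign of the slope of the perturbed ends at the bottleneck — all of which rest on the detailed shape of the snaky profile function fixed in Step~2 and on the locations of $\widehat{K}$ and of the $b_{ij}$ recorded there.
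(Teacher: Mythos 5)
Your approach shares the paper's basic mechanism — locate $z_*\in\overline{S_r}$ with $v'(z_*)=b_{ij}$ and exploit local holomorphicity of $v'$ with controlled boundary — but it follows a bulkier route and the extra work exposes genuine gaps. The paper restricts from the start to a segment $I\subset\pi(W'')$ approaching $b_{ij}$ from inside the fourth quadrant, and this is not cosmetic: since only $C_{k+1}$ is sent by $v'$ into $\pi(W'')$ (the arcs $C_i$, $i\le k$, land in the $\phi^h_{-a_r(z)}$-images of the $\pi(V_i)$'s, which near $b_{ij}$ meet $\pi(W'')$ only at the single point $b_{ij}$), the boundary case in the paper can only put $z_*$ on $C_{k+1}$, where, by the simplifying assumption~(\ref{eqn:1-on-last}), the boundary condition is the fixed and exactly vertical — hence real-analytic — branch of $\pi(W'')$. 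You instead pick an arbitrary sequence with $v'(z_n)\to b_{ij}$ from inside the fourth-quadrant component, so $z_*$ can also fall on $C_i$ with $i\le k$ or on an entry puncture $\epsilon_\ell$ with $\ell\ne m$, and these are exactly the cases the paper was set up to avoid.

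Those extra cases are where the gaps sit. First, for $z_*\in C_i$ with $i\le k$ the boundary condition is the moving one of~(\ref{eq:mov-bdry}): the assertion that ``near $z_*$ the boundary of $S_r$ maps into a single curve $\gamma$'' is false in the transition region of $a_r$, so the Schwarz reflection and the local normal form $w\mapsto b_{ij}+cw^k+\cdots$ do not apply as stated; and even where $a_r$ is locally constant, $\phi^h_{-a}(\pi(V_i))$ for $a\ne 0$ is smooth but generally not real-analytic, so literal reflection is still not available — you would need a similarity-principle or local-degree argument instead. Second, in your leftover-puncture case the two boundary curves at $b_{ij}$ are $\pi(V_\ell)$ (horizontal) and $\phi^h_{-1}(\pi(V_{\ell+1}))$ (slope $-h_-''>0$ at the local maximum of $h_-$); the four sectors these cut out are \emph{not} the lemma's four quadrants, which are cut by the horizontal and by the \emph{vertical} branch of $\pi(W'')$, so ``whichever of the left, right, upper or lower one it is'' is the wrong bookkeeping. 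The conclusion does hold, but for a reason you leave entirely implicit: the asymptotic sector that touches the fourth-quadrant component of $G$ is the one running from angle $\pi+\arctan(-h_-'')$ to $2\pi$, and it also contains the whole third-quadrant component of $G$ precisely because the sliver has positive slope — a short computation that needs to be spelled out. The paper's choice of $I\subset\pi(W'')$ removes both issues at once; your route would need both of them repaired before it is a proof.
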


   For an illustration of the statement of the Lemma take a look at
   Figure \ref{fig:lemma-arg} and the point $b_{42}$ there. The claim
   of the Lemma is that if the green region having $b_{42}$ in its
   boundary is included in $Image(v')$, then one of the yellow regions
   next to $b_{42}$ is also contained in this image.

   \begin{proof}[Proof of Lemma~\ref{lem:quadr-ext}]
      Consider a small segment $I\subset \pi(W'')$ that ends
      up at $b_{ij}$ and is included in the closure of the fourth
      quadrant (the quadrants here are defined by the vertical and horizontal lines in
      Figure \ref{fig:lemma-arg}).  We have $I \subset Image(v')$. Let $x\in I$.  If $x$
      is the image of a point $z\in Int(S_{r})$, then, by the open
      mapping theorem, the image of $v'$ also intersects the third
      quadrant which implies our claim. Thus it is sufficient to
      consider the case when all the points of $I$ are in the image of
      boundary points of $S_{r}$. The only boundary component that is
      mapped to $W''$ is $C_{k+1}$ so that $I\subset v'(C_{k+1})$.
      Moreover, as $b_{ij}$ is not the asymptotic image of the ends of
      $C_{k+1}$, it follows that $b_{ij}\in v'(C_{k+1})$. Let $z\in
      C_{k+1}$ so that $v'(z)=b_{ij}$. As shown at step 2, $v'$ is
      holomorphic outside of $\widehat{K}$ and thus, in particular,
      around $b_{ij}$. Given that (around $b_{ij}$) $v'(C_{k+1})$ is
      contained in the vertical line through $b_{ij}$ and, due to the
      bottleneck structure around $b_{ij}$, the open mapping theorem
      implies that $Image(v')$ intersects the region of $G$
      corresponding to the first quadrant and ends the proof of the
      lemma.
   \end{proof}
      
   \begin{figure}[htbp]
      \begin{center}
         \includegraphics[scale=0.8]{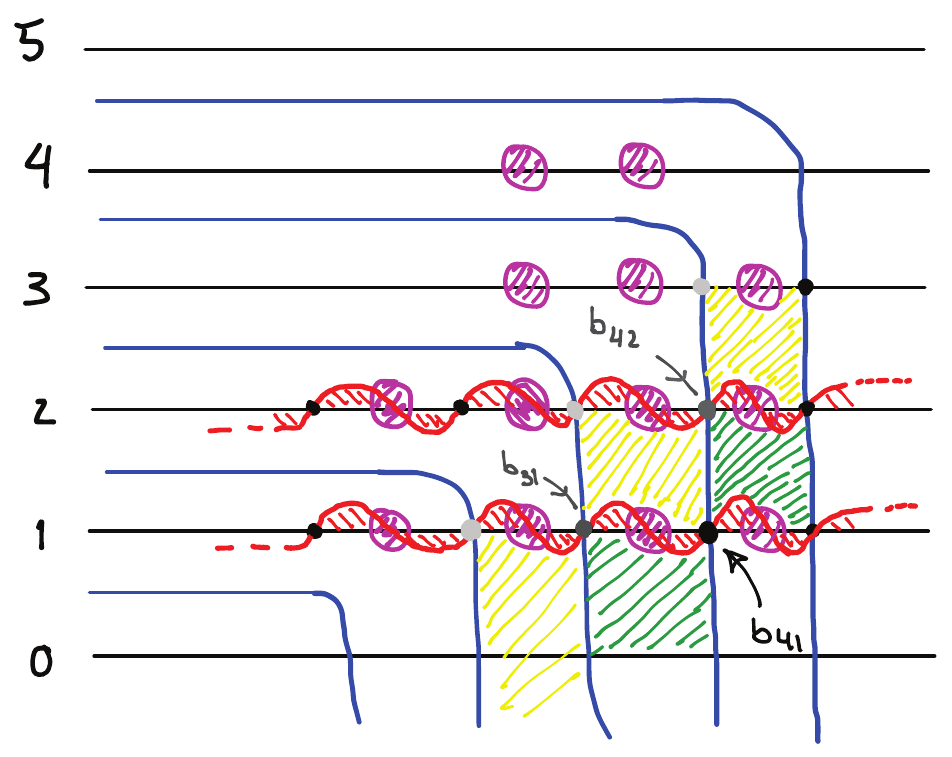}
      \end{center}
      \caption{We take here $s\geq 5$ and in blue are the projections
        of the ends of $W''$.  Assume $v'(m)=b_{41}$ and suppose
        $v'(e)=b_{rs}$ with $r\geq 4$; $v'$ exits $b_{41}$ through one
        of the green regions which is therefore included in
        $Image(v')$; Lemma \ref{lem:quadr-ext} applied to $b_{42}$ and
        $b_{41}$ shows that one of the yellow regions $\subset
        Image(v')$; by applying again Lemma \ref{lem:quadr-ext} to one
        of the upper left corners of the yellow regions - in light
        gray - we get that an unbounded region of $G$ is contained in
        $Image(v')$. Thus, we reach a contradiction in three steps.
        \label{fig:lemma-arg}}
   \end{figure}

   We return to the proof of the proposition and we recall
   $v'(m)=b_{\alpha\beta}$, $v'(e)=b_{rs}$.  Assume that $r>\alpha$.
   As $m$ is an entry point, for orientation reasons, $Image (v')$ has
   to contain at least one of the first or third quadrants at
   $b_{\alpha\beta}$. In both cases, the upper left corner of the
   respective quadrant, that we denote by $b_{i_{1}j_{1}}$, is so that
   $i_{1}\leq \alpha$.  Thus Lemma \ref{lem:quadr-ext} can be applied
   to $b_{i_{1}j_{1}}$ and it implies that the first or third quadrant
   at $b_{i_{1}j_{1}}$ is contained in $Image(v')$. Let
   $b_{i_{2}j_{2}}$ be the upper left corner of the respective
   quadrant. We have $i_{2}\leq i_{1}$.  This process can be pursued
   recursively, thus getting a sequence of points $b_{i_{1}j_{1}},
   b_{i_{2}j_{2}},\ldots $ and associated quadrants $\subset
   Image(v')$ by picking at each step the upper left corner of a
   quadrant obtained from Lemma \ref{lem:quadr-ext} applied to the
   previous point in the sequence. This process continues till one the
   quadrants in question is an unbounded region. But this contradicts
   the fact that the image of $v'$ can not intersect such a region.
   See Figure \ref{fig:lemma-arg} for an illustration of this
   argument.
\end{proof}


\subsection{Disjunction via Dehn twists}
\label{subsec:Dehn-disj}

This subsection is purely geometric in nature and is of independent
interest.  Monotonicity assumptions are not required in this part.
The main purpose here is to show that certain Dehn twists of a
cobordism are Hamiltonian isotopic to remote cobordisms and therefore
can be decomposed by means of Proposition~\ref{lem:decomp-remote}.
The idea is the following. Given a cobordisms $V \subset E$, we first
add specific singularities to $E$ (with critical values in the lower
half plane) so that we can join each initial singularity $x _{i}$ of
$E$ to one of the ``new'' ones, $x'_{i}$, by a matching cycle $S_{i}$.
We then show that, with appropriate choices for the matching cycles
and the other elements of the construction, the iterated Dehn twist
$\tau_{S_{m}}\circ \ldots \circ \tau_{S_{i}}\circ \ldots\circ
\tau_{S_{1}}$ transforms $V$ into a remote cobordism $V'$.

\subsubsection{The case of a single singularity} We start with the
core of the geometric argument.  This appears in the case of a
fibration with a single singularity.

Fix $S\subset M$, a framed (or parametrized) Lagrangian sphere. We use
Seidel's terminology here~\cite{Se:long-exact, Se:book-fukaya-categ}
so that this means $S$ is Lagrangian and that we fix a parametrization
$e:S^{n}\to S$. Consider a Lefschetz fibration $\pi: E\to \C$ which is
tame outside $U\subset \R\times [\frac{1}{4},+\infty)\subset \C$ and
with a single singularity $x_1$ so that the vanishing cycle
corresponding to $x_1$ coincides with $S$. (Note that since there is
only one singularity here there is a canonical hamiltonian isotopy
class of vanishing cycles in the fibers over $\mathbb{C} \setminus
U$.) We will assume that the singularity has critical value $v_{1}=
(1,\frac{3}{2})$.  Fix also a negatively ended cobordism $V \subset E$
with ends $L_{1},L_{2},\ldots, L_{s}$.

For the construction described below it is useful to refer to
Figure~\ref{fig:complex-thimble} (which contains also details that
will be relevant only later on). We will make use of an auxiliary
Lefschetz fibration $\hat{\pi}:\hat{E}\to \C$ that coincides with $E$
over the upper half plane and that has an additional critical point
$x'_{1}$ with corresponding critical value $v'_{1}=(-1,-\frac{3}{2})$
and a matching cycle $\hat{S}_{\gamma}\subset \hat{E}$ that projects
onto $\C$ to a path joining $v'_{1}$ to $v_{1}$. More precisely,
$\hat{E}$ has the following properties.  The fibration $\hat{E}$ is
tame outside a set $\hat{U}$ (as pictured in
Figure~\ref{fig:complex-thimble}),
$\hat{U}\subset (-\infty, a_{\hat{U}}]\times [-K, +\infty)$. Moreover,
let $D$ be a disk around $v'_{1}$ that is included in the lower half
plane but is not completely included in $\hat{U}$.  Let
$v_{0}\in\partial D\setminus \hat{U}$.  Fix also a path $\gamma$ that
joins $v_{1}$ to $v_{0}$.  Denote by $T_{\gamma}$ the thimble
originating at $x_{1}$ and whose planar projection is $\gamma$.  The
boundary of $T_{\gamma}$ is identified to the vanishing cycle $S$ and,
as subset in $\pi^{-1}(v_{0})$, we denote it by $S_{0}$.  The
fibration $\hat{\pi}:\hat{E}\to \C$ is such that it admits the sphere
$S_{0}$ as vanishing cycle also relative to the singularity $x'_{1}$.
If we extend the curve $\gamma$ to a curve (that we will continue to
denote by $\gamma$) that joins $v_{1}$ to $v'_{1}$ this is covered by
a matching cycle $\hat{S}_{\gamma}\subset \hat{E}$. Given that $E$ is
trivial over the lower half-plane, the construction of $\hat{E}$
follows directly from the constructions in \S 16,
\cite{Se:book-fukaya-categ}.

For further use, we now fix another thimble $T$ originating at $x_{1}$
and whose projection is the vertical half-line $\{1\}\times
[\frac{3}{2},\infty)$.

\begin{prop}\label{lem:multiple-surgery} There exists a curve
   $\gamma$, depending on $V$, and a framed Lagrangian sphere $S'$ in
   $\hat{E}$, hamiltonian isotopic to the matching sphere
   $\hat{S}_{\gamma}$ so that the Lagrangian $V'=\tau_{S'} V$ is
   disjoint from $T$ and the intersection $V'\cap S'$ is contained in
   $D$.
\end{prop}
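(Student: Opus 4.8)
The plan is to realize the Dehn twist $\tau_{S'}$, for a suitably chosen matching path $\gamma$, as a \emph{fibered} symplectomorphism of $\hat{E}$ covering a half-twist of the base, and then to arrange that this half-twist sweeps the part of $\pi(V)$ that meets $\pi(T)$ down into the disk $D$. First I would recall, following Seidel~\cite[\S 16]{Se:book-fukaya-categ}, that $\tau_{\hat{S}_{\gamma}}$ has a representative $\Phi_{\gamma}$ in its Hamiltonian isotopy class which is compatible with $\hat{\pi}$, that is $\hat{\pi}\circ\Phi_{\gamma}=\phi_{\gamma}\circ\hat{\pi}$ for a compactly supported diffeomorphism $\phi_{\gamma}$ of $\C$; moreover $\phi_{\gamma}$ can be taken supported in an arbitrarily thin neighbourhood $N$ of the matching path $\hat{\pi}(\hat{S}_{\gamma})$ (the path joining $v_{1}$ to $v'_{1}$ through $v_{0}\in D$), and there it is the braid half-twist: it exchanges $v_{1}$ and $v'_{1}$, reverses the path, and carries the portion of the path of arc-length $\ell$ measured from $v_{1}$ onto the portion of arc-length $\ell$ measured from $v'_{1}$. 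The model case is the hyperelliptic picture — over a lens with two marked points the Dehn twist along the core circle covers the braid half-twist — and the only delicate point is the behaviour over the critical fibres $\hat{\pi}^{-1}(v_{1})$ and $\hat{\pi}^{-1}(v'_{1})$; this is handled as in Seidel's treatment of matching cycles, and since $V$ may be assumed after a generic Hamiltonian perturbation to miss the critical points $x_{1},x'_{1}$, any monodromy terms localised at those fibres are irrelevant below. With this in hand I would set $S'=\hat{S}_{\gamma}$ and $V'=\Phi_{\gamma}(V)$, so that $\pi(V')=\phi_{\gamma}(\pi(V))$.

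Next I would choose the path $\gamma$, depending on $V$. Since $V$ is cylindrical outside a compact set, lies over $\{\,y\ge\tfrac12\,\}$, and has its negative ends at the heights $1,\dots,s$ running off to the far left, its projection $\pi(V)$ is bounded above in height, say by $h_{0}$, and meets the vertical ray $\pi(T)=\{1\}\times[\tfrac32,\infty)$ in a compact set $C\subset\{1\}\times[\tfrac32,h_{0}]$ contained in the non-cylindrical part of $\pi(V)$. Route $\gamma$ so that: it leaves $v_{1}=(1,\tfrac32)$ vertically upward along $\pi(T)$ and rises to height $h_{0}+1$, so that a thin neighbourhood $N$ of $\gamma$ contains a neighbourhood of the whole of $\pi(V)\cap\bigl(\{1\}\times[\tfrac12,\infty)\bigr)$ while $\pi(T)\setminus N\subset\{1\}\times[h_{0}+1,\infty)$ is disjoint from $\pi(V)$; it then passes over the top of $\pi(V)$ to the right, descends the far right side beyond the non-cylindrical part of $\pi(V)$, runs along a horizontal line lying below all of $\pi(V)$, and only then enters $D$, so that $\gamma$, and $N$ if thin enough, is disjoint from every cylindrical end of $V$; and finally, inside $D$ the path is made to wind so that its terminal portion of arc-length at least $h_{0}$, measured from $v'_{1}$, lies inside $D$. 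All of this is possible because the ends of $V$ sit at the heights $1,\dots,s$ on the far left, whereas $v'_{1}$ and $D$ lie in the lower half-plane.

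The two conclusions are then immediate. Outside $N$ one has $\phi_{\gamma}(\pi(V))=\pi(V)$, which is disjoint from $\pi(T)\setminus N$; and a point of $\phi_{\gamma}(\pi(V))$ lying in $\pi(T)\cap N$ would be $\phi_{\gamma}(w)$ with $w\in\pi(V)$ forced, since $N$ is thin and $\phi_{\gamma}$ is the half-twist, to lie near the terminal near-$v'_{1}$ portion of $\gamma$, hence in the lower half-plane — impossible. Hence $\pi(V')\cap\pi(T)=\emptyset$, so $V'\cap T=\emptyset$. For the intersection with $S'$, note $\pi(V'\cap S')\subseteq\phi_{\gamma}(\pi(V))\cap\hat{\pi}(\hat{S}_{\gamma})=\phi_{\gamma}\bigl(\pi(V)\cap\hat{\pi}(\hat{S}_{\gamma})\bigr)$ since $\phi_{\gamma}$ preserves the matching path; by the routing above $\pi(V)$ meets that path only along its initial vertical segment, in the set $C$, which lies within arc-length $h_{0}$ of $v_{1}$, so its image under the half-twist lies within arc-length $h_{0}$ of $v'_{1}$, hence inside $D$. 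Thus $V'\cap S'\subset\hat{\pi}^{-1}(D)$, as required. If a transverse intersection is wanted one replaces $S'$ by a small Hamiltonian perturbation supported over $D$, which changes none of the above.

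The substantive ingredient is the first step: establishing, in the symplectic category, that $\tau_{\hat{S}_{\gamma}}$ is a fibered half-twist supported over an arbitrarily thin neighbourhood of the matching path, together with the control over the two singular fibres. Once that is granted, the rest is elementary planar bookkeeping of the projections $\pi(V)$, $\pi(T)$ and $\gamma$.
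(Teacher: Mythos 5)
Your proposal takes a genuinely different route from the paper. The paper's proof is what one might call a local geometric argument: it chooses the matching path $\gamma$ so that the matching sphere $S=\hat{S}_\gamma$ agrees with an auxiliary (Hamiltonian isotopic) thimble $\bar{T}$ near $x_1$; it reduces the disjointness claim to showing $\tau_S^{-1}(\bar{T}')\cap V=\emptyset$ where $\bar{T}'$ is the small piece of $\bar{T}$ lying in the Weinstein neighborhood of $S$ but off $S$; and it proves this using the explicit model Dehn twist on $T^*S^{n+1}$ — the ``to the left of $S$'' condition $\langle v, J\nabla h(x)\rangle>0$ forces the inverse geodesic flow to move away from the cap where $V$ meets $S$. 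You instead want to realize $\tau_{\hat{S}_\gamma}$ globally as a fibered map over a planar half-twist and then reduce everything to a routing argument about projections in $\mathbb{C}$.

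The problem is that this is precisely the approach the authors flag in the remark following \S\ref{subsubsec:multipl-surg} (item~b) and explain why they did \emph{not} use it. What is available (Lemma~18.2 of~\cite{Se:book-fukaya-categ}, not \S16) is an identification of $\tau_{\hat{S}_\gamma}$ with a fibered $180^\circ$ rotation $\sigma$ of a \emph{disk} $D$ containing the matching path, and it requires the fibration over $D$ to be constructed symmetrically (e.g.\ by reflecting $E|_{\textnormal{upper half-plane}}$ through the origin). Your claim~(*) — that $\tau_{\hat{S}_\gamma}$ is Hamiltonian isotopic to a fibered map covering a half-twist of the base \emph{supported in an arbitrarily thin neighborhood of an arbitrary matching path} $\gamma$ — is strictly stronger than that lemma in two respects: (i) the support is the whole disk $D$, not a thin neighborhood of the path, and a rotation of a disk and a thin half-twist are not the same compactly supported diffeomorphism of $\mathbb{C}$; (ii) Lemma 18.2's hypotheses tie the identification to the symmetric construction of $\hat E$, whereas your $\hat E$ is the paper's (non-symmetric) extension and your path $\gamma$ wanders over the top of $\pi(V)$ through a region where $E$ is not tame. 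You acknowledge in your last paragraph that establishing this fibered picture is the substantive ingredient, but it is not supplied by the cited reference and is not a routine consequence of it. Indeed the obstruction is geometric, not merely bibliographic: the model Dehn twist on $T^*S^{n+1}$ moves a point $(x,v)$ by geodesic flow, which in the Lefschetz picture does not map fibers of $\hat\pi$ to fibers, so the Dehn twist, as defined, is simply not fibered; one would have to produce a Hamiltonian isotopy to a fibered representative, with control on where the isotopy is supported.

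There is a second, more formal but still real gap even granting~(*). You set $S'=\hat{S}_\gamma$ and $V'=\Phi_\gamma(V)$. But $\Phi_\gamma$ is only Hamiltonian isotopic to $\tau_{\hat{S}_\gamma}$, so $V'$ is not literally $\tau_{S'}(V)$, and disjointness from $T$ is not preserved under an arbitrary Hamiltonian isotopy. The paper sidesteps exactly this issue in its Step~1 by arranging the bridging isotopy $\psi$ (from $\bar{T}$ to $T$) to satisfy $\psi(V)=V$, so that defining $S'=\psi(\hat S_\gamma)$ gives literally $\tau_{S'}(V)=\psi(\tau_{\hat S_\gamma}(V))$. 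Your argument would need an analogous statement — that the Hamiltonian isotopy from $\Phi_\gamma$ to a genuine Dehn twist can be taken supported away from $V$ and from $T$ — and this is not automatic, since both $\Phi_\gamma$ and $\tau_{\hat S_\gamma}$ are supported near $\hat S_\gamma$, a region which is deliberately chosen to meet both $V$ and $T$. In short, the planar bookkeeping is fine, but the reduction to planar bookkeeping is the hard part, and it is not justified.
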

  
\begin{proof}
      
   We start the proof by recalling the definition of the Dehn twist \cite{Ar:monodromy}
   following the conventions in ~\cite{Se:long-exact}. We begin with the model Dehn twist.
   This construction is standard in the subject but as we need the explicit definition in the following we 
   will provide some details here.
   Let $g$ be the standard round metric on $S^n$ and for $0<\lambda$
   denote by $D_{\lambda}^*S^n \subset T^*S^n$ the disk bundle
   consisting of cotangent vectors of norm $\leq \lambda$.  We have
   identified here $T^*S^n$ with $TS^n$ via the metric $g$.  Our
   conventions are such that the symplectic form on the cotangent
   bundle $T^{\ast}S^{n}$ is $dp\wedge dq$ where $q$ is the ``base''
   coordinate and $q$ is the coordinate along the fiber.

   Denote by $\psi_t: D_{\lambda}^*S^n \setminus 0_{S^n}
   \longrightarrow D_{\lambda}^*S^n \setminus 0_{S^n}$ the {\em
     normalized} geodesic flow corresponding to $g$, defined on the
   complement of the zero-section. With our conventions this flow is
   the Hamiltonian flow of the function $H(p,q)=|p|$.

   Denote by $\sigma: S^n \longrightarrow S^n$ the antipodal map. Note
   that $\psi_{\pi}$ extends to the zero-section by $\sigma$.

   Given $0<\lambda$, pick a smooth function $\rho_{\lambda}:
   \mathbb{R} \longrightarrow \mathbb{R}$ with the following
   properties:
   \begin{enumerate}
     \item $\rho(t) + \rho(-t) = 1$ for every $|t| \leq \delta$ for
      some $0< \delta< \lambda$.
     \item $supp(\rho) \subset (-\lambda, \lambda)$; $\rho (t)\geq 0 \
      ,\ \forall \ t>0$.
   \end{enumerate}
   Note that we have $\rho(0) = \tfrac{1}{2}$.

   With the above at hand we define the model Dehn twist $\tau_{S^n}:
   D^*_{\lambda} S^n \longrightarrow D^*_{\lambda} S^n$ by the formula
   \begin{equation} \label{eq:Dehn-twist-model} \tau_{S^n}(x) =
      \begin{cases}
         \psi^g_{2 \pi \rho(||x||)}(x), & x \in T^*_{\leq
           \lambda}S^n \setminus 0_{S^n} ; \\
         \sigma(x), & x \in 0_{S^n}.
      \end{cases}
   \end{equation}
   Note that $\tau_{S^n}$ is the identity near the boundary of
   $D^*_{\lambda} S^n$.

   Now let $N$ be a symplectic manifold and $f: S^n \longrightarrow N$
   a Lagrangian embedding of the $n$-sphere.  Denote by $S = f(S^n)
   \subset N$ its image. By the Darboux-Weinstein theorem there exists
   a neighborhood $U(S)\subset N$ of $S$, a $\lambda>0$, and a
   symplectic diffeomorphism $i: D^*_{\lambda} S^n \longrightarrow
   U(S)$ that maps $0_{S^n}$ to $S$ via the map $f$. Define now the
   Dehn-twist along $S$, $\tau_S: N \longrightarrow N$, by setting
   $\tau_S = i \circ \tau_{S^n} \circ i^{-1}$ on the image of $i$ and
   extend it as the identity to the rest of $N$. By the results
   of~\cite{Se:long-exact} the diffeomorphism $\tau_S$ is symplectic
   and moreover, its symplectic isotopy class is independent of the
   choices of $\rho$ and $\lambda$, but possibly not of the class of
   parametrization of the Lagrangian sphere $f:S^n \longrightarrow S$.
The symplectomorphism $\tau_S$  is {\em the Dehn twist along
     $S$}.
  \begin{rem} \label{r:framing}
 In case $S$ is a vanishing cycle in a Lefschetz
         fibration (associated to a path emanating from a critical
         value in the base of the fibration), $S$ carries a canonical
         isotopy class of parametrizations (or framings)  which we will often adopt
         implicitly. In that case $\tau_S$ is well defined
         up to symplectic isotopy without any further choices.
   \end{rem}

   In the rest of the proof the place of $N$ will be taken by
   $\hat{E}$ and the role of $S$ by the matching cycle
   $\hat{S}_{\gamma}$.

\

To start the actual proof we first assume that, after a possible
Hamiltonian isotopy of $V$, $T$ intersects $V$ transversely in the
points $p_{1},\ldots, p_{k}\in T$. All along the argument it is useful
to refer to Figure~\ref{fig:complex-thimble}.

   \begin{figure}[htbp]
      \begin{center}
         \includegraphics[scale=0.4]{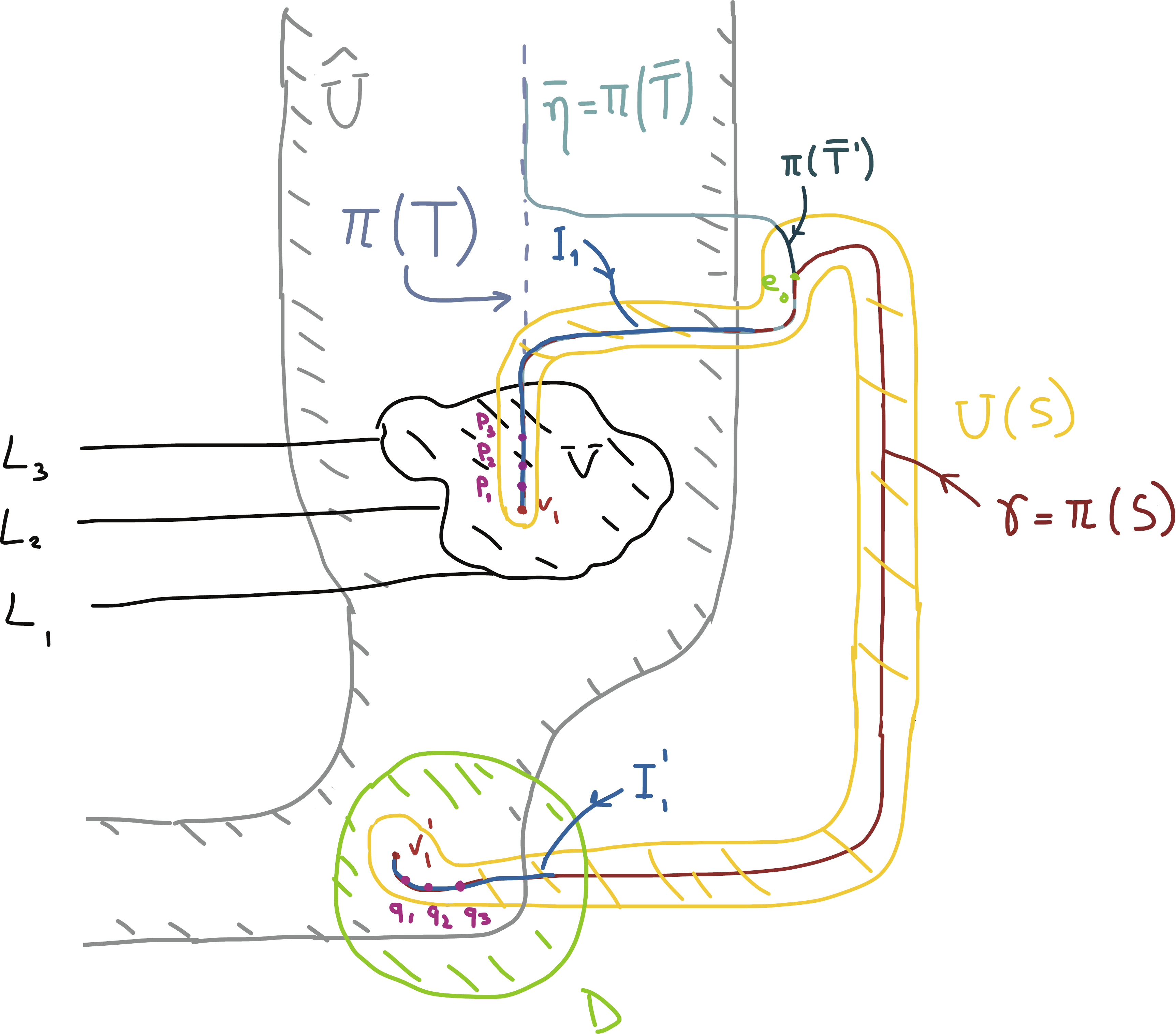}
      \end{center}
      \caption{The Lefschetz fibration $\hat{\pi}:\hat{E}\to \C$
        coincides with $E$ over the upper semi-plane; $\hat{\pi}$ has
        two singularities of critical values $v_{1}$ and $v_{1}'$ and
        is symplectically trivial outside of $\hat{U}$. Are pictured
        (in projection on $\C$): the ``straight'' vertical thimble $T$
        and its deformation $\bar{T}$; the matching cycle $S$ that
        coincides with $\bar{T}$ from $v_{1}$ to $e_{0}$; the disk
        $D$; $S\cap V=\{p_{1}, p_{2}, p_{3}\}$; $q_{i}=\sigma(p_{i})$
        (where $\sigma$ is the antipodal map); the neighborhood $U(S)$
        where is supported $\tau_{S}$; the portion $\bar{T}'$ of
        $\bar{T}$ that differs from $S$ and is included in $U(S)$; the
        projections $I_{1}$, $I_{1}'$ of two disks $K_{1}, K_{1}'$ in
        $S$ around the two singularities of $\hat{\pi}$ so that
        $S_{0}=S\setminus (K_{1}\cup K_{1}')$ lies inside a trivial
        symplectic fibration. Notice that the domain $\hat{U}$ is generally 
        unbounded along some additional
        directions compared to the domain outside which $E$ is tame. 
        This is required so  that the fibration $\hat{E}$, 
        that agrees with $E$ over the upper half plane, has additional 
        singularities compared to $E$. 
        Our choice is for this unbounded direction to be in the lower left corner, 
        as in the picture.
        \label{fig:complex-thimble}}
   \end{figure}

   \noindent \textbf{Step 1:} {\em Choice of the curve $\gamma$}.
   Recall that the fibration $\pi:E\to \C$ is tame outside the set
   $U\subset \C$ and the fibration $\hat{\pi}:\hat{E}\to \C$ is tame
   outside the larger set $\hat{U}$.  We fix two neighborhoods
   $U(V)\subset U'(V)$ of $V$.  We consider an auxiliary thimble
   $\bar{T}$ whose projection on $\C$ is as in
   Figure~\ref{fig:complex-thimble}.  In particular, $\bar{T}$
   coincides with $T$ inside $U(V)$ as well as outside of $U'(V)$ and
   $\pi^{-1}(\C\setminus \hat{U})\cap \bar{T}\not =\emptyset$ but
   $\pi^{-1}(\C\setminus \hat{U})\cap \bar{T}\cap U(V)=\emptyset$.  We
   notice that $\bar{T}$ is hamiltonian isotopic to $T$ by an isotopy
   supported away from $U(V)\cup \pi^{-1}(\R\times (-\infty, 0])$
   ($\bar{T}$ and $T$ are Lagrangian isotopic and it is easy to check
   that this isotopy is exact).
 
   Denote by $\bar{\eta}=\pi(\bar{T})$.  We assume that, as in
   Figure~\ref{fig:complex-thimble}, $\bar{\eta}$ can be written as
   the union of three closed connected sub-segments
   $\bar{\eta}=\bar{\eta}'\cup\bar{\eta}''\cup\bar{\eta}'''$ so that
   $\bar{\eta}'\cup \bar{\eta}'''$ is the closure of $\hat{U}\cap
   \bar{\eta}$.  Thus, the interior of $\bar{\eta}''$ is disjoint from
   $\hat{U}$. We also assume to fix that $\bar{\eta}''\subset
   [1,\infty)\times [1,\infty)$. Consider a point $e_{0}$ inside the
   segment $\bar{\eta}''$ so that $\bar{\eta}''=\bar{\eta}''_{1}\cup
   \bar{\eta}''_{2}$ with $\bar{\eta}''_{1}$ and $\bar{\eta}''_{2}$
   the closures of the two sub-segments given by
   $\bar{\eta}''\setminus \{e_{0}\}$ with $e_{0}$ being the end-point
   of $\bar{\eta}''_{1}$ and the starting point of $\bar{\eta}''_{2}$.
   We now pick the curve $\gamma\subset\C$ that joins $v_{1}$ to
   $v'_{1}$ so that $\gamma$ can be written as a union of two
   connected, closed parts $\gamma=\gamma_{1}\cup \gamma_{2}$ so that
   $\gamma_{1}$ originates in $v_{1}$ and coincides with
   $\bar{\eta}'\cup\bar{\eta}''_{1}$, $\gamma_{2}$ is disjoint from
   $U(V)$, it intersects $\bar{\eta}$ only in $e_{0}$, it ends in
   $v_{1}'$ and $\gamma_{2}\setminus D \subset \C\setminus \hat{U}$.
   Clearly, $e_{0}$ is a point where $\bar{\eta}$ and $\gamma$ are
   tangent and after this point $\gamma$ is to the ``right'' of
   $\bar{\eta}$ and is included in $\C\setminus \hat{U}$ till (and
   including) the moment it reaches $D$.

   Notice that if we show that:
   \begin{equation} \label{eq:reduction}
      \tau_{\hat{S}_{\gamma}}V\cap \bar{T}=\emptyset\ \mathrm{and}\ 
      \tau_{\hat{S}_\gamma}V\cap \hat{S}_{\gamma}\subset D
   \end{equation} 
   then by using the Hamiltonian isotopy $\psi$ that carries $\bar{T}$
   to $T$ and such that $\psi(V) = V$, we deduce that there is a
   Lagrangian sphere $S'=\psi (\hat{S}_{\gamma})$ so that $\tau_{S'}
   V$ is disjoint from $T$ and $\tau_{S'}V\cap S'\subset D$.  For this
   argument, $\tau_{S'}$ is defined by using the choice of framing so
   that $\tau_{S'}^{-1} = \psi \circ \tau^{-1}_{\hat{S}_{\gamma}}
   \circ \psi^{-1}$ (hence $\tau^{-1}_{S'}(V) = \psi \circ
   \tau^{-1}_{\hat{S}_{\gamma}}(V)$).  In short, it remains to
   show~\eqref{eq:reduction}.
     
   \
   
   \noindent \textbf{Step 2:} {\em Other choices involved in the
     definition of the twist.} From now on, to simplify notation, we
   put $S=\hat{S}_{\gamma}$. We first choose a small Weinstein
   neighborhood $U(S)$ of $S$.  The Dehn twist $\tau_{S}$ will be
   supported inside this neighborhood. We notice, by construction,
   that $\{p_{1},\ldots, p_{k}\}=T\cap V = \bar{T}\cap V= S\cap V$. We
   may assume that $V\cap U(S)$ is a union of small disks
   $D_{i}\subset V$ centered at $p_{i}$ which, for convenience, we may
   assume are included in the fiber of $T^{\ast}S$ through $p_{i}$
   under the identification of $U(S)$ with a disk bundle of
   $T^{\ast}S$. Further, we denote by $\bar{T}'$ the closure of
   $(\bar{T} \setminus S) \cap U(S)$. We now consider a disk
   $K_{1}\subset S$ centered at $x_{1}$ so that $U(V)\cap S\subset
   K_{1}$.  Similarly we also consider a disk $K_{1}'\subset S$
   centered at $x_{1}'$.  We assume that both $K_{1}$ and $K_{1}'$ are
   preimages of segments $I_{1}$ and $I_{1}'$ contained in $\gamma$
   and we suppose that the two disks are so that
   $\gamma_{0}=\gamma\setminus (I_{1}\cup I'_{1})\subset \C\setminus
   \hat{U}$, $e_{0}\in \gamma_{0}$ and $I'_{1}\subset D$.  We further
   pick $U(S)$, $K_{1}$ and $K_{1}'$ so that $\bar{T}'$ is disjoint
   from both $K_{1}$ and $K_{1}'$.  We consider the curve oriented so
   that it starts at $v_{1}$ and ends at $v_{1}'$.

   The boundary of $K_{1}$ is a Lagrangian sphere $A\subset
   (M,\omega)$ and the boundary of $K_{1}'$ is the same sphere
   transported to the end of $\gamma_{0}$ (parallel transport is
   trivial along $\gamma_{0}$ because $\hat{\pi}$ is symplectically
   trivial outside $\hat{U}$). We denote the sphere that appears as
   boundary of $K'_{1}$ by $A'$.  The region $S_{0}=S\setminus Int
   (K_{1}\cup K_{1}')$ is diffeomorphic to a cylinder $C=[-a,a]\times
   A$.  We think about this cylinder so that $\{-a\}\times A$
   corresponds to the boundary of $K_{1}$ and $\{a\}\times A$
   corresponds to the boundary of $K'_{1}$.

   Denote by $U(S_{0})$ the restriction of the neighborhood
   $U(S)$ (identified with a disk bundle in $T^{\ast}S$) to $S_{0}$.
   We assume $U(S)$ small enough so that $\pi(U(S_{0}))\subset
   \C\setminus \hat{U}$. As $\hat{\pi}$ is trivial over $U(S_{0})$,
   by possibly reducing $U(S)$ further, we obtain the existence of a
   symplectomorphism:
   $$k: D_{r}T^{\ast}[-a,a]\times D_{r'}T^{\ast}A \to U(S_{0})
   \approx D_{s} T^{\ast}S_{0}\subset \hat{E}~.~$$  After picking $a$
   appropriately, this symplectomorphism can be made also compatible
   with the almost complex structures involved so that
   $\pi'=\hat{\pi}\circ k$ is holomorphic with respect to the split
   standard complex structure in the domain and the standard complex
   structure in $\C$.
 
   \noindent \textbf{Step 3:} {\em The parametrization of $S$.}  This
   step consists in picking a particular framing of $S$ so that the
   associated Dehn twist $\tau_{S}$ can be tracked explicitly. To
   simplify slightly notation we assume $a=1-\delta$ with $\delta$
   very small.

   We fix a diffeomorphism $\varphi: S^{n}\to A$ in the isotopy class
   as explained at point~(2) of Remark~\ref{r:framing}.  Let $h:
   S^{n+1}\to \R$ be the height function defined on the standard round
   sphere in $\R^{n+2}$ and let $S_{\delta}= h^{-1}([-a,a])$.  We now
   pick a parametrization $\alpha: S^{n+1}\to S$ so that the
   restriction of this parametrization to $S_{\delta}$ is a
   diffeomorphism $\alpha_{0}=\alpha|_{S_{\delta}}:S_{\delta}\to C$
   with the property that for each $t\in [-a,a]$,
   $\alpha|_{h^{-1}(t))}: h^{-1}(t)\to \{ t\}\times A\subset C$ is a
   rescaling of $\varphi$, and so that $h(\alpha^{-1}(x_1)) = -1$,
   $h(\alpha^{-1}(x'_1) = 1$ (recall that $x_1, x'_1 \in \hat{E}$ are
   the critical points of $\pi$ lying over $v_1, v'_1$ respectively).
   Clearly, $\alpha_{0}$ extends to a symplectic diffeomorphism
   $\bar{\alpha}_{0}:T^{\ast}S_{\delta}\to T^{\ast} C$ so that
   $T^{\ast} h^{-1}(t)$ is mapped by a symplectomorphism to
   $\{t\}\times T^{\ast}A$.  Basically, we are parametrizing here the
   ``flat'' cylinder $C$ (which is identified with $S_{0}$) by the
   ``round'' cylinder $S_{\delta}$ and we then extend this
   parametrization as symplectomorphisms at the level of the cotangent
   bundles. All the parametrizations involved identify level sets of
   the height function on $S_{\delta}$ to slices of the cylinder $C$.

   We denote by $\sigma: S\to S$ the antipodal map defined using this
   parametrization. This means, in particular, that the points
   $q_{i}=\sigma (p_{i}) $ are contained in $D$ (the disk appearing in
   the statement of the proposition).  It is easy to see, as for instance 
   in~\S 1.2 \cite{Se:long-exact}, with an appropriate choice of function
   $\rho$ in the definition of the Dehn twist (which we have assumed
   here) the intersection $\tau_{S} V\cap S$ is transverse and
   consists precisely of the antipodal of the intersection $S\cap V$.
   Thus, $\tau_{S} V\cap S=\{q_{1},\ldots, q_{k}\}\subset D$ as
   claimed in the second part of (\ref{eq:reduction}).  It remains to
   show the main part of the claim: $\tau_{S}V\cap \bar{T}=\emptyset$.
   As $\tau_{S}V\cap S=\{q_1, \ldots, q_k\}$, the Dehn twist
   $\tau_{S}$ is supported inside $U(S)$ and given that $\bar{T}$ and
   $S$ coincide along the segment of $\gamma$ that starts at $v_{1}$
   and ends at $e_{0}$ it follows that
   \begin{equation} \label{eq:inters}
      \tau_{S}V\cap \bar{T}=\tau_{S}V\cap \bar{T}'=
      \tau_{S}(V\cap \tau_{S}^{-1}(\bar{T}'))
   \end{equation}
   Thus, to conclude the proof, it is enough to show
   $\tau_{S}^{-1}(\bar{T}')\cap V=\emptyset$.
  
   \

   \noindent \textbf{Step 4:} {\em Showing $\tau_{S}^{-1}(\bar{T}')\cap
     V=\emptyset$.} By possibly adjusting the neighborhood $U(S)$ we
   may assume that $U$ can be written as $U(S)=(k\circ
   \bar{\alpha}_{0})(U(S^{n+1}))$ for some neighborhood $U(S^{n+1})$
   of the zero section inside $T^{\ast}S_{\delta}$. Let $\tilde{T}'=
   (k\circ \bar{\alpha}_{0})^{-1}(\bar{T}')$.  We denote by
   $U(S_{\delta})$ the corresponding neighborhood of $S_{\delta}$ (so
   that $U(S_{\delta})$ is the preimage of $U(S_{0})$) and we let
   $\tilde{K}_{1}$ be the cap $\tilde{K}_{1}=h^{-1}(-1,-1+\delta]=
   (k\circ \bar{\alpha}_{0})^{-1}(K_{1})$. Further, we let
   $U(\tilde{K}_{1})$ be the restriction of $U(S^{n+1})$ over
   $\tilde{K}_{1}$.  Clearly $\tilde{T}'\subset U(S_{\delta})$, and to
   show the claim it is enough to notice that
   $\tau_{S^{n+1}}^{-1}\tilde{T}'\cap U(\tilde{K}_{1})=\emptyset$ where now
   $\tau_{S^{n+1}}$ is the standard model for the Dehn twist.
 
   Let $(x,v)\in \tilde{T}'\subset T^{\ast}S_{\delta}$ with $ v\in
   T^{\ast}_{x}S^{n+1}$, $v\not=0$.  We now notice that the condition
   that $\bar{T}'$ is to the ``left'' of $S$ in
   Figure~\ref{fig:complex-thimble} 
    translates to the fact that
   \begin{equation}\label{eq:inequality}
      \langle   v,  J \nabla h (x) \rangle > 0 ~.~
   \end{equation} 
   Here $J$ is an almost complex structure on $T^{\ast}S_{\delta}$
   with respect to which, as at Step 2, the map
   $\pi'=\hat{\pi}\circ k$ is holomorphic.  This follows from the same
   inequality that is valid for the planar projection of $\bar{T}'$
   relative to $\gamma_{0}$.  Equation (\ref{eq:inequality}) implies
   that the geodesic flow with origin $(x,v)$ has its vertical
   component pointing in the direction of $-\nabla h$ (because if
   $\langle v, w\rangle >0$, then the geodesic associated to $v$
   points in the direction of $Jw$). Thus, the inverse of the geodesic
   flow points in the direction of $\nabla h$ and therefore away from
   $\tilde{K}_{1}$. As a consequence, it is easy to see that the orbit
   $\phi_{t}^{g}(x,v)$ for $-\pi\leq t\leq 0$ does not intersect
   $U(\tilde{K}_{1})$ and, as a consequence,
   $\tau_{S}^{-1}(\bar{T}')\cap V=\emptyset$ - see also
   Figure~\ref{fig:geodesic}.
\end{proof}

\begin{figure}[htbp]
  \begin{center}
    \includegraphics[scale=0.35]{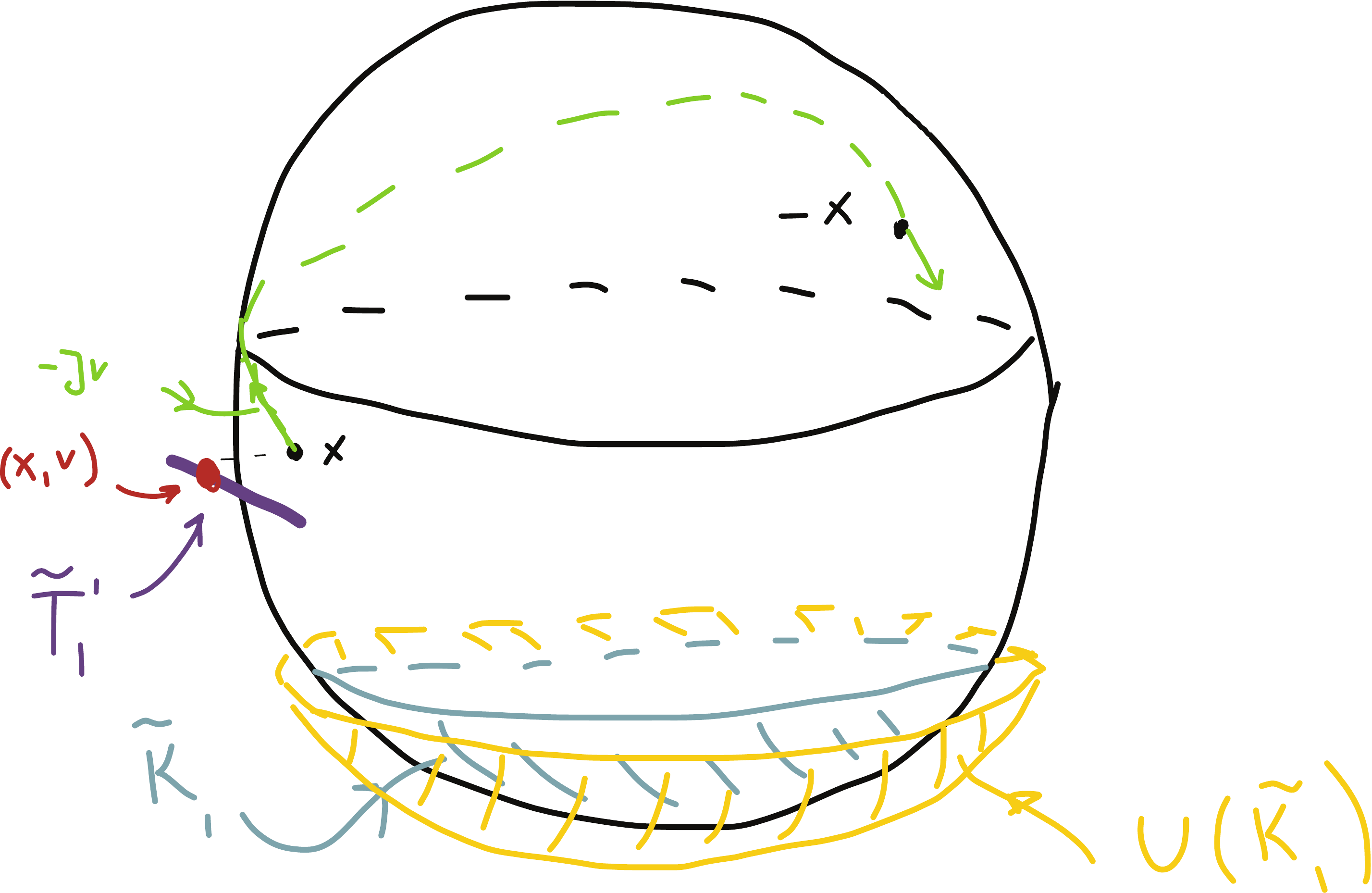}
  \end{center}
  \caption{The cap $\tilde{K}_{1}\subset S^{n+1}$ the set
    $\tilde{T}'_{1}$ containing the point $(x,v)$ together with the geodesic 
starting from $x$ in the direction of $-Jv$ and ending at
    $-x$. \label{fig:geodesic}}
\end{figure}

\begin{cor}\label{cor:twist-remote}
   With the notation in Proposition~\ref{lem:multiple-surgery} the
   cobordism $\tau_{S'}V$ is hamiltonian isotopic - via an isotopy
   with compact support - to a cobordism that is remote relative to
   $E$. \end{cor}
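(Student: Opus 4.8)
The plan is to take the cobordism $V'=\tau_{S'}V$ produced by Proposition~\ref{lem:multiple-surgery} and construct a compactly supported Hamiltonian isotopy of $\hat E$ that pushes its projection into the region $\R\times(-\infty,0]\cup Q^{-}_{U}$ occurring in~\eqref{eq:remote-cob}, while leaving the cylindrical ends far to the left untouched (so that compact support and the cobordism structure are retained automatically). The two inputs from Proposition~\ref{lem:multiple-surgery} are: (i) $V'\cap T=\emptyset$, where $T$ is the thimble over the vertical ray $\{1\}\times[\tfrac{3}{2},\infty)$; and (ii) $V'\cap S'\subset D$, the disk around $v'_{1}$ lying in the lower half plane. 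Since $\tau_{S'}$ is supported in the small Weinstein neighbourhood $U(S')$, the Lagrangian $V'$ coincides with $V$ outside $U(S')$; in particular its ends are the ends $L_{1},\dots,L_{s}$ of $V$, and its only ``new'' feature is a finger running from near $v_{1}$, along a Hamiltonian image of $\gamma$, down into $D$, meeting $S'$ only inside $D$ by (ii). Compare Figure~\ref{fig:complex-thimble} and Figure~\ref{fig:remote-cob}.

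First I would deal with the part of $V'$ lying in the upper half plane. Because $v'_{1}$ and the disk $D$ already lie in the target region $\R\times(-\infty,0]$, the only offending pieces of $V'$ are its body near $v_{1}$ together with the portions of the ends $L_{i}$ having real coordinate $>-a_{U}$. I construct a compactly supported base isotopy $\{\phi_{t}\}$ of $\C$ which sweeps the closed region $\{x\le a_{\hat U}+1\}\cap\{y\ge -K+1\}$, minus a small neighbourhood $\mathcal{O}$ of the vertical ray $\{1\}\times[\tfrac{3}{2},\infty)$, into $\R\times(-\infty,0]\cup Q^{-}_{U}$; geometrically this means ``drag the body down around the left side of the thimble ray into the lower half plane, and drag the proximal ends leftward past $x=-a_{U}$''. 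Over $\C\setminus\hat U$, where $\hat\pi$ is a trivial symplectic fibration, this base isotopy lifts by parallel transport to a Hamiltonian isotopy of $\hat E$ carrying the corresponding part of $V'$ along, and near $v'_{1}$ we take $\phi_{t}$ to be the identity so that the part of $V'$ inside $D$ is untouched. The only delicate region is a neighbourhood of the critical value $v_{1}$, where sweeping across the thimble ray $\pi(T)$ would a priori require composing with the local monodromy (a Dehn twist along the vanishing cycle in a fibre near $v_{1}$). Here I use fact~(i): since $V'\cap T=\emptyset$, over the fibres lying above $\pi(T)$ the Lagrangian $V'$ is disjoint from that vanishing cycle, so the lift can be arranged to be the identity on $V'$ throughout a neighbourhood of $v_{1}$ and the sweep goes through without disturbing $V'$. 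Calling the resulting time-one map $\Phi_{1}$, the Lagrangian $V'':=\Phi_{1}(V')$ then satisfies $\pi(V'')\subset\R\times(-\infty,0]\cup Q^{-}_{U}$, has the same cylindrical ends as $V'$ (hence the same integral heights), and --- after choosing $\hat U$ and $K$ large enough at the outset --- lies in $\hat\pi^{-1}(\R\times[-K+\tfrac12,\infty))$; thus $V''$ is remote relative to $E$. Since $\Phi_{t}$ is fixed outside a compact set containing the body and the proximal ends of $V'$, the isotopy has compact support, as required.

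The main obstacle is precisely the ``sweep across the thimble ray near $v_{1}$'' step: one must make rigorous the assertion that a compactly supported base isotopy of $\C$ whose support crosses the projection of a thimble $T$ can be covered by a Hamiltonian isotopy of the total space that is the identity on any Lagrangian disjoint from $T$. This is a local statement near a Lefschetz singularity --- in the Picard--Lefschetz model $(z_{1},\dots,z_{n})\mapsto\sum z_{j}^{2}$ the failure of a base isotopy to admit an invariant lift is carried by the vanishing cycle and its thimble, and away from them parallel transport produces a canonical lift up to Hamiltonian isotopy --- and I would prove it by a direct analysis in that model, patched to the already trivial behaviour over $\C\setminus\hat U$. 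Once this local lemma is in place the remainder is routine bookkeeping of the heights of the ends and of compact supports.
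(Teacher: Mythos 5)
Your general strategy (use $V'\cap T=\emptyset$ to build a Hamiltonian isotopy of $\hat E$ that pushes $V'$ into $\hat\pi^{-1}\bigl(\R\times(-\infty,0]\cup Q^{-}_{U}\bigr)$) is the right instinct, and it is also what the paper does. But the specific construction you propose — lifting a base isotopy of $\C$ that goes ``around'' $v_1$ — does not work, and the place it fails is instructive.

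The obstruction is not the Picard--Lefschetz monodromy, as you suggest; it is more basic. Any fiber-preserving isotopy of $\hat E$ that covers a base isotopy $\phi_t$ of $\C$ sends a point $p$ to a point with projection $\phi_t(\pi(p))$. Your $\phi_t$ must fix $v_1$ (it is a critical value, and a smooth isotopy of $\hat E$ cannot carry the singular fiber to a regular one), so it must also fix a small neighbourhood of $v_1$ in order to be smooth; your description even enforces this by leaving the neighbourhood $\mathcal O$ of $\pi(T)$ untouched. But $V'\cap T=\emptyset$ does \emph{not} imply $\pi(V')\cap\pi(T)=\emptyset$, nor $\pi(V')\cap\mathcal O=\emptyset$: the cobordism $V'$ can and in general does project onto the ray $\pi(T)=\{1\}\times[\tfrac32,\infty)$ and onto points arbitrarily close to $v_1$, it simply misses the $(n+1)$-disk $T$ sitting inside those fibers. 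Any lift of your $\phi_t$ leaves that entire part of $V'$ in place, so the resulting $V''$ is \emph{not} remote. To move those points of $V'$ into the lower half-plane you would need their images under $\pi$ to cross $v_1$, which a base isotopy cannot do.

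The paper's proof uses a vector field that is \emph{not} the horizontal lift of any base vector field, and this is exactly what makes the argument go through. It takes $\xi=-\nabla\,\textnormal{Im}(\pi)$ (equivalently the Hamiltonian vector field of $\textnormal{Re}(\pi)$, up to sign), cut off by a function $\eta\circ\pi$ so as to kill it on the ends and far from $v_1$. In the local holomorphic Morse model $\pi(z)=\sum z_j^2$ one computes
$$ D\pi\bigl(-\nabla\,\textnormal{Im}(\pi)\bigr)_{p}=-4i\,|p|^{2}, $$
so the ``speed'' of the induced motion in the base depends on the fiberwise distance of $p$ from the critical point. The flow fixes $x_1$ and has the thimble $T$ as its stable manifold; but a point $p\in V'$ lying over $\pi(T)$ with $p\notin T$ has $|p|^{2}$ bounded below, hence is pushed down past the critical level $\textnormal{Im}(\pi)=\textnormal{Im}(v_1)$ in finite time even though $x_1$ never moves. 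This is precisely the mechanism that lets $V'$ ``squeeze past'' $v_1$ once $V'\cap T=\emptyset$, and it has no analogue for a lifted base isotopy. Finally, the cut-off $\eta$ arranges that the flow is zero on the cylindrical ends and far from the singularity, so that after a further truncation in time the resulting isotopy is compactly supported; your bookkeeping of the ends and heights is in the right spirit for that last step.
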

\begin{proof}
   We already know from Proposition \ref{lem:multiple-surgery} that
   $V'=\tau_{S'}V$ is disjoint from $T$. Consider an
   $\Omega$-compatible almost complex structure $J$ on $E$ with the
   additional property that $\pi: E \longrightarrow \mathbb{C}$ is
   $J$-holomorphic. It is well known that the function $Im(\pi):E\to
   \R$ defines a Morse function on $E$ whose negative gradient flow
   $\xi$ (with respect to the metric induced by $(\Omega, J)$) is also
   Hamiltonian. Moreover $\xi$ has the thimble $T$ as a stable
   manifold. Write $\xi =X^{H}$ with $H:E\to \R$. Now consider a
   smooth function $\eta:\C\to \R$ so that $\eta(z)=1$ if $z\in
   [-a_{U}-1, a_{U}+1]\times [-\frac{1}{4}, +\infty)$ and $\eta(z)=0$
   if $z\in ((-\infty, -a_{U}-2] \times \R) \cup ([-a_{U}-2,
   a_{U}+2]\times (-\infty, -\frac{1}{2}]) \cup ([a_{U}+2,
   \infty)\times \R)$.  Let $\xi'$ be the Hamiltonian flow of the
   function $(\eta \circ \pi) H$ defined on $\hat{E}$.  It is easy to
   see that, after sufficient time, the flow $\xi'$ isotopes $V'$ to a
   new cobordism $V''$ that is included in $\hat{\pi}^{-1}(\R\times
   (-\infty,0]\times \R \cup Q_{U}^{-})$.  Therefore, $V''$ is remote
   relative to $E$. Moreover, as the ends of $V'$ are not moved by
   this isotopy, it is easy to see that, by a further truncation of
   $\xi'$, $V''$ is hamiltonian isotopic to $V'$ through a compactly
   supported isotopy.
\end{proof}
 
 \subsubsection{Multiple singularities} 
\label{subsubsec:null-cob-remote}

Consider a Lefschetz fibration $\pi:E\to \C$ as
in~\S\ref{subsec:main-decomp}, thus possibly with more than one
singularity.

We fix $V\in \mathcal{O}b (\fk^{\ast}(E))$, $V:\emptyset\cobto
(L_{1},\ldots, L_{s})$.  The purpose of this subsection is to describe
an extension of Proposition~\ref{lem:multiple-surgery} and
Corollary~\ref{cor:twist-remote} to the case of multiple
singularities.

We will consider a fibration $\hat{\pi}:\hat{E}\to \C$ that extends
$E$ and has one more singularity $x'_{i}$ for each singular point
$x_{i}$, $1\leq i\leq m$, of $\pi$ so that the vanishing cycles of
$x_{i}$ and $x'_{i}$ can be related by matching cycles $\hat{S}_{i}$
that are the analogues of the matching cycle $\hat{S}_{\gamma}$ from
Proposition \ref{lem:multiple-surgery}. The specific positioning of
the corresponding critical values $v'_{i}$ in the plane $\C$ is
important as is as in Figure~\ref{fig:matching-cycles}.  We then
obtain Lagrangian spheres, $S'_{i}$ that are hamiltonian isotopic to
$\hat{S}_{i}$ (as in Figure~\ref{fig:matching-cycles}) and we then
consider the image of $V$ under the iterated Dehn twist
$$V'=\tau_{\hat{S}_{m}} \circ \tau_{\hat{S}_{m-1}} 
\circ \cdots \circ \tau_{\hat{S}_{1}} (V)$$ inside $\hat{E}$ as
well as the following Hamiltonian isotopic copy of it
$V''=\tau_{{S}_{m}'}\circ \tau_{{S}_{m-1}'} \circ \cdots
\circ \tau_{{S}_{1}'}(V)$ obtained by applying an iterated Dehn
twist along the Lagrangian spheres $S'_j$ which are Hamiltonian
isotopic to the $\hat{S}_j$'s.

Let $\thmb_i$ be the vertical thimble with origin the critical point
$x_{i}$ and projecting to the vertical half-line $\{i\}\times
[\frac{3}{2},\infty)$. The thimbles $\thmb_i$ generalize the thimble
$T$ considered earlier (just before
Proposition~\ref{lem:multiple-surgery}) in the context of one
singularity to the case of multiple singularities. We denote them by
$\thmb_i$ (this avoids confusion with the thimbles $T_{i}$ that are
horizontal at infinity and are associated to the curves $t_i$, see
Figure~\ref{fig:spec-curves}).
 
\begin{figure}[htbp]
  \begin{center}
    \includegraphics[scale=0.4]{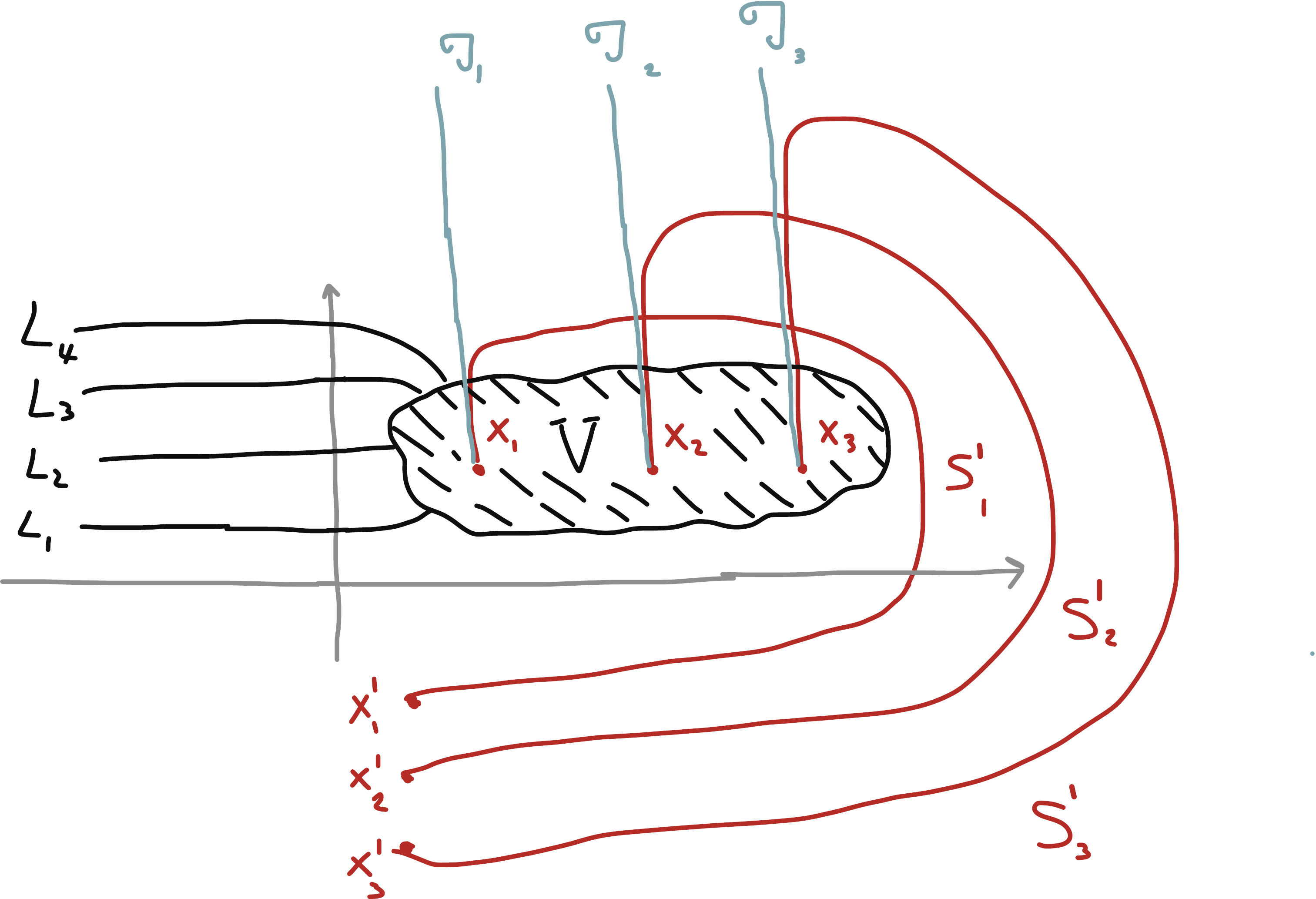}
  \end{center}
  \caption{The cobordism $V:\emptyset\cobto (L_{1},L_{2}, L_{3},
    L_{4})$, the Lagrangian spheres $S_{1}', S_{2}', S_{3}'$ together
    with the vertical thimbles $\thmb_{1}, \thmb_{2}, \thmb_{3}$ so
    that $V''=\tau_{S_{m}'}\circ \tau_{S_{m-1}'} \circ
    \cdots \circ \tau_{S_{1}'}(V)$ is disjoint from the
    $\thmb_{i}$'s.
   \label{fig:matching-cycles}
 }
\end{figure}

\begin{cor}\label{cor:moving-cob} 
  It is possible to construct $\hat{E}$ and the Lagrangian spheres
  $S'_{i}$ so that the cobordism $V''$ is disjoint from all the
  thimbles $\thmb_{i}$.  As a consequence, there exists a horizontal
  Hamiltonian isotopy $\phi$ so that the cobordism $\phi(V'')\subset
  \hat{E}$ is remote relative to $E$. In particular, in
  $D\fk^{\ast}(E)$, there exists a cone decomposition:
  $$V_{E}' \cong (\gamma_{s}\times L_{s}\to \gamma_{s-1}\times L_{s-1}
  \to\ldots \to\gamma_{2}\times L_{2})~.~$$
\end{cor}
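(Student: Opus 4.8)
The plan is to bootstrap from the single-singularity case. First I would build the extended fibration $\hat{\pi}:\hat{E}\to\C$: since $E$ is symplectically trivial over the lower half-plane, the construction of \S16 of~\cite{Se:book-fukaya-categ} lets me add to $E$ one auxiliary singularity $x'_{i}$ for each singularity $x_{i}$ of $\pi$, with critical value $v'_{i}$ in the lower half-plane positioned as in Figure~\ref{fig:matching-cycles}, together with a matching cycle $\hat{S}_{i}\subset\hat{E}$ over a path $\gamma_{i}$ from $v_{i}$ to $v'_{i}$. The enlarged tameness domain $\hat{U}$ is taken unbounded towards the lower-left so as to house all the $v'_{i}$ at once. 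Each $\gamma_{i}$ is chosen to emanate from $v_{i}$, to run alongside a deformation $\bar{\thmb}_{i}$ of the vertical thimble $\thmb_{i}$ up to a branch point, and then to veer off to the right and downwards into $\C\setminus\hat{U}$ towards $v'_{i}$, exactly as the curve $\gamma$ in the proof of Proposition~\ref{lem:multiple-surgery}.

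Then I would carry out the disjunction inductively, one singularity at a time. Suppose $V_{k}:=\tau_{S'_{k}}\circ\cdots\circ\tau_{S'_{1}}(V)$ has already been arranged to be disjoint from $\thmb_{1},\ldots,\thmb_{k}$. Applying Proposition~\ref{lem:multiple-surgery} with $V_{k}$ in place of $V$ and with $x_{k+1}$, $\thmb_{k+1}$ in the role of the unique singularity and thimble there, I obtain the curve $\gamma_{k+1}$, the auxiliary singularity $x'_{k+1}$, and a framed sphere $S'_{k+1}$ Hamiltonian isotopic to $\hat{S}_{k+1}$ such that $V_{k+1}:=\tau_{S'_{k+1}}(V_{k})$ is disjoint from $\thmb_{k+1}$ and meets $S'_{k+1}$ only inside a small disk $D_{k+1}$ around $v'_{k+1}$. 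Disjointness from $\thmb_{1},\ldots,\thmb_{k}$ is not destroyed: $\tau_{S'_{k+1}}$ is supported in a small Weinstein neighbourhood $U(S'_{k+1})$ of $S'_{k+1}$, and since $v_{k+1}=(k+1,\tfrac{3}{2})$ lies to the right of all the vertical lines $\{j\}\times[\tfrac{3}{2},\infty)$ with $j\le k$, the curve $\gamma_{k+1}$ --- hence $U(S'_{k+1})$ --- can be kept disjoint from $\thmb_{1},\ldots,\thmb_{k}$ (and from the cylindrical ends of $V_{k}$), so $\tau_{S'_{k+1}}$ is the identity near those thimbles. After $m$ steps, $V''=V_{m}=\tau_{S'_{m}}\circ\cdots\circ\tau_{S'_{1}}(V)$ is disjoint from all the $\thmb_{i}$; it is Hamiltonian isotopic in $\hat{E}$ to $V'=\tau_{\hat{S}_{m}}\circ\cdots\circ\tau_{\hat{S}_{1}}(V)$ via the isotopies taking each $S'_{i}$ to $\hat{S}_{i}$ (performed in the same order), and, since all twists are supported away from the ends, $V''$ is a negatively-ended cobordism with ends $L_{1},\ldots,L_{s}$.

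To finish, I would run the argument of Corollary~\ref{cor:twist-remote} with the single thimble $T$ replaced by the family $\{\thmb_{i}\}$. Pick an $\Omega$-compatible $J$ on $\hat{E}$ making $\hat{\pi}$ holomorphic; then $-\nabla Im(\hat{\pi})=X^{H}$ is Hamiltonian and has $\bigcup_{i}\thmb_{i}$ as its stable set. Multiplying $H$ by a cutoff $\eta\circ\hat{\pi}$ that equals $1$ on a large box containing all critical values and vanishes near the ends and over the lower half-plane, and using that $V''$ avoids every $\thmb_{i}$, the flow of $(\eta\circ\hat{\pi})H$ pushes $V''$, after sufficient time, into $\hat{\pi}^{-1}\bigl((\R\times(-\infty,0])\cup Q^{-}_{U}\bigr)$, i.e. makes it remote relative to $E$; a further truncation turns this into a compactly supported horizontal Hamiltonian isotopy $\phi$ fixing the ends. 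Proposition~\ref{lem:decomp-remote} applied to the remote cobordism $\phi(V'')$ then gives $\phi(V'')_{E}\cong(\gamma_{s}\times L_{s}\to\cdots\to\gamma_{2}\times L_{2})$ in $D\fk^{\ast}(E)$; as $\phi(V'')$, $V''$ and $V'$ are Hamiltonian isotopic in $\hat{E}$, their pulled-back Yoneda modules are isomorphic, so $V'_{E}$ has the same cone decomposition.

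The main obstacle is the geometric bookkeeping in the inductive step: one must choose the auxiliary critical values $v'_{i}$, the matching-cycle curves $\gamma_{i}$ and the Weinstein neighbourhoods $U(S'_{i})$ so that each twist $\tau_{S'_{i}}$ simultaneously achieves the disjunction from $\thmb_{i}$ supplied by Proposition~\ref{lem:multiple-surgery} \emph{and} is supported away from all other thimbles $\thmb_{j}$ and from the (evolving) ends, so that the successive disjunctions do not undo one another. The placement in Figure~\ref{fig:matching-cycles} is precisely what makes this possible, and checking it is a routine, if careful, adaptation of Steps~1--4 in the proof of Proposition~\ref{lem:multiple-surgery}.
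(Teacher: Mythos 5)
Your proof is correct and takes essentially the same approach as the paper: an inductive application of Proposition~\ref{lem:multiple-surgery} to disjoin $V$ from the vertical thimbles one at a time (with the key geometric observation that each new twist can be supported away from the previously treated thimbles and from the cylindrical ends), followed by the cutoff-gradient-flow argument from Corollary~\ref{cor:twist-remote} to push the resulting cobordism into the lower half-plane, and finally Proposition~\ref{lem:decomp-remote}. Your explicit remark that $V'$, $V''$ and $\phi(V'')$ are Hamiltonian isotopic in $\hat{E}$ and hence have isomorphic pulled-back modules is a useful bit of detail that the paper leaves implicit.
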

\begin{proof} The first part of the proof is to construct iteratively
   fibrations $\hat{\pi}_{i}:\hat{E}_{i}\to \C$ with $\hat{E}_{0}=E$
   and with the final fibration $\hat{E}=\hat{E}_{m}$ so that
   $\hat{E}_{i+1}$ extends $\hat{E}_{i}$ and has one more singularity,
   $x'_{i+1}$, compared to $\hat{E}_{i}$. At each step we also
   construct the matching cycles $\hat{S}_{i}$ joining $x_{i}$ to
   $x_{i}'$ and their Hamiltonian isotopic images $S'_{i}$ so that the
   relevant properties are satisfied. Here are more details on the
   induction step. Assume that $\hat{E}_{k}$ has already been
   constructed together with the matching cycles $\hat{S}_{i}$ and
   their hamiltonian isotopic copies $S'_{i}$, $1\leq i\leq k$ so that
   $V''_{k}=\tau_{S_{k}'}\circ \tau_{S_{k-1}'}\circ \cdots \circ
   \tau_{S_{1}'} (V)$ is disjoint from $\thmb_{i}$, $1\leq i\leq k$.
   We now consider the cobordism $V''_{k}$ and the vertical thimble
   $\thmb_{k+1}$ and we apply to them the construction described in
   the proof of Proposition~\ref{lem:multiple-surgery}.  This produces
   first a new fibration $\hat{E}_{k+1}$ that has an additional
   singularity denoted now by $x'_{k+1}$.  Here, the only difference
   with respect to the construction of $\hat{E}$ in
   Proposition~\ref{lem:multiple-surgery} is that the coordinates of
   the critical value $v'_{k+1}$ associated to $x'_{k+1}$ is now $(-1,
   -k-\frac{3}{2})$ and the set $\hat{U}$, outside which
   $\hat{E}_{k+1}$ is tame, is extended appropriately inside the
   third-quadrant.  Further, just as in the proof of
   Proposition~\ref{lem:multiple-surgery} we can construct the
   deformed thimble $\barthmb_{k+1}$ as well as the matching cycle
   $\hat{S}_{\gamma}$ so that $\hat{S}_{\gamma}$ coincides with
   $\barthmb_{k+1}$ over a certain sub-segment of $\gamma$. Two
   important points should be made here: first, the place of $V$ in
   the proof of Proposition~\ref{lem:multiple-surgery} is taken here
   by $V''_{k+1}$; second $\thmb_{k+1}$ as well as $\barthmb_{k+1}$
   and $\hat{S}_{\gamma}$ are all disjoint from $\thmb_{i}$ for $i\leq
   k$.  Now, again as in the proof of
   Proposition~\ref{lem:multiple-surgery}, we obtain that there exists
   a hamiltonian isotopy $\psi_{k+1}$ supported outside a neighborhood
   of $V''_{k+1}$ so that $S'_{k+1}=\psi_{k+1}(\hat{S}_{\gamma})$ has
   the property that $V''_{k+1}=\tau_{S'_{k+1}}V''_{k}$ is disjoint
   from $\thmb_{k+1}$.  One additional point appears here: it is easy
   to see that the isotopy $\psi_{k+1}$ can be assumed to leave fixed
   $\thmb_{i}$ for $i\leq k$. By defining $V''_{k+1}$ by using a
   sufficiently small neighborhood $U(S'_{k+1})$ of $S'_{k+1}$ so that
   $U(S'_{k+1})\cap \thmb_{i}= \emptyset$ for all $i\leq k$, we also
   deduce $V''_{k+1}\cap \thmb_{i}=\emptyset\ 1\leq i\leq k$ and the
   induction step is completed.

   We now put $V''=V''_{m}$ and we know that $V''$ is disjoint from
   all the thimbles $\thmb_{i}$. Constructing the horizontal isotopy
   that transforms $V''$ into a cobordism $V'''$ remote relative to
   $E$ is a simple exercise by, possibly, iterating the construction
   in Corollary~\ref{cor:twist-remote}.

   Finally, the cone-decomposition in the statement follows by
   applying to $V'''$ Proposition~\ref{lem:decomp-remote}.
\end{proof}

\pbhl{The following proposition establishes monotonicity properties
  for $\hat{E}$ that will be used later on}
  in~\S\ref{subsec:prof-main-t} when proving
  Theorems~\ref{thm:main-dec} and~\ref{thm:main-dec-gen0}.

\begin{prop} \label{p:strong-mon-Ehat} If the Lefschetz fibration
  $E \longrightarrow \mathbb{C}$ is strongly monotone (see
  Definition~\ref{df:monlef}) then the extended fibration
  $\hat{E} \longrightarrow \mathbb{C}$ is strongly monotone too and
  has the same monotonicity class $*$. The matching spheres
  $\hat{S_j} \subset \hat{E}$ are monotone of class $*$ and if the
  cobordism $V \subset E$ is monotone of class $*$ then it continues
  to be monotone of the same class when viewed as a cobordism in
  $\hat{E}$.
\end{prop}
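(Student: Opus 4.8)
The plan is to exploit that $\hat{E}$ has the \emph{same} generic fibre $M$ as $E$ and is obtained from $E$ by adding Lefschetz critical points over the lower half-plane, in the manner of~\S 16 of~\cite{Se:book-fukaya-categ} (cf.\ the proof of Corollary~\ref{cor:moving-cob}); in particular $\hat{E}$ agrees with $E$ --- as a symplectic manifold, with its almost complex structure and with its connection --- over $\hat{\pi}^{-1}$ of a neighbourhood of the closed upper half-plane, and topologically $\hat{E}$ is obtained from $E$ by attaching cells of dimension $n+1=\tfrac{1}{2}\dim_{\mathbb{R}}M+1$ along the new vanishing cycles (pushed into fibres over the lower half-plane). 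First I would observe that the vanishing spheres of $\hat{E}$ are $S_1,\dots,S_m$ --- realized by paths staying in the upper half-plane, exactly as for $E$ --- together with the new ones $S'_1,\dots,S'_m$ over the added critical values; and each new vanishing sphere is, by the matching-cycle construction, the image of some $S_j$ under parallel transport along a portion of the relevant curve $\gamma$, hence symplectomorphic to $S_j$. Consequently $d_{S'_j}=d_{S_j}$, so the minimal Chern number $c_1^{\min}(M)$, the monotonicity ratio $\rho$ and the common value $d_E$ of the $d$-invariants of the vanishing spheres are all unchanged; by Proposition~\ref{p:monot-E} and Remark~\ref{r:tvsg-mntlf} none of these depends on the auxiliary choices. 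Thus, \emph{once the underlying monotonicity hypothesis of Definition~\ref{df:monlef} is checked for $\hat{E}$}, that definition assigns $\hat{E}$ the same class $*$ as $E$.

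That hypothesis splits into two cases. If $\dim_{\mathbb{R}}M\ge 4$, Definition~\ref{df:monlef} only requires $M$ to be monotone, which is automatic since the fibre is unchanged (and $c_1^{\min}(\hat{E})=c_1^{\min}(M)$ by Remark~\ref{r:S_k-monot}); nothing further is needed. The substantive case is $\dim_{\mathbb{R}}M=2$, i.e.\ $n=1$, where one must show $(\hat{E},\Omega_{\hat{E}})$ is itself monotone with ratio $\rho$. Here the matching cycles $\hat{S}_j\subset\hat{E}$ are Lagrangian $2$-spheres, and the plan is to argue from the explicit handle decomposition that $\pi_2(\hat{E})$ is generated, together with its $\pi_1(\hat{E})$-translates, by the image of $\pi_2(E)$ and the classes $[\hat{S}_j]$ (each $[\hat{S}_j]$ realizing the new spherical class obtained by capping off the attaching circle of the $j$-th handle with a Lefschetz thimble). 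On a class from $\pi_2(E)$ the forms $\Omega_{\hat{E}}$ and $c_1(\hat{E})$ restrict to $\Omega_E$ and $c_1(E)$, up to a pullback of a $2$-form from $\mathbb{C}$ which pairs trivially with spherical classes (as $\pi_*\colon\pi_2(E)\to\pi_2(\mathbb{C})=0$), so $[\Omega_{\hat{E}}]=2\rho\,c_1(\hat{E})$ holds there with the same $\rho$. On $[\hat{S}_j]$ both sides vanish: $\langle[\Omega_{\hat{E}}],[\hat{S}_j]\rangle=0$ since $\hat{S}_j$ is Lagrangian, and $\langle c_1(\hat{E}),[\hat{S}_j]\rangle=0$ since $T\hat{E}|_{\hat{S}_j}\cong T\hat{S}_j\otimes_{\mathbb{R}}\mathbb{C}$ has vanishing first Chern class; the $\pi_1$-translates do not affect these pairings. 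This yields monotonicity of $\hat{E}$ with ratio $\rho$, and hence the equality of classes. The monotonicity of the matching spheres $\hat{S}_j$ of class $*$ then follows readily: being simply connected Lagrangians in the monotone manifold $\hat{E}$ they are monotone with ratio $\rho$, their minimal Maslov number is $2c_1^{\min}(M)\ge 2$, and $d_{\hat{S}_j}=d_E$ by the relation between $d$-invariants of matching cycles and of vanishing spheres (Proposition~\ref{p:monot-E}, cf.~\cite{Chek:cob}).

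Finally, for the cobordism $V$: since the new handles of $\hat{E}$ sit over the lower half-plane and, by the choices made in Corollary~\ref{cor:moving-cob}, may be taken disjoint from $V$, any disk with boundary on $V$ can be homotoped off the new thimbles, giving a surjection $\pi_2(E,V)\to\pi_2(\hat{E},V)$; on classes in the image the area and the Maslov index agree with those computed in $E$, and any genuinely new relative class comes from an absolute class in $\pi_2(\hat{E})$, on which the relation $\Omega=\rho\mu$ has just been verified. Hence $\rho$, $N_V$ and $d_V$ are preserved, so $V$ remains monotone of class $*$ in $\hat{E}$. I expect the main obstacle to be the low-dimensional case $\dim_{\mathbb{R}}M=2$: pinning down $\pi_2(\hat{E})$ precisely and matching the cohomology class of $\Omega_{\hat{E}}$ with that of $\Omega_E$ on spherical classes, which forces one to unwind the handle structure underlying the construction of $\hat{E}$.
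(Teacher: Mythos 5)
The overall scaffolding of your argument matches the paper's: the $\dim_{\mathbb{R}}M\ge 4$ case is handled by monotonicity of the unchanged fiber, the $\dim_{\mathbb{R}}M=2$ case by a homotopical analysis showing the new spherical classes in $\hat{E}$ are (up to classes coming from $E$) the matching-sphere classes $[\hat{S}_j]$, on which $[\Omega_{\hat{E}}]$ and $c_1(\hat{E})$ both pair trivially; and the statement about $V$ is reduced to a surjectivity/long-exact-sequence argument for $\pi_2(E,V)\to\pi_2(\hat{E},V)$. These steps are correct and morally the same as the paper's (which is slightly more careful in the $\dim_{\mathbb{R}}M = 2$ case, replacing $\pi_2$ with the image of the Hurewicz map and using explicit deformation retracts, but the conclusion and the mechanism are the same).

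The genuine gap is the claim $d_{\hat{S}_j}=d_E$, which you dispose of by citing ``the relation between $d$-invariants of matching cycles and of vanishing spheres (Proposition~\ref{p:monot-E}, cf.~\cite{Chek:cob}).'' No such relation is established there: Proposition~\ref{p:monot-E} and Chekanov's result concern cobordisms with cylindrical ends (in particular, thimbles over curves horizontal at $\pm\infty$), whereas $\hat{S}_j$ is a \emph{closed} Lagrangian sphere projecting to a compact arc $\lambda_j$ between two critical values. The invariant $d_{\hat{S}_j}$ counts Maslov-$2$ holomorphic disks in $\hat{E}$ with boundary on all of $\hat{S}_j$, and a priori such a disk can wander over the whole of $\lambda_j$; there is no ``end'' to localize at. The paper proves $d_{\hat{S}_j}=d_E$ by a separate, substantive argument: one fixes a regular $\hat{J}$ for which $\hat{\pi}$ is $\hat{J}$-holomorphic near $\lambda_j$, picks the marked point in a regular fiber over $z\in\lambda_j$, and shows that the planar projection $v=\hat{\pi}\circ u$ of any contributing disk must be constant. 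The key step uses the open mapping theorem together with the fact that $\lambda_j$ is not a closed curve: if $v$ were nonconstant, counterclockwise traversal of $\partial D$ through every preimage of $z$ would push $v$ in the same direction along $\lambda_j$, which is impossible for an open arc. Hence $u$ lies in a single fiber, reducing $d_{\hat{S}_j}$ to $d_S$ for a vanishing sphere $S$, and the latter equals $d_E$ by definition. This fiber-localization argument is the essential content you are missing; without it, the equality $d_{\hat{S}_j}=d_E$ — and hence the monotonicity class of the matching spheres, which is needed for them to be objects of the Fukaya category over which Seidel's triangle is iterated — is unjustified.
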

\begin{proof}
  Denote by $M$ the generic fiber of $E$. Assume first that
  $\dim_{\mathbb{R}}M \geq 4$. By Remark~\ref{r:S_k-monot} $M$ is
  monotone. Denote for every $1\leq j\leq m$ by $\lambda_j$ the path
  connecting $x_j$ to $x'_j$ over which the matching cycle $\hat{S}_j$
  was constructed, as in Figure~\ref{fig:matching-cycles}. Pick a
  point $p_j$ on $\lambda_j$ in such a way that all the points
  $p_1, \ldots, p_m$ are in the upper half-plane and all of them lie
  in one of the domain where $\hat{E}$ is tame. Divide each of the
  $\lambda_j$ into two parts: $\lambda_j^+$ going from $x_j$ to $p_j$
  and $\lambda_j^{-}$ that goes (in the opposite orientation to
  $\lambda_j$) from $x'_j$ to $p_j$. Since $\hat{S}_j$ is a matching
  cycle, the two vanishing spheres in $E_{p_j} = \pi^{-1}(p_j)$
  associated to the paths $\lambda^{+}_j$ and $\lambda^{-}_j$
  coincide. It follows that if Case~(ii) in Definition~\ref{df:monlef}
  is applicable then it is satisfied also for the fibration
  $\hat{E}$. This proves that $\hat{E}$ is strongly monotone under the
  assumption that $\dim_{\mathbb{R}}M \geq 4$. It is not hard to see
  that its monotonicity class $*$ is the same as the one of $*$. That
  $V$ remains monotone when viewed in $\hat{E}$ follows easily from
  the fact that when $\dim_{\mathbb{R}}M \geq 4$ the map induced by
  the inclusion $\pi_2(E, V) \longrightarrow \pi_2(\hat{E}, V)$ is
  surjective.

  The statement about the matching spheres will be proved below, at
  the present proof, as it does not require any assumptions on the
  dimension of $M$.

  We now turn to the case $\dim_{\mathbb{R}}M = 2$. Recall that in
  this case strong monotonicity assumes that $E$ itself is a monotone
  manifold. We will first determine the homotopy type of $E$ and that
  of $\hat{E}$.  Consider the complement (in $\mathbb{C}$) of the
  union of curves $\cup_{j=1}^m \lambda_j$. This has several unbounded
  connected components and several bounded ones (unless $m=1,2$, when
  there are only unbounded ones). Denote by
  $\mathcal{B} \subset \mathbb{C}$ the closure of the union of the
  bounded components. If $m=1$ take $\mathcal{B}$ to be just a point
  on $\lambda_1$ in the upper half-plane which is not $x_1$ or $x'_1$
  and if $m=2$ take $\mathcal{B}= \lambda_1 \cap \lambda_2$. Put
  $\hat{E}_{\lambda, \mathcal{B}} = \hat{\pi}^{-1}(\cup_{j=1}^m
  \lambda_j \cup \mathcal{B})$. Then the inclusion
  $\hat{E}_{\lambda, \mathcal{B}} \longrightarrow \hat{E}$ is a
  homotopy equivalence.

  Denote by $l^+_j \subset \lambda_j$ the part of $\lambda_j$ that
  starts from $x_j$ till the point where it enters $\mathcal{B}$, and
  by $l^{-}_j$ the path starting at $x'_j$ and goes along $\lambda_j$,
  with the reverse orientation, till the point it hits the domain
  $\mathcal{B}$. Put
  $E_{l^+, \mathcal{B}} = \pi^{-1}(\cup_{j=1}^m l^+_j \cup
  \mathcal{B})$. The inclusion
  $E_{l^+, \mathcal{B}} \longrightarrow E$ is a homotopy equivalence
  too.

  Consider now the following subspaces:
  $$\hat{E}^0 = 
  E|_{\mathcal{B}} \cup (\cup_{j=1}^m T_{l^{+}_j}) \cup (\cup_{j=1}^m
  T_{l^{-}_j}) \subset \hat{E}_{\lambda, \mathcal{B}}, \quad E^0 =
  E|_{\mathcal{B}} \cup (\cup_{j=1}^m T_{l^{+}_j}) \subset E_{l^+,
    \mathcal{B}},$$ where $T_{l^+_j}$ is the thimble associated to
  $l^+_j$ and similarly for $T_{l^{-}_j}$. Thus $\hat{E}^0$ is
  obtained from $E_{\mathcal{B}}$ by attaching to it $m$ pairs of
  $(n+1)$-dimensional balls by identifying their boundaries with
  vanishing spheres of some fibers of $E$ over $\mathcal{B}$. The
  space $E^0$ has an analogous description, by using only the
  $T_{l^+_j}$'s. Note also that
  $$\hat{E}^0 = E|_{\mathcal{B}} \cup (\cup_{j=1}^m \hat{S}_j).$$

  By standard arguments from Morse theory the inclusions
  $\hat{E}^0 \longrightarrow \hat{E}_{\lambda, \mathcal{B}}$ and
  $E_0 \longrightarrow E_{l^+, \mathcal{B}}$ are homotopy
  equivalences.
  
  We are now ready to show that $\hat{E}$ is a monotone symplectic
  manifold. For a space $X$ we denote by
  $H_2^S(X) = \textnormal{image\,} (\pi_2(X) \longrightarrow H_2(X))$
  the image of the Hurewicz homomorphism.  Denote by
  $j: E^0 \longrightarrow \hat{E}^0$ the inclusion and by $j_*$ its
  induced map on $H_2^S$. Since $\mathcal{B}$ is contractible, it is
  easy to see that $H^S_2(\hat{E^0})$ is generated by
  $\textnormal{image\,}(j_*)$ together with the classes
  $[\hat{S}_1], \ldots, [\hat{S}_m]$. As $E$ is assumed to be monotone
  and $\hat{S_j}$ are Lagrangian it readily follows that $\hat{E}$ is
  monotone too.

  The monotonicity of $V \subset \hat{E}$ can be proved by similar
  methods.  For a pair of spaces $Y \subset X$ put
  $H_2^D(X,Y) = \textnormal{image\,} (\pi_2(X,Y) \longrightarrow
  H_2(X,Y))$. A similar argument to the preceding one combined with
  the homotopy long exact sequence of the triple $(\hat{E}, E , V)$
  shows that $H_2^D(\hat{E}, V)$ is generated by
  $\textnormal{image\,}(i_*)$ together with
  $[\hat{S}_1], \ldots, [\hat{S}_m]$, where $i_*$ is the map induced
  by the inclusion $(E, V) \to (\hat{E}, V)$ and the $[S_j]$'s are
  viewed as elements of $H_2^D(\hat{E}, V)$ via the map
  $H_2^S \to H_2^D$. As before, since $[S_j]$ are Lagrangian it
  follows that $V \subset \hat{E}$ remains monotone. Moreover, it is
  easy to see that its monotonicity class $*$ remains unchanged.

  Finally, we prove the statement about the matching spheres. The
  argument below works for $M$ of arbitrary positive dimension.  Let
  $\hat{S}$ be one of the matching spheres $\hat{S}_j$. Since
  $\hat{S}$ is simply connected (recall that $\dim M > 0$) and
  $\hat{E}$ is monotone it follows that $\hat{S}$ is monotone
  too. Moreover, if the monotonicity constant of $E$ satisfies
  $\rho>0$, then $\hat{S}$ will have the same constant. 

  It remains to show that $d_{\hat{S}} = d_E$. Recall that
  $d_{\hat{S}}$ counts the number of pseudo-holomorphic disks in
  $\hat{E}$ with boundary on $\hat{S}$ that go through a given point
  in $\hat{S}$. Pick a point $p \in \hat{S}$ such that its projection
  $z=\pi(p)$ belongs to the upper half-plane and is in a region where
  both $E$ and $\hat{E}$ are tame. Denote by
  $\mathcal{U}\subset \mathbb{C}$ the domain over which $\hat{E}$ is
  tame. Let $\hat{J}$ be an almost complex structure on $\hat{E}$,
  compatible with the symplectic structure and such that
  $\pi: \hat{E} \longrightarrow \mathbb{C}$ is $\hat{J}$-holomorphic
  above $\mathcal{U}$. Standard arguments show that the class of such
  almost complex structures contain regular ones and therefore one can
  calculate $d_{\hat{S}}$ using such a $\hat{J}$.

  Let $u:(D, \partial D) \longrightarrow (\hat{E}, \hat{S})$ be a
  $\hat{J}$-holomorphic disk with $u(\partial D) \ni p$. Let
  $v = \pi \circ u: D \longrightarrow \mathbb{C}$ be the projection of
  $u$ to $\mathbb{C}$. We claim that $v$ is constant, hence the image
  of $u$ is in the fiber $E_z \cong M$. To prove this, suppose by
  contradiction that $v$ is not constant.  We have
  $v(\partial D) \subset \lambda$, where $\lambda = \pi(\hat{S})$ is a
  curve (connecting two critical values $x_j$ and $x'_j$ of
  $\pi$). Note that $v$ is holomorphic on
  $\mathcal{H} := v^{-1}(\mathcal{U})$. Let $\xi \in \partial D$ be a
  point such that $v(\xi) = z$. Clearly $\xi \in \mathcal{H}$. Without
  loss of generality we may assume that $z$ is not a critical value of
  $v$ (otherwise, move $z$ slightly to a nearby point on $\lambda$
  which is still in the image of $v$ and which is a regular value of
  $v$). By the open mapping theorem it is impossible for $v(D)$ to
  intersect the part of $\mathcal{U}$ that is on the right-hand side
  of $\lambda$. Thus in a neighborhood of $z$, the image $v(D)$ must
  be on the left-hand side of $\lambda$. Since $v$ is holomorphic near
  $\xi$ it follows that when we go along $\partial D$ counterclockwise
  through $\xi$, the image of $v$ goes along $\lambda$ in the upper
  direction. This holds for all points $\xi \in v^{-1}(z)$. But this
  is impossible since $\lambda$ is not a closed curve, so there must
  be another point $\xi' \in \partial D$ with $v(\xi') = z$ and such
  that when we go counterclockwise along $\partial D$ around $\xi'$
  the image of $v$ goes in the lower direction of $\lambda$. A
  contradiction. This proves that $v$ is constant, hence
  $\textnormal{image\,}u \subset E_z$. We thus conclude that
  $d_{\hat{S}} = d_{S}$, where $S \subset E_z$ is the vanishing sphere
  corresponding to the matching sphere $\hat{S}$. It now easily follows
  that $d_{\hat{S}} = d_E$.
\end{proof}

\subsubsection{Dehn twist as multiple surgery}
\label{subsubsec:multipl-surg} 

Here we give an interpretation of the action of a Dehn twist on
Lagrangian submanifolds in terms of surgery. Fix $S^n \longrightarrow
S\subset M$, a parametrized Lagrangian sphere and let $L$ be another
Lagrangian submanifold of $(M,\omega)$. It is know that if $L$ and $S$
intersect transversely and in a single point, then Lagrangian surgery
at this point produces a Lagrangian $S\# L$ that is Hamiltonian
isotopic to the Dehn twist $\tau_{S}L$ of $L$ along $S$ (see
e.g.~\cite{Se:knotted, Th-RP:moment}). (See~\cite{Po:surgery} as well
as~\cite{La-Si:Lag} for the definition of Lagrangian surgery, and see
below for our conventions regarding the choice of handles in the
surgery).  Assume now that $L$ is still transverse to $S$ but that the
number of intersection points $L\cap S$ is more than one. In this case
too, one can express the Dehn twist $\tau_{S}(L)$ as a certain type of
surgery.  The construction goes as follows. Assume that $L\cap
S=\{p_{1},\ldots, p_{r}\}$. Fix an additional point $p_{0}\in S$ and a
small neighborhood of it $V\subset S$.

\begin{itemize}
  \item[i.] Consider $r$ hamiltonian diffeomorphisms $\phi^{j}$,
   $1\leq j\leq r$ supported in a small Weinstein neighborhood of $S$,
   so that $S_{j}=\phi^{j}(S)$ is transverse to $S$ and $S_{j}\cap S=
   \{p_{j}, p'_{j}\}$ for some additional point $p'_{j}\in V$.
  \item[ii.]  Pick small disks $D^L_{j}\subset L$ centered at $p_{j}$
   and disks $D^{S_j}_{j}\subset S_{j}$ also centered at $p_{j}$ as
   well as Lagrangian handles $H_{j}\subset M$ defined in a small
   neighborhood of $p_{j}$ that join $S_{j}$ to $L$ so that $
   (L\setminus D^L_{j}) \cup (S_{j}\setminus D^{S_j}_{j})\cup H_{j}$
   is the usual Lagrangian surgery $L\# S_{j}$ between $L$ and $S_{j}$
   at the point $p_{j}$ (this is, in general, an immersed Lagrangian).
   Notice that there are two choices for Lagrangian surgery at each
   intersection point. The choice used here is the same at each point
   and is the one defined as follows (this is the same convention as
   in \cite{Bi-Co:cob1}).  The sphere $S$ is oriented hence so are the
   $S_j$'s.  This induces a local orientation on $L$ (even if $L$ is
   not orientable) near each intersection point $p_j$ in such a way
   that $T_{p_j}S_{j}\oplus T_{p_j}L$ gives the orientation of
   $T_{p_j}M$.  We then symplectically identify a neighborhood of $p_j
   \in M$ with a neighborhood of $0$ in $\mathbb{R}^{2n}$ in such a
   way that $D^{S_j}_j$ is identified with a small disk around $0$ in
   $\mathbb{R}^n\times \{0\}$ and $D^{L}_j$ with a small disk around
   $0$ in $\{0\}\times \mathbb{R}^n $, with the last two
   identifications being orientation preserving. The model Lagrangian
   handle is then defined to be $H_j = \cup_{t \in [-1,1]} \gamma(t)
   S^{n-1} \in \mathbb{C}^n \cong \mathbb{R}^{2n}$, where
   $\gamma(t):[-1,1] \longrightarrow \mathbb{C}$ is an appropriately
   chosen curve whose image is in the 2'nd quadrant and such that
   $\gamma(t) \in \mathbb{R}_{<0}$ for $t$ close to $-1$ and
   $\gamma(t) \in i\mathbb{R}_{>0}$ for $t$ close $1$.
          
\item[iii.] Define $ S\#_{r} L$ by
  \begin{equation}\label{eq:multipl-surg}
    S \#_{r} L = (\cup_{j} S_{j}\setminus D^{S_j}_{j}) \cup
      (L\setminus \cup_{j} D^L_{j}) \cup (\cup_{j} H_{j})~.~
  \end{equation}
  In other words $S\#_{r} L$ is obtained by performing simultaneously,
  for all $1\leq j\leq r$, the one point surgery at $p_{j}$ between
$S_{j}$ and $L$.
\end{itemize}

\begin{figure}[htbp]
   \begin{center}
      \includegraphics[scale=0.9]{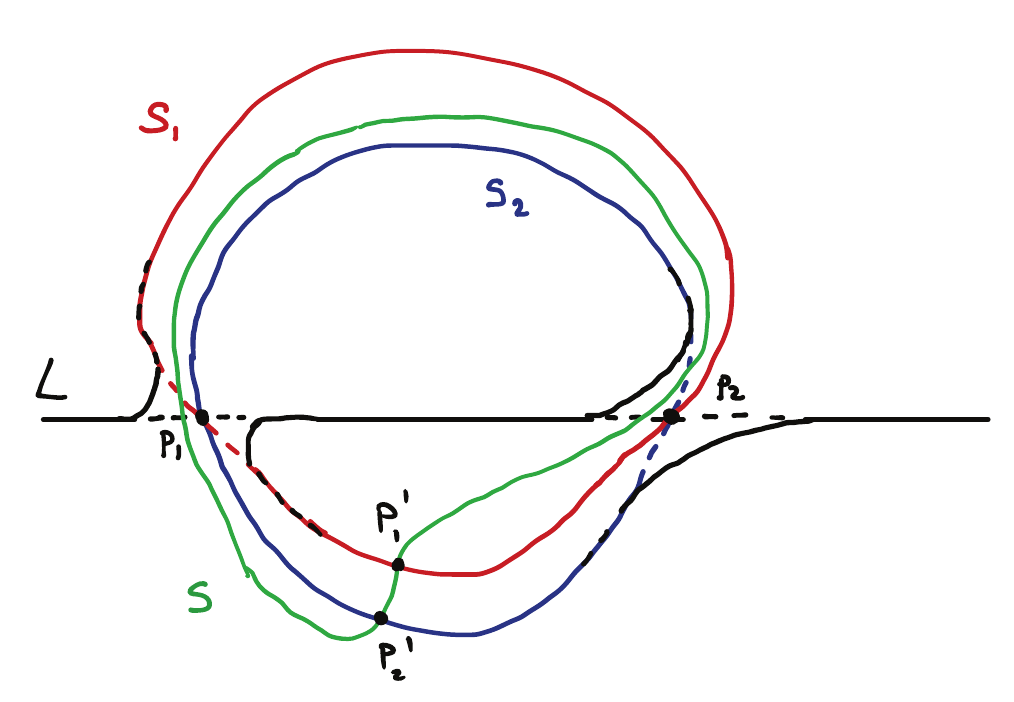}
   \end{center}
   \caption{Dehn twist as multiple surgery for $n=1$ 
     assuming two intersection points $p_{1},p_{2}$
    between $L$ and $S$.
    \label{fig:multiple-surgery}}
\end{figure}

Either by a direct argument - this is instructive to draw in dimension
two as in Figure~\ref{fig:multiple-surgery} - or by comparing this
multiple surgery construction with the definition of $\tau_{S} L$, we
see that there exist choices of $\phi^{j}, D^L_{j}, D^{S_j}_{j},
H_{j}$ so that:
\begin{itemize}
\item[i.] $S\#_{r} L$ is embedded and is Hamiltonian isotopic to
  $\tau_{S}L$.
\item[ii.] $S\#_{r} L$ is transverse to $S$ and it intersects $S$ in
  the $r$ points $p'_{j}\in V$, $1\leq j \leq r$.
 \item[iii.] If both $L$ and $S$ are monotone of monotonicity constant
  $\rho$, then so is $S\#_{r} L$.
\end{itemize}
As explained above, the local model for surgery at a point requires an
order among the two Lagrangians involved. By reversing the order for
all the one-point surgeries, we obtain again a Lagrangian denoted now
$L\#_{r} S$. This has properties similar to i,ii,iii above except that
it is hamiltonian isotopic to $\tau^{-1}_{S}L$. From this perspective,
Proposition~\ref{lem:multiple-surgery} claims that, with appropriate
choices of handles, we have $(S'\#_{r} V) \cap T=\emptyset$.

  \begin{rem} a. The ``doubling'' of singularities used in Proposition
     \ref{lem:multiple-surgery} first appeared in a somewhat different
     form and with a different purpose in the work of Seidel
     \cite{Se:book-fukaya-categ}.  From the perspective of our paper,
     the initial approach to the setting of
     Proposition~\ref{lem:multiple-surgery} was to consider a thimble
     $T'$ (inside $E$) that projects over the curve $\gamma$ in
     Figure~\ref{fig:complex-thimble} and continues horizontally to
     $-\infty$. The idea was to disjoin $V$ from $T$ by a process of
     multiple surgery with multiple copies of $T'$, in other words to
     define $V'=T'\#_{r} V$ so that $V'\cap T=\emptyset$.  Purely
     geometrically, this operation is possible. However, the problem
     in drawing algebraic conclusions from it is that the condition
     $V'\cap T=\emptyset$ turns out to force that the copies of $T'$
     used in the surgery are not cylindrical at infinity
     (alternatively, one can achieve cylindricity at infinity
       at the expense that the resulting manifold $V'$ would no longer
       be embedded but only immersed, see
       also~\S\ref{subsubsec:simple-cob}). As a consequence the
     machinery involving $J$-holomorphic curves can not be applied
     directly to $V'$. On the other hand, by compactifying $T'$ to the
     sphere $S'$ - as described in the paper - this issue is no longer
     present.  The price to pay is that we need to add singularities
     to the initial fibration $E$.
     
     b. It is likely that  Proposition \ref{lem:multiple-surgery} can be proven 
     also along an approach closer to Seidel's constructions involving
      bifibrations. The basic idea along this line would be to construct the 
      fibration $\hat{E}$ by symmetrizing the restriction of the  fibration $E$ to the upper
      semi-plane by a rotation $\sigma$ by $180^{\circ}$
       around the origin in $\C$. This gives rise to a specific matching cycle
      that projects to a segment joining the singular value $v_{1}$ to its ``mirror''  $v'_{1}$. By restricting to
       a suitable disk $D$ containing this segment,  we see that the the Dehn twist around this vanishing cycle is identified to the rotation $\sigma$  (Lemma 18.2 in \cite{Se:book-fukaya-categ}). At the same time 
if  $V$ is assumed to be a Lagrangian without ends and included in $D$, then $\sigma (V)$ is remote. 
However, as  $V$ is in general more complicated this argument does not work directly and thus we 
gave a direct geometric proof.
     \end{rem}


\subsection{A cobordism viewpoint on Seidel's exact triangle}
\label{s:cob-vpt}

In this section we present a new proof of Seidel's exact
triangle~\cite{Se:long-exact, Se:book-fukaya-categ}.  This is the last
essential ingredient for the proof of Theorem~\ref{thm:main-dec}.  Our
proof is based on cobordism considerations and is valid in the
monotone setting.  We give full details not only for the sake of
self-containedness but also in order to emphasize the reason why the
Novikov ring $\mathcal{A}$ is required in the proof of
Theorem~\ref{thm:main-dec}: this is precisely in establishing Seidel's
exact triangle.  Additionally, in the proof of Theorem
\ref{thm:main-dec} we need a variant of the exact triangle that
applies to the case when the Lagrangian to which the Dehn
twist is applied is itself a cobordism in the total space of a Lefschetz
fibration and the proof is robust enough to cover this case with
minimal adjustment.

Seidel's proof~\cite{Se:book-fukaya-categ} assumes an
exact setting but his argument adapts to the monotone case
too and also admits further generalizations as in \cite{We-Wo-Dehn}.

\subsubsection{The exact triangle} \label{sbsb:ex-tr}

We work, as in the rest of the paper, with coefficients in the
universal Novikov ring $\mathcal{A}$ over $\mathbb{Z}_2$ and with
monotone Lagrangians assumed to be of class $\ast$. Floer complexes
and Fukaya categories are ungraded.

Below we will have two versions of the Seidel's exact
  triangle. The first is for symplectic manifolds $X$ (which are
  either closed or symplectically convex at infinity) and their
  compact Fukaya categories (i.e. the Fukaya categories whose objects
  are {\em closed} Lagrangian submanifolds). The second version is
  specially tailored to the situation when $X$ is itself the total
  space of a Lefschetz fibration and the Fukaya category considered in
  $X$ is that of negatively ended cobordisms in $X$. It is the second
  version that will be used in the proof of
  Theorem~\ref{thm:main-dec}.  We will later exhibit $X$ as a
  fiber in a Lefschetz fibration denoted by $\mathcal{E}$.  The choice
  of notation ($\mathcal{E}$ and $X$) is intentional, in order to
  avoid confusion with the Lefschetz fibrations $E \longrightarrow
  \mathbb{C}$ and their fibers $M$ that appear in the rest of the
  paper.

  Let $(X^{2n+2}, \omega)$ be a symplectic manifold which is either
  closed or symplectically convex at infinity. \pbhl{Throughout this
    section we add the assumption that $\dim_{\mathbb{R}}X \geq
    4$. (The reason for this restriction will be explained in}
  Remark~\ref{r:dimX4} below.) Let $S$ a parametrized Lagrangian
  sphere in $X$, i.e. a Lagrangian submanifold $S \subset X$ together
  with a diffeomorphism $i_S: S^{n+1} \longrightarrow S$. Recall that
  we denote by $\tau_S: X \longrightarrow X$ the Dehn twist associated
  to $S$. Assume further that $S \subset X$ is monotone and denote by
  $*$ its monotonicity class. Following the conventions of the paper,
  we write $\fuk^{*}(X)$ for the Fukaya category of monotone closed
  Lagrangian submanifolds of $X$ of monotonicity class $*$.

The following important result was proved by
Seidel~\cite{Se:long-exact} in the exact case.  As mentioned above, we extend
the result to the monotone case and provide an independent proof.

\begin{prop} \label{t:ex-tr-compact} Let $X$, $S$
   be as above and let $Q \subset X$ be another monotone
     closed Lagrangian submanifold of monotonicity class $*$. In
   $D\fuk^{\ast}(X)$ there is an exact triangle of the form:
   \begin{equation} \label{eq:ex-tr-1}
      \xymatrix{
        \tau_S(Q) \ar[r] & Q \ar[d]\\
        & S \otimes HF(S,Q) \ar[ul]
      }
   \end{equation}
\end{prop}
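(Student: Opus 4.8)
The plan is to prove Seidel's exact triangle by exhibiting it as an instance of the cobordism-induced exact triangle from~\cite{Bi-Co:lcob-fuk}, applied to a carefully chosen Lagrangian cobordism inside a Lefschetz fibration built out of $X$ and $S$. First I would construct an auxiliary Lefschetz fibration $\pi_{\mathcal{E}}:\mathcal{E}\to\mathbb{C}$ having $X$ as its generic fiber and a single critical point whose vanishing sphere is (isotopic to) $S$; this is the standard construction of attaching a Lefschetz handle along the framed sphere $S\subset X$, possible since $\dim_{\mathbb{R}}X\geq 4$ (this is the reason for that hypothesis, since one needs room to embed the handle). I would then make $\mathcal{E}$ tame outside a $U$-shaped region using Proposition~\ref{p:from-gnrl-to-tame}, and record (as in Proposition~\ref{p:monot-E}, Remark~\ref{r:S_k-monot}) that the thimble $T$ over a ray emanating from the critical value and the vanishing sphere $S$ are monotone of the right class, so the relevant Fukaya categories $\fuk^{\ast}(\mathcal{E})$ and $\fuk^{\ast}(X)$ are defined.

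Next I would produce inside $\mathcal{E}$ a Lagrangian cobordism $W$ realizing the surgery trace that geometrically incarnates the triangle. Concretely, using the identification of the Dehn twist with iterated Lagrangian surgery from~\S\ref{subsubsec:multipl-surg}, one builds a cobordism $W\subset\mathcal{E}$ with three negative ends, namely $\tau_S(Q)$, $Q$, and $S$ (the last end being the thimble/vanishing sphere end), obtained as the trace of the multiple Lagrangian surgery $S\#_r Q$ together with a thimble-type piece accounting for the $S$-end; one must check $W$ is embedded, cylindrical at infinity, and monotone of class $\ast$. The heart of the argument is then to run the decomposition machinery: by Proposition~\ref{lem:decomp-remote} (in the form used for remote cobordisms, or directly by the main decomposition theorem of~\cite{Bi-Co:lcob-fuk} applied here) the cobordism $W$, viewed as a module over the Fukaya category of the fiber, yields an iterated cone decomposition whose pieces are precisely the ends $\tau_S(Q)$, $Q$, and $S\otimes HF(S,Q)$ — the Floer-homology multiplicity $HF(S,Q)$ appearing because the thimble end contributes $S$ tensored with the morphism space counting how the surgeries were performed, i.e.\ the intersection data $S\cap Q$ with their Floer differential, which over the Novikov ring $\mathcal{A}$ assembles to $HF(S,Q)$.

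The step I expect to be the main obstacle is identifying the ``$S$-component'' of the decomposition with $S\otimes HF(S,Q)$ rather than merely with $S\otimes CF(S,Q)$ or with some a priori larger complex: one must show that the relevant piece of the iterated cone is quasi-isomorphic to the Yoneda module of $S$ tensored with the honest Floer \emph{homology}, which requires a bottleneck/holomorphic-projection analysis (as in~\S\ref{sbsb:nat-transf} and Step~2 of the proof of Proposition~\ref{lem:decomp-remote}) showing that the module multiplications take an upper-triangular form whose off-diagonal blocks recover exactly the Floer differential $\mu_1$ of $CF(S,Q)$. This is also precisely where the universal Novikov ring $\mathcal{A}$ is essential: working over $\mathbb{Z}_2$ alone, the bubbling contributions entering the relevant moduli spaces need not cancel, and it is the formal weights $T^{\omega}$ that make the identification of the component with $HF(S,Q)$ (and hence the well-definedness of the triangle) go through. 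Finally I would remark that the same argument, with $X$ replaced by the total space of a Lefschetz fibration and closed Lagrangians replaced by negatively-ended cobordisms, works verbatim since all the inputs (monotonicity of thimbles, the surgery description, the bottleneck arguments) were already developed in that generality in~\S\ref{subsubsec:Fuk-cob}--\S\ref{subsec:Dehn-disj}; this yields the cobordism version of the triangle needed for the proof of Theorem~\ref{thm:main-dec}.
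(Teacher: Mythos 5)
Your construction of the auxiliary Lefschetz fibration $\mathcal{E}$ with a single critical point, fiber $X$, and vanishing cycle $S$, followed by taming it and checking monotonicity of the thimble, is exactly the paper's Step~1 and Step~2. But after that your argument takes a direction that the paper explicitly identifies as a dead end, and the remaining reasoning does not close the gap.

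The central geometric step in your plan is to build an embedded, monotone, cylindrical-at-infinity Lagrangian cobordism $W\subset\mathcal{E}$ with three negative ends $\tau_S(Q)$, $Q$, $S$ by taking the trace of the multiple surgery $S\#_r Q$ together with a thimble piece for the $S$-end, and then to apply the decomposition machinery of~\cite{Bi-Co:lcob-fuk} to $W$. The paper discusses precisely this attempt and explains why it fails in general: when $|S\cap Q|\geq 2$, carrying out the multiple surgery with the required copies of the thimble forces a choice between losing cylindricity at infinity or ending up with a manifold that is only immersed, not embedded (see the remark after~\S\ref{subsubsec:multipl-surg} and the discussion around Figure~\ref{fig:example3} in~\S\ref{subsubsec:simple-cob}). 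You flag ``one must check $W$ is embedded, cylindrical at infinity, and monotone'' as a routine verification, but that is exactly the step that cannot be made to work; it is the reason the paper is forced to pass to the extended fibration $\hat{E}$ with a mirror singularity and matching sphere in~\S\ref{subsec:Dehn-disj}, and independently to the stretching argument in the proof of the exact triangle itself. Moreover, even if such a $W$ with a single $S$-end did exist, the cobordism decomposition would produce an exact triangle of the shape $S\to Q\to\tau_S(Q)$, not $S\otimes HF(S,Q)\to Q\to\tau_S(Q)$: a single geometric end contributes a single Yoneda module, and there is no mechanism in the $\eta$-restriction/filtration machinery for the Floer homology multiplicity to materialize from one end. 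The appearance of $HF(S,Q)$ in the paper's proof comes from an entirely different source: the module $\mathcal{M}=T_\Delta\otimes CF(T_\Delta,V)$ attached to a diagonal thimble, the identification $\mathcal{I}^*_{\gamma'}T_\Delta\simeq S$, and the hard analytic step (Steps~3--7) showing that $\mu_2:CF(\gamma'N,T_\Delta)\otimes CF(T_\Delta,V)\to CF(\gamma'N,V)$ is a quasi-isomorphism by stretching along the gradient flow of $\mathrm{Re}(\pi)$ and proving the resulting multiplication matrix is invertible over $\mathcal{A}$.

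Two smaller points are also mischaracterized. The role of the Novikov ring is not about bubbling cancellations: it is that after stretching, the triangles contributing to $\mu_2$ split into short ones (tiny area, giving a bijection, hence diagonal entries $T^{\alpha_k}$ with $\alpha_k\to 0$) and long ones (area bounded below by a fixed $C>0$), and only the valuation on $\mathcal{A}$ lets one conclude the matrix is invertible; over $\mathbb{Z}_2$ short and long triangles would contribute the same coefficient $1$ and could cancel. And the hypothesis $\dim_{\mathbb{R}}X\geq 4$ is not about room to embed a handle: as Remark~\ref{r:dimX4} explains, it is needed to guarantee that $\mathcal{E}$ is strongly monotone, which in dimension $\dim_{\mathbb{R}}X=2$ can fail.
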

The proof of this result will occupy most of~\S\ref{sb:prf-ex-tr}
below. We note that the maps appearing in this exact triangle will be identified 
along the proof, they coincide with the corresponding maps in Seidel's exact triangle.

\begin{rem}
   If one restricts the objects in the Fukaya category of $X$ to
   orientable Lagrangians, our proof should hold also with a
   $\mathbb{Z}_2$-grading. Similarly, under more assumptions on the
   Lagrangians (and additional structures) the proof is expected to carry over
 with a $\mathbb{Z}$-grading as well as, if one assumes all
   Lagrangians to be endowed with spin structures, with coefficients in $\mathbb{Z}$. 
   \end{rem}

\subsubsection{Second version of the exact triangle: the case when $X$
 is a Lefschetz fibration} \label{sbsb:ex-tr-cob}

Here we assume that $X$ is the total space of a tame Lefschetz
fibration $\pi_X^{2n+2}: X \longrightarrow \mathbb{C}$, $n\geq 1$, as
defined in~\S\ref{subsec:lef-fibr}. (The assumption that $X$ is
symplectically convex at infinity is now dropped.) We denote by
$\fuk^*(X)$ the Fukaya category of $X$ whose objects are negatively
ended {\em Lagrangian cobordisms} in $X$ of monotonicity class $*$ as
defined in \S\ref{subsubsec:Fuk-cob}.

  \begin{prop} \label{t:ex-tr-cob} For $X$ as above, let $S \subset
     X$ be a monotone Lagrangian sphere of class $*$ and let $Q \subset X$ be
     a monotone Lagrangian cobordism (possibly without ends) of the
     same monotonicity class. Then in $D\fuk^*(X)$ there is an exact
     triangle as in~\eqref{eq:ex-tr-1}.
\end{prop}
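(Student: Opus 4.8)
The plan is to obtain Proposition~\ref{t:ex-tr-cob} by running the proof of Proposition~\ref{t:ex-tr-compact} (given below) essentially verbatim, with ``closed Lagrangian submanifold of $X$'' replaced throughout by ``negatively-ended Lagrangian cobordism in $X$'' and the compact Fukaya category of $X$ replaced by the category $\fuk^{*}(X)$ of cobordisms set up in \S\ref{subsubsec:Fuk-cob}. Since the proof of Proposition~\ref{t:ex-tr-compact} is purely cobordism-theoretic it is robust under this substitution; what must be checked is that each auxiliary construction still makes sense when $Q$ carries ends and $X$ is itself the total space of a Lefschetz fibration.

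\emph{Realising $\tau_S$ as a monodromy.} First I would embed $X$ as the fibre over a base point $z_0$ of an auxiliary Lefschetz fibration $\pi_{\mathcal{E}}\colon \mathcal{E}\to \mathbb{C}$ having a single critical point whose canonically framed vanishing cycle is $S$, so that the monodromy of $\pi_{\mathcal{E}}$ around its critical value equals $\tau_S$. When $X$ is convex at infinity this is standard; when $X$ is a Lefschetz fibration, $\mathcal{E}$ has as fibre a Lefschetz fibration --- the case anticipated in Remark~\ref{rem:triv-infty}(a) --- and one checks, exactly as in Propositions~\ref{p:from-gnrl-to-tame} and~\ref{p:strong-mon-Ehat}, that $\mathcal{E}$ can be made tame at infinity and strongly monotone with the same monotonicity class $*$. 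Let $\Delta\subset\mathcal{E}$ be the thimble over a ray which, after a half turn around the critical value, ends at $X$, so the end of $\Delta$ inside $X$ is $S$; by the analogue of Proposition~\ref{p:monot-E}, $\Delta\in\mathcal{O}b(\fuk^{*}(\mathcal{E}))$. Parallel transporting $Q$ along suitably chosen arcs produces a Lagrangian $W\subset\mathcal{E}$ with cylindrical ends whose ``ends'' are $\tau_S Q$, $Q$ and $\Delta$; geometrically $W$ is the trace of the multiple Lagrangian surgery $S\#_{r} Q$ of \S\ref{subsubsec:multipl-surg}, bent into cylindrical position.

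\emph{Extracting the triangle.} Next I would analyse the $\fuk^{*}(X)$-module determined by $W$ by the bottleneck and open-mapping arguments already used in the proof of Proposition~\ref{lem:decomp-remote} and in \cite{Bi-Co:lcob-fuk}: with profile and perturbation data as in \S\ref{subsubsec:Fuk-cob}, the planar projections to $\mathbb{C}$ of the polygons defining the module operations are holomorphic off a compact set, and the quadrant argument forces the module to split, yielding a two-term iterated cone decomposition relating $\tau_S Q$, $Q$ and the pulled-back Yoneda module of $\Delta$ --- equivalently, the exact triangle \eqref{eq:ex-tr-1}. The thimble term is then identified, by the very same curve analysis that identifies the modules $E_i$ in the proof of Theorem~\ref{thm:main-dec}, with $\mathcal{Y}(S)\otimes HF(S,Q)$; this is the step where the Novikov ring $\mathcal{A}$ is indispensable, as the relevant disc counts need not be finite over $\mathbb{Z}_2$. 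Matching the two connecting morphisms with Seidel's (the evaluation $S\otimes HF(S,Q)\to Q$ and the natural map $\tau_S Q\to Q$) then gives~\eqref{eq:ex-tr-1}. In the cobordism case one additionally carries the negative ends of $Q$ through the construction and verifies, as in \S\ref{subsubsec:non-comp-J} and Proposition~\ref{p:strong-mon-Ehat}, that $W$ is a genuine monotone cobordism of class $*$ in $\mathcal{E}$ and that the compactness arguments persist --- the almost complex structures split at infinity along both the $\mathcal{E}\to\mathbb{C}$ and the $X\to\mathbb{C}$ directions, so the projection-to-$\mathbb{C}$ argument applies in each.

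\emph{Main obstacle.} The serious point will be showing that the thimble contributes $S\otimes HF(S,Q)$ rather than $S\otimes CF(S,Q)$, i.e.\ that the pulled-back thimble module is already formal; this rests on the precise bottleneck count for polygons with one boundary arc on $\Delta$ together with the identification $HF(\Delta,\ \text{transport of }Q)\cong HF(S,Q)$, and is exactly the mechanism behind formula~\eqref{eq:E-i-s}. A more bureaucratic obstacle is making $\mathcal{E}$ a legitimate strongly monotone tame Lefschetz fibration whose fibre is the non-convex Lefschetz fibration $X$ and checking the $T_{\infty}$-type conditions in this iterated fibration --- routine given \S\ref{sb:tame-vs-gnrl}, \S\ref{subsubsec:non-comp-J} and Proposition~\ref{p:strong-mon-Ehat}, but it is where most of the care goes.
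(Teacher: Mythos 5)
Your high-level plan — realize $\tau_S$ as a monodromy of an auxiliary fibration $\mathcal{E}$ with fibre $X$, then run the proof of Proposition~\ref{t:ex-tr-compact} verbatim for cobordisms — is exactly the paper's approach, and you also correctly locate the bureaucratic work (making $\mathcal{E}$ tame, strongly monotone of class~$*$, checking $T_\infty$ in the iterated fibration, and splitting the almost complex structures at infinity in both the $\mathcal{E}\to\mathbb{C}$ and $X\to\mathbb{C}$ directions so the open-mapping arguments apply to both projections, as in \S\ref{sbsb:prf-ex-tr-cob}). The problem is what you say happens \emph{inside} the t:ex-tr-compact proof: you describe the triangle as coming from a single cobordism $W\subset\mathcal{E}$ with ends $\tau_S Q$, $Q$, $\Delta$, geometrically ``the trace of the multiple Lagrangian surgery $S\#_r Q$ bent into cylindrical position,'' and you propose to obtain~\eqref{eq:ex-tr-1} by applying the bottleneck/open-mapping decomposition of Proposition~\ref{lem:decomp-remote} to that $W$. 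This is precisely the strategy the paper flags as failing: as explained after Proposition~\ref{lem:multiple-surgery} and in \S\ref{subsubsec:simple-cob}, when $\#(S\cap Q)>1$ the multiple-surgery trace is either not cylindrical at infinity or not embedded, so there is no such legitimate object of $\fuk^*(\mathcal{E})$. Even in the single-intersection case, a bottleneck decomposition of a three-ended cobordism would produce a single copy of $S$, not $S\otimes HF(S,Q)$.

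The actual engine of the proof of Proposition~\ref{t:ex-tr-compact} (and hence of t:ex-tr-cob) is not the bottleneck decomposition at all — that step only gives the cone~\eqref{eq:cone-tau-S-1} $\mathcal{I}^*_{\gamma'}V\simeq\mathrm{cone}(\tau_SQ\to Q)$, where $V$ is simply the \emph{trail} of $Q$ along $\gamma''$ (two ends, no surgery, no thimble end). The hard step is Proposition~\ref{p:nu-iso}: showing the module map $T_\Delta\otimes CF(T_\Delta,V)\to V$ is a quasi-isomorphism after pull-back, which reduces to the product $\mu_2$ in~\eqref{eq:mu-2-qi} being a quasi-isomorphism. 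That is proved by the stretching argument: apply the $\pm\nabla\,\mathrm{Re}(\pi)$ flows to $V$ and $W$, localize near the critical point, and split holomorphic triangles into ``small'' on-diagonal ones (Lemma~\ref{p:loc-solutions-1}) and ``large'' off-diagonal ones (Lemma~\ref{l:small-big}); over the Novikov ring $\mathcal{A}$ this makes the $\mu_2$-matrix invertible. Your proposal never mentions this mechanism, and your ``main obstacle'' paragraph instead raises a nonissue (formality of the thimble module: $S\otimes CF(S,Q)\simeq S\otimes HF(S,Q)$ automatically, as a chain complex is quasi-isomorphic to its homology over a field). So ``running the proof essentially verbatim'' doesn't carry the intended content here — the core area-estimate argument must be identified and carried through in the cobordism setting (in particular, the gradient flows of $\mathrm{Re}(\pi_{\mathcal{E}})$ no longer preserve cylindricity of the objects, and the paper handles this by noting that cylindricity persists at infinity in the fibre direction thanks to $T_\infty$ and that Floer-theoretic transversality can be arranged by further compactly supported perturbations).
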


The proof is very similar to the proof of
  Proposition~\ref{t:ex-tr-compact} (which is given
  in~\S\ref{sb:prf-ex-tr} below), the only difference being that now
  $Q$ is allowed to be a cobordism rather than just a closed
  Lagrangian (and similarly for the objects of $\fuk^*(X)$). We
  explain the necessary modifications in~\S\ref{sbsb:prf-ex-tr-cob} below.

\subsubsection{Outline of the proof of Proposition~\ref{t:ex-tr-compact}} \label{sb:prf-ex-tr}

The idea of the proof is simple and we summarize it here (the precise
details are given in~\S\ref{sbsb:prf-ex-tr-compact} below). By the
geometric interpretation of the monodromy around an isolated Lefschetz
singularity - \cite{Ar:monodromy}, see also \cite{Se:long-exact} -
there exists a Lefschetz fibration $\pi:\mathcal{E}\to \C$ with a
single singularity (chosen at the origin) and with general fiber
$X$. Moreover, there is a cobordism $V\subset \mathcal{E}$ as in
Figure~\ref{f:lags-bf-stretching}, that projects to the curve
$\gamma''$ there, and has ends $Q$ and $\tau_{S}Q$. Consider a second
cobordism $W$, as in the same picture, obtained as the trail of $N$
along the curve $\gamma'$, where $N$ is any Lagrangian in
$\mathcal{L}^{\ast}(X)$. The cobordism techniques in \cite{Bi-Co:cob1}
produce an associated chain morphism $CF(N,\tau_{S}Q)\to CF(N, Q)$
given by counting the Floer strips going from the intersections of $W$
and $V$ that project to $w_1$ to the intersections that project to
$w_0$ and the cone - in the sense of chain complexes - over this
morphism is $CF(W,V)$.  The proof reduces to finding a
quasi-isomorphism $CF(N,S)\otimes CF(S,Q)\to CF(W,V)$. The next step
is again geometric and is based on the well-known fact that the
function $\textnormal{Re}(\pi)$ is Morse with a single singularity at
the origin and that its gradient with respect to the standard metric
is Hamiltonian. Moreover, the positive horizontal thimble originating
at $0$ is the stable manifold of $\textnormal{Re}(\pi)$ and the
negative horizontal thimble is the unstable manifold of
$\textnormal{Re}(\pi)$.  To start this stage in the proof, we use the
flow of $\nabla \textnormal{Re}(\pi)$ to push $W$ to the right in
picture Figure~\ref{f:lags-bf-stretching} thus getting
$\widetilde{W}$; similarly, we use the gradient of
$-\textnormal{Re}(\pi)$ to push $V$ to the left in the same picture
thus getting $\widetilde{V}$ - see Figure~\ref{f:stretching}. We
notice that $CF(\widetilde{W},\widetilde{V})\cong CF(W,V)$ and analyze
the complex $CF(\widetilde{W},\widetilde{V})$.  Assuming all relevant
intersections are generic, by standard Morse theory, if $W$ is pushed
enough to the right, $\widetilde{W}$ intersects a neighborhood around
the singularity in a number $n_{1}$ of copies of the stable manifold
of $\textnormal{Re}(\pi)$. Moreover, $n_{1}$ is equal to the number of
intersections of $W$ with the unstable manifold of
$\textnormal{Re}(\pi)$. Similarly, $\widetilde{V}$ intersects the same
neighborhood in $n_{2}$ copies of the unstable manifold of
$\textnormal{Re}(\pi)$ and $n_{2}$ is equal to the number of
intersections of $V$ with the stable manifold of
$\textnormal{Re}(\pi)$. The interpretation of the stable and unstable
manifolds as thimbles (and our transversality assumptions) immediately
imply that $n_{1}$ equals the number of intersection points $N\cap S$
and $n_{2}$ is the number of intersections $S\cap Q$. Moreover, each
copy of the stable manifold that is associated to $W'$ intersects
precisely once each copy of the unstable manifold that is contained in
$V'$. In short, it follows that there is a bijection $\Xi$ between the
following two sets
$(N\cap S) \times (S\cap Q)\equiv (\widetilde{W}\cap
\widetilde{V})$. The last step of the proof is more technical and
shows that $\Xi$ extends to a quasi-isomorphism of chain
complexes. The basic idea here is to compare the bijection $\Xi$ with
the product
$\mu_{2}:CF(\widetilde{W}, T_{\Delta})\otimes CF(T_{\Delta},
\widetilde{V})\to CF(\widetilde{W},\widetilde{V})$ where $T_{\Delta}$
is a thimble as in Figure \ref{f:lags-bf-stretching}. The key part of
the argument is to notice that the $J$-holomorphic triangles giving
this product decompose in two classes: ``short'' ones, of small area,
and ``long'' ones, of big area, and that the short component of
$\mu_{2}$ is a bijection identified to $\Xi$.  Because we work over
$\mathcal{A}$ this means that the product $\mu_{2}$ is a quasi
isomorphism and the wanted statement easily follows.

\begin{rem} \label{r:dimX4} \pbhl{The reason we restrict ourselves to
    $\dim_{\mathbb{R}} X \geq 4$ is the following. As mentioned above,
    the proof uses an auxiliary Lefschetz fibration $\mathcal{E}$ with
    a single singularity and with general fiber $X$. Moreover, we will
    use a version of the Fukaya category of of cobordisms in
    $\mathcal{E}$. For this to work we need $\mathcal{E}$ to be
    strongly monotone} (see Definition~\ref{df:monlef}). \pbhl{This
    easy follows from the monotonicity of $X$ when
    $\dim_{\mathbb{R}} X \geq 4$. However, when
    $\dim_{\mathbb{R}} X = 2$ this might not be the case anymore.} It
  seems plausible that this difficulty can be overcome (since in
  dimension $4$ (i.e. the dimension of $\mathcal{E}$) for a generic
  almost complex structure there are no holomorphic disks with
  non-positive Maslov numbers.)
\end{rem}

\subsubsection{Proof of Proposition~\ref{t:ex-tr-compact}}
\label{sbsb:prf-ex-tr-compact} 

The actual proof consists of seven steps that follow below. Two
auxiliary Lemmas that are used along the way are proved
in~\S\ref{sb:prf-lem-small-big} and~\S\ref{sb:prf-prop-loc-sol}.

To fix ideas, we first carry out the proof under the assumption that $X$ is closed.
We discuss the non-compact case at the end.

\

\noindent \textbf{Step 1:} {\em Constructing an appropriate Lefschetz
  fibration.}
\label{sb:cons-lf}

We first claim that there exists a Lefschetz fibration $\pi:
\mathcal{E} \longrightarrow \mathbb{C}$ with symplectic structure
$\Omega$ so that $\mathcal{E}$ is tame over a subset $\mathcal{W}
\subset \mathbb{C}$ as in Figure~\ref{f:fibration-E}, and there is a
symplectic trivialization $\psi$ over $\mathcal{W}$ (see
Definition~\ref{df:tame-lef-fib}), such that $\mathcal{E}$, $\Omega$
and $\psi$ have the following properties:
\begin{enumerate}
  \item The fibration has only one critical point $p \in \mathcal{E}$
   lying over $0 \in \mathbb{C}$.
  \item The fiber $(\mathcal{E}_{z_0}, \Omega|_{\mathcal{E}_{z_0}})$
   over $z_0 = -10 \in \mathbb{C}$ is symplectomorphic via the
   trivialization $\psi$ to $(X, \omega)$. (Henceforth we make this
   identification.)
  \item The vanishing cycle in $\mathcal{E}_{z_0}$ associated to the
   path going from $z_0$ to $0$ along the $x$-axis is $S$.
  \item The monodromy associated to a loop $\lambda$ based at $z_0$
   that goes around $0$ counterclockwise is Hamiltonian isotopic to
   $\tau_S$.
\end{enumerate}

\begin{figure}[htbp]
   \begin{center}
      \includegraphics[width=0.6\linewidth]{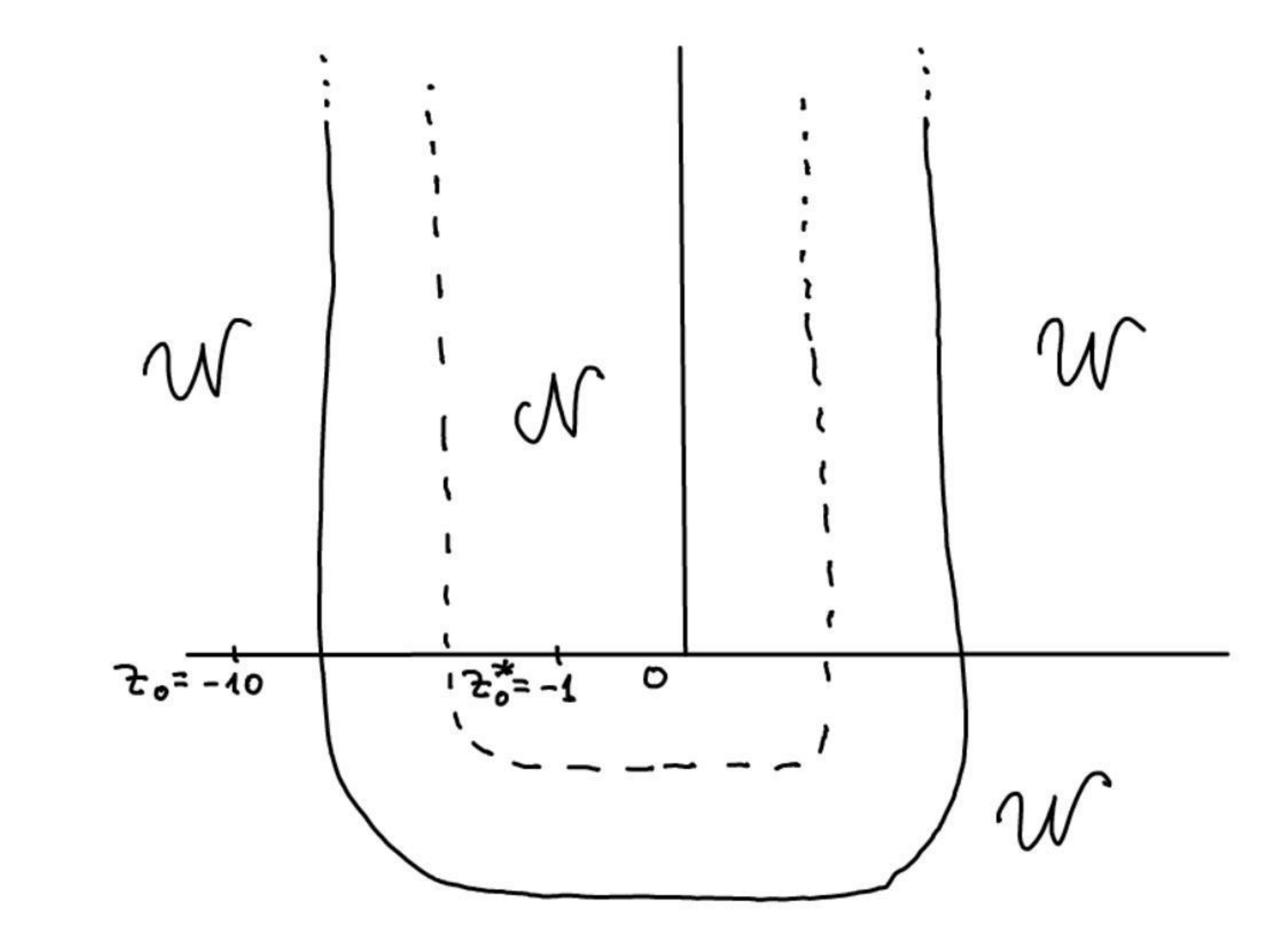}
   \end{center}
   \caption{Constructing the fibration $\mathcal{E}$.
     \label{f:fibration-E}}
\end{figure}

To prove this we first construct a Lefschetz fibration $\mathcal{E}
\longrightarrow \mathbb{C}$ (not necessarily tame) whose total space
is endowed with a symplectic structure $\Omega^*$ and with the
following properties:
\begin{enumerate}
  \item The fibration has only one critical point $p \in \mathcal{E}$
   lying over $0 \in \mathbb{C}$.
  \item The fiber over $z^*_0 = -1 \in \mathbb{C}$ is
   $(\mathcal{E}_{z^*_0}, \Omega|_{\mathcal{E}_{z^*_0}}) = (X,
   \omega)$.
  \item The vanishing cycle in $\mathcal{E}_{z^*_0}$ associated to the
   path going from $z^*_0$ to $0$ along the $x$-axis is Hamiltonian
   isotopic to $S$.
  \item The monodromy around a loop $\lambda^*$ based at $z^*_0$ which
   goes counterclockwise around the critical value $0$ is Hamiltonian
   isotopic to the Dehn twist $\tau_S$.
\end{enumerate}
The proof that such a Lefschetz fibration exists follows
from~\cite{Se:long-exact} (see also Chapter~16e
in~\cite{Se:book-fukaya-categ}), where it is proved for exact
Lagrangian spheres. This is a local argument and therefore that proof
extends to the case when $X$ is possibly not exact.

Given the fibration $\mathcal{E} \longrightarrow \mathbb{C}$ and
$\Omega^*$ we apply Proposition~\ref{p:from-gnrl-to-tame} with
appropriate subsets $\mathcal{N}$ and $\mathcal{W}$ as in
Figure~\ref{f:fibration-E} and base point $z_0 = -10$. We obtain a new
symplectic structure $\Omega'$ on $\mathcal{E}$ with respect to which
the fibration is tame over $\mathcal{W}$ and such that $\Omega'$
coincides with $\Omega^*$ over $\mathcal{N}$. We thus obtain a
trivialization
$\psi': (\mathcal{W} \times X', c\omega_{\mathbb{C}} \oplus \omega')
\longrightarrow (\mathcal{E}|_{\mathcal{W}}, \Omega')$, where
$(X', \omega') = (\mathcal{E}|_{z_0}, \Omega^*|_{\mathcal{E}|_{z_0}})$
and $c>0$.

Consider the loop $\lambda$ which starts at $z_0$, goes to $z^*_0$
along the $x$-axis, then goes along $\lambda^*$ and finally comes back
to $z_0$ along the $x$-axis. Parallel transport along the straight
segment connecting $z_0$ to $z^*_0$ and with respect to the connection
$\Gamma' = \Gamma(\Omega')$ gives a symplectomorphism $\varphi: (X',
\omega') \to (X, \omega)$. Put $S' = \varphi^{-1}(S)$.  Clearly the
monodromy (with respect to $\Gamma'$) along $\lambda$ is $\varphi^{-1}
\circ \tau_S \varphi = \tau_{S'}$.

Finally, the desired symplectic structure on $\mathcal{E}$ and the
trivialization are obtained by taking $\Omega = \Omega'$ and $\psi =
\psi' \circ (\id \times \varphi^{-1})$.

From now on the trivialization $\psi$ will be implicitly assumed and
we make the following identification
$$(\mathcal{E}|_{\mathcal{W}}, \Omega|_{\pi^{-1}(\mathcal{W})}) =
(\mathcal{W} \times X, c\omega_{\mathbb{C}} \oplus \omega).$$

\noindent \textbf{Step 2:} {\em Translating the problem to cobordisms.}
\label{sb:cobs-in-E} 

\pbhl{First note that $\mathcal{E}$ is strongly monotone of class
  $*$. This follows immediately from the} Definition~\ref{df:monlef}
(recall that we have assumed that $\dim_{\mathbb{R}} X \geq 4$) and
Remark~\ref{r:tvsg-mntlf}.

Let $\gamma' \subset \mathbb{C}$ be the curve depicted in
Figure~\ref{f:lags-bf-stretching}. In a similar way
to~\cite{Bi-Co:lcob-fuk} $\gamma'$ gives rise to an inclusion functor
$$\mathcal{I}_{\gamma'}: \fuk^*(X) \longrightarrow
\fuk^*(\mathcal{E})$$ whose action on objects is
$\mathcal{I}_{\gamma'}(N) = \gamma'N$, where $\gamma' N\subset
\mathcal{E}$ stands for the trail of $N$ along the curve $\gamma'$
(see~\S\ref{sbsb:con-ptrans-trail}). Here, by $\fuk^*(\mathcal{E})$ we
mean the Fukaya category of cobordisms in $\mathcal{E}$ of
monotonicity class $*$ but we do not require the cobordisms to be only
negatively ended. This category is defined, following the recipe in
\cite{Bi-Co:lcob-fuk} as described in \S\ref{subsubsec:Fuk-cob}, but
by also using perturbations and bottlenecks associated to the positive
ends.  For the purpose of the proof below, it is actually enough to
restrict to a subcategory whose objects are 
cobordisms in $\mathcal{E}$ that project to curves in $\C$.

\begin{figure}[htbp]
   \begin{center}
      \includegraphics[width=0.8\linewidth]{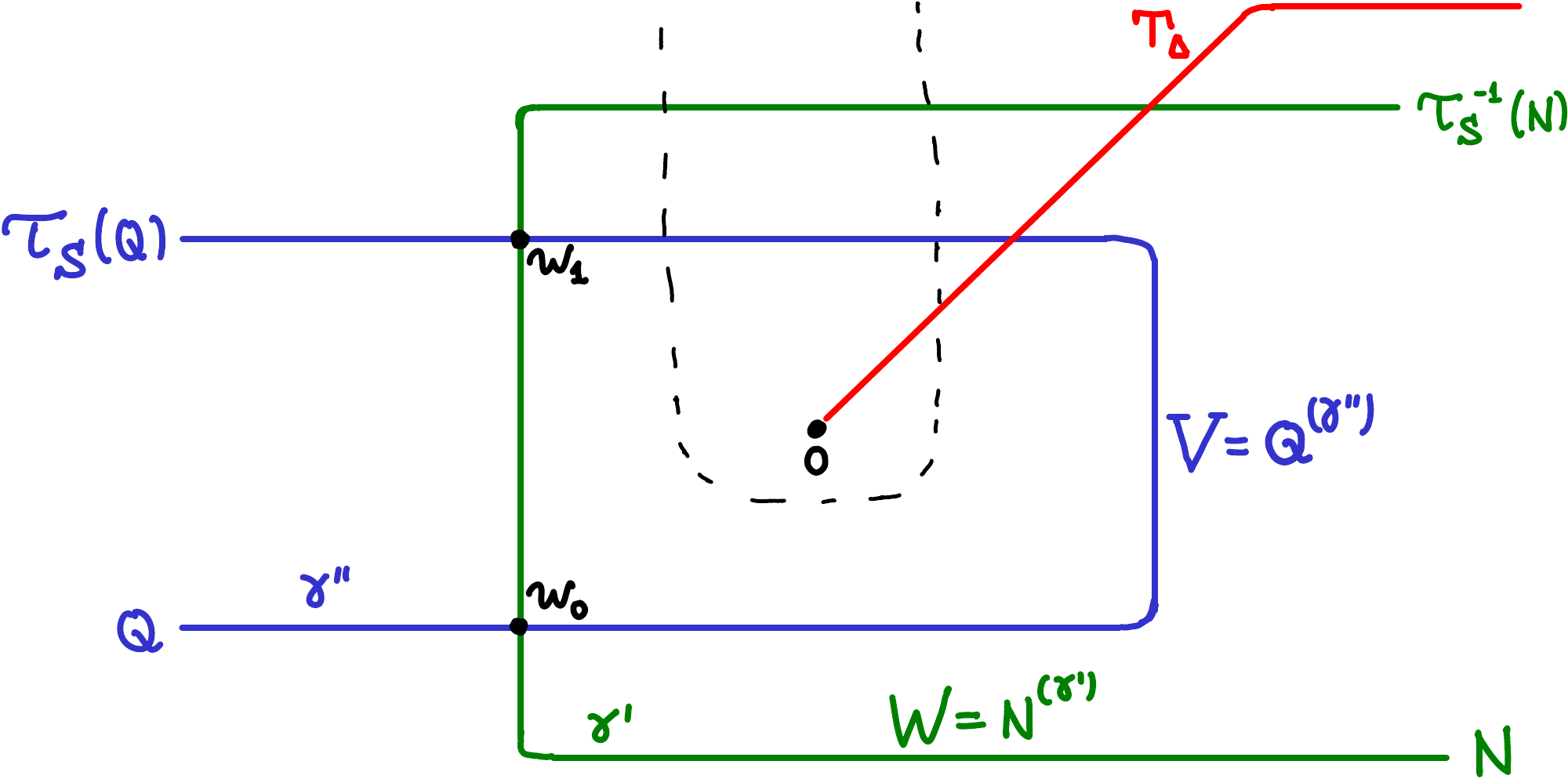}
   \end{center}
   \caption{The cobordisms $V$, $W$ and $T_{\Delta}$.
     \label{f:lags-bf-stretching}}
\end{figure}

Denote $W = \mathcal{I}_{\gamma'} N = \gamma' N$ and view it as a
cobordism in $\mathcal{E}$. Next, consider the curve
$\gamma'' \subset \mathbb{C}$ as depicted in
Figure~\ref{f:lags-bf-stretching} and fix a base point
$w_0 \in \gamma'' \cap \mathcal{W}$. Define
$V \subset (\mathcal{E}, \Omega)$ to be the Lagrangian submanifold
obtained as the \trl\ of $Q \subset \mathcal{E}_{w_0} = X$ along
$\gamma''$. \pbhl{Clearly both $V$ and $W$ are monotone and by
  standard arguments} (see~\cite{Chek:cob} and also~\cite[Remark
2.2.4]{Bi-Co:cob1}) \pbhl{we have $d_{V} = d_Q$ and $d_{W}=d_N$. It
  follows that both $V$ and $W$ are monotone of class $*$ hence are
  legitimate objects of the Fukaya category $\fuk^*(\mathcal{E})$ as
  considered in this section.}

Note that since the fibration $(\mathcal{E}, \Omega)$ is
symplectically trivial over $\mathcal{W}$ the lower end of $V$ is
identified with $Q$ and due to the homotopy class of $\gamma''$ (in
$(\mathbb{C} \setminus \{0\}, \; \textnormal{rel} \; \infty)$) the
upper end of $V$ is a Lagrangian submanifold of $X$ which is
Hamiltonian isotopic to $\tau_S(Q)$. Similarly, the lower end of $W$
is cylindrical over $N$ and the upper end is cylindrical over
$\tau_S^{-1}(N)$.

Below we will work with the Fukaya categories of both $X$ and
$\mathcal{E}$. Our choices of auxiliary parameters (Floer and
perturbation data, etc.) for these categories will be as described
in~\S\ref{subsec:fuk-cat}. We therefore omit them from the notation in
Floer complexes and the other $A_{\infty}$-structures.  There are a
few modifications compared to the conventions used in
\S\ref{subsec:fuk-cat}: we assume that the ends of the curve $\gamma'$
are at height $-2$ and $2$ and the ends of $\gamma''$ are at $-1$ and
$1$. In other words, to fit precisely the setting in
\S\ref{subsec:fuk-cat} we need to translate the whole picture by
$+3i$.  Clearly, this adjustment is formal and it has no impact on the
construction of the relevant Fukaya categories (it is required because
we prefer to keep the critical value of $\pi$ to be at $0$).

Denote by $\mathcal{Y}_X: \fuk^*(X) \longrightarrow mod(\fuk^*(X))$
and
$\mathcal{Y}: \fuk^*(\mathcal{E}) \longrightarrow
mod(\fuk^*(\mathcal{E}))$ the Yoneda embeddings associated to the
Fukaya categories of $X$ and $\mathcal{E}$ respectively. When no
confusion may arise we will simplify the notation and denote the
module $\mathcal{Y}_X(L)$ associated to a Lagrangian $L \subset X$
simply by $L$ and similarly for $\mathcal{E}$.

We now analyze the pullback module
$\mathcal{I}_{\gamma'}^*V \in mod(\fuk^*(X))$. Similar arguments to~
\S 4.4 \cite{Bi-Co:lcob-fuk} (see also~\S\ref{subsec:dec-Yo} in this
paper, in particular the exact sequence at Step 3 i.) show that we
have a quasi-isomorphism:
\begin{equation} \label{eq:cone-tau-S-1} \mathcal{I}_{\gamma'}^*V
   \simeq \textnormal{cone} \bigl( \tau_S(Q) \xrightarrow{\quad
     \varphi \quad } Q \bigr),
\end{equation}
for some homomorphism of $A_{\infty}$-modules $\varphi$ that is induced by
counting holomorphic strips (and polygons) going from the intersection
of $V$ with $W$ at the $\tau_{S}(Q)$ end to the intersection of $V$
and $W$ at the $Q$ end - see Figure \ref{f:lags-bf-stretching}. 

Let $T_{\Delta} \subset \mathcal{E}$ be the thimble corresponding to
the ``diagonal'' curve $\Delta$ depicted in
Figure~\ref{f:lags-bf-stretching}. \pbhl{By}
Proposition~\ref{p:monot-E} \pbhl{$T_{\Delta}$ is monontone of class
  $(*)$ and we view it as an object of $\fuk^*(\mathcal{E})$.}

Consider now the $\fuk^*(\mathcal{E})$-module
\begin{equation}
   \mathcal{M} = T_{\Delta} \otimes CF(T_{\Delta}, V),
\end{equation}
where the second factor in the tensor product is regarded as a chain
complex (see Chapter~3c in~\cite{Se:book-fukaya-categ} for the
definition of the tensor product of an $A_{\infty}$-module and a chain
complex).

The $A_{\infty}$-operations $\mu_k$, $k \geq 2$, induce a homomorphism
of modules $\mathcal{M} \longrightarrow V$. Pulling back by
$\mathcal{I}_{\gamma'}$, this homomorphism induces a homomorphism of
$\fuk^*(X)$-modules:
\begin{equation} \label{eq:M-equiv-TTV} \nu:
   \mathcal{I}^*_{\gamma'}\mathcal{M} \longrightarrow
   \mathcal{I}^*_{\gamma'}V.
\end{equation}

We claim that Proposition~\ref{t:ex-tr-compact} reduces to the
next statement:
\begin{prop} \label{p:nu-iso} The homomorphism $\nu$ is a
   quasi-isomorphism.
\end{prop}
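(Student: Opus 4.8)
The plan is to prove that $\nu$ induces an isomorphism on homology by reducing the computation of both its source and its target to explicit chain complexes concentrated near the critical fiber of $\pi : \mathcal{E} \to \mathbb{C}$, and then recognizing $\nu$ as an energy-filtered isomorphism over the Novikov ring $\mathcal{A}$.

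First I would identify the source module. Since $T_{\Delta}$ is a thimble whose projection $\Delta$ meets $\gamma'$ transversally in a single point, a standard thimble computation (of the same type as in~\S\ref{subsec:dec-Yo}) shows that $\mathcal{I}^{*}_{\gamma'}\mathcal{Y}(T_{\Delta})$ is quasi-isomorphic, as a $\fuk^{*}(X)$-module, to $\mathcal{Y}_{X}(S)$; consequently $\mathcal{I}^{*}_{\gamma'}\mathcal{M} \simeq S \otimes CF(T_{\Delta}, V)$. A second thimble computation, using that the lower end of $V$ is $Q$ and that $T_{\Delta}$ meets $V$ only over a neighborhood of the critical value, identifies the chain complex $CF(T_{\Delta}, V)$ (up to quasi-isomorphism) with $CF(S,Q)$. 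Hence the source of $\nu$ is quasi-isomorphic to $S \otimes HF(S,Q)$, exactly the third vertex of the triangle~\eqref{eq:ex-tr-1}, while its target $\mathcal{I}^{*}_{\gamma'}V$ is, by~\eqref{eq:cone-tau-S-1}, the cone of $\tau_{S}(Q) \to Q$. Once $\nu$ is shown to be a quasi-isomorphism, Proposition~\ref{t:ex-tr-compact} follows immediately from the definition of an exact triangle in $D\fuk^{*}(X)$ as the homology image of a cone attachment.

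Next I would perform the geometric deformation sketched in~\S\ref{sb:prf-ex-tr}: flow $W$ far to the right along $\nabla\,\textnormal{Re}(\pi)$ to obtain $\widetilde{W}$, and flow $V$ far to the left along $-\nabla\,\textnormal{Re}(\pi)$ to obtain $\widetilde{V}$. Since these flows are Hamiltonian, $CF(\widetilde{W},\widetilde{V}) \cong CF(W,V)$ and the pulled-back modules are unchanged, so it suffices to analyze $\nu$ after this replacement. Using that $\textnormal{Re}(\pi)$ is Morse with a single non-degenerate critical point whose stable (resp. unstable) manifold is the positive (resp. negative) horizontal thimble, Morse theory forces $\widetilde{W}$ to meet a fixed small neighborhood of the critical fiber in $n_{1} = \#(N\cap S)$ parallel copies of the stable manifold and $\widetilde{V}$ to meet it in $n_{2} = \#(S\cap Q)$ copies of the unstable manifold, with each stable copy meeting each unstable copy exactly once. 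This yields an explicit bijection of generators $\Xi : (N\cap S)\times (S\cap Q) \xrightarrow{\ \cong\ } \widetilde{W}\cap\widetilde{V}$, and after unwinding the definitions $\nu$ is computed by $\mu_{2} : CF(\widetilde{W}, T_{\Delta})\otimes CF(T_{\Delta},\widetilde{V}) \to CF(\widetilde{W},\widetilde{V})$ — the pairing whose source complexes were identified above with $CF(N,S)$ and $CF(S,Q)$.

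The crux — and the step I expect to absorb most of the effort — is to show that the $J$-holomorphic triangles contributing to this $\mu_{2}$ split, according to their symplectic area, into \emph{short} triangles, confined to the small neighborhood of the critical fiber and of area less than some $\epsilon>0$, whose count realizes precisely the bijection $\Xi$ (a local model computation near the non-degenerate singularity, following~\cite{Se:long-exact}), and \emph{long} triangles, which are forced to leave that neighborhood and hence have area bounded below by some $c > \epsilon$, and therefore contribute with strictly larger powers of the formal variable $T$. Establishing this dichotomy requires an a priori area estimate from the Morse structure of $\textnormal{Re}(\pi)$ together with monotonicity and Gromov compactness, and it is exactly here that the coefficient ring must be $\mathcal{A}$ rather than $\mathbb{Z}_{2}$: over $\mathcal{A}$ the matrix of $\mu_{2}$ relative to the energy filtration is $\Xi$ plus terms of strictly positive $T$-valuation, hence invertible, so $\mu_{2}$ — and therefore $\nu$ — is an isomorphism of chain complexes, in particular a quasi-isomorphism. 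The remaining points, namely that $\Xi$ is genuinely the short part of $\mu_{2}$ and that the various auxiliary choices are compatible, are routine given the constructions already set up in~\S\ref{subsec:fuk-cat} and~\S\ref{subsec:dec-Yo}.
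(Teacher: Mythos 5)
Your proposal is correct and follows essentially the same strategy as the paper's Steps 3--7 of \S\ref{sbsb:prf-ex-tr-compact}: reduce to showing $\mu_2$ is a quasi-isomorphism for each test object, stretch $W$ and $V$ along $\pm\nabla\textnormal{Re}(\pi)$ to concentrate their intersections near the critical fiber, identify the bijection $\Xi$, establish the short/long dichotomy via area estimates (which the paper carries out in Lemmas~\ref{l:small-big} and~\ref{p:loc-solutions-1}), and conclude invertibility of the $\mu_2$-matrix over $\mathcal{A}$ from its $T$-valuation filtration. The main ingredients you summarize as routine --- completeness of the gradient flow on a non-compact total space, the further isotopy to constant sections near $p$, and the local model computation giving existence/uniqueness/regularity of the short triangle --- are exactly where the paper spends its technical effort, but your outline correctly identifies them and their role.
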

This is due to the following quasi-isomorphisms:
\begin{equation} \label{eq:q-iso-for-nu}
   \mathcal{I}^*_{\gamma'} \mathcal{M} =
   \mathcal{I}^*_{\gamma'} T_{\Delta} \otimes CF(T_{\Delta}, V) \simeq S \otimes CF(S,Q).
\end{equation}

Here we identify $S$ and its image under the Yoneda embedding. 

In turn, by the general theory of $A_{\infty}$-categories, in order to
prove Proposition~\ref{p:nu-iso} it is enough to show that for every
Lagrangian $N \in \mathcal{O}b(\fuk^*(X))$ the map

\begin{equation} \label{eq:mu-2-qi} \mu_2: CF(\gamma'N,
   T_{\Delta}) \otimes CF(T_{\Delta}, V) \longrightarrow
   CF(\gamma'N, V)
\end{equation}
is a quasi-isomorphism. (Recall that $\gamma'N$ stands for the
trail of $N$ along $\gamma'$.)

\begin{rem} \label{r:mu-2-qi} We have not indicated at this moment the
   choices of Floer and perturbation data in~\eqref{eq:mu-2-qi} for
   two reasons. This is because, whether or not the map
   in~\eqref{eq:mu-2-qi} is a quasi-isomorphism does not depend on
   these specific choices (the induced product in homology is
   canonical). Moreover, later on in the proof we will actually make
   use of a very specific choice of parameters (which is different
   than the one used in~\S\ref{subsec:fuk-cat} when setting up the
   entire Fukaya category of $\mathcal{E}$ !) for which it will be
   convenient to prove that the map in~\eqref{eq:mu-2-qi} is a
   quasi-isomorphism.
\end{rem}
The rest of this section will be devoted to proving
  that~\eqref{eq:mu-2-qi} is a quasi-isomorphism. For brevity we
  denote from now on $W = \gamma'N\subset \mathcal{E}$ (see
  Figure~\ref{f:lags-bf-stretching}). 

\

\noindent \textbf{Step 3:} {\em Stretching the cobordisms.}
\label{sb:stretching-cob}

Write the projection $\pi: \mathcal{E} \longrightarrow \mathbb{C}$ as
$$\pi = \textnormal{Re}(\pi) + \textnormal{Im}(\pi) i.$$ Denote by
$Z = -\nabla \textnormal{Re}(\pi)$ the negative gradient vector field
of the real part of $\pi$ with respect to the Riemannian metric
induced on $\mathcal{E}$ by $(\Omega, J_{\mathcal{E}})$. Since the
functions $\textnormal{Re}(\pi)$ and $\textnormal{Im}(\pi)$ are
harmonic conjugate (recall that $\pi$ is holomorphic), it follows that
$Z$ is also the Hamiltonian vector field associated to the function
$\textnormal{Im}(\pi)$.

\label{pg:grad-vf-complete}
The flow of the vector field $Z$ will be used extensively
  throughout the rest of the proof. However, due to the
  non-compactness of $\mathcal{E}$, it might lack to be defined for
  all times. To overcome this difficulty we proceed as follows. Write
  $y_1 + iy_2 \in \mathbb{C}$ for the standard coordinates on
  $\mathbb{C}$. Denote by $R^{\Omega}$ the curvature of the connection
  $\Gamma(\Omega)$. (Recall that this is a $2$-form on $\mathbb{C}$
  with values in the space of Hamiltonian functions of the fibers of
  $\mathcal{E}$.) A straightforward calculation shows that
  for every $z \in \mathbb{C}$, $p \in \mathcal{E}_z$ we have:
\begin{equation} \label{eq:Z-vf-1}
   Z_{(z,p)} = \frac{-1}{C(z) -
     R^{\Gamma}_{z}(\partial_{y_1}, \partial_{y_2})(p)}
   (\partial_{y_1})^{\textnormal{hor}},
\end{equation}
where $C: \mathbb{C} \longrightarrow \mathbb{R}$ is a function and
$(\partial_{y_1})^{\textnormal{hor}}$ stands for the horizontal lift
of $\partial_{y_1}$. Since $Z = -\nabla \textnormal{Re}(\pi)$ it
follows that the denominator on the right-hand side
of~\eqref{eq:Z-vf-1} is always positive. Fix a positive real number $a
> 0$ and define
$$\Omega' = \Omega + a \pi^* dy_1 \wedge dy_2.$$ Note that
$J_{\mathcal{E}}$ continues to be compatible with $\Omega'$.  Denote
by $Z'$ the negative gradient of the same function,
$\textnormal{Re}(\pi)$, but now defined via the metric associated to
$(\Omega', J_{\mathcal{E}})$. A simple calculation shows that:
\begin{equation} \label{eq:Z-vf-2} Z'_{(z,p)} = \frac{-1}{a + C(z) -
     R^{\Gamma}_{z}(\partial_{y_1}, \partial_{y_2})(p)}
   (\partial_{y_1})^{\textnormal{hor}}.
\end{equation}
Clearly the coefficient standing before
$(\partial_{y_1})^{\textnormal{hor}}$ on the right-hand side
of~\eqref{eq:Z-vf-2} is bounded from above by $1/a$. It now easily
follows that the flow of $Z'$ exists for all times (recall that we are
under the assumption that the fiber $X$ is compact).  Finally, note
that the connections of $\Omega$ and $\Omega'$ are the same and
moreover, $V$ and $W$ continue both to be Lagrangian cobordisms with
respect to the new form $\Omega'$.

Summarizing the preceding procedure, by replacing $\Omega$ by
$\Omega'$ we may assume that the negative gradient flow of
$\textnormal{Re}(\pi)$ exists for all times. For simplicity we
continue to denote the symplectic structure of $\mathcal{E}$ by
$\Omega$.

Denote by $\phi_t$, $t \in \mathbb{R}$, the flow of $Z$. Note that the
function $\textnormal{Re}(\pi)$ is a Morse function with exactly one
critical point $p \in \mathcal{E}$ lying over $0 \in \mathbb{C}$.  The
Morse index of $\textnormal{Re}(\pi)$ at $p$ is precisely $n+1 =
\dim_{\mathbb{C}}\mathcal{E}$. Denote by $\phi_t$, $t \in \mathbb{R}$,
the flow of $Z$. The stable submanifold of $Z$ is the thimble $T'$
lying over the positive $x$-axis and the unstable submanifold of $Z$
is the thimble $T''$ lying over the negative $x$-axis. Note that we
have $J_{\mathcal{E}} T_p(T') = T_p(T'')$.

\begin{figure}[htbp]
   \begin{center}
      \includegraphics[width=0.8\linewidth]{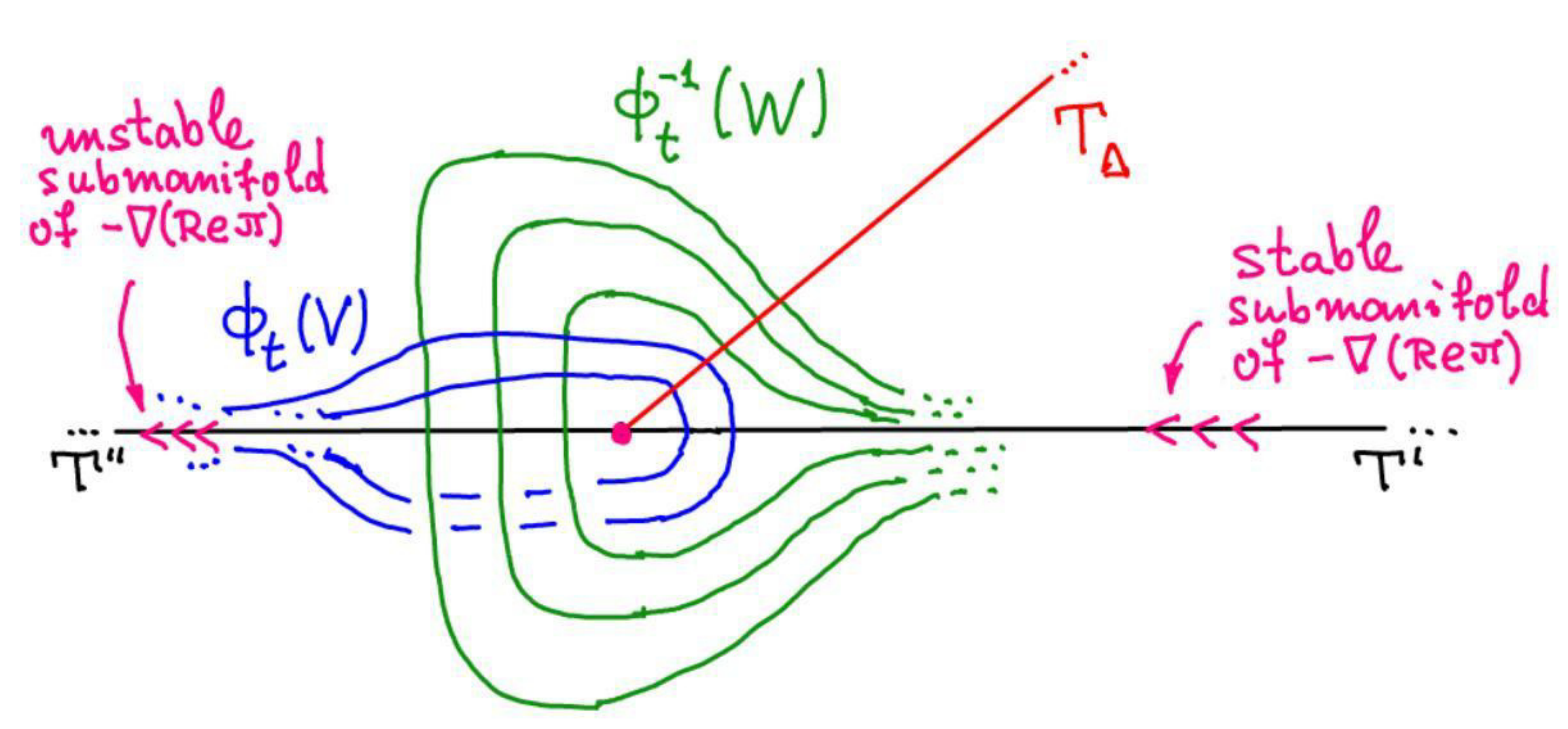}
   \end{center}
   \caption{The cobordisms $V$, $W$ after the flows $\phi_t$ and
     $\phi^{-1}_{t}$ are applied to them for large time $t$.
     \label{f:stretching}}
\end{figure}

Denote by $B'(r) =
B''(r) = B^{n+2}(r) \subset \mathbb{R}^{n+2}$ two copies of the
$n+1$-dimensional closed Euclidean ball of radius $r$ around $0 \in
\mathbb{R}^{n+2}$. (Since each of these two balls corresponds to a
different factor of $\mathbb{R}^{n+2} \times \mathbb{R}^{n+2}$ we have chosen
to denote them by different symbols.)

Fix a little neighborhood $Q_p \subset \mathcal{E}$ of $p$ which is
symplectomorphic to a product $B'(r_0) \times B''(r_0) \subset
(\mathbb{R}^{n+2} \times \mathbb{R}^{n+2}, \omega_{\textnormal{can}} = dp_1
\wedge dq_1 + \cdots dp_m \wedge dq_m)$ for some small $r_0$. Below we
will abbreviate $B' = B'(r_0)$, $B'' = B''(r_0)$.

We may assume that the symplectic identification $Q_p \approx B'
\times B''$ sends $T' \cap Q_p$ to $B' \times\{0\}$ and $T'' \cap Q_p$
to $\{0\}\times B''$ and $T_{\Delta}$ to the diagonal $\{(x,y) \in B'
\times B'' \mid x = y\}$. From now on we assume the identification
$Q_p \approx B' \times B''$ explicit and when convenient view $Q_p$ as
a subset of $\mathbb{R}^{2m}$.

We now apply the flow $\phi_t$ to $V$ and $\phi^{-1}_t$ to $W$ (see
Figures~\ref{f:stretching},~\ref{f:nbhd-Qp}). Standard arguments in
Morse theory imply that for $t_0 \gg 1$ we have
$$\phi_{t_0}^{-1}(W) \cap Q_p = \coprod_{i=1}^{s''} D'_i, \quad 
\phi_{t_0}(V) \cap Q_p = \coprod_{j=1}^{s'} D''_j,$$ where $D'_i
\subset Q_p$ are graphs of exact $1$-forms on $B'$ and $D''_j \subset
Q_p$ are graphs of exact $1$-forms on $B''$. Here $s'' = \# (W \cap
T'')$ and $s' = \# (V \cap T')$ are the number of intersection points
(counted without signs) of $W \cap T''$ and $V \cap T'$ respectively.
Note also that by our construction of $\mathcal{E}$ we have $s'' =
\#(N \cap S)$ and $s' = \#(Q \cap S')$, where $S'$ is the vanishing
sphere $T' \cap \mathcal{E}_{x}$ with $0<x$ large enough so that $x
\in \mathcal{W}$. Note that $S'$, when viewed as a Lagrangian in $(X,
\omega)$ is Hamiltonian isotopic to $S$.

Fix $0 < \delta_0 \ll 1/3$. By taking $t_0$ large enough we may assume
that
\begin{equation} \label{eq:delta_0} \phi^{-1}_{t_0}(W) \cap Q_p
   \subset B' \times B''(\delta_0 r_0), \quad \phi_{t_0}(V) \cap Q_p
   \subset B'(\delta_0 r_0) \times B''
\end{equation}
and moreover that each of the $D'_i$ (resp. $D''_j$'s) is $C^1$-close
to a constant section of $B' \times B'' \to B'$ (resp. $B' \times B''
\to B'$). See Figure~\ref{f:nbhd-Qp}.

\begin{figure}[htbp]
   \begin{center}
      \includegraphics[width=0.5\linewidth]{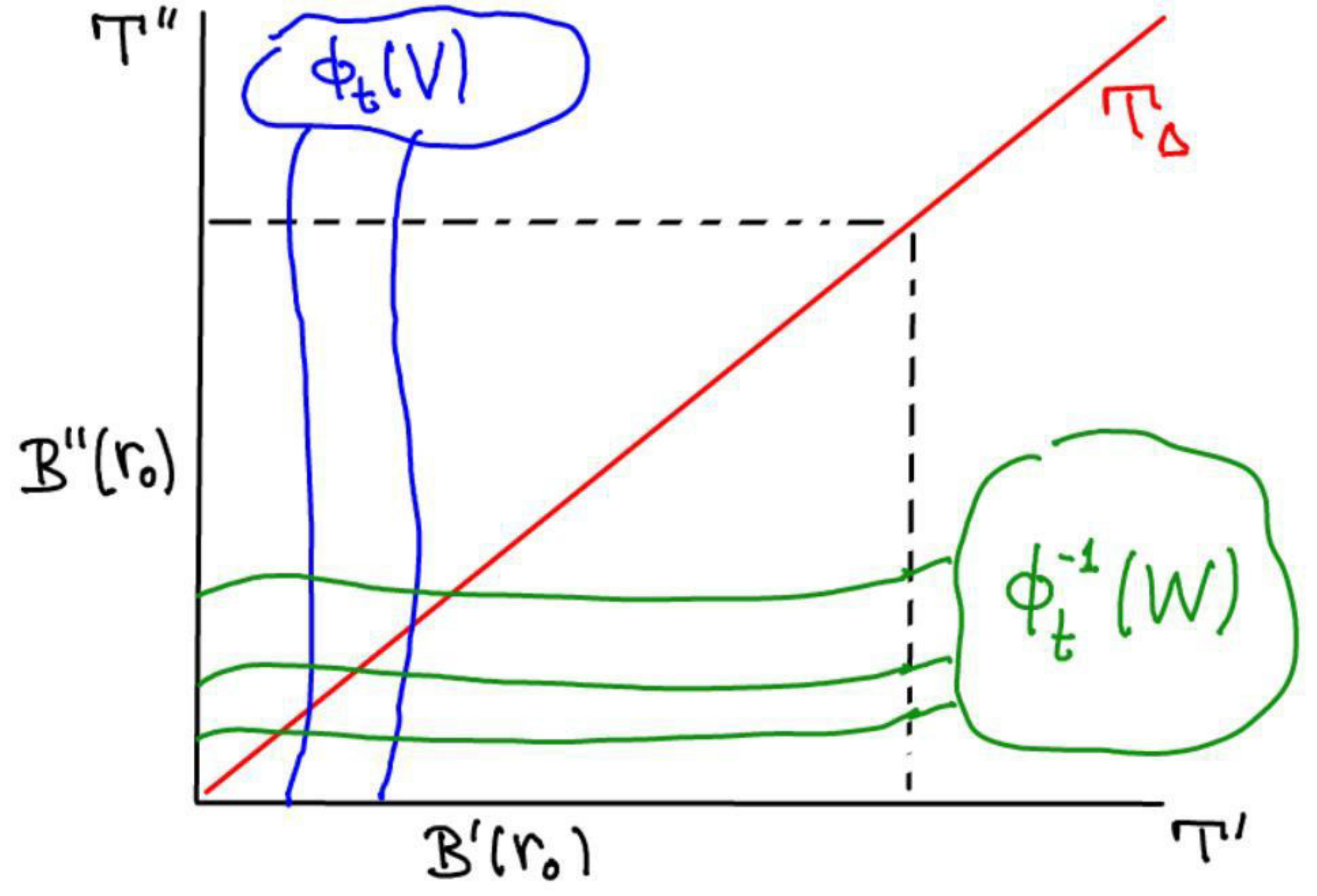}
   \end{center}
   \caption{The parts of $\phi_t(V)$ and $\phi^{-1}_t(W)$ that lie in $Q_p$.
     \label{f:nbhd-Qp}}
\end{figure}

Thus by applying a suitable Hamiltonian diffeomorphism of $Q_p$ (which
extends to the rest of $\mathcal{E}$) we may assume that
$$\phi^{-1}_{t_0}(W) \cap Q_p = \coprod_{i=1}^{s''} B' \times \{a''_i(t_0)\},
\quad \phi_{t_0}(V) \cap Q_p = \coprod_{j=1}^{s'} \{a'_j(t_0)\} \times
B'',$$ where $|a'_i(t_0)|, |a''_j(t_0)| < \delta_0 r_0$. See
Figure~\ref{f:const-sect}.

\begin{figure}[htbp]
   \begin{center}
      \includegraphics[width=0.5\linewidth]{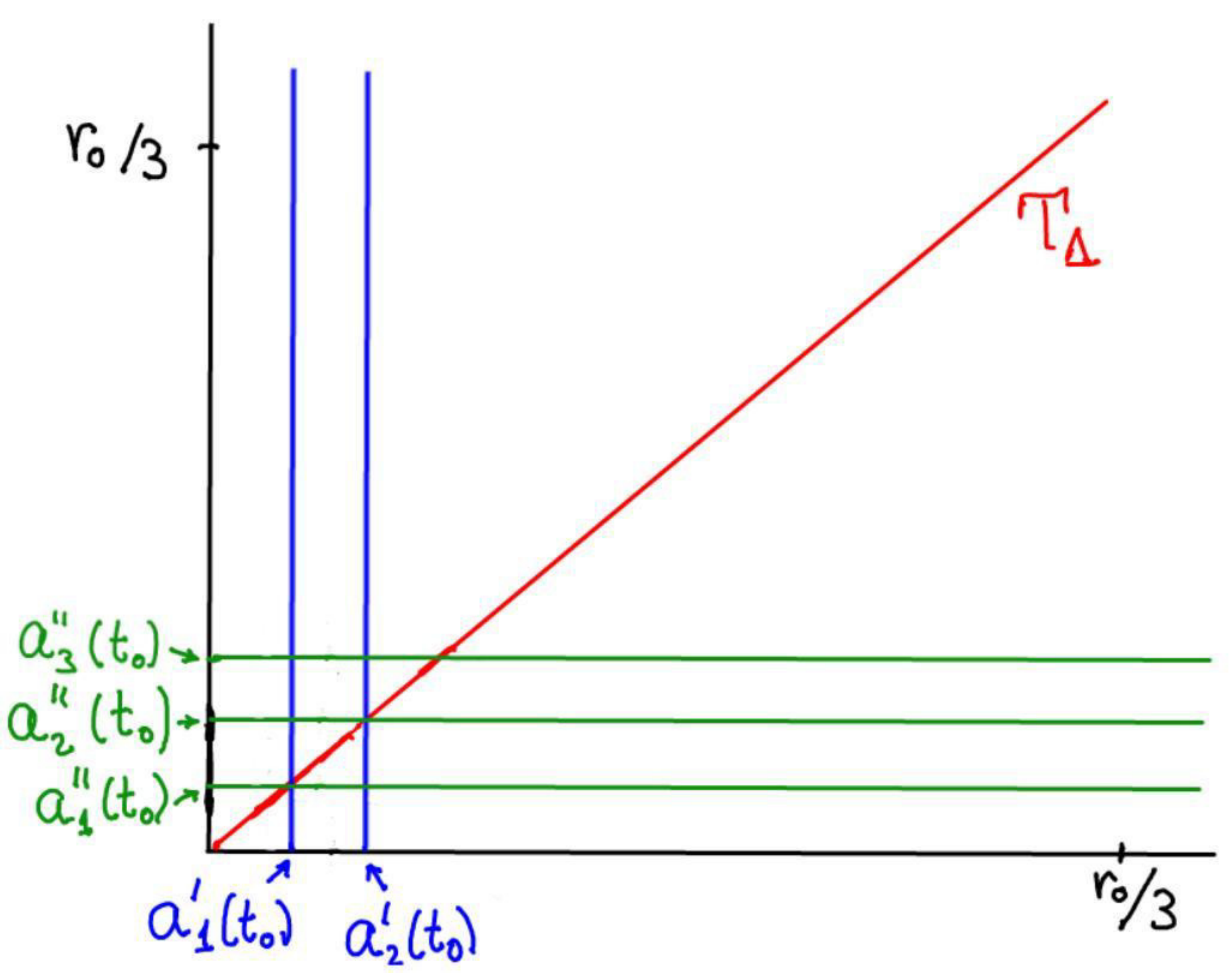}
   \end{center}
   \caption{Isotoping $\phi_{t_0}(V) \cap Q_p$ and $\phi^{-1}_{t_0}(W)
     \cap Q_p$ to be constant sections.
     \label{f:const-sect}}
\end{figure}

Fix now $t_0$ large enough as above and set $$\widetilde{V} :=
\phi_{t_0}(V), \quad \widetilde{W} = \phi^{-1}(t_0)(W).$$ For $r', r''
< r_0$ we abbreviate $B(r',r'') := B'(r') \times B''(r'')$ and also $B
= B(r_0, r_0) = B' \times B''$.

\

\noindent \textbf{Step 4:} {\em A further isotopy of $\widetilde{V}$ and
  $\widetilde{W}$.} \label{sb:further-isotop}

We claim there exist two Hamiltonian isotopies $\psi'_t, \psi''_t$, $0
\leq t < 1$, with $\psi'_0 = \psi''_0 = \id$ and with the following
properties for every $0\leq t< 1$:
\begin{enumerate}
  \item $\psi'_t$, $\psi''_t$ are both supported in
   $\textnormal{Int\,}(B)$.
  \item $\psi'_t(\widetilde{W}) \cap B(r_0/3, r_0/3) =
   \coprod_{i=1}^{s''} B'(r_0/3) \times \{b''_i(t)\}$ with $|b''_i(t)|
   \leq (1-t)\delta_0 r_0$ for every $i$.
  \item $\psi''_t(\widetilde{V}) \cap B(r_0/3, r_0/3) =
   \coprod_{j=1}^{s'} \{b'_j(t)\} \times B''(r_0/3)$ with $|b'_j(t)|
   \leq (1-t)\delta_0 r_0$ for every $j$.
  \item $\psi'_t(\widetilde{W}) \cap \Bigl( \bigl( (B'(r_0) \setminus
   B'(2r_0/3) \bigr) \times B''(r_0) \Bigr) = \widetilde{W} \cap
   \Bigl( \bigl( B'(r_0) \setminus B'(2r_0/3) \bigr) \times B''(r_0)
   \Bigr)$.
  \item $\psi''_t(\widetilde{V}) \cap \Bigl( B'(r_0) \times \bigl(
   B''(r_0) \setminus B''(2r_0/3) \bigr) \Bigr) = \widetilde{V} \cap
   \Bigl( B'(r_0) \times \bigl( B''(r_0) \setminus B''(2r_0/3) \bigr)
   \Bigr)$.
  \item $\psi'_t(\widetilde{W})$ and $\psi''_t(\widetilde{V})$
   intersect only inside $B(\delta_0 r_0, \delta_0 r_0)$.  Moreover,
   their intersection is: $\psi'_t(\widetilde{W}) \cap
   \psi''_t(\widetilde{V}) = \{(b'_j(t), b''_i(t)) \mid 1 \leq i \leq
   s'', \; 1 \leq j \leq s' \}$.
  \item $T_{\Delta} \cap \psi''_t(\widetilde{V}) \subset B(r_0/3,
   r_0/3)$ and $T_{\Delta} \cap \psi'_t(\widetilde{W}) \subset
   B(r_0/3, r_0/3)$. \label{i:T-D-cap-WV}.
\end{enumerate}
See Figure~\ref{f:isotopies-psi}. The construction of the isotopies
$\psi'_t, \psi''_t$ is elementary and can be done quite explicitly.
For point~(\ref{i:T-D-cap-WV}) one might need to reduce the size of
the parameter $\delta_0$ from~\eqref{eq:delta_0}, which can be done in
advance. 

\begin{figure}[htbp]
   \begin{center}
      \includegraphics[width=0.8\linewidth]{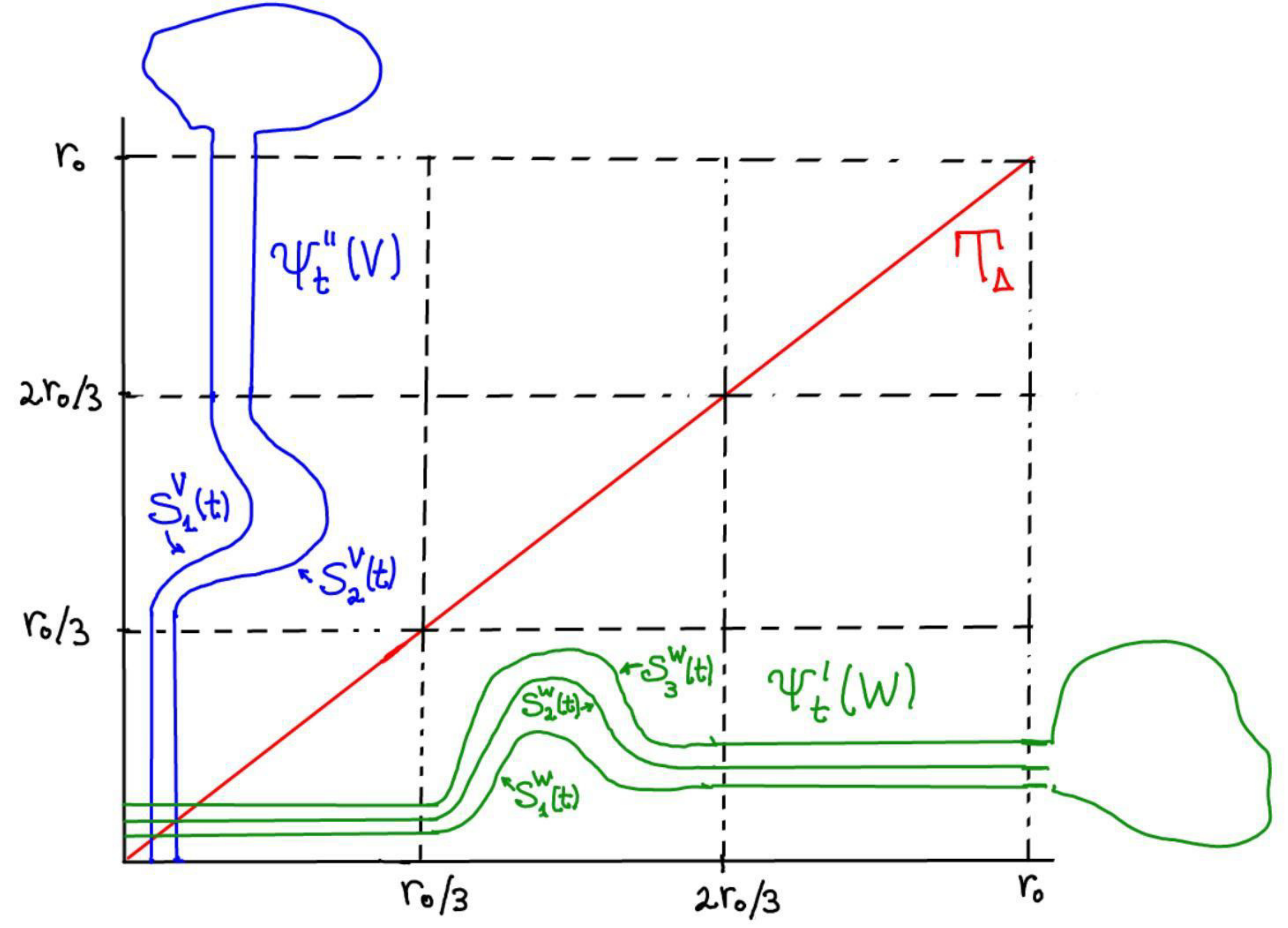}
   \end{center}
   \caption{The isotopies $\psi'_t(\widetilde{V})$,
     $\psi''_t(\widetilde{W})$ \label{f:isotopies-psi}} \end{figure}
To keep the notation short we now set: $$\widetilde{V}_t =
\psi''_t(\widetilde{V}), \quad \widetilde{W}_t =
\psi'_t(\widetilde{W}).$$ Note that $\widetilde{W}_t \cap B(r_0, r_0)$
is {\em disconnected} and has precisely $s''$ connected components,
each of which looks like a copy of $B' \times \{0\}$ which is
(non-linearly) translated along the $B''$-axis. These components lie
in ``parallel'' position one with respect to the other (see
Figure~\ref{f:isotopies-psi}). We will refer to these components as
the {\em sheets} of $\widetilde{W}_t$ inside $B(r_0, r_0)$ and denote
them by $\mathcal{S}^{W}_i(t)$, $i = 1, \ldots, s''$.  The indexing
here is so that $\mathcal{S}^{W}_i(t)$ coincides with $B'(r_0/3)
\times \{b''_i(t)\}$ inside $B(r_0/3, r_0/3)$. Similarly,
$\widetilde{V}_t \cap B(r_0, r_0)$ is {\em disconnected} and consists
of $s'$ ``parallel'' sheets which are all ``translates'' of
$\{0\}\times B''$. We denote them by $\mathcal{S}^{V}_j(t)$, $j=1,
\ldots, s'$, where the indexing is done so that $\mathcal{S}^{V}_j(t)$
coincides with $\{b_j(t)\} \times B''(r_0/3)$ inside $B'(r_0/3,
r_0/3)$. See Figure~\ref{f:isotopies-psi}. Clearly we have
\begin{equation} \label{eq:sheets} \begin{aligned} &
      \mathcal{S}^W_i(t) \cap \mathcal{S}^V_j(t) =
      \{ (b'_j(t), b''_i(t))\}, \\
      & \mathcal{S}^W_i(t) \cap T_{\Delta} = \{(b''_i(t), b''_i(t))
      \}, \quad T_{\Delta} \cap \mathcal{S}^V_j(t) = \{ (b'_j(t),
      b'_j(t))\}.  \end{aligned} \end{equation} 
  
  \
      
\noindent \textbf{Step 5:} {\em
  Area estimates for large holomorphic triangles.}
\label{sb:area-estim-large} Let $D' = D \setminus \{z_1, z_2, z_3\}$
be the unit disk punctured at three boundary points $z_1, z_2, z_3$
ordered clock-wise along $\partial D$. Fix strip-like ends around the
punctures (see~\S\ref{subsec:fuk-cat}), and denote by $\partial_{i,j}
D'$, the arc on $\partial D'$ connecting $z_i$ with $z_j$.

We will now consider a special almost complex structure $J^0_{B}$ on
$B = B' \times B''$. We identify $\mathbb{R}^{n+2} \times \mathbb{R}^{n+2}$
with $\mathbb{C}^{n+2}$ in the obvious way via $(x_1, \ldots, x_m, y_1,
\ldots, y_{n+2}) \longmapsto (x_1+iy_1, \ldots, x_{n+2}+iy_{n+2})$. This induces a
complex structure $J_{\textnormal{std}}$ on $\mathbb{R}^{n+2} \times
\mathbb{R}^{n+2}$. We define $J^0_B$ to be the restriction of
$J_{\textnormal{std}}$ to $B \subset \mathbb{R}^{n+2} \times
\mathbb{R}^{n+2}$. Define now $\mathcal{J}_0$ to be the space of
$\Omega$-compatible domain-dependent almost complex structures $J =
\{J_z\}_{z \in D'}$ which coincide with $J^0_B$ on $B$. For elements
$J \in \mathcal{J}_0$, and $z \in D'$, $p \in \mathcal{E}$ we will
also write $J(z,p)$ for the restriction of $J_z$ to $T_p\mathcal{E}$.

Consider now finite energy solutions to the Floer equation with
boundary conditions on the Lagrangians $\widetilde{W}_t$,
$T_{\Delta}$, $\widetilde{V}_t$:
\begin{equation} \label{eq:Floer-eq-D'}
   \begin{aligned}
      & u:D' \longrightarrow \mathcal{E}, \quad E(u) < \infty, \\
      & Du + J(z,u)\circ Du\circ j = 0, \\
      & u(\partial_{3,1}D') \subset \widetilde{W}_t, \quad
      u(\partial_{1,2}D') \subset T_{\Delta}, \quad
      u(\partial_{2,3}D') \subset \widetilde{V}_t
   \end{aligned}
\end{equation}
together with the requirement that $u$ converges along each strip-like
end of $D'$ to an intersection point between the corresponding pair of
Lagrangians (associated to the two arcs of $\partial D'$ that neighbor
a given puncture). Thus $u$ extends continuously to a map $u:D
\longrightarrow \mathcal{E}$ with
$$u(z_1) \in \widetilde{W}_t \cap T_{\Delta}, \quad u(z_2) \in 
T_{\Delta} \cap \widetilde{V}_t, \quad u(z_3) \in \widetilde{W}_t
\cap \widetilde{V}_t.$$ In what follows we denote for a (finite
energy) map $u: D \longrightarrow \mathcal{E}$ by $A_{\Omega}(u) =
\int_{D'} u^*\Omega$ its symplectic area.

We now fix once and for all $r_1$ with $2r_0/3 < r_1 < r_0$.
\begin{lem} \label{l:small-big} There exists a constant $C = C(r_1,
   \widetilde{W}, \widetilde{V}) > 0$ (that depends only on $r_1$ and
   $\widetilde{W}=\widetilde{W}_0$, $\widetilde{V} = \widetilde{V}_0$)
   with the following property. Let $0 \leq t < 1$ and $J \in
   \mathcal{J}_0$. Then every solution $u: D' \longrightarrow
   \mathcal{E}$ of~\eqref{eq:Floer-eq-D'} with $u(D') \not\subset
   B(r_1, r_1)$ must satisfy $A_{\Omega}(u) \geq C$.
\end{lem}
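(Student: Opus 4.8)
The plan is to extract the area lower bound from the monotonicity inequality for pseudo-holomorphic curves with Lagrangian boundary, applied \emph{inside the fixed box} $B = B' \times B''$, on which every $J \in \mathcal{J}_0$ restricts to the standard split structure $J^0_B$. The point of localising the estimate to $B$ is that the resulting constant becomes automatically independent of $t$ and of the choice of $J \in \mathcal{J}_0$, since on $B$ neither the almost complex structure nor (in a controlled sense) the Lagrangians vary.

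First I would record that all three asymptotic intersection points of a solution $u$ of~\eqref{eq:Floer-eq-D'} lie in the small box $B(r_0/3,r_0/3)$: indeed $u(z_1)\in\widetilde{W}_t\cap T_\Delta$, $u(z_2)\in T_\Delta\cap\widetilde{V}_t$ and $u(z_3)\in\widetilde{W}_t\cap\widetilde{V}_t$, and by properties~(6)--(7) of the isotopies $\psi'_t,\psi''_t$ from Step~4 each of these intersection sets is contained in $B(r_0/3,r_0/3)$ (for the third one we also use $\delta_0<1/3$). Now let $u$ be a solution with $u(D')\not\subset B(r_1,r_1)$, fix $\zeta\in D'$ with $u(\zeta)\notin B(r_1,r_1)$, and set $q=u(z_3)\in B(r_0/3,r_0/3)$ and $\rho=r_1-r_0/3$, which is positive since $r_1>2r_0/3>r_0/3$. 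Writing $B_\rho(q)$ for the Euclidean ball of radius $\rho$ about $q$ in the ambient $\mathbb{R}^{n+2}\times\mathbb{R}^{n+2}$ of $B$, one has $B_\rho(q)\subset B'(r_0/3+\rho)\times B''(r_0/3+\rho)=B(r_1,r_1)\subset B$, so $J\equiv J^0_B$ on $B_\rho(q)$; and since $u(\zeta)\notin B(r_1,r_1)\supseteq B_\rho(q)$ while $q=u(z_3)\in B_\rho(q)$, the image of $u$ is not contained in $B_\rho(q)$.

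I would then pass to the connected component of $u^{-1}(\overline{B_\rho(q)})$ containing $z_3$ --- whose image meets both $q$ and $\partial B_\rho(q)$ --- and apply the monotonicity lemma in its form for a curve with partly free, partly Lagrangian boundary through a prescribed point. Its hypotheses hold: on $B_\rho(q)$ the almost complex structure is the fixed $J^0_B$, and the arcs of $\partial D'$ that $u$ maps into $B_\rho(q)$ land on $\widetilde{W}_t$, $T_\Delta$ or $\widetilde{V}_t$, each of which meets $B$ in a submanifold of $C^1$-geometry bounded uniformly in $t\in[0,1)$: inside $B(r_0/3,r_0/3)$ these are $C^1$-small perturbations of flat affine sections (Step~3 and properties~(2)--(3) of Step~4), while on the collar $B\setminus B(r_0/3,r_0/3)$ they agree with the $t$-independent Lagrangians $\widetilde{W},T_\Delta,\widetilde{V}$ by properties~(4)--(5) of Step~4. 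Hence there is a constant $C=C(r_1,\widetilde{W},\widetilde{V})>0$ (comparable to $\rho^2$ times the monotonicity constant on $(B,J^0_B)$, and to be shrunk if $\rho$ exceeds the validity radius of the inequality), independent of $t$ and of $J\in\mathcal{J}_0$, with
\[
A_\Omega(u)\ \geq\ A_\Omega\bigl(u|_{u^{-1}(B_\rho(q))}\bigr)\ \geq\ C ,
\]
which is what the lemma asserts.

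The only genuinely delicate point is producing a constant uniform in $t$ and in $J$: the family $\mathcal{J}_0$ is non-compact and wholly unconstrained outside $B$, and $\widetilde{W}_t,\widetilde{V}_t$ do genuinely move with $t$. Both difficulties evaporate once one observes that the whole estimate can be run inside $B$, where $J$ is frozen and the Lagrangians are uniformly controlled, the behaviour of $u$ elsewhere entering only through the bare fact that $u(D')\not\subset B_\rho(q)$. I would also be careful to cite the precise version of the monotonicity inequality used (in the style of Sikorav's or Hummel's accounts, adapted to Lagrangian boundary conditions) and to verify that the component of $u^{-1}(\overline{B_\rho(q)})$ selected above really carries area bounded below --- which is exactly the content of that inequality.
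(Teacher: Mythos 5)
Your overall strategy is sound: run the area estimate entirely inside the box $B=B'\times B''$, where every $J\in\mathcal{J}_0$ is frozen to $J^0_B$, so that whatever constant comes out is automatically uniform in $t$ and in $J$. That part matches the paper's intent. Where your argument develops a genuine gap is in the choice of ball and the version of the monotonicity inequality you invoke.

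You place the ball at the exit point $q=u(z_3)$ with radius $\rho=r_1-r_0/3$, which is comparable to the whole box. Inside $B_\rho(q)$ there are \emph{three} distinct Lagrangian pieces meeting it ($\mathcal{S}^W_i(t)$, $T_\Delta$, $\mathcal{S}^V_j(t)$), two of which intersect at $q$ and a third ($T_\Delta$) passing a $t$-dependent distance from $q$; on top of that, over the annulus $B\setminus B(r_0/3,r_0/3)$ these Lagrangians are no longer flat but are Hamiltonian images of flat sheets, with no a priori $C^2$ bound. The ``Sikorav/Hummel monotonicity inequality adapted to Lagrangian boundary'' that you gesture at is stated for a \emph{single} (totally real) boundary condition, obtained by one reflection. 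For a union of several Lagrangians, the usual device is to iterate the reflections, and the estimate survives only if the reflection group is finite and the Lagrangians are close to linear on the scale of the ball; neither of these is verified here, and in fact your $\rho$ is too large for the ``close to linear'' hypothesis since on that scale the isotopies $\psi'_t,\psi''_t$ are arbitrary. So the constant in the last display, as written, is not produced by any statement you can cite, and its independence of $t$, of $(i,j)$ and of the number of sheets $s',s''$ is not actually established.

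The paper sidesteps this problem entirely by \emph{not} working at the exit point. It performs a case analysis: either some boundary point $z_*\in\partial D'$ is mapped to the slab $\partial B(r_2,r_2)$, in which case a small ball $\overline{\textnormal{Ball}_{u(z_*)}(\rho_1)}$ is constructed to meet exactly \emph{one} Lagrangian (so a single reflection and Lelong give area $\geq\tfrac{\pi}{2}\rho_1^2$); or else the boundary stays in $B(r_2,r_2)$ while the interior escapes $B(r_1,r_1)$, in which case one takes an interior point with image on $\partial B(r_1,r_1)$ and a ball of radius $\rho_2$ that is disjoint from \emph{all} Lagrangian boundary (so plain Lelong gives $\geq\pi\rho_2^2$). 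Both radii are chosen once and for all independently of $t$, because the relevant regions near $\partial B(r_2,r_2)$ and $\partial B(r_1,r_1)$ are exactly where the isotopies are supported away from. This gives the constant $C=\min\{\tfrac{\pi}{2}\rho_2^2,\pi\rho_1^2\}$ with no appeal to a multi-Lagrangian monotonicity statement.

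Your approach is likely salvageable — shrink $\rho$ so that $B_\rho(q)\subset B(r_0/3,r_0/3)$ where the three Lagrangians are genuinely linear, then prove a dihedral-reflection version of the monotonicity estimate for the arrangement $\mathbb{R}^m\times\{*\}$, $\{*\}\times\mathbb{R}^m$, $\Delta$ — but this is extra work and a non-standard lemma that you would have to supply; you should not present it as a direct citation of the standard monotonicity inequality. As written, the step from ``$u$ escapes $B_\rho(q)$'' to ``$A_\Omega(u)\geq C$'' has a real hole.
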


The proof of the lemma is given in~\S\ref{sb:prf-lem-small-big} below.

Next consider the intersections between any of $\widetilde{W}_t$,
$\widetilde{V}_t$ and $T_{\Delta}$. Recall from~\eqref{eq:sheets} the
intersections between $\mathcal{S}^W_i(t)$, $\mathcal{S}^{V}_j(t)$ and
$T_{\Delta}$. For simplicity we set $$w_i(t) = (b''_i(t),b''_i(t)),
\quad v_j(t) = (b'_j(t),b'_j(t)), \quad x_{i,j}(t) = (b'_j(t),
b''_i(t)).$$ With this notation we have:
\begin{equation}
   \begin{aligned}
      & \widetilde{W}_t \cap T_{\Delta} = \{w_i(t) \mid 1 \leq i \leq
      s'' \}, \quad T_{\Delta} \cap \widetilde{V}_t = \{v_j(t) \mid 1
      \leq j \leq s'\}, \\
      & \widetilde{W}_t \cap \widetilde{V}_t = \{x_{i,j}(t) \mid 1
      \leq i \leq s'', \; 1\leq j \leq s' \}.
   \end{aligned}
\end{equation}

As a consequence from Lemma~\ref{l:small-big} we have:
\begin{cor} \label{c:small-big-2} Let $0 \leq t < 1$, $J \in
   \mathcal{J}_0$ and $u:D' \longrightarrow \mathcal{E}$ a solution
   of~\eqref{eq:Floer-eq-D'}. If $$u(z_1) = w_i(t), \quad u(z_2) =
   v_j(t), \quad u(z_3) \neq x_{i,j}(t),$$ then $A_{\Omega}(u) \geq
   C$, where $C$ is the constant from Lemma~\ref{l:small-big}.
\end{cor}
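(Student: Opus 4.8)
The corollary is an immediate consequence of Lemma~\ref{l:small-big}, so the plan is simply to verify that the hypotheses of the lemma are satisfied whenever $u(z_3) \neq x_{i,j}(t)$. First I would recall the key geometric input from Step~4: inside the box $B(r_0/3, r_0/3)$ the Lagrangians $\widetilde{W}_t$ and $\widetilde{V}_t$ decompose into the parallel sheets $\mathcal{S}^W_i(t)$ and $\mathcal{S}^V_j(t)$, and by~\eqref{eq:sheets} each pair of sheets meets in exactly one point, namely $x_{i,j}(t) = (b'_j(t), b''_i(t))$. The boundary arc $\partial_{3,1}D'$ of $u$ maps into $\widetilde{W}_t$, and near the puncture $z_1$ it is asymptotic to $w_i(t) = (b''_i(t), b''_i(t))$, which lies on the sheet $\mathcal{S}^W_i(t)$; similarly $\partial_{2,3}D'$ maps into $\widetilde{V}_t$ and is asymptotic at $z_2$ to $v_j(t) \in \mathcal{S}^V_j(t)$. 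Hence at the third puncture $z_3$ the boundary of $u$ must approach a point of $\mathcal{S}^W_i(t) \cap \mathcal{S}^V_j(t)$, \emph{provided} the image $u(D')$ stays inside the box where the sheet structure is valid.

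\medskip

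The crux of the argument is therefore the following dichotomy. Either $u(D') \subset B(r_1, r_1)$, in which case — since $r_1 < r_0$ and the components of $\widetilde{W}_t$ (resp.\ $\widetilde{V}_t$) that enter $B(r_0,r_0)$ are exactly the sheets $\mathcal{S}^W_i(t)$ (resp.\ $\mathcal{S}^V_j(t)$), which by construction are ``parallel'' translates of $B' \times \{0\}$ (resp.\ $\{0\} \times B''$) — the arc of $u$ containing $z_1$ lies on the single sheet $\mathcal{S}^W_i(t)$ (it cannot jump to another sheet, as the sheets are disjoint inside $B(r_0,r_0)$ and $u$ is continuous), and likewise the arc containing $z_2$ lies on $\mathcal{S}^V_j(t)$; consequently the arc joining $z_3$ to its neighbors forces $u(z_3) \in \mathcal{S}^W_i(t) \cap \mathcal{S}^V_j(t) = \{x_{i,j}(t)\}$, contradicting the hypothesis $u(z_3) \neq x_{i,j}(t)$. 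Or $u(D') \not\subset B(r_1,r_1)$, and then Lemma~\ref{l:small-big} applies directly and gives $A_{\Omega}(u) \geq C$ with $C = C(r_1, \widetilde{W}, \widetilde{V})$ the constant from that lemma. This proves the corollary.

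\medskip

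The one point that requires a little care is why, in the first alternative, the asymptotic limits pin down the \emph{sheets}: this uses that $w_i(t)$ lies on $\mathcal{S}^W_i(t)$ and no other sheet of $\widetilde{W}_t$ (which follows from $|b''_i(t)| \leq (1-t)\delta_0 r_0 < r_0/3$ and the normalization $\mathcal{S}^W_i(t) \cap B(r_0/3, r_0/3) = B'(r_0/3) \times \{b''_i(t)\}$ in Step~4), and similarly for $v_j(t)$ and $\mathcal{S}^V_j(t)$; together with connectedness of the boundary arcs of $D'$ this determines on which sheet each arc lies. The main obstacle, if any, is purely bookkeeping: making sure that $r_1$ was chosen (as it was, $2r_0/3 < r_1 < r_0$) large enough that ``$u(D') \subset B(r_1,r_1)$'' already implies the image meets $B(r_0,r_0)$ in the sheet regime, and small enough that Lemma~\ref{l:small-big}'s constant is positive; both are guaranteed by the setup of Steps~3 and~4, so no new work is needed.
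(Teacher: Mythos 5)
Your proof is correct and takes essentially the same approach as the paper: the paper also reduces to showing that $u(\partial D') \not\subset B(r_1, r_1)$ by observing that the asymptotic conditions at $z_1$ and $z_2$ would force the boundary arcs onto the sheets $\mathcal{S}^W_i(t)$ and $\mathcal{S}^V_j(t)$, so $u(z_3)$ would have to lie in their unique intersection point $x_{i,j}(t)$, and then invokes Lemma~\ref{l:small-big}. The only cosmetic difference is that the paper contradicts on $u(\partial D') \subset B(r_1,r_1)$ while you contradict on the slightly stronger $u(D') \subset B(r_1,r_1)$; both suffice.
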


\begin{proof}[Proof of Corollary~\ref{c:small-big-2}]
   Let $u:D' \longrightarrow \mathcal{E}$ be as in the statement of
   the corollary. We claim that $u(\partial D') \not \subset B(r_1,
   r_1)$.

   To prove this, assume the contrary were the case, i.e. that
   $u(\partial D') \subset B(r_1, r_1)$. Since $u(z_1) = w_i(t)$ it
   follows that $u(\partial_{1,3} D') \subset \mathcal{S}^{W}_i(t)$.
   Similarly, from $u(z_2) = v_j(t)$ we conclude that
   $u(\partial_{1,2} D') \subset \mathcal{S}^V_j(t)$. It now follows
   that $u(z_3) \in \mathcal{S}^{W}_i(t) \cap \mathcal{S}^{V}_j(t) =
   \{x_{i,j}(t) \}$, which is a contradiction.  This proves that
   $u(\partial D') \not \subset B(r_1, r_1)$. By
   Lemma~\ref{l:small-big} we have $A_{\Omega}(u) \geq C$.
\end{proof}

\noindent \textbf{Step 6:} {\em
Estimating the small holomorphic triangles.}
\label{sb:area-estim-small}

\begin{lem} \label{p:loc-solutions-1} There exists $\epsilon >0$ and
   a constant $C'>0$ such that the following holds. Let $1-\epsilon
   \leq t < 1$ and $1 \leq i \leq s''$, $1 \leq j \leq s'$ and $J \in
   \mathcal{J}_0$. Then among the solutions of
   equation~\eqref{eq:Floer-eq-D'} there exists a unique one $u$ with
   the following two properties:
   \begin{enumerate}
     \item $u(z_1) = w_i(t)$, $u(z_2) = v_j(t)$, $u(z_3) =
      x_{i,j}(t)$.
     \item $A_{\Omega}(u) < C'$.
   \end{enumerate}
   Moreover, this solution $u$ satisfies $u(D') \subset B(r_0/3,
   r_0/3)$ and $A_{\Omega}(u) \leq \sigma(t)$, where $\sigma(t)
   \xrightarrow[\; t\to 1^{-} \; ]{} 0$. Furthermore $J$ is regular
   for the solution $u$ in the sense that the linearized
   $\overline{\partial}$ operator is surjective at $u$.
\end{lem}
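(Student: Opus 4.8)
The plan is to reduce the whole statement to an explicit computation in the standard model $Q_p \approx B = B' \times B'' \subset \mathbb{C}^{n+2}$ near the critical point $p$, where $\Omega|_{Q_p} = \omega_{\textnormal{can}}$ and every $J \in \mathcal{J}_0$ agrees with $J^0_B = J_{\textnormal{std}}$. First I would exhibit the solution. By the defining properties of the isotopies $\psi'_t, \psi''_t$ (Step~4 above) together with the identification of $T_\Delta$ with the diagonal, for $t$ close to $1$ the three Lagrangians $\widetilde{W}_t$, $T_\Delta$, $\widetilde{V}_t$ coincide, inside $B(r_0/3, r_0/3)$, with the three affine totally real subspaces $\{y = b''_i(t)\}$, $(1+i)\mathbb{R}^{n+2}$ and $\{x = b'_j(t)\}$, which meet pairwise at $w_i(t)$, $v_j(t)$, $x_{i,j}(t)$ (cf.~\eqref{eq:sheets}). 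Decomposing into the $n+2$ complex coordinates, in each factor one gets three affine lines in general position bounding a flat right--isoceles triangle $\Delta_k$ of area $\tfrac12 |b'^{(k)}_j(t) - b''^{(k)}_i(t)|^2$; the Riemann map of $D'$ onto $\Delta_k$ normalized to send $z_1, z_2, z_3$ to the three vertices in the cyclic order dictated by~\eqref{eq:Floer-eq-D'} is the unique holomorphic triangle in $\mathbb{C}$ with these corners, and it is regular. Assembling these gives a solution $u_0$ of~\eqref{eq:Floer-eq-D'}; its image lies in a ball of radius $\le 2\sqrt2\,(1-t)\delta_0 r_0$ about the origin, hence in $B(r_0/3, r_0/3)$, so that $u_0$ genuinely has boundary on $\widetilde{W}_t, T_\Delta, \widetilde{V}_t$, and $A_\Omega(u_0) = \tfrac12 |b'_j(t) - b''_i(t)|^2 \le 2(1-t)^2\delta_0^2 r_0^2 =: \sigma(t) \to 0$. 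Regularity of $u_0$ is automatic: since $u_0(D') \subset B$, where $J \equiv J^0_B$, its linearized operator splits into $n+2$ copies of the standard (and invertible) Cauchy--Riemann operator on the disk with boundary on three transverse lines.

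The heart of the proof is uniqueness, which I would obtain by a two--step confinement. Set $C' := \tfrac12 \min\{C,\, c\,(r_0/4)^2\}$, where $C$ is the constant of Lemma~\ref{l:small-big} and $c > 0$ is a monotonicity constant for $J^0_B$ and the Lagrangians $\widetilde{W}_t, T_\Delta, \widetilde{V}_t$ (uniformly totally real, and $t$--independent outside $B(2r_0/3, r_0)$, so $c$ depends only on $r_0, r_1, \widetilde{W}=\widetilde{W}_0, \widetilde{V}=\widetilde{V}_0$); then choose $\epsilon > 0$ so small that for all $t \in [1-\epsilon, 1)$ one has $\sqrt2\,(1-t)\delta_0 r_0 < r_0/12$ and $\sigma(t) < C'$. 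Now let $u$ be any solution of~\eqref{eq:Floer-eq-D'} with $u(z_1) = w_i(t)$, $u(z_2) = v_j(t)$, $u(z_3) = x_{i,j}(t)$ and $A_\Omega(u) < C'$. Since $C' \le C$, Lemma~\ref{l:small-big} forces $u(D') \subset B(r_1, r_1) \subset B$, so $u$ is honestly $J^0_B$--holomorphic. The corner $w_i(t)$ lies within $\sqrt2\,(1-t)\delta_0 r_0 < r_0/12$ of the origin; if $u(D')$ were not contained in the Euclidean ball of radius $r_0/4$ about $w_i(t)$ (which lies inside $B(r_1,r_1)$), the monotonicity lemma for curves with piecewise totally real boundary would yield $A_\Omega(u) \ge c\,(r_0/4)^2 > C'$, a contradiction; hence $u(D') \subset B(r_0/3, r_0/3)$. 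There the three Lagrangians are affine subspaces, so $u$ decouples coordinatewise, and in each $\mathbb{C}$--factor, because $\pi_1$ and $\pi_2$ of $\mathbb{C}$ relative to an affine line vanish, there is a single relative homotopy class of triangle with the prescribed corners; its only holomorphic representative is the Riemann map $u_{0,k}$ by the classical rigidity of the linear triangle. Thus $u = u_0$, and the final assertions ($u(D') \subset B(r_0/3, r_0/3)$, $A_\Omega(u) = A_\Omega(u_0) \le \sigma(t) \to 0$, regularity) have already been proved.

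I expect the genuine obstacle to be the second confinement step --- upgrading ``$u(D') \subset B(r_1, r_1)$'' (given by Lemma~\ref{l:small-big}) to ``$u(D') \subset B(r_0/3, r_0/3)$'' --- because this is exactly where one must be sure that the monotonicity constant $c$ is uniform in $t \in [1-\epsilon, 1)$, in the index pair $(i,j)$, and over all $J \in \mathcal{J}_0$. The feature that makes this work is that every $J \in \mathcal{J}_0$ coincides with the fixed structure $J^0_B$ on $B \supset B(r_1, r_1)$, and that the isotopies of Step~4 can be arranged to keep $\widetilde{W}_t, \widetilde{V}_t$ uniformly $C^1$--close to the fixed affine sheets, so the constants depend only on the fixed data $r_0, r_1, \widetilde{W}, \widetilde{V}$. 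Once the confinement is secured, existence, the area estimate, regularity and uniqueness are all the standard --- and essentially two-dimensional --- analysis of the holomorphic triangle attached to a triple of affine Lagrangian subspaces.
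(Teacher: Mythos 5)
Your proof follows the same strategy as the paper's: a first confinement via Lemma~\ref{l:small-big}, a second confinement to $B(r_0/3, r_0/3)$ via an area-versus-escape estimate, and then a coordinatewise reduction to $n+2$ planar triangle problems in which Riemann mapping gives existence, uniqueness, regularity, and the area bound $\sigma(t) \to 0$. The only deviation is in the second confinement step, where you apply a monotonicity lemma in a single Euclidean ball centered at the corner $w_i(t)$ (through which $u$ passes only asymptotically and near which all three Lagrangians collapse together as $t \to 1^-$), whereas the paper avoids the corner entirely by rerunning the two-case Lelong argument of Lemma~\ref{l:small-big} with small balls meeting at most one Lagrangian; both routes work, but yours leans on a version of the monotonicity lemma for piecewise totally real boundary whose constant must be shown uniform at such corners, a point worth spelling out rather than absorbing into ``$c$ depends only on the fixed data.''
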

The proof is given in~\S\ref{sb:prf-prop-loc-sol} below.

\noindent \textbf{Step 7:} {\em End of the proof.} 
\label{sb:sum-up} We are now ready to
prove that the map in~\eqref{eq:mu-2-qi} is a quasi-isomorphism, thus
proving Proposition~\ref{p:nu-iso}.

Following Steps 1-6 above it is enough to show that the map
\begin{equation} \label{eq:mu-2-qi-2} \mu_2: CF(\widetilde{W}_t,
   T_{\Delta}) \otimes CF(T_{\Delta}, \widetilde{V}_t)
   \longrightarrow CF(\widetilde{W}_t, \widetilde{V}_t)
\end{equation}
is a quasi-isomorphism for some $0\leq t < 1$.

Next, note that the whether or not~\eqref{eq:mu-2-qi-2}
(or~\eqref{eq:mu-2-qi}) is a quasi-isomorphism is independent of the
Floer and perturbation data used for the respective Floer complexes
and for the operation $\mu_2$. Therefore for the sake of our proof any
choice of such data would do as long as it is regular and amenable to
the situation of cobordisms. (In contrast, consistency with respect to
the perturbation data used for the higher $\mu_k$'s is irrelevant for
our present purposes.) We therefore choose for~\eqref{eq:mu-2-qi-2}
Floer data for which the Hamiltonian perturbations are $0$ and $J \in
\mathcal{J}_0$.

By construction, $CF(\widetilde{W}_t, T_{\Delta})$ has the elements
$w_1(t), \ldots, w_{s''}(t)$ as a basis. Similarly $CF(T_{\Delta},
\widetilde{V}_t)$ has a basis consisting of $v_1(t), \ldots,
v_{s'}(t)$ and $CF(\widetilde{W}_t, \widetilde{V}_t)$ can be endowed
with the basis $\{x_{i,j}(t)\}_{1\leq i \leq s'', \, 1\leq j \leq
  s'}$. Thus we have a 1-1 correspondence between the associated basis
of $CF(\widetilde{W}_t, T_{\Delta}) \otimes CF(T_{\Delta},
\widetilde{V}_t)$ and the basis of $CF(\widetilde{W}_t,
\widetilde{V}_t)$, given by $$w_i(t) \otimes v_j(t) \longmapsto
x_{ij}(t).$$ 

We will now show that for $t<1$ close enough to $1$ and appropriate
$J$, the matrix of $\mu_2$ with respect to these bases is invertible.
This will prove that for such a choice of $t$ and $J$, $\mu_2$ is in
fact a chain isomorphism (hence a quasi-isomorphism for any other
choice). Below we will denote the matrix of $\mu_2$ with respect to
these bases by $M$.

Fix a generic $J \in \mathcal{J}_0$ and $t_0$ with $1\leq t_0 <
1-\epsilon$ such that $\sigma(t_0) \ll C'$, where $\epsilon$, $C'$ and
$\sigma$ are as in Proposition~\ref{p:loc-solutions-1}. By
Proposition~\ref{p:loc-solutions-1} the entries in the diagonal of $M$
have the form $$M_{k,k}(T) = T^{\alpha_k} + O(T^{C'}),$$ with $0 \leq
\alpha_{k} \leq \sigma(t_0)$. Here $o(T^{C'})$ stands for an element
of the Novikov ring in which every monomial is of the form $c_l
T^{\lambda_l}$ with $c_l \in \mathbb{Z}_2$ and $\lambda_l\geq C'$.

Similarly, by Corollary~\ref{c:small-big-2}, the elements of $M$ that
are off the diagonal are all of the form $$M_{k,l} = O(T^C), \quad
\forall \, k \neq l,$$ where $C$ is the constant from
Corollary~\ref{c:small-big-2} and Lemma~\ref{l:small-big}.

By choosing $t_0$ close enough to $1$ we obtain $\alpha_k$ as close as
we want to $0$. It easily follows that for such a choice of
  $t_0$ the matrix $M$ can be transformed via elementary row
  operations to an upper triangular matrix with non-zero elements in
  the diagonal. It follows that $M$ is invertible. \Qed

\begin{rem}
   It is not difficult to see that the map $\varphi$
   from~\eqref{eq:cone-tau-S-1} is chain-homotopic to the
   corresponding map constructed by Seidel (in the exact case) in his
   construction of the exact triangle associated to a Dehn twist.  As
   a consequence, the exact triangle constructed above coincides with
   his.
\end{rem}

\subsubsection{Proof of Lemma~\ref{l:small-big}}
\label{sb:prf-lem-small-big}

Throughout the proof we will denote by $\textnormal{Ball}_x(r) \subset
\mathbb{R}^{n+2} \times \mathbb{R}^{n+2}$ the open Euclidean ball of radius
$r$ centered at $x$.

Fix $r_2$ with $2r_0/3 < r_2 < r_1$ and let $\rho_2>0$ small enough so
that:
\begin{enumerate}
  \item For $i$ and every $x \in \mathcal{S}^{W}_i(t) \cap (\partial
   B'(r_2) \times B'')$ the closed ball
   $\overline{\textnormal{Ball}_x(\rho_1)}$ is disjoint from all
   $\mathcal{S}^{W}_k(t)$ for every $k \neq i$ as well as from
   $\widetilde{W}_t$ and from $T_{\Delta}$.
  \item For $j$ and every $x \in \mathcal{S}^{V}_j(t) \cap (B' \times
   \partial B''(r_2))$ the closed ball
   $\overline{\textnormal{Ball}_x(\rho_1)}$ is disjoint from all
   $\mathcal{S}^{V}_k(t)$ for every $k \neq j$ as well as from
   $\widetilde{V}_t$ and from $T_{\Delta}$.
  \item For every $x \in T_{\Delta} \cap (\partial B'(r_2) \times
   \partial B''(r_2))$ the closed ball
   $\overline{\textnormal{Ball}_x(\rho_1)}$ is disjoint from
   $\widetilde{W}_t$ and $\widetilde{V}_t$.
\end{enumerate}
By construction, such a $\rho_1$ exists and can be chosen to be {\em
  independent} of $0 \leq t < 1$. (Recall that
$\widetilde{W}_t \cap (B(r_0, r_0) \setminus B(2r_0/3, r_0))$ is
independent of $t$.)  See Figure~\ref{f:area-estim-Lelong}.

\begin{figure}[htbp]
   \begin{center}
      \includegraphics[width=0.6\linewidth]{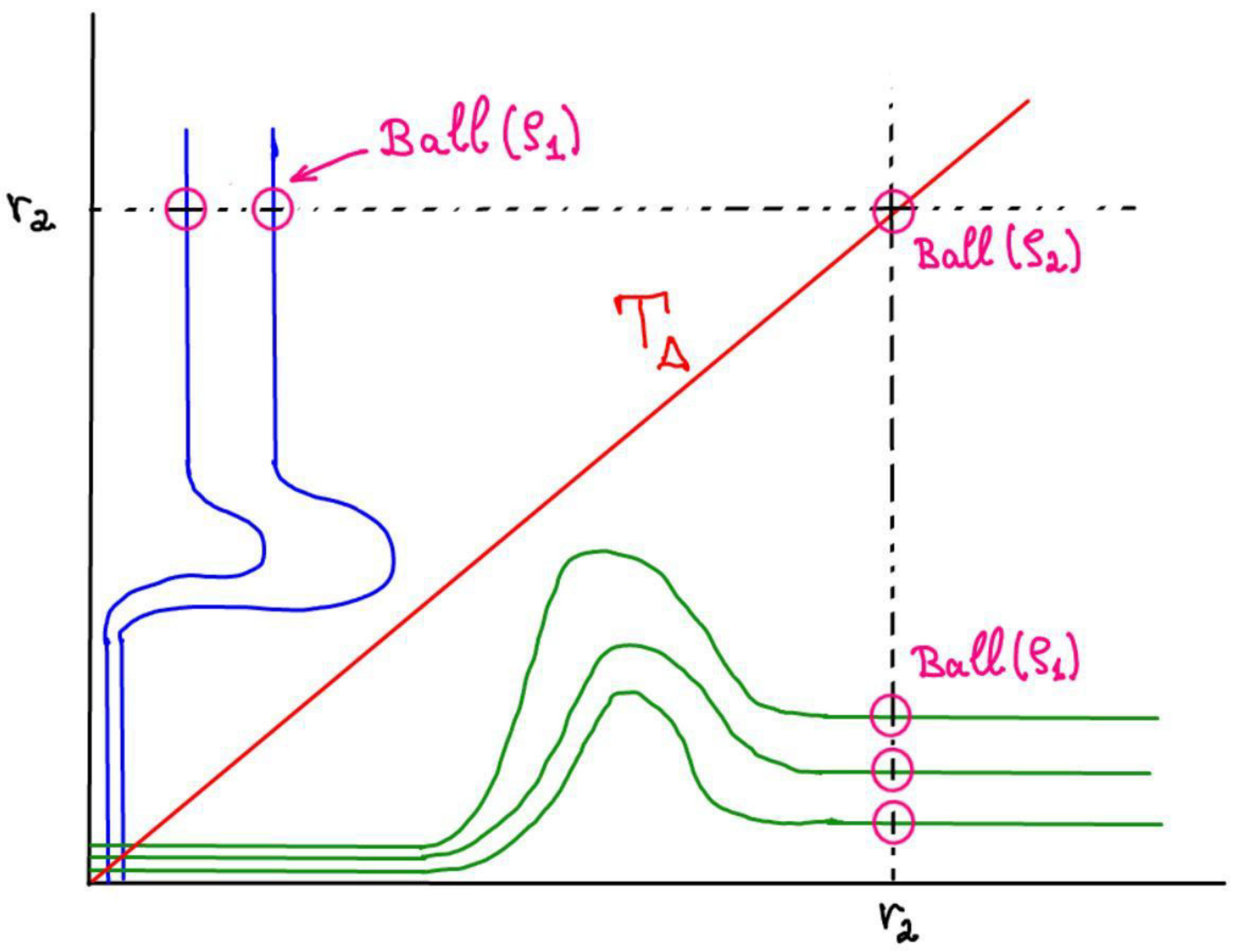}
   \end{center}
   \caption{Estimating the area of holomorphic curves that go out of 
     $B(2r_0/3, 2r_0/3)$.
     \label{f:area-estim-Lelong}}
\end{figure}

Similarly, choose $\rho_2>0$ such that for every $x \in \partial
B(r_1,r_1)$ the closed ball $\overline{\textnormal{Ball}_x(\rho_2)}$
is disjoint from $B(r_2, r_2)$ and is also contained inside $B =
B(r_0, r_0)$.

Set $C: = \min \{\tfrac{\pi}{2}\rho_2^2, \pi \rho_1^2 \}$.

Now let $u: D' \longrightarrow \mathcal{E}$ be a solution
of~\eqref{eq:Floer-eq-D'} and assume first that $u$ satisfies the
following special assumption: $u(\partial D') \not \subset B(r_2,
r_2)$. We will prove that $A_{\Omega}(u) \geq C$.

Since $u(z_i) \in B(2r_0/3, 2r_0/3)$ (recall $z_i$ are the punctures
of $D'$) it follows that there exists $z_* \in \partial D'$ such that
$u(z_*)$ lies in one of the following three: 
\begin{enumerate}
  \item $\mathcal{S}^{W}_i(t) \cap (\partial B'(r_2) \times B'')$ for
   some $i$; or
  \item $\mathcal{S}^{V}_j(t) \cap (B' \times \partial B''(r_2))$ for
   some $j$; or
  \item $T_{\Delta} \cap (\partial B'(r_2) \times
   \partial B''(r_2))$.
\end{enumerate}
Consider now the intersection $u(D') \cap
\overline{\textnormal{Ball}_{u(z_*)}(\rho_2)}$. By the Lelong
inequality (applied after a reflection in the ball with respect to the
corresponding Lagrangian) it follows that
$$A_{\Omega}(u) \geq \tfrac{\pi}{2} \rho_2^2 \geq C.$$ 
(Alternatively one can use an appropriate version of the monotonicity
lemma for minimal surfaces to obtain the same inequality.) We have
thus proved the lemma under the assumption that $u(\partial D') \not
\subset B(r_2, r_2)$.

We are now ready to prove the general case. Assume that $u(D') \not
\subset B(r_1, r_1)$. There are two cases (mutually not exclusive):
either $u(\partial D') \not \subset B(r_1, r_1)$, or
$u(\textnormal{Int\,}D') \not \subset B(r_1, r_1)$.

If the first case occurs then clearly $u(\partial D') \not \subset
B(r_2, r_2)$ and we are done. Therefore we may assume that $u(\partial
D') \subset B(r_2, r_2)$ {\em and} that the second case occurs, namely
$u(\textnormal{Int\,}D') \not \subset B(r_1, r_1)$. It follows that
there is $z_* \in \textnormal{Int\,}D'$ with $u(z_*) \in \partial
B(r_1, r_1)$.  Applying the Lelong inequality for $u(D') \cap
\overline{\textnormal{Ball}_{u(z_*)}(\rho_1)}$ we obtain
$$A_{\Omega}(u) \geq \pi \rho_1^2 \geq C.$$
\Qed

\subsubsection{Proof of Lemma~\ref{p:loc-solutions-1}}
\label{sb:prf-prop-loc-sol}
Before defining the constant $C'$, we first consider solutions $u$
of~\eqref{eq:Floer-eq-D'} that satisfy property~(1) of our proposition
as well as property~(2) with the constant $C'$ replaced by the
constant $C$ from Lemma~\ref{l:small-big}. (The constant $C'$, defined
below, will have the property that $0 < C' \leq C$.) By
Lemma~\ref{l:small-big} we have $u(D') \subset B(r_1, r_1)$. Since
$$\widetilde{W}_t \cap B(r_1, r_2) = \coprod_{k=1}^{s''}
\mathcal{S}^W_k(t), \quad \widetilde{V}_t \cap B(r_1, r_2) =
\coprod_{k=1}^{s'} \mathcal{S}^V_k(t)$$ it follows that
\begin{equation} \label{eq:new-bndry-cond-1}
   u(\partial_{3,1}D') \subset \mathcal{S}^{W}_i(t), \quad 
   u(\partial_{1,2}D') \subset T_{\Delta}, \quad u(\partial_{2,3}D')
   \subset \mathcal{S}^{V}_j(t).
\end{equation}
Thus we are considering here finite energy solutions $u:D'
\longrightarrow B' \times B''$ of~\eqref{eq:Floer-eq-D'} subject to
the boundary condition~\eqref{eq:new-bndry-cond-1} and the asymptotics
(see Figure~\ref{f:triangle-w-v-x}
\begin{equation} \label{eq:asympt-in-B}
   u(z_1) = w_i(t), \quad u(z_2) = v_j(t), \quad u(z_3) = x_{i,j}(t).
\end{equation}
Recall also that our almost complex structure $J$ is in
$\mathcal{J}_0$, hence by definition $J \equiv J^0_B$ on $B = B'
\times B''$.

\begin{figure}[htbp]
   \begin{center}
      \includegraphics[width=0.5\linewidth]{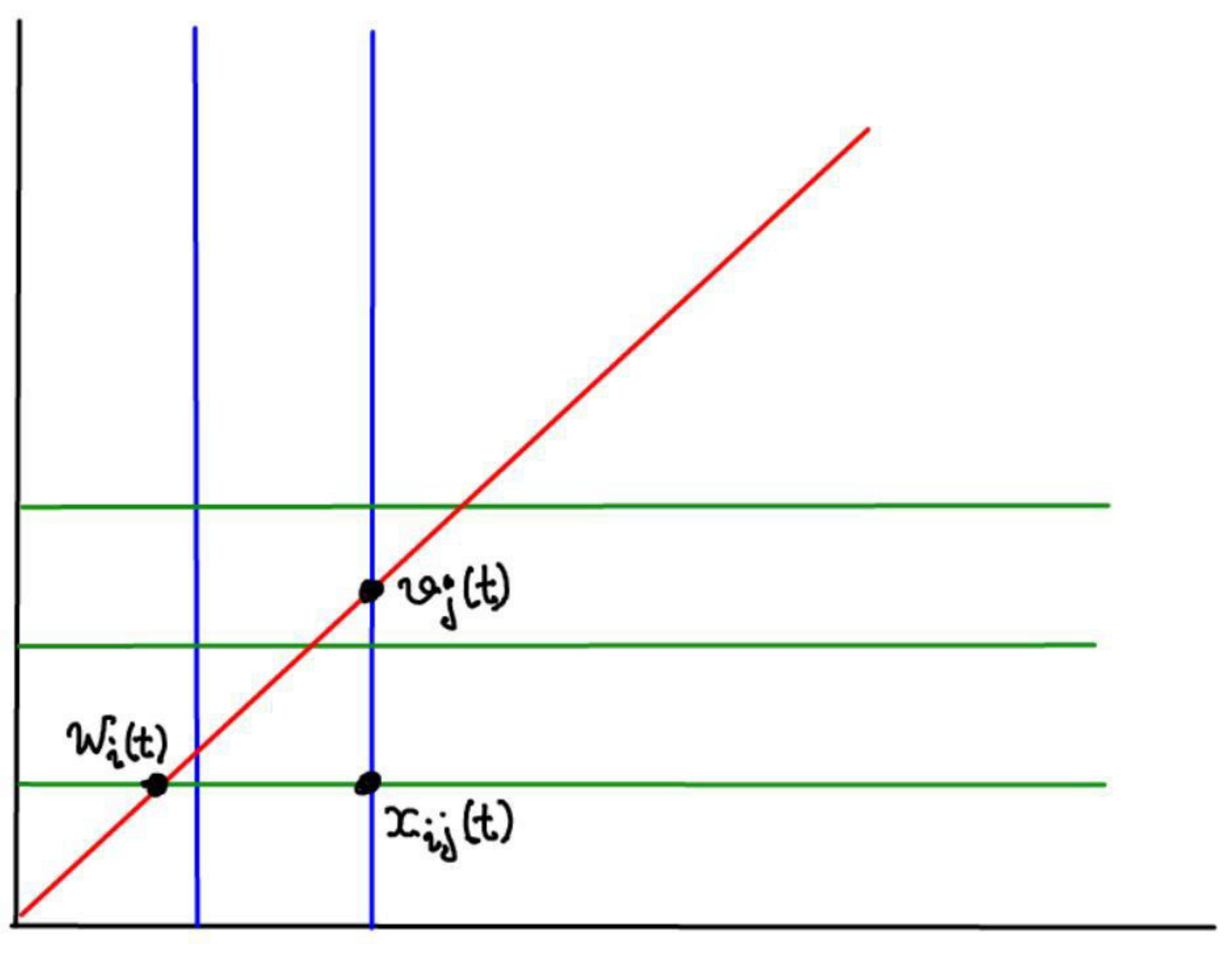}
   \end{center}
   \caption{Holomorphic triangles going from 
     $w_i(t), v_j(t)$ to $x_{i,j}(t)$.
     \label{f:triangle-w-v-x}}
\end{figure}

We now claim that there is a constant $0< C' \leq C$ such that all
solutions $u$ of~\eqref{eq:new-bndry-cond-1} with
asymptotics~\eqref{eq:asympt-in-B} and with $A_{\Omega}(u)\leq C'$
must satisfy $u(D') \subset B(r)/3, r_0/3)$. The proof of this claim
is very similar to that of Lemma~\ref{l:small-big} and in fact even
simpler since we are considering here boundary conditions only on one
pair of sheets ($\mathcal{S}^{W}_i(t)$, $\mathcal{S}^{V}_j(t)$) and
$T_{\Delta}$, and the distance between each pair of these three
Lagrangians outside of $B(r_0/3, r_0/3)$ is uniformly bounded below.

This proves that all solutions $u:D' \longrightarrow \mathcal{E}$ that
satisfy assumptions~(1) and~(2) of our proposition have their images
inside $B(r_0/3, r_0/3)$. 

It remains to show the existence and uniqueness of such solutions, the
area estimate and the regularity. To this end, set:
$$\mathcal{S}^W(t) := \mathbb{R}^m \times \{b''_i(t)\}, 
\quad \mathcal{S}^V(t) := \{b'_j(t) \} \times \mathbb{R}^m, \quad
\mathcal{T}_{\Delta} = \{(x,x) \mid x \in \mathbb{R}^m\}.$$ Clearly
$\mathcal{S}^{W}_i(t)$ coincides with $\mathcal{S}^W(t)$ inside
$B(r_0/3, r_0/3)$ and similarly for $\mathcal{S}^{V}_j(t)$ and
$\mathcal{S}^V(t)$ as well as for $T_{\Delta}$ and
$\mathcal{T}_{\Delta}$. Thus for our purposes we can consider now the
equation~\eqref{eq:Floer-eq-D'} for maps $u:D' \longrightarrow
\mathbb{R}^m \times \mathbb{R}^m$ with $J = J_{\textnormal{std}}$ and
with the following boundary condition and asymptotics:
\begin{equation} \label{eq:new-bndry-cond-2}
   \begin{aligned}
      & u(\partial_{3,1}D') \subset \mathcal{S}^{W}(t), \quad 
      u(\partial_{1,2}D') \subset \mathcal{T}_{\Delta}(t), \quad
      u(\partial_{2,3}D') \subset \mathcal{S}^{V}(t), \\
      & u(z_1) = w_i(t), \quad  u(z_2) = v_j(t), \quad u(z_3) =
      x_{i,j}(t).
   \end{aligned}
\end{equation}

Note that this problem splits. If we rearrange the coordinates by
identifying of $\mathbb{R}^m \times \mathbb{R}^m \cong
(\mathbb{R}^2)^{\times m}$ via the symplectic isomorphism $(p_1,
\ldots, p_m, q_1, \ldots, q_m) \longmapsto (p_1, q_1, \ldots, p_m,
q_m)$ then $J_{\textnormal{std}}$ is sent to the standard split
complex structure (which we continue to denote
$J_{\textnormal{std}}$), and $\mathcal{S}^W(t)$ becomes $(\mathbb{R}
\times q_1(t)) \times \cdots \times (\mathbb{R} \times q_m(t))$, where
$b''_i(t) = (q_1(t), \ldots, q_m(t))$. Similarly $\mathcal{S}^{V}(t)$
becomes $(p_1(t) \times \mathbb{R}) \times \cdots \times (p_m(t)
\times \mathbb{R})$, where $b'_j(t) = (p_1(t), \ldots, p_m(t))$.
Finally, $\mathcal{T}_{\Delta}$ becomes $\Delta_1 \times \cdots
\Delta_m$ where $\Delta_i$ is the diagonal in each of the
$\mathbb{R}^2$ factors. We continue to denote the corresponding three
Lagrangians by $\mathcal{S}^{W}(t)$, $\mathcal{S}^{V}(t)$ and
$\mathcal{T}_{\Delta}$.

We will now write maps $u: D' \longrightarrow (\mathbb{R}^2)^{\times
  m}$ as: $u(z) = (u_1(z), \ldots, u_m(z))$ with $u_k(z) \in
\mathbb{R}^2$. Clearly each of the maps $u_k: D' \longrightarrow
\mathbb{R}^2 \cong \mathbb{C}$ is holomorphic (in the usual sense) and
satisfies the boundary conditions and asymptotics (see
Figure~\ref{f:triangle-in-plane}):
\begin{equation} \label{eq:new-bndry-cond-3} 
   \begin{aligned}
      & u(\partial_{3,1}D') \subset \mathbb{R} \times q_k(t), \quad
      u(\partial_{1,2}D') \subset \Delta_k, \quad
      u(\partial_{2,3}D') \subset p_k(t) \times \mathbb{R}, \\
      & u(z_1) = (q_k(t), q_k(t)), \quad u(z_2) = (p_k(t), p_k(t)),
      \quad u(z_3) = (p_k(t), q_k(t)).
   \end{aligned}
\end{equation}

\begin{figure}[htbp]
   \begin{center}
      \includegraphics[width=0.5\linewidth]{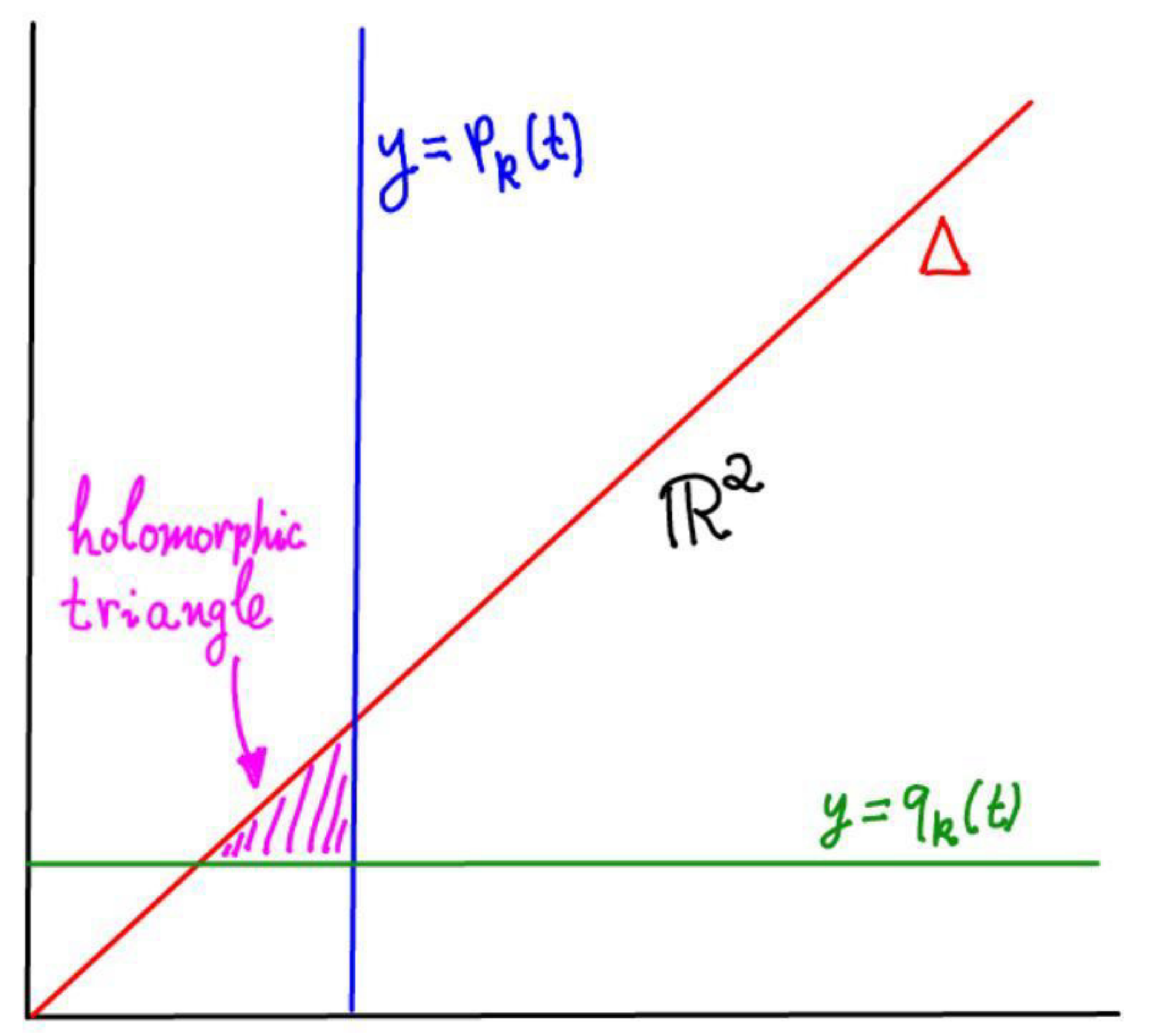}
   \end{center}
   \caption{Holomorphic triangles in $\mathbb{R}^2$ 
     corresponding to the projection on the $k$'th factor of $u$.
     \label{f:triangle-in-plane}}
\end{figure}

Standard 1-dimensional complex analysis show that there is a unique
holomorphic map $u^0_k: D' \longrightarrow \mathbb{C} \cong
\mathbb{R}^2$ with the boundary
conditions~\eqref{eq:new-bndry-cond-3}, the image of which is
precisely the triangle consisting of the convex hull of the three
points $(q_k(t), q_k(t))$, $(p_k(t) p_k(t))$, $(p_k(t), q_k(t))$.
Moreover, a straightforward calculation (using e.g. the methods from
Chapter 13 of~\cite{Se:book-fukaya-categ}) shows that the Maslov index
of $u^0_k$ is $0$ and that the standard complex structure of
$\mathbb{C}$ is regular for this solution.

Note that the mutual position of the three Lagrangians
from~\eqref{eq:new-bndry-cond-3} plays a crucial role here. If for
example, one would replace $\Delta_k$ by the anti-diagonal line
$\{(x,-x): x \in \mathbb{R}\}$ then there would be no solutions with
the boundary conditions~\eqref{eq:new-bndry-cond-3}, the reason being
that the order of the punctures $z_1, z_2, z_3$ on $\partial D$ is
``wrong''.)

It follows that $u^0(z)= (u^0_1(z), \ldots, u^0_m(z))$ is the unique
holomorphic map $u: D' \longrightarrow (\mathbb{R}^2)^{\times m}$
satisfying~\eqref{eq:new-bndry-cond-2}. Since the
$\overline{\partial}$-operator splits in a compatible way with the
splitting $(\mathbb{R}^2)^{\times m}$ it follows that the index of
$u^0$ is $0$ and that $J_{\textnormal{std}}$ is regular.

Finally, it is clear that the symplectic area $A_{\Omega}(u^0)$ of
$u^0$ is the sum of the areas of the triangles $u^0_k$, $k=1, \ldots,
m$. Since $p_k(t), q_k(t) \xrightarrow[t \longrightarrow 1^{-}]{} 0$
it follows that $A_{\Omega}(u^0) \xrightarrow[t \longrightarrow
1^{-}]{} 0$. 

This concludes the proof of the proposition. 

\begin{remnonum}
   An alternative calculation of the index and regularity can be done
   by degenerating the problem to $t=1$. Then the three Lagrangians
   forming the boundary conditions in~\eqref{eq:new-bndry-cond-3}
   become $\mathbb{R} \times \{0\}$, $\Delta_k$ and $\{0\} \times
   \mathbb{R}$.  The asymptotics at the punctures become $u_k(z_1) =
   u_k(z_2) = u_k(z_3) = (0,0)$.  It is easy to see that the only
   solution now is the constant solution at $(0,0)$. The fact that its
   index is $0$ and that $J$ is regular follow e.g.
   from~\cite{Bi-Co:lcob-fuk}~(section~4.3). By a standard implicit
   function theorem it follows that the same holds for $t$'s close
   enough to $1$. Note that also here, if one would replace $\Delta_k$
   by a line going through the 2'nd and 4'th quadrants, e.g.
   $\{(x,-x): x \in \mathbb{R}\}$, things would go wrong. The constant
   map at $0$ would still be a solution but its index would be
   negative and $J$ would not be regular with respect to it.
\end{remnonum}

It remains to discuss the case when $X$ is non-compact but
symplectically convex at $\infty$.  The proof is very similar to the
one for the case when $X$ is closed. Recall that although now $X$ is
not compact the objects of $\fuk^*(X)$ (i.e.  the Lagrangians in $X$)
are still assumed to be compact.

The results of Seidel (see Chapter 16e of~\cite{Se:book-fukaya-categ}
and~\cite{Se:long-exact}) can be used to produce a fibration
$\mathcal{E}$ of generic fibre $X$, in the sense of the definitions in
\S\ref{sb:defs-lef-fibr}, in particular this fibration satisfies
assumption $T_{\infty}$.  As in the compact fibre case, we then use
the Proposition \ref{p:from-gnrl-to-tame} to transform the fibration
into a tame one that continues to satisfy $T_{\infty}$.  The proof
then pursues just as in the compact case.  Indeed, notice that
Assumption~$T_{\infty}$ implies that the monodromy is well defined
over any path in $\mathbb{C} \setminus \textnormal{Critv}(\pi)$ (and
in fact over any path in $\mathbb{C}$ if we restrict the monodromy to
``infinity in the fibers'').  Similarly, the procedure from
page~\pageref{pg:grad-vf-complete} that ensures that the negative
gradient flow of $\textnormal{Re}(\pi)$ is defined for all times
continues to work in the present setting. Indeed, the fact that the
fibers of $\mathcal{E}$ are not compact does not pose any problems
because (in the notation of Assumption~$T_{\infty}$) on
$\mathcal{E}^{\infty} \approx \mathcal{E}^{\infty}_{w_0} \times
\mathbb{C}$ this flow is just a translation in the
$\mathbb{C}$-direction.  Finally, in what concerns the Floer and
perturbation data we use as in \S\ref{subsubsec:non-comp-J} almost
complex structures that are split at $\infty$ as $i\oplus J_{0}$ with
$J_{0}$ compatible with the symplectic convexity of (the end) of $X$.
\Qed

\subsubsection{Proof of Proposition~\ref{t:ex-tr-cob}}
\label{sbsb:prf-ex-tr-cob}

We now explain how to modify the proof of
Proposition~\ref{t:ex-tr-compact} under the assumptions of
Proposition~\ref{t:ex-tr-cob}, namely that $X$ is itself the total
space of a tame Lefschetz fibration $\pi_X : X \longrightarrow
\mathbb{C}$ as described in~\S\ref{sbsb:ex-tr-cob}. Denote by
$(N,\omega)$ the generic fibre of $\pi_{X}$ which is compact or
symplectically convex at infinity.

As in the the proof of Proposition~\ref{t:ex-tr-compact}, we again
construct a Lefschetz fibration $\pi_{\mathcal{E}}: \mathcal{E}
\longrightarrow \mathbb{C}$ with fiber over $w_0$ being $X$. As
before, the fibration $\mathcal{E}$ can be assumed to satisfy
Assumption~$T_{\infty}$ as well as the other assumptions in
\S\ref{sb:defs-lef-fibr}. By applying to this fibration the same
procedure as in the proof of Proposition \ref{p:from-gnrl-to-tame} we
may further assume that this fibration is also tame.
 
In what concerns the Fukaya category $\fuk^*(\mathcal{E})$ of
$\mathcal{E}$, by inspecting the proof of Proposition
\ref{t:ex-tr-compact}, we see that we can actually use here only a
smaller category whose objects are cylindrical cobordisms $V \subset
\mathcal{E}$ (not necessarily negatively ended) obtained by taking the
trail of a given cobordism $Q \subset \mathcal{E}_{w_0} = X$ along a
curve $\gamma \subset \mathbb{C} \setminus
\textnormal{Critv}(\pi_{\mathcal{E}})$.  To avoid confusion denote the
Fukaya category involved here by $\fuk^{\ast}_{r}(\mathcal{E})$ (where
$r$ indicates that our objects are restricted as above).  Notice that
later in the proof we apply certain isotopies (e.g. the negative
gradient flow of $\textnormal{Re}(\pi_{\mathcal{E}})$) to these
cobordisms that might not keep them everywhere cylindrical. However,
as we shall see below, this is not a problem since for that stage of
the proof we do not need the entire Fukaya category anymore but only
Floer homology calculations.

Using the notation from Assumption~$T_{\infty}$, put $X^{\infty} =
\mathcal{E}^{\infty}_{w_0}$ and fix a symplectic identification
\begin{equation} \label{eq:E-infty-CX} \mathcal{E}^{\infty} \approx
   \mathbb{C}_{\mathcal{E}} \times X^{\infty}.
\end{equation}
  
Here $\mathbb{C}_{\mathcal{E}}$ stands for the base of the fibration
$\mathcal{E}$, which is just a copy of $\mathbb{C}$.  The subscript
$\mathcal{E}$ is there only in order to emphasize the relation to
$\mathcal{E}$.  Denote by $\widehat{\pi}_X: \mathcal{E}^{\infty}
\longrightarrow \mathbb{C}_X$ the projection (on the other copy of
$\mathbb{C}$) induced via~\eqref{eq:E-infty-CX} by $\pi_X: X^{\infty}
\longrightarrow \mathbb{C}_X$.

Notice that due to the $T_{\infty}$ assumption a cobordism $V\in
\mathcal{O}b(\fuk^{\ast}_{r}(\mathcal{E}))$ has the property that
$V\cap \mathcal{E}^{\infty}$ is a union of finitely many components of
the form $\gamma \times l_{i}\times L_{i}$ where $\gamma\in
\C_{\mathcal{E}}$ is the projection of $V$ onto $\C_{\mathcal{E}}$,
$l_{i}$ is a negative ray in $\C_{X}$ (of imaginary coordinate $i$ in)
and $L_{i}\subset N$ is a Lagrangian in $N$. To fix ideas we will call
these Lagrangians $L_{i}$, {\em the ends of $V$ in the direction of
  $\C_{X}$}.  The important fact to keep in mind is that these ends
remain constant along $\gamma$.  Obviously, there are also the
``usual'' ends of $V$ that are of the form $l_{i}\times C_{i}$ where
$l_{i}$ is a ray (negative or positive) in $\C_{\mathcal{E}}$ and
$C_{i}\subset X$ is a negative-ended cobordism in $X$.  We will refer
to these cobordisms $C_{i}$ as the {\em ends of $V$ in the direction
  of $\C_{\mathcal{E}}$}.  For each $V$ there are obviously at most
two such ends.  Notice also that the ends of $C_{i}$, itself viewed as
cobordism, are Lagrangians in $N$ that coincide with the ends of $V$
in the direction of $\C_{X}$.
 
We now pass to explaining the choices of Floer and perturbation data
required to define the category $\fuk^{\ast}_{r}(\mathcal{E})$.  We
first pick a profile function $h_{X}:\C_{x}\to \R$ such as in
\S\ref{subsubsec:profile} but with the property that the bottlenecks
are inside $\pi_{X}(X^{\infty})$.

Consider $V_{1},\ldots,
V_{k+1}\in\mathcal{O}b(\fuk^{\ast}_{r}(\mathcal{E}))$. Let
$C^{1},\ldots, C^{s}\in \mathcal{O}b(\fuk^{\ast}(X))$ be the
collection of all the ends in the direction of $\C_{X}$ of the objects
$V_{1},\ldots, V_{k+1}$. We use the function $h_{X}$ and the method in
\S\ref{subsubsec:perturb} to construct the Floer and perturbation
data, associated to $C^{1},\ldots, C^{s}$ as objects of the category
$\fuk^{\ast}(X)$ associated to the tame Lefschetz fibration
$\pi_{X}:X\to \C$.  We denote all this data by
$\mathcal{D}^{X}_{V_{1},\ldots, V_{k+1}}$.  As described in
\S\ref{subsubsec:Fuk-cob}, this data consists of particular choices of
Hamiltonians on $X$, that are grouped here in
$\mathcal{H}^{X}_{V_{1},\ldots, V_{k+1}}$, and almost complex
structures on $X$, grouped in $\mathcal{J}^{X}_{V_{1},\ldots,
  V_{k+1}}$ so that $\mathcal{D}^{X}_{V_{1},\ldots,
  V_{k+1}}=(\mathcal{H}^{X}_{V_{1},\ldots, V_{k+1}},
\mathcal{J}^{X}_{V_{1},\ldots, V_{k+1}})$.

Pick a profile function $h_{\mathcal{E}}:\C_{\mathcal{E}}\to \R$ again
as described in \S\ref{subsubsec:profile}.  Let $\gamma_{i}$ be the
projection of $V_{i}$ onto $\C_{\mathcal{E}}$.  Now modify
$h_{\mathcal{E}}$, away from the region of the bottlenecks, in such a
way that the new function $h_{V_{1},\ldots, V_{k+1}}$ conserves the
same bottlenecks as $h_{\mathcal{E}}$ and, additionally,
$(\phi^{h_{V_{1},\ldots, V_{k+1}}}_{1})^{-1}(\gamma_{i})$ is
transverse to $\gamma_{j}$ for all $i,j$.  Now define a new set of
Hamiltonians, this time defined on $\C_{\mathcal{E}}\times X$ as
follows: $\mathcal{H}'_{V_{1},\ldots, V_{k+1}}=\{h_{V_{1},\ldots,
  V_{k+1}}+H \ : \ H\in \mathcal{H}^{X}_{V_{1},\ldots, V_{k+1}}\}$.
 
With these choices, we can describe the constraints on the class of
Hamiltonians $\mathcal{H}^{\mathcal{E}}_{V_{1},\ldots, V_{k+1}}$
defined on $\mathcal{E}$ that are part of the perturbation data
$\mathcal{D}^{\mathcal{E}}_{V_{1},\ldots,
  V_{k+1}}=(\mathcal{H}^{\mathcal{E}}_{V_{1},\ldots, V_{k+1}},
\mathcal{J}^{\mathcal{E}}_{V_{1},\ldots, V_{k+1}})$ that we associate
to the family $V_{1},\ldots, V_{k+1}$, as required to define
$\fuk^{\ast}_{r}(\mathcal{E})$. There is a compact set
$K_{V_{1},\ldots, V_{k+1}}\subset \C_{\mathcal{E}}$ away from the
bottlenecks of $h_{\mathcal{E}}$ and a compact set $K'_{V_{1},\ldots,
  V_{k+1}}\subset \mathcal{E}^{\infty}$ away from the bottlenecks of
$h_{X}$ so that the hamiltonians in
$\mathcal{H}^{\mathcal{E}}_{V_{1},\ldots, V_{k+1}}$ coincide with
corresponding Hamiltonians in $\mathcal{H}'_{V_{1},\ldots, V_{k+1}}$
on the set
$$\mathcal{S}_{V_{1},\ldots V_{k+1}}=(\mathcal{E}^{\infty}\setminus 
K'_{V_{1},\ldots, V_{k+1}})\cup
\pi_{\mathcal{E}}^{-1}(\C_{\mathcal{E}}\setminus K_{V_{1},\ldots,
  V_{k+1}})~.~$$ It is useful to notice at this point that, because
the ends of $V_{i}$ in the direction of $\C_{X}$ do not change along
$\gamma_{i}$ this choice of Hamiltonian perturbations ensures the
required transversality at $\infty$ both in the $\C_{\mathcal{E}}$
direction as well as in the $\C_{X}$ direction. As the Hamiltonians in
$\mathcal{H}^{\mathcal{E}}_{V_{1},\ldots, V_{k+1}}$ are basically
arbitrary perturbations of the Hamiltonians in
$\mathcal{H}'_{V_{1},\ldots, V_{k+1}}$ outside of
$\mathcal{S}_{V_{1},\ldots V_{k+1}}$ this (together with the choice of
almost complex structures as detailed below) is also sufficient to
achieve the regularity of the relevant moduli spaces.

The family of almost complex structures
$\mathcal{J}^{\mathcal{E}}_{V_{1},\ldots, V_{k+1}}$ associated to
$V_{1},\ldots, V_{k+1}$ satisfies similar constraints. Namely, over
$\mathcal{S}_{V_{1},\ldots V_{k+1}}$ they are of the form
$i_{\mathcal{E}}\oplus J$ with $J\in \mathcal{J}^{X}_{V_{1},\ldots,
  V_{k+1}}$ but can be perturbed freely, so as to insure regularity,
outside of $\mathcal{S}_{V_{1},\ldots V_{k+1}}$.

With these choices the compactness results required to define the
category $\fuk^{\ast}_{r}(\mathcal{E})$ are valid. More specifically,
all solutions $u$ of the relevant perturbed Cauchy-Riemann equation
lie in a prescribed compact subset. The argument is very similar to
the one in \cite{Bi-Co:lcob-fuk}. We consider a hamiltonian
$\bar{h}:\mathcal{E}\to \R$ so that away from
$\mathcal{S}_{V_{1},\ldots V_{k+1}}$, $\bar{h}$ coincides with
$h_{\mathcal{E}}\oplus h_{X}$. We then use the naturality
transformation involving $\bar{h}$, as summarized
in~\S\ref{sbsb:nat-transf}, to turn the solutions $u$ into curves $v$
that are (non-perturbed) $J$-holomorphic away from
$\mathcal{S}_{V_{1},\ldots V_{k+1}}$. We then apply the open mapping
theorem to the projections $\widehat{\pi}_X \circ v$ and
$\pi_{\mathcal{E}} \circ v$. To summarize, the arguments for both
regularity and compactness of the relevant moduli spaces follow
closely the corresponding arguments in \cite{Bi-Co:lcob-fuk} that are
used to set up the Fukaya category of cobordisms in $\C\times M$.
  
Beyond the definition of $\fuk^{\ast}_{r}(\mathcal{E})$ an additional
remark is in order. A key part of the proof
in~\S\ref{sbsb:prf-ex-tr-compact} uses the Floer homology for the
pairs $(W, V)$, $(W, T_{\Delta})$ and $(T_{\Delta}, V)$. In the course
of the proof we apply to $W$ and $V$ the negative and positive
gradient flows of $\textnormal{Re}(\pi_{\mathcal{E}})$.  While $V$ and
$W$ are cylindrical, these flows do not preserve cylindricity.
Nevertheless, cylindricity is preserved at infinity in the
fiber-direction due to Assumption~$T_{\infty}$ on $\mathcal{E}$.
Therefore the Floer data can easily be adjusted in this case too by
using possibly another compactly supported perturbation to ensure
transversality.
  
With this remark taken into account and with the definition of
$\fuk^{\ast}_{r}(E)$ as above the remainder of the proof proceeds just
as in the proof of Proposition~\ref{t:ex-tr-compact}.


\subsection{The decomposition in Theorem~\ref{thm:main-dec-gen0}}
\label{subsec:prof-main-t}

To construct this decomposition we start with the proof of
Theorem~\ref{thm:main-dec}.

\subsubsection{Proof of Theorem \ref{thm:main-dec}}
\label{subsubsec:proof-main-dec}
We assume for the moment that we are in the setting of
\S\ref{subsec:dec-tame}.  In particular, $\pi: E\to \C$ is a tame
Lefschetz fibration with the properties listed there.

Let $V:\emptyset \cobto (L_{1},\ldots, L_{s})$ and consider the
Lefschetz fibration $\hat{\pi}:\hat{E}\to \C$ obtained from $E$ by
adding singularities as described
in~\S\ref{subsubsec:null-cob-remote}. \pbhl{By}
Proposition~\ref{p:strong-mon-Ehat} \pbhl{$\hat{E}$ is strongly
  monotone. The cobordism $V$ continues to be monotone in $\hat{E}$
  and the matching spheres $\hat{S}_j$ are monotone too. Moreover, all
  these Lagrangians are of monotonicity class $*$. Recall also that by
  assumption $\dim_{\mathbb{R}}E \geq 4$.} Consider now the cobordism
$$V'=\tau_{\hat{S}_{m}} \circ \tau_{\hat{S}_{m-1}} \circ
\cdots \circ \tau_{\hat{S}_{1}}(V) \subset \hat{E}.$$ 

Given $W \in \mathcal{L}^{\ast}(E)$ we rewrite the exact sequence in
Proposition~\ref{t:ex-tr-cob} as $$ W= ( S\otimes
HF(S,W)\to \tau_{S} W)$$ and deduce that in $D\fuk^*(\hat{E})$ we have
the following decomposition of $V$:
$$V \cong (\hat{S}_{1}\otimes E_{1}\to \hat{S}_{2}\otimes E_{2}
\to \ldots \to \hat{S}_{m}\otimes E_{m}\to V'),$$ where 
\begin{equation}\label{eq:E-i-s}
E_{i}=HF(\hat{S}_{i},\tau_{\hat{S}_{i-1}} \circ \cdots \circ 
\tau_{\hat{S}_{1}}(V))~.~
\end{equation}

Notice that in $D\fuk^{\ast}(E)$ we have $T_{i}\cong
(J^{E,\hat{E}})^{\ast}(\hat{S}_{i})$ where $J^{E,\hat{E}}$ is the
inclusion~\eqref{eq:Lef-inclusion} and $T_{i}$ are the thimbles in the
statement of Theorem \ref{thm:main-dec}.  Thus, in
$D\fuk^{\ast}(E)$ we have the decomposition:
 
\begin{equation}\label{eq:thimbles-dec}
   V\cong (T_{1}\otimes E_{1}\to T_{2}\otimes E_{2}\to \ldots 
   \to T_{m}\otimes E_{m}\to V')~.~
\end{equation}
 
By Corollary \ref{cor:moving-cob} we know that inside
$D\fuk^{\ast}(E)$ we have:
\begin{equation} \label{eq:ends-dec} V'\cong (\gamma_{s}\times L_{s}\to
   \gamma_{s-1}\times L_{s-1}\to\ldots \to \gamma_{2}\times L_{2})
\end{equation}
 
Splicing together (\ref{eq:thimbles-dec}) and (\ref{eq:ends-dec}) we
obtain:
$$V\cong (
T_{1}\otimes E_{1}\to \ldots \to T_{m}\otimes E_{m}\to
\gamma_{s}\times L_{s}\to \ldots \to \gamma_{2}\times L_{2})$$ which
concludes the proof of Theorem \ref{thm:main-dec}. \Qed

\medskip 

\subsubsection{The decomposition in Theorem~\ref{thm:main-dec-gen0}}
\label{sbsb:prf-main-dec-gen0} 
We assume the setting from Theorem \ref{thm:A-rfm} (which we recall is
just a more precise reformulation of Theorem \ref{thm:main-dec-gen0})
and recall a bit of the necessary background.  
The fibration $\pi:E\to
\C$ is no longer assumed to be tame. All the singularities of $\pi$
are included in $\pi^{-1}(S_{x,y})$, $x<0<y$ and there is a tame fibration
$\pi:E_{\tau}\to \C$ that coincides with $E$ over $[x-\frac{7}{2},y+\frac{7}{2}]\times
[-\frac{1}{2}, \infty)$ and is tame outside of a set $U$ that contains
$(x-4,y+4)\times (-1,\infty)$.  Recall also the category $\fuk^{\ast}(E_{\tau})$
whose objects are cobordisms (with only negative ends) as in Definition \ref{def:Lcobordism}.
In particular, these cobordisms have ends that project to the axes $(-\infty, -a_{U}]\times \{i\}\subset \C$.
The constant $a_{U}$ verifies $-a_{U}< x-4$.  Recall from
\S\ref{subsubsec:fuk-cob-gen} that the objects of the category
$\fuk^{\ast}(E;\tau)$ are uniformly monotone cobordisms $V\subset E$
that are cylindrical outside $S_{x-3,y-3}$ and the operations $\mu_{k}$ of
$\fuk^{\ast}(E;\tau)$ are defined by means of the corresponding
operations in the category $\fuk^{\ast}(E_{\tau})$ associated to the
tame fibration $E_{\tau}$.

The decomposition in Theorem~\ref{thm:A-rfm} (and thus that in
Theorem~\ref{thm:main-dec-gen0}) follows rapidly from that in Theorem
\ref{thm:main-dec}.  Indeed, recall from \S\ref{subsubsec:fuk-cob-gen}
that we have an inclusion:
\begin{equation}\label{eq:inclu-fuk-cat}
\fuk^{\ast}(E;\tau) \to \fuk^{\ast}(E_{\tau})
\end{equation}
that is a quasi-equivalence and which, on objects, is defined by $V\to
\overline{V}$ where $\overline{V}$ is obtained by cutting off the the
ends of $V$ along the line $\{x-\frac{7}{2}\}\times \R$ and extending them
horizontally by parallel transport in the fibration $E_{\tau}$.  As
$E_{\tau}$ is a tame fibration, Theorem \ref{thm:main-dec} can be
applied to it.  We deduce decompositions involving two types of curves
in the plane, the $t_{k}$'s and $\gamma_{i}$'s as in Figure
\ref{fig:spec-curves}.  The curves $\gamma_{i}$ appearing here are
included in the negative quadrant $Q_{U}^{-}=(-\infty, -a_{U}]\times
[0,\infty)$ and they are away from $U$.  For reasons that will become
clear in a moment, it is convenient to refine the notation for these
curves such as to explicitly indicate their dependence on $U$.  Thus
we will further denote them by $\gamma^{U}_{i}$.
 
The decomposition result that we want to show here - for the statement
of Theorem \ref{thm:A-rfm} - applies to $\fuk^{\ast}(E;\tau)$.  It
again involves the same thimbles $T_{k}$ associated to the curves
$t_{k}$ as before as well certain ``trails'' denoted in Theorem
\ref{thm:A-rfm} by $\gamma_{i}L_{i}$.  It is important to notice at
this point that the curves $\gamma_{i}$ appearing in
the statement of Theorem \ref{thm:A-rfm} 
do not coincide with the $\gamma^{U}_{i}$'s above - see also Figure \ref{fig:u-and-x}.
Indeed, following the definition in \S\ref{subsubsec:atoms}, these curves 
have image inside $(-\infty, x)\times [\frac{1}{2},\infty)$ and they ``bend'' inside $[x-2,x-1]\times [1,\infty]$, while $\gamma^{U}_{i}$ is away from $U$ and thus away from $(x-4,y+4)\times \R$. 

\begin{figure}[htbp]
      \begin{center}
        \includegraphics[scale=0.7]{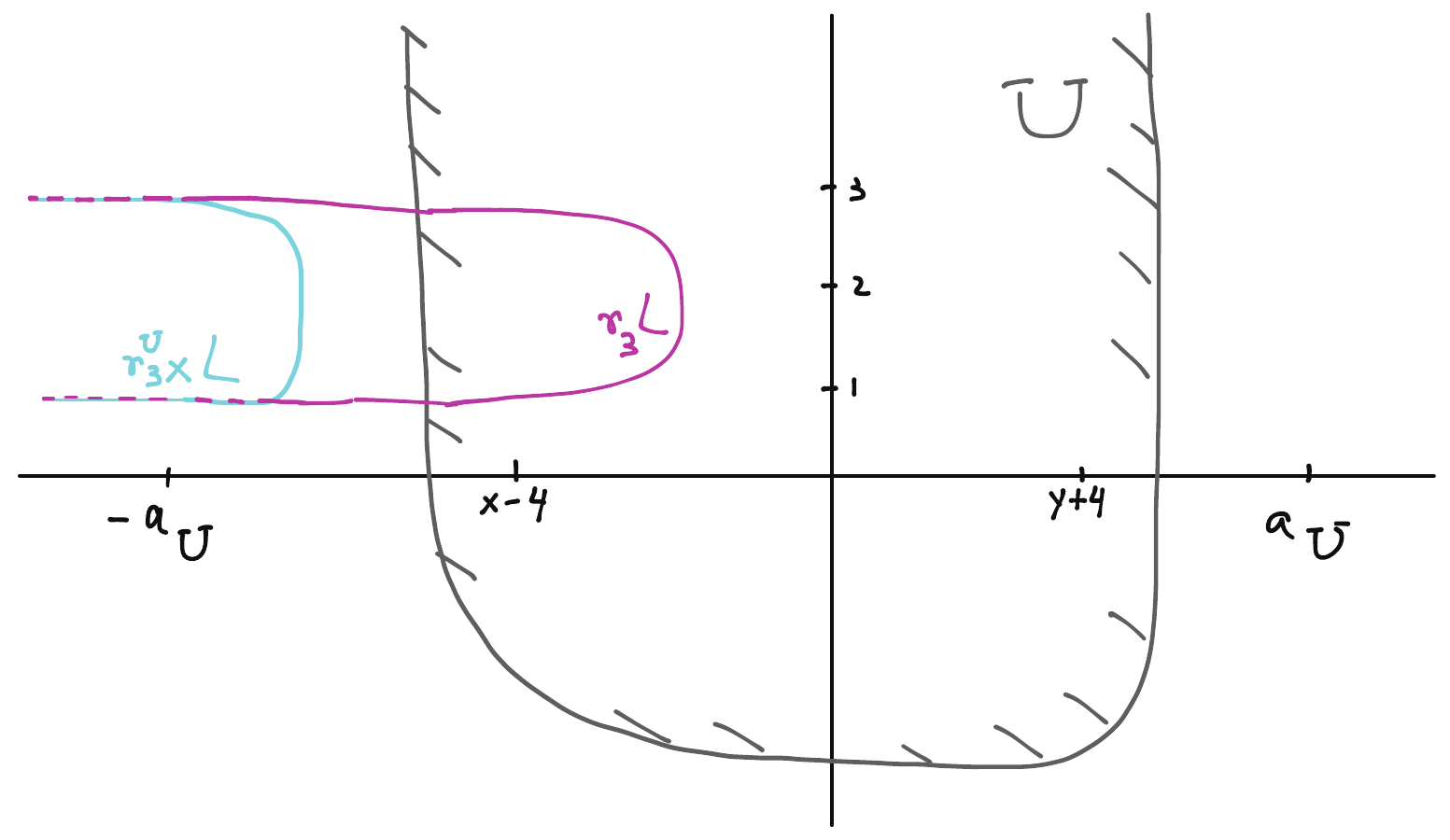}
      \end{center}
      \caption{The Lagrangian $\gamma^{U}_{3}\times L$ is an object in
        $\fuk^{\ast}(E_{\tau})$ but is not cylindrical outside
        of $[x-3,y+3]\times \R$ and thus it not an object in
        $\fuk^{\ast}(E,\tau)$.
        \label{fig:u-and-x}}
   \end{figure}
   Nonetheless, for $L\in\mathcal{L}^{\ast}(M)$ and any curve
   $\gamma_{i}$ consider the cobordism $\overline{\gamma_{i}L}$ as an
   object of $\fuk^{\ast}(E_{\tau})$. This object is quasi-isomorphic
   to $\gamma^{U}_{i}\times L$ (this can proved directly, but it also
   follows immediately from Theorem~\ref{thm:main-dec} itself).  As a
   consequence, we may replace in the decomposition given by Theorem
   \ref{thm:main-dec} the objects $\gamma^{U}_{i}\times L_{i}$ by the
   objects $\overline{\gamma_{i}L_{i}}$ and by pulling back the
   resulting decomposition from $\fuk^{\ast}(E_{\tau})$ to
   $\fuk^{\ast}(E;\tau)$ via the inclusion (\ref{eq:inclu-fuk-cat}) we
   obtain the decomposition claimed in Theorem \ref{thm:A-rfm}.  \Qed


\section{Main consequences} \label{sec:conseq}

\subsection{From the total space to the fiber and back}
\label{subsec:fibr-tot}

We will work in this subsection only with tame Lefschetz fibrations -
see Definition~\ref{df:tame-lef-fib}.  In view
of~\S\ref{sb:tame-vs-gnrl} this is not restrictive.  Thus we assume
that $\pi: E\to \C$ is a Lefschetz fibration which is tame outside of
$U\subset \C$ and $(M,\omega)$ is the generic fibre. The fibration $E$
has singularities $x_{1},\ldots, x_{m}$ of respective critical values
$v_{1}, \ldots, v_{m}$ (assumed to be, for simplicity, $v_{k}=(k,\frac{3}{2})$).   
Denote by $O \in \mathbb{C}$ the origin and
  recall that the fibration $E$ is assumed to be tame over a region
  that contains $O$. Connect each critical value $v_k$ to $O$ by a
  straight segment, and denote by $S_k \in \pi^{-1}(O) = M$ the
  vanishing cycle associated to that path.

We use the rest of the set-up and notation
from~\S\ref{subsec:dec-tame}.  The results described below are all
consequences of Theorem~\ref{thm:main-dec}.

\subsubsection{Descent: from decompositions in $D\fuk^{\ast}(E)$ to
  decompositions in $D\fuk^{\ast}(M)$} \label{subsubsec:desc}

\begin{cor} \label{cor:dec-M} As in Theorem~\ref{thm:main-dec}, let
   $V\in\mathcal{L}^{\ast}(E)$, $V:\emptyset \to (L_{1},\ldots,
   L_{s})$. Then there exists an iterated cone decomposition that
   depends on $V$ and takes place in $D\fuk^{\ast}(M)$:
   \begin{equation} \label{eq:cone-dec-M}
      \begin{aligned}
         L_{1} \cong \bigl( 
         \widetilde{\tau}_{2, \ldots,
           m}^{-1} S_{1} \otimes E_{1} & \to \widetilde{\tau}_{3, \ldots,
           m}^{-1}S_{2} \otimes E_{2} \to \cdots \\
         & \to \widetilde{\tau}^{-1}_{i+1, \ldots, m} S_{i} \otimes
         E_i \to \cdots \to  S_{m}\otimes E_{m}\to L_{s}\to L_{s-1} \to \cdots \to L_{2} \bigr),
      \end{aligned}
   \end{equation}
   where $\widetilde{\tau}_{i, \ldots, m}$ stands for the composition:
   $$\widetilde{\tau}_{i, \ldots, m} = \tau_{S_{i}} 
   \circ \tau_{S_{i+1}} \circ \cdots \circ \tau_{S_{m}}~.~$$
 
\end{cor}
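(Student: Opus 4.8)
The plan is to deduce Corollary~\ref{cor:dec-M} from Theorem~\ref{thm:main-dec} by applying an appropriate restriction (or ``descent'') functor from $D\fuk^{\ast}(E)$ to $D\fuk^{\ast}(M)$ and tracking what it does to each of the ``atomic'' objects $T_k$ and $\gamma_j \times L_j$ appearing in the decomposition of $V$. First I would set up the relevant functor. Fix the ray $\ell_1 = (-\infty, -a_U] \times \{1\}$ carrying the first end $L_1$ of $V$, and more generally the ray $\ell_i$ carrying $L_i$. For a cobordism $V \in \mathcal{L}^{\ast}(E)$ one has the restriction $V|_{z_i} = L_i \subset M_{z_i} \cong M$, and as in~\cite{Bi-Co:lcob-fuk} this operation of ``taking the $i$-th end'' extends to a functor $\mathcal{R}_i : D\fuk^{\ast}(E) \to D\fuk^{\ast}(M)$ (one can obtain it by intersecting with $\pi^{-1}(z_i)$ after a suitable Hamiltonian isotopy that makes everything cylindrical near the ray, exactly as the end-restriction functors are built for trivial fibrations). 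We would then apply $\mathcal{R}_1$ to the iterated cone decomposition of Theorem~\ref{thm:main-dec}; since a triangulated functor carries iterated cone decompositions to iterated cone decompositions, it remains only to identify the images of the constituents.

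The key computation is the effect of $\mathcal{R}_1$ on the atoms. For the ``trivial-transport'' pieces $\gamma_j \times L_j$, the curve $\gamma_j$ is a path in the trivial region whose $1$-end is the ray $\ell_1$ and whose ``other end'' sits at height $j$; thus $\mathcal{R}_1(\gamma_j \times L_j)$ is just $L_j$ itself (parallel transport being trivial over the relevant region), which accounts for the tail $L_s \to L_{s-1} \to \cdots \to L_2$ in~\eqref{eq:cone-dec-M}. For the thimble $T_k$, associated to the curve $t_k$ which emanates from $v_k$, runs into the trivial region at height $1$, and winds once around $v_{k+1}, \ldots, v_m$: restricting $T_k$ to the fiber over a point on its height-$1$ end gives the vanishing cycle attached to $x_k$ transported along $t_k$ back to that fiber. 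Comparing $t_k$ to the straight segment from $v_k$ to the origin $O$ used to define $S_k$, the difference between the two paths is a loop that encircles $v_{k+1}, \ldots, v_m$ once, so by the Picard--Lefschetz formula (the standard description of monodromy around Lefschetz singularities, cf.~\cite{Ar:monodromy}, \cite{Se:book-fukaya-categ}) the resulting sphere is $\widetilde{\tau}^{-1}_{k+1,\ldots,m} S_k$, where $\widetilde{\tau}_{k+1,\ldots,m} = \tau_{S_{k+1}} \circ \cdots \circ \tau_{S_m}$; for $k = m$ there is no winding and the sphere is just $S_m$. Hence $\mathcal{R}_1(T_k \otimes E_k) \cong \widetilde{\tau}^{-1}_{k+1,\ldots,m} S_k \otimes E_k$, which is exactly the $k$-th term of~\eqref{eq:cone-dec-M}. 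Finally $\mathcal{R}_1(V) = L_1$ by definition, so the whole decomposition descends as claimed.

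The main obstacle I expect is making the restriction functor $\mathcal{R}_i$ genuinely well defined on the derived category and genuinely triangulated, rather than just defined on geometric objects: one must check that the end-restriction operation is compatible with the module structures and with cone attachments at the chain level, and that the holomorphic polygons computing $\mu_k$ in $E$ restrict coherently, near the cylindrical region, to those computing $\mu_k$ in $M$. This is where the bottleneck/naturality machinery of~\S\ref{sbsb:nat-transf} and the arguments of~\S\ref{subsec:dec-Yo} are used: because the relevant curves become holomorphic in the planar direction outside a compact strip, their images cannot escape into the fibers over the trivial region, so the restriction is controlled. A secondary, purely combinatorial point is to verify that the loop relating $t_k$ to the straight segment indeed encircles precisely $v_{k+1}, \ldots, v_m$ (and in the correct cyclic order), so that the Picard--Lefschetz substitution produces the stated composition $\widetilde{\tau}_{k+1,\ldots,m}$ and not some other word in the Dehn twists; this is read off directly from Figure~\ref{fig:spec-curves} together with the convention $v_k = (k, \tfrac{3}{2})$.
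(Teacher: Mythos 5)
Your proposal follows essentially the same route as the paper's proof: construct a triangulated ``restriction to the $j$-th end'' functor $\mathcal{R}_j : D\fuk^{\ast}(E)\to D\fuk^{\ast}(M)$, apply $\mathcal{R}_1$ to the iterated cone decomposition of Theorem~\ref{thm:main-dec}, and identify the images of the atoms --- $\gamma_j\times L_j \mapsto L_j$ by triviality of parallel transport, and $T_k\mapsto \widetilde{\tau}^{-1}_{k+1,\ldots,m}S_k$ by Picard--Lefschetz monodromy comparing $t_k$ with a straight path to $z_0$. The one place where your description is weaker than the paper's is exactly the point you flag as the ``main obstacle'': a module in $D\fuk^{\ast}(E)$ cannot simply be ``intersected with a fiber.'' The paper resolves this not by trying to make the geometric restriction operation compatible with cones at the chain level, but by defining $\mathcal{R}_j$ as the pullback $(i^{\eta_j})^{\ast}$ along an inclusion $A_\infty$-functor $i^{\eta_j}:\fuk^{\ast}(M)\to\fuk^{\ast}_{1/2}(E)$, $L\mapsto \eta_j\times L$, and then observing that $(i^{\eta_j})^{\ast}$ carries the Yoneda module of a cobordism $V$ to the Yoneda module of the $j$-th end of $V$; since pullback of modules is automatically a triangulated functor, this descends to the derived categories with no further chain-level checking. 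The direct interpretation you gesture at (restriction to the end as an $A_\infty$-functor $\fuk^{\ast}(E)\to\fuk^{\ast}(M)$) is mentioned in Remark~\ref{rem:restr-funct1} of the paper as an equivalent but not independently constructed alternative; so while your outline is correct, you should be aware that closing the ``obstacle'' you mention is most efficiently done via the pullback formalism rather than the geometric restriction.
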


\begin{proof}
   In this proof it is convenient to consider again the category
   $D\fuk^{\ast}_{\frac{1}{2}}(E)$ from \S\ref{subsec:dec-Yo}.  Recall
   that the difference between this category and $D\fuk^{\ast}(E)$ is
   that the objects $V$ of the underlying category
   $\fuk^{\ast}_{\frac{1}{2}}(E)$ are more general cobordisms than
   those given in Definition \ref{def:Lcobordism} in that the
   imaginary coordinates of the ends of $V$ are allowed to also be
   positive half-integers. In other words, $V$ has only negative ends
   and
   $$V\cap \pi^{-1}(Q_{U}^{-})= \coprod_{i}((-\infty, -a_{U}]\times 
   \frac{i}{2})\times L_{i}~.~$$ We now consider curves $\eta_{i}$ as
   in Figure \ref{fig:curves-eta}.

   \begin{figure}[htbp]
      \begin{center}
      \includegraphics[scale=0.5]{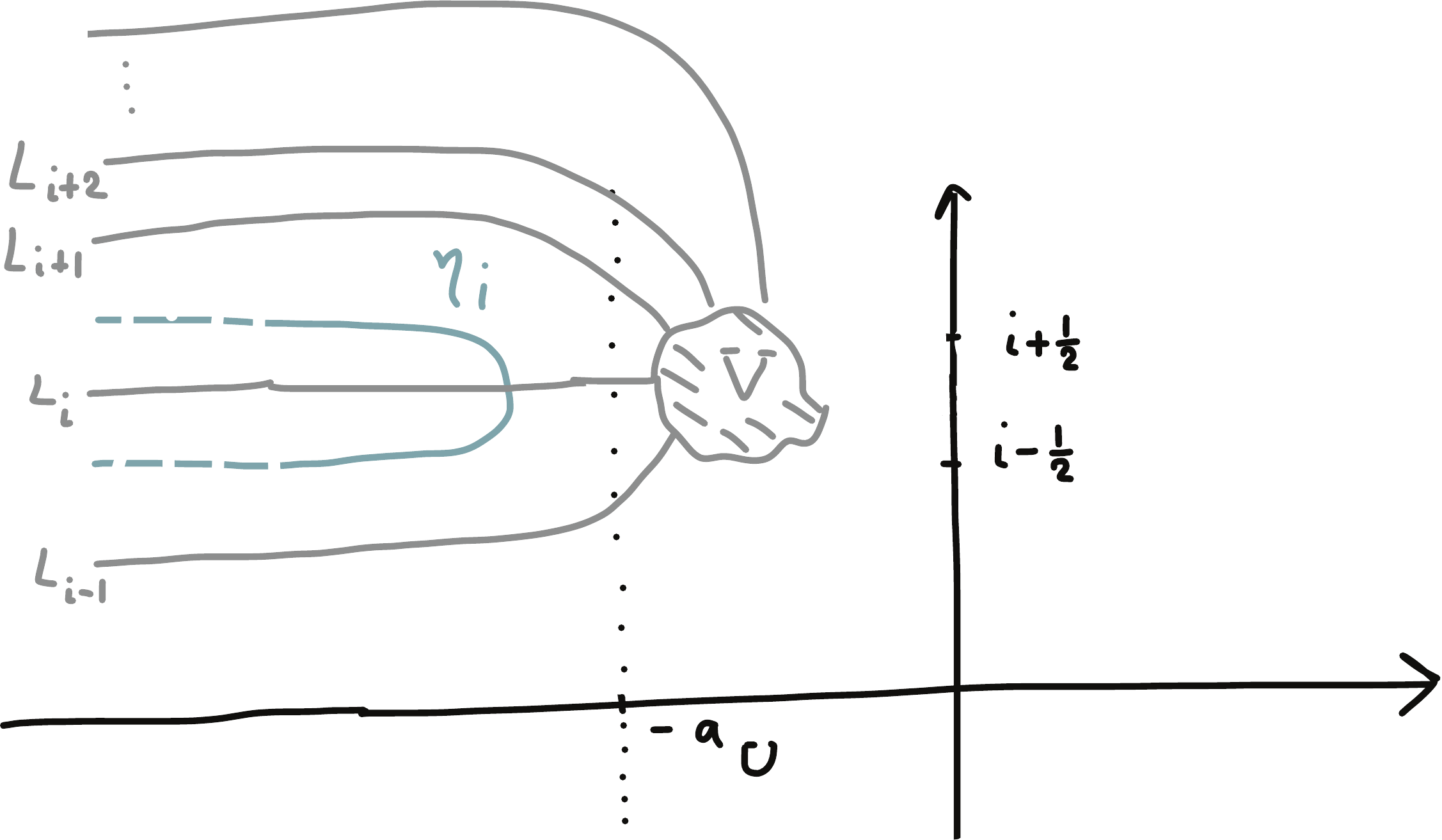}
      \end{center}
      \caption{The auxiliary curves $\eta_{i}$ together with the cobordism 
        $V\in \mathcal{L}^{\ast}(E)$.
        \label{fig:curves-eta}}
   \end{figure}

   These curves satisfy
   $$\eta_{i}((-\infty,-1])=(-\infty,-a_{U}-2]\times \frac{2i-1}{2}\ ,
   \ \eta_{i}([1,+\infty))=(-\infty, -a_{U}-2]\times \frac{2i+1}{2}$$
   and $\eta_{i}(\R)\subset Q_{U}^{-}$.
 
   As shown in ~\cite{Bi-Co:lcob-fuk} \S 4  there exists an
   $A_{\infty}$-functor:
   $$i^{\eta_{j}}:\fuk^{\ast}(M)\to \fuk^{\ast}_{\frac{1}{2}}(E)$$
   which acts on objects by $L \longmapsto \eta_{j}\times L$.
   Consider now the pull-back functor:
   $$(i^{\eta_{j}})^{\ast}: mod(\fuk^{\ast}_{\frac{1}{2}}(E))\to 
   mod(\fuk^{\ast}(M))~.~$$

   Notice that there is a full and faithful embedding
   $e:\fuk^{\ast}(E)\to \fuk^{\ast}_{\frac{1}{2}}(E)$.  Consider the
   Yoneda embeddings $\mathcal{Y}:\fuk^{\ast}(E)\to mod
   (\fuk^{\ast}(E))$ and
   $\mathcal{Y}_{\frac{1}{2}}:\fuk^{\ast}_{\frac{1}{2}}(E)\to mod
   (\fuk^{\ast}_{\frac{1}{2}}(E))$.  Let $\mathcal{Y}':
   \fuk^{\ast}(E)\to mod (\fuk^{\ast}_{\frac{1}{2}}(E))$ be
   $\mathcal{Y}'=\mathcal{Y}_{\frac{1}{2}}\circ e$. The homology
   category associated to the triangular completion
   $(Image(\mathcal{Y}'))^{\wedge}$ of the image of $\mathcal{Y}'$
   inside $ mod(\fuk^{\ast}_{\frac{1}{2}}(E))$ is easily seen to be
   quasi-equivalent to $D\fuk^{\ast}(E)$ (see
   also~\S\ref{subsec:Fuk-fibr}).

   For an object $V\in \fuk^{\ast}(E)$ let
   $\mathcal{M}'_{V}=\mathcal{Y}'(V)$.  Notice that
   $(i^{\eta_{j}})^{\ast}(\mathcal{M}'_{V})$ is precisely the Yoneda
   module associated to the $j$-end of $V$.  Thus $i^{\eta_{j}}$ takes
   Yoneda modules to Yoneda modules and given that
   $H(Image(\mathcal{Y}')^{\wedge})=D\fuk^{\ast}(E)$ we deduce
   that the functor $(i^{\eta_{j}})^{\ast}$ induces a functor of
   triangulated categories
   \begin{equation} \label{eq:functor-R-j}
      \mathcal{R}_{j} :D\fuk^{\ast}(E)\to D\fuk^{\ast}(M)
   \end{equation}
   that we will refer to as the restriction to the $j$-th end.

   The decomposition in the statement is obtained by applying
   $\mathcal{R}_{1}$ to the decomposition in
   Theorem~\ref{thm:main-dec}.   Symplectic Picard-Lefschetz theory shows that the  end
    of the thimble $T_{k}$ is Hamiltonian isotopic
   to $(\tau_{S_{m}}^{-1}\circ \tau_{S_{m-1}}^{-1}\circ
   \tau_{S_{k+1}}^{-1}) (S_{k})=\widetilde{\tau}^{-1}_{k+1, \ldots, m}
   S_{k}$ and its projection to $\mathbb{C}$ has $y$-coordinate $1$.
   Clearly, the end of $\gamma_{k}\times L_{k}$ over $y=1$ is
     $L_{k}$ for $k\geq 2$ and, similarly, the end of $V$ over $y=1$
     is $L_{1}$.
\end{proof}

\begin{rem}\label{rem:restr-funct1}
   The functor $\mathcal{R}_{j}$ from~\eqref{eq:functor-R-j} can also
   be interpreted in a different fashion. We can view it as the
   triangulated functor induced by an $A_{\infty}$-functor
   $\widetilde{\mathcal{R}}_j: \fuk^{\ast}(E)\to \fuk^{\ast}(M)$ that,
   on objects, associates to each cobordism $V:\emptyset \cobto
   (L_{1},\ldots, L_{s})$ its $j$-th end, $L_{j}$.  It is not
   difficult to see that, with appropriate choices of auxiliary
   structures, such a functor is indeed defined and that it induces at
   the derived level precisely $\mathcal{R}_{j}$.  At the derived level we also have
     $\mathcal{R}_j \circ i^{\eta_j} = \id$.  Notice also that the pull-back
     functor
   $$\widetilde{\mathcal{R}}^{\ast}_{j}: 
   mod(\fuk^{\ast}(M))\to mod(\fuk^{\ast}(E))$$ takes the Yoneda
 module $\mathcal{Y}(L)$ to the Yoneda module $\mathcal{Y}(\eta_{j}
 \times L)=i^{\eta_{j}}(L)$.
\end{rem}

\subsubsection{Ascent: from $D\fuk^{\ast}(M)$ to the category
  $D\fuk^{\ast}(E)$} \label{subsubsec:ascent}

We assume the same setting as fixed at the beginning of
\S\ref{subsec:fibr-tot} and start with some algebraic notation. Let
$\mathcal{B}$ be an $A_{\infty}$-category (over a given ring
$\mathcal{A}$, e.g. the Novikov ring) and $R_{1},\ldots R_{m}$ a
collection of $m$ objects of $\mathcal{B}$.  The following
construction is a straightforward extension of the notion of directed
$A_{\infty}$-category as it appears in \cite{Se:book-fukaya-categ}
(see, in particular, (5m) there).

Consider the ordered set $I_m = \{1, \ldots, m\}$ and let $\N_{+m}$ be
the disjoint union $\N \cup I_m$ ordered strictly in a way that
respects the order of $\N$ and $I_m$ and so that each element in $I_m$
is strictly bigger than any element of $\N$. We still denote the
resulting order relation by $\geq$.  For any two $i,j\in \N_{+m}$ we
put $\xi^{i,j}=1$ if $i\geq j$ and $\xi^{i,j}=0$ if $i< j$ and we let
$\xi^{i_{1},i_{2},\ldots, i_{k+1}}=\xi^{i_{1},i_{2}}\xi^{i_{2},i_{3}}
\ldots \xi^{i_{k},i_{k+1}}$.

   We denote by $\N_{+m}\otimes \mathcal{B}$ the unique
   $A_{\infty}$-category with the properties:
\begin{itemize}
  \item[i.] The objects of $\N_{+m}\otimes\mathcal{B}$ are couples
   $(i, L)$ with $i\in \N_{+m}$ and $L$ an object of $\mathcal{B}$
   with the constraint that if $i\in I_{m}$, then $L=R_{i}$. We will
   write the couples $(i,L)$ as $i\times L$.
  \item[ii.] The morphisms of $\N_{+m}\otimes\mathcal{B}$ are defined
   by:
   $$\mor (i\times L, j\times L')= \xi^{i,j}\mor_{\mathcal{B}}(L,L')$$ 
   except if $i=j\in I_m$. In this case $\mor (i\times R_{i}, i\times
   R_{i})= \mathcal{A}e_{R_{i}}$. Here $e_{R_{i}}$ is, by definition,
   a strict unit in the category $\N_{+m}\otimes B$.

  \item[iii.] We denote by $$\mu_{k}: \mor ( L_{1}, L_{2})\otimes
   \mor(L_{2}, L_{3})\otimes \ldots \otimes \mor( L_{k}, L_{k+1})\to
   \mor ( L_{1}, L_{k+1})$$ the multiplications in $\mathcal{B}$.
   Consider successive indices $(i_{1}, i_{2},\ldots, i_{k+1})$ so
   that no two successive indexes $i_{r},i_{r+1}$ satisfy
   $i_{r}=i_{r+1}\in I_{m}$. Then the multiplications in
   $\N_{+m}\otimes \mathcal{B}$ are given by:
   \begin{eqnarray}\label{eq:multipl}
      \nonumber \mu'_{k}: \mor (i_{1}\times L_{1}, i_{2}\times L_{2})
      \otimes \mor(i_{2}\times L_{2}, i_{3}\times L_{3})\otimes \ldots
      \otimes \mor(i_{k}\times L_{k},i_{k+1}\times L_{k+1}) &\to  \\
      \nonumber\to \ \mor (i_{1}\times L_{1}, i_{k+1}\times L_{k+1}) 
      \hspace{1in}&\\
      \mu'_{k}=\xi^{i_{1},\ldots, i_{k+1}}\mu_{k}\hspace{2in}& ~.~
   \end{eqnarray}
   In case for some index $r$ we have $i_{r}=i_{r+1}\in I_{m}$, then
   $\mu'_{k}$ is completely described by the requirement that
   $e_{R_{i}}$ be a strict unit: $\mu'_{k}$ vanishes if $k\not=2$ and
   $\mu'_{2}(a,e_{R_{i}})=a$, $\mu'_{2}(e_{R_{i}},b)=b$.
\end{itemize}
The notation $\N_{+m}\otimes \mathcal{B}$ is slightly imprecise as
this category actually depends on the choice of objects $R_{1},\ldots,
R_{m}$.  Moreover, there is obviously an abuse of notation here as 
$\N_{+m}\otimes \mathcal{B}$ is not a tensor product (there is no addition among the objects etc). 

In case the $A_{\infty}$-category $\mathcal{B}$ is such that
the objects $R_{i}$ have strict units $e'_{R_{i}}\in
\mor_{\mathcal{B}}(R_{i},R_{i})$, then by taking
$e_{R_{i}}=e'_{R_{i}}$, equation (\ref{eq:multipl}) applies without
treating separately the case $i_{r}=i_{r+1}\in I_{m}$. In general,
when the $R_{i}$'s do not have strict units, we treat the
$e_{R_{i}}$'s as formal elements, part of the construction of
$\N_{+m}\otimes \mathcal{B}$.

\begin{cor} \label{cor:cat-eq} There exists a choice of 
Lagrangians spheres $R_{1},\ldots, R_{m}\in
   \mathcal{L}^{\ast}(M)$ and an equivalence of categories:
   $$\mathcal{I}:D(\N_{+m}\otimes \fuk^{\ast}(M))\to D\fuk^{\ast}(E)~.~$$
\end{cor}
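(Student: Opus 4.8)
The plan is to build the equivalence $\mathcal{I}$ in two stages, mirroring the way the main decomposition result was proved: a geometric/categorical identification of the objects $R_i$ with the vanishing cycles $S_i$, and then an inductive argument showing that the directed-type category $\N_{+m}\otimes \fuk^{\ast}(M)$ captures exactly the triangulated structure of $D\fuk^{\ast}(E)$. First I would choose $R_i = S_i$, the vanishing cycles of $\pi$ associated to the straight segments from $v_i$ to the origin, which lie in $\mathcal{L}^{\ast}(M)$ by Proposition~\ref{p:monot-E}. The ``$\N$-part'' of $\N_{+m}\otimes \fuk^{\ast}(M)$ should be matched to the cobordism ends: an object $j\times L$ with $j\in\N$ corresponds to the trail $\eta_j \times L$ (equivalently $\gamma_j\times L$) sitting over the $j$-th ray, while an object $i\times S_i$ with $i\in I_m$ corresponds to the thimble $T_i$. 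The directedness condition $\xi^{i,j}$ (morphisms only go ``downward'') is exactly the upper-triangularity phenomenon that the bottleneck/snaky-perturbation arguments of~\S\ref{subsec:dec-Yo} produce: a Floer strip between ends of different heights can only flow from a higher end to a lower one, and a morphism into a thimble $T_i$ can only come from objects ``below'' it in the ordering.

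The key steps, in order, would be: (1) Define an $A_\infty$-functor $\Phi: \N_{+m}\otimes\fuk^{\ast}(M)\to \fuk^{\ast}(E)$ on objects by $\Phi(j\times L)=\eta_j\times L$ for $j\in\N$ and $\Phi(i\times S_i)=T_i$ for $i\in I_m$. To define it on morphisms and check the $A_\infty$-relations one invokes the inclusion functors $i^{\eta_j}:\fuk^{\ast}(M)\to \fuk^{\ast}_{1/2}(E)$ from~\cite{Bi-Co:lcob-fuk} (recalled in the proof of Corollary~\ref{cor:dec-M}), together with the morphism spaces among the $T_i$'s, and uses the holomorphicity-of-the-planar-projection arguments of~\S\ref{sbsb:nat-transf} and~\S\ref{subsec:dec-Yo} to verify that all the cross-terms vanish in exactly the pattern dictated by $\xi$. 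The fact that $\mor(i\times S_i,i\times S_i)=\mathcal{A}e_{S_i}$ is the analogue of the statement that, after suitable perturbation, the only self-intersection data of a thimble that survives is the unit --- in the exact case this is Seidel's computation for the directed Fukaya category of a Lefschetz fibration, and in our monotone setting it follows from the same bottleneck analysis. (2) Pass to derived categories: $\Phi$ induces $D\Phi: D(\N_{+m}\otimes\fuk^{\ast}(M))\to D\fuk^{\ast}(E)$. (3) Show $D\Phi$ is essentially surjective: this is precisely the content of Theorem~\ref{thm:main-dec}, which says every object $V$ of $D\fuk^{\ast}(E)$ is an iterated cone built from the $T_i$'s and the $\gamma_j\times L_j$'s, i.e.\ lies in the image of $D\Phi$. (4) Show $D\Phi$ is fully faithful: it suffices to check this on the generating objects, i.e.\ that $\Phi$ induces isomorphisms on the $\mor$-spaces between the $\eta_j\times L$'s and the $T_i$'s. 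For the $\eta_j$-parts this is Floer's invariance plus the identification of $HF(\eta_j\times L,\eta_{j'}\times L')$ with $HF(L,L')$ or $0$ according to whether $j'\ge j$ (this is in~\cite{Bi-Co:lcob-fuk}); for the thimbles, $HF(T_i,T_{i'})$ and $HF(\eta_j\times L,T_i)$ are computed by symplectic Picard--Lefschetz theory and match $\xi^{\cdot,\cdot}HF_{\mathcal{B}}$ on the nose.

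The main obstacle I expect is step (4), the full faithfulness, and specifically two sub-issues. The first is computing the morphism spaces $\mor_{\fuk^{\ast}(E)}(T_i,T_{i'})$ and checking they reduce to $\xi^{i,i'}HF(S_i,S_{i'})$ with the correct product structure --- in the exact graded setting this is Seidel's theorem on the directed subcategory generated by thimbles, and the work here is to redo it in the monotone, ungraded framework with the bottleneck perturbations, being careful that no extra holomorphic polygons appear that would break directedness or the strict-unit property. The curves $t_i$ are chosen (see Figure~\ref{fig:spec-curves}) to wind around $v_{i+1},\dots,v_m$, which is what forces the ordering; translating the combinatorics of these windings into the vanishing of $\mu_k$-components is the technical heart. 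The second sub-issue is bookkeeping the compatibility of all the auxiliary choices (profile functions, transition functions, snaky perturbations) across the three categories $\fuk^{\ast}(M)$, $\fuk^{\ast}_{1/2}(E)$, $\fuk^{\ast}(E)$ so that $\Phi$ is genuinely an $A_\infty$-functor and not merely a map on cohomology; here one can lean heavily on the constructions already set up in~\S\ref{subsec:dec-Yo} and in~\cite{Bi-Co:lcob-fuk}~\S4. Once full faithfulness and essential surjectivity are in hand, $\mathcal{I}:=D\Phi$ is the desired equivalence, and as an immediate corollary one gets the independence of $D\fuk^{\ast}(E;\tau)$ from $\tau$, since the right-hand side $D(\N_{+m}\otimes\fuk^{\ast}(M))$ makes no reference to $\tau$.
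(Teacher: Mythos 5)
Your overall strategy is the paper's: build an $A_\infty$-functor from $\N_{+m}\otimes\fuk^{\ast}(M)$ to the full subcategory of $\fuk^{\ast}(E)$ generated by the $\gamma_i\times L$'s and the thimbles $T_j$, invoke Theorem~\ref{thm:main-dec} (via Seidel's Lemma~3.34) for essential surjectivity, and use the computations of~\cite{Bi-Co:lcob-fuk} plus the bottleneck arguments for the comparison of morphism spaces. However, there is a concrete gap in your choice of the spheres $R_i$, and a secondary inaccuracy about strict units.

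The choice $R_i = S_i$ is wrong. The thimble $T_i$ projects to the curve $t_i$, which winds once around $v_{i+1},\ldots,v_m$ before heading to $-\infty$ at height~$1$, and by symplectic Picard--Lefschetz theory the end of $T_i$ over that ray is not $S_i$ but rather $\widetilde{\tau}^{-1}_{i+1,\ldots,m}S_i = (\tau^{-1}_{S_m}\circ\cdots\circ\tau^{-1}_{S_{i+1}})(S_i)$ --- this is stated explicitly at the end of the proof of Corollary~\ref{cor:dec-M}. Consequently, after the bottleneck perturbation, $\mor_{\fuk^{\ast}(E)}(T_i,T_{i'})$ identifies with the Floer complex of these \emph{transported} spheres, not of $S_i$ and $S_{i'}$, so your claim that $HF(T_i,T_{i'})$ and $HF(\eta_j\times L, T_i)$ ``match $\xi^{\cdot,\cdot}HF_{\mathcal{B}}$ on the nose'' with $R_i=S_i$ fails. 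The correct choice, which the paper makes, is $R_{m+1-i} := \widetilde{\tau}^{-1}_{i+1,\ldots,m}(S_i)$, with the functor sending $i\times R_i$ to $T_{m+1-i}$ (the reversal of indexing is also needed to make the directed-morphism pattern $\xi^{i,j}$ line up with the way the curves $t_k$ nest). If you insist on $R_i=S_i$ you would have to use different thimbles, which then would not be the generators appearing in Theorem~\ref{thm:main-dec}, and essential surjectivity would no longer be automatic.

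A smaller point: you say the strict unit requirement in $\N_{+m}\otimes\fuk^{\ast}(M)$ ``follows from the same bottleneck analysis''. It does not: the objects $T_j$ of $\mathcal{F}(E)$ genuinely do \emph{not} have strict units in general. The paper handles this by invoking an algebraic replacement lemma (Lemma~5.20 in~\S(5n) of~\cite{Se:book-fukaya-categ}) that upgrades the naive assignment $\Xi'$ to an honest $A_\infty$-functor $\Xi$ with the same action on objects. You should not expect the strict-unit structure to come for free from the geometry.
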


\begin{proof}
   Consider the full and faithful subcategory $\mathcal{F}(E)$ of
   $\fuk^{\ast}(E)$ whose objects consist of the following two
   collections:
   \begin{itemize}
     \item[i.] $\gamma_{i+2}\times
        L$ with $i\in \N$ and $L\in\mathcal{L}^{\ast}(M)$. Here
      $\gamma_{k}$, $k \geq 2$, are the plane curves defined
      in~\S\ref{subsubsec:atoms} (see also
      Figure~\ref{fig:spec-curves}).
     \item[ii.] the thimbles $T_{j}$, $j \in I_m$.
   \end{itemize}
   The generation Theorem~\ref{thm:main-dec} combined with the
   algebraic Lemma~3.34 in~\cite{Se:book-fukaya-categ} implies that
   there is an equivalence of categories
   $$D\mathcal{F}(E)\to D\fuk^{\ast}(E)$$
   induced by the inclusion
   $$\mathcal{F}(E)\to \fuk^{\ast}(E)~.~$$  

   We now intend to show the existence of a quasi-equivalence of
   $A_{\infty}$-categories:
   $$\Xi :\N_{+m}\otimes \fuk^{\ast}(M)\to \mathcal{F}(E)~.~$$
   To this end we first pick a specific family of objects
   $R_{1},\ldots, R_{m}$ in $\fuk^{\ast}(M)$. By definition, these
   objects are the following Lagrangian spheres:
   $$R_{m+1-i} :=  \widetilde{\tau}_{i+1, \ldots, m}^{-1} (S_{i}) 
   \ , \ i=1,\ldots, m$$ - see Corollary \ref{cor:dec-M} for the
   notation.  For $i\in \N$, and $L\in \mathcal{L}^{\ast}(M)$, we
   define 
   $\Xi'(i\times L) = \gamma_{i+2}\times L$.  For $i\in I_m$ we
   define $\Xi'(i\times R_{i})=T_{m+1-i}$.

   It is not difficult to see - as in the construction of the
   inclusion functor $\mathcal{I}_{\gamma,h}$
   in~\cite{Bi-Co:lcob-fuk}, in particular Proposition 4.2.3 there -
   that by using appropriate choices for the curves $\gamma_{i}$ as
   well as almost complex structures and perturbation data, we can
   describe the morphisms and higher products in $\mathcal{F}(E)$ by
   the formulas corresponding to $\N_{+m}\otimes \fuk^{\ast}(M)$.
   There is however one exception concerning this correspondence and
   due to it the map $\Xi'$ can not be assumed directly to be a
   morphism of $A_{\infty}$ categories: the difficulty comes from the
   fact that the objects $T_{j}$ of $\mathcal{F}(E)$ do not, in
   general, have strict units.  However, there is an algebraic
   argument - Lemma 5.20 in \S (5n) in \cite{Se:book-fukaya-categ} -
   that applies also to our case with minor modifications and implies
   that we can replace $\Xi'$ by a true $A_{\infty}$ functor: $\Xi:
   \N_{+m}\otimes \fuk^{\ast}(M)\to \mathcal{F}(E)$ that acts on
   objects in the same way as $\Xi'$ and so that $\Xi$ is a
   quasi-equivalence.  Clearly, this implies the equivalence of the
   associated derived categories and the existence of $\mathcal{I}$.
 \end{proof}

\begin{rem}\label{rem:Seidel-reference-Lef}

   a. Corollary \ref{cor:cat-eq} extends a result of Seidel in \S 18
   of \cite{Se:book-fukaya-categ} (see also
   \cite{Se:Lefschetz-Fukaya}) which provides a similar description
   for the subcategory of $D\fuk^{\ast}(E)$ that is generated by the
   thimbles $T_{i}$.

   b. It is easy to see by direct calculation that there are
   inclusions $\mathcal{J}_{s}:D\fuk^{\ast}(M)\to D(\N_{+m}\otimes
   \fuk^{\ast}(M))$ induced by $L\to (s,L)$ for all $s\in \N$. The
   compositions $\mathcal{J}'_{s}=\mathcal{I}\circ \mathcal{J}_{s}$
   have a simple geometric interpretation. Consider the inclusion
   $i^{\gamma_{s+2}}:\fuk^{\ast}(M)\to \fuk^{\ast}(E)$ which acts on
   objects as $L \to \gamma_{s+2}\times L$. This induces a functor
   $i^{\gamma_{s+2}}:D\fuk^{\ast}(M)\to D\fuk^{\ast}(E)$ that
   coincides with $\mathcal{J}'_{s}$.
   
   c. An obvious by-product of this Corollary is that the derived
   categories $D\fuk^{\ast}(E;\tau)$ from the statement of Theorem
   \ref{thm:A-rfm} are independent of the choice of tame fibration
   $E_{\tau}$ up to equivalence. Together with
   \S\ref{sbsb:prf-main-dec-gen0} this concludes the proof of Theorem
   \ref{thm:A-rfm}.
\end{rem}

\subsection{The Grothendieck group} \label{subsec:groth}
 
The purpose of this section is to discuss a variety of consequences of
Theorem \ref{thm:main-dec} in what concerns the morphism $\Theta$
from~\eqref{eq:theta} as well as the Grothendieck group itself.

\subsubsection{Cobordism groups and the Grothendieck group.}

We start by defining the appropriate cobordism groups that will be of
interest to us here.  We will restrict here too the discussion to tame
Lefschetz fibrations.  Fix such a fibration $\pi :E \to \C$ that is
tame outside $U\subset \C$.  Let $(M,\omega)$ be the fibre of $\pi$ at
a point $z_{0}\in \C\setminus U$.  Let $\Omega^{\ast}_{Lag}(M; E)$ be
the abelian group defined as the quotient of the free abelian group
generated by the Lagrangians $L\in \mathcal{L}^{\ast}(M)$-modulo the
relations $\mathcal{R}_{cob}^{E}$ generated by the cobordisms $V:
\emptyset \cobto (L_{1},\ldots, L_{s})$, $V\in\mathcal{L}^{\ast}(E)$
in the sense that to each such $V$ we associate the relation
$L_{1}+\ldots + L_{s}\in \mathcal{R}_{cob}^{E}$. Basically, the point
of view here is that cobordisms are relators among their ends. As we
do not take into account orientations this group is obviously
$2$-torsion. Notice that all vanishing spheres $S \subset M$
(associated to any path between a critical value of $\pi$ and $z_0$)
belong to $\mathcal{R}_{cob}^{E}$, hence their cobordism class is $0
\in \Omega^{\ast}_{Lag}(M; E)$. This follows from the fact that a
vanishing sphere is the single end of a cobordism which is a thimble
of some path going from one critical value of $\pi$ to $z_0$.

In case $\pi: E \longrightarrow \mathbb{C}$ is the trivial fibration
(i.e. $E$ splits symplectically as $E = \mathbb{C} \times M$ and $\pi
= \textnormal{pr}_{\mathbb{C}}$) we will abbreviate
$\Omega^{\ast}_{Lag}(M; E)$ by $\Omega^{\ast}_{Lag}(M)$.

\begin{rem}\label{rem:non-comm}
   a. While we will not explore this issue here, notice that the group
   $\Omega^{\ast}_{Lag}(M;E)$ is the abelianization of a group
   $\mathcal{G}^{\ast}_{Lag}(M;E)$ that is defined as the free {\em
     non-abelian} group generated by the $L\in \mathcal{L}^{\ast}(M)$
   modulo relations $L_{1}\cdot L_{2}\cdot\ldots \cdot L_{s}$
   associated as before to cobordisms $V: \emptyset \cobto
   (L_{1},\ldots, L_{s})$. In other words, in this case we take into
   account the geometric order of the ends of $V$.

   b. It is easy to adjust the definition of the groups
   $\Omega^{\ast}_{Lag}(-)$ to the case of non-tame fibrations.
   However, in view of \S\ref{sb:tame-vs-gnrl}, all interesting
   phenomena concerning these cobordism groups are already present in
   the case of tame fibrations.
\end{rem}

Recall the Grothendieck group $K_{0}(D\fuk^{\ast}(M))$ that is
associated to the triangulated category $D\fuk^{\ast}(M)$ as in
\S\ref{subsec:Fuk-fibr}. Notice that this group too is $2$-torsion
because we work in an ungraded setting. We are interested in a
quotient of this Grothendieck group that is associated to our tame
fibration $\pi: E\to \C$.  To construct it assume $x_{1},\ldots,
x_{m}$ are the critical points of $\pi$ and let the corresponding
critical values be $v_{1},\ldots, v_{m}$. Then for each $i$ pick a
path in $\C$ from $v_{i}$ to $z_{0}$ that does not encounter any other critical value
(such as, for instance, the paths
$t_{i}$ in Figure \ref{fig:spec-curves}). There is an associated
thimble to each such path and let $\Sigma_{i}$ be the vanishing sphere
in $M=\pi^{-1}(z_{0})$ that is the end of the thimble from $x_{i}$ to
$M$. Denote by $\mathcal{S}_{E}$ the subgroup in
$K_{0}(D\fuk^{\ast}(M))$ that is generated by the spheres
$\Sigma_{i}$. Finally, define the quotient:
$$ K_{0}(D\fuk^{\ast}(M); E)=K_{0}(D\fuk^{\ast}(M))/\mathcal{S}_{E}~.~$$

\begin{cor} \label{cor:basic-morphism} The group
   $K_{0}(D\fuk^{\ast}(M); E)$ does not depend on the choices made in
   its construction and there exists a morphism of groups:
   $$\Theta^{E}:\Omega^{\ast}_{Lag}(M; E)\to K_{0}(D\fuk^{\ast}(M);E)$$
   that is induced by $L\to L$.
\end{cor}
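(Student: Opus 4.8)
The plan is to prove Corollary~\ref{cor:basic-morphism} in two parts: first the well-definedness of the morphism $\Theta^E$, and then the independence of $K_0(D\fuk^\ast(M);E)$ from the auxiliary choices.

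\textbf{Defining $\Theta^E$.} First I would recall that the Lagrangian Thom morphism $\Theta:\Omega^\ast_{Lag}(M)\to K_0(D\fuk^\ast(M))$ from~\eqref{eq:theta}, established in~\cite{Bi-Co:lcob-fuk}, is already known to be well defined: it sends $[L]\mapsto [\mathcal{Y}(L)]$, and for every cobordism $V\subset \mathbb{C}\times M$ with ends $(L_1,\dots,L_s)$ the class $[L_1]+\cdots+[L_s]$ vanishes in $K_0(D\fuk^\ast(M))$ because the cone decomposition of the ends (the $E=\mathbb{C}\times M$ case of Corollary~\ref{cor:dec-M}, equivalently~\cite{Bi-Co:lcob-fuk}) forces $\sum_i [L_i]=0$ by the additivity of $K_0$ over exact triangles. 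To get $\Theta^E$ I would start from the free abelian group on $\mathcal{L}^\ast(M)$, map each generator $L$ to its Yoneda class $[\mathcal{Y}(L)]\in K_0(D\fuk^\ast(M))$, then compose with the quotient projection $K_0(D\fuk^\ast(M))\to K_0(D\fuk^\ast(M);E)$. It remains to check that the cobordism relations $\mathcal{R}^E_{cob}$ are killed. Given $V:\emptyset\cobto(L_1,\dots,L_s)$ with $V\in\mathcal{L}^\ast(E)$, Corollary~\ref{cor:dec-M} provides an iterated cone decomposition in $D\fuk^\ast(M)$ of $L_1$ in terms of $L_2,\dots,L_s$ and the twisted vanishing spheres $\widetilde{\tau}^{-1}_{i+1,\dots,m}S_i\otimes E_i$. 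Passing to $K_0$ and using that $K_0$ is $2$-torsion, this reads
\begin{equation}
  [L_1]+[L_2]+\cdots+[L_s] = \sum_{i=1}^m \operatorname{rk}(E_i)\,[\widetilde{\tau}^{-1}_{i+1,\dots,m}S_i]
\end{equation}
in $K_0(D\fuk^\ast(M))$. The key geometric input now is that each class $[\widetilde{\tau}^{-1}_{i+1,\dots,m}S_i]$ lies in $\mathcal{S}_E$: indeed $\widetilde{\tau}^{-1}_{i+1,\dots,m}S_i$ is, by symplectic Picard-Lefschetz theory, Hamiltonian isotopic to the vanishing sphere associated to a path from $v_i$ to $z_0$ (precisely the end of the thimble $T_i$), hence by the definition of $\mathcal{S}_E$ it is in the subgroup generated by the $\Sigma_j$'s (see the last step of the proof of Corollary~\ref{cor:dec-M} and Remark~\ref{rem:Seidel-reference-Lef}). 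Therefore the right-hand side vanishes in the quotient $K_0(D\fuk^\ast(M);E)$, so $[L_1]+\cdots+[L_s]=0$ there, and $\Theta^E$ descends to $\Omega^\ast_{Lag}(M;E)$.

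\textbf{Independence of $K_0(D\fuk^\ast(M);E)$ from choices.} The only choices entering the construction of $\mathcal{S}_E$ are the paths from the critical values $v_i$ to $z_0$. I would argue that $\mathcal{S}_E$ does not depend on these as follows. If $\Sigma_i$ and $\Sigma_i'$ are the vanishing spheres associated to two different such paths, then by the proof of Proposition~\ref{p:monot-E}, $\Sigma_i'$ is obtained from $\Sigma_i$ by applying a symplectomorphism of $M$ that is (up to symplectic isotopy) an iterated composition of Dehn twists and inverse Dehn twists along the $\Sigma_1,\dots,\Sigma_m$. Hence in $K_0(D\fuk^\ast(M))$ we have, by Seidel's exact triangle (Proposition~\ref{t:ex-tr-compact}) applied repeatedly, that $[\Sigma_i']$ differs from $[\Sigma_i]$ by an integer combination of classes $[\Sigma_j]$ and $[\Sigma_j]\cdot(\text{ranks of various }HF\text{-groups})$ — more precisely each $[\tau_{\Sigma_j}^{\pm 1}Q]=[Q]+\operatorname{rk}HF(\Sigma_j,Q)[\Sigma_j]$ in $K_0$. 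It follows that the subgroup generated by the $\Sigma_i'$'s coincides with the one generated by the $\Sigma_i$'s, so $\mathcal{S}_E$ is independent of the path choices, and hence so is the quotient $K_0(D\fuk^\ast(M);E)$. A cleaner alternative, which I would mention, is to invoke Corollary~\ref{cor:cat-eq}: the equivalence $D(\N_{+m}\otimes\fuk^\ast(M))\simeq D\fuk^\ast(M;E_\tau)$ identifies the relevant subgroup with a choice-free one coming from the thimbles, but the Dehn-twist argument is more direct for this particular statement.

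\textbf{Main obstacle.} The genuinely substantive point is not the formal bookkeeping in $K_0$ but verifying that every cobordism relation in $\mathcal{R}^E_{cob}$ maps into $\mathcal{S}_E$ — this rests entirely on Corollary~\ref{cor:dec-M} (and through it on Theorem~\ref{thm:main-dec} and Seidel's exact triangle), together with the Picard-Lefschetz identification of the thimble ends $\widetilde{\tau}^{-1}_{i+1,\dots,m}S_i$ as vanishing cycles. Once one trusts those, the remaining arguments are short and formal. I expect no real difficulty beyond carefully matching the indexing conventions of Corollary~\ref{cor:dec-M} (the spheres $S_k$ there associated to straight segments to the origin) with the $\Sigma_i$'s used to define $\mathcal{S}_E$ (associated to arbitrary non-crossing paths to $z_0$); the reconciliation is exactly the path-independence argument of the second part, so these two parts of the proof reinforce each other.
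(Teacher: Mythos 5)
Your proof is correct and follows essentially the same route as the paper's: well-definedness of $\Theta^E$ is deduced from Corollary~\ref{cor:dec-M}, and independence of $\mathcal{S}_E$ from the path choices follows from the Picard-Lefschetz fact that vanishing spheres associated to different paths differ by words in Dehn twists, combined with Seidel's exact triangle (Proposition~\ref{t:ex-tr-compact}). Your version spells out the bookkeeping more explicitly --- in particular, the observation that the two halves of the argument reinforce each other by reconciling the indexing of the spheres $\widetilde{\tau}^{-1}_{i+1,\dots,m}S_i$ appearing in Corollary~\ref{cor:dec-M} with the $\Sigma_j$'s defining $\mathcal{S}_E$ --- but the key inputs and structure of the argument are identical to the paper's.
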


This morphism extends the Lagrangian Thom morphism initially
constructed in~\cite{Bi-Co:lcob-fuk} and already mentioned at
(\ref{eq:theta})
$$\Theta : \Omega^{\ast}_{Lag}(M)\to K_{0}(D\fuk^{\ast}(M))$$

\begin{proof} We first discuss the independence of
   $K_{0}(D\fuk^{\ast}(M);E)$ of the choices of the vanishing spheres
   $\Sigma_{i}$. Assume for instance that one of these spheres, say
   $\Sigma_{1}$ - that is the end of a thimble $K_{1}$ that projects
   to a path $k_{1}$ from $v_{1}$ to $z_{0}$ - is replaced with a
   sphere $\Sigma'_{1}$ which is the end of a thimble $K'_{1}$,
   associated to a different path, $k'_{1}$. By the results of
   Seidel~\cite{Se:book-fukaya-categ}, the difference between
   $\Sigma_{1}$ and $\Sigma'_{1}$ (up to hamiltonian isotopy) can be
   described as follows: one sphere is obtained from the other by 
   applying a symplectic diffeomorphism $\phi$ which can be written as
   word in the elements $\tau_{\Sigma_2}, \ldots, \tau_{\Sigma_m}$
   (i.e. $\phi$ is a composition of Dehn twists and their inverses
   along spheres from the collection $\Sigma_2, \ldots, \Sigma_m$).
   From Seidel's exact triangle as given in
   Proposition~\ref{t:ex-tr-compact} we see that the subgroups
   generated, respectively, by $\Sigma_{1}, \Sigma_{2},\ldots,
   \Sigma_{m}$ and $\Sigma'_{1}, \Sigma_{2},\ldots, \Sigma_{m}$ are
   the same.

   The existence of the morphism $\Theta^{E}$ is now an immediate
   consequence of the decomposition in Corollary \ref{cor:dec-M}.
\end{proof}

\subsubsection{The Grothendieck group as an algebraic cobordism group.}
We now focus our attention on the category $\fuk^{\ast}(E)$.

For each module $\mathcal{M}\in \mathcal{O}b(D\fuk^{\ast}(E))$, define
$[\mathcal{M}]_{j}\in\mathcal{O}b(D\fuk^{\ast}(M))$ by
$$[\mathcal{M}]_{j}=\mathcal{R}_{j}(\mathcal{M})$$
where $\mathcal{R}_{j}$ are the restriction functors defined in the
proof of Corollary \ref{cor:dec-M} (see also
Remark~\ref{rem:restr-funct1}). Basically, this extends to all objects
in $D\fuk^{\ast}(E)$ the operation that associates to a cobordism $V$
its $j$-th end. It is easy to see that for all objects $\mathcal{M}$
of $D\fuk^{\ast}(E)$ there are only finitely many non-vanishing
$[\mathcal{M}]_{j}$'s.

We now define another group $\Omega_{Alg}^{\ast}(M; E)$, which we call
the {\em algebraic cobordism group}, as the free abelian group
generated by all the {\em isomorphisms types of} objects $\in
\mathcal{O}b(D\fuk^{\ast}(M))$ modulo the relations
$$[\mathcal{M}]_{1}+[\mathcal{M}]_{2}+[\mathcal{M}]_{3}+\ldots = 0$$
for each $\mathcal{M}\in\mathcal{O}b(D\fuk^{\ast}(E))$.

The group $\Omega_{Alg}^{\ast}(M;E)$ can be viewed as an algebraic
cobordism group in the following sense. The generators of this group
are the (isomorphism type of) objects of $D\fuk^{\ast}(M)$, thus they
are obtained by completing algebraically the objects of
$\fuk^{\ast}(M)$ as in the construction of the derived Fukaya
category.  Similarly, the relations defining the group are again an
algebraic completion - in a similar sense but now involving the
categories $\fuk^{\ast}(E)$ and $D\fuk^{\ast}(E)$ - of the relations
providing $\Omega_{Lag}^{\ast}(M; E)$.  By definition, there is an
obvious group morphism:
$$q:\Omega_{Lag}^{\ast}(M; E)\to \Omega^{\ast}_{Alg}(M; E)~.~$$

\begin{cor}\label{cor:alg-cob}
   There is a group isomorphism
   $$\Theta^{E}_{Alg}:\Omega_{Alg}^{\ast}(M;E)\to 
   K_{0}(D\fuk^{\ast}(M); E)$$ so that
   $\Theta^{E}=\Theta^{E}_{Alg}\circ q$.
\end{cor}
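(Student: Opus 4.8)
The plan is to construct the inverse map to $\Theta^E_{Alg}$ directly and then check that the two composites are the identity. First I would define a map $\Phi^E: K_0(D\fuk^{\ast}(M); E) \to \Omega^{\ast}_{Alg}(M;E)$ on generators by sending the class $[L]$ of a Lagrangian $L \in \mathcal{L}^{\ast}(M)$ to the class of the Yoneda module $\mathcal{Y}(L)$ in $\Omega^{\ast}_{Alg}(M;E)$, and more generally the class of an arbitrary object $\mathcal{N} \in \mathcal{O}b(D\fuk^{\ast}(M))$ to the class of $\mathcal{N}$ itself. The key point to verify is that this respects all the relations: (i) the exact-triangle relations in $K_0(D\fuk^{\ast}(M))$, and (ii) the quotient by $\mathcal{S}_E$. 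For (i), given an exact triangle $A \to B \to C$ in $D\fuk^{\ast}(M)$, I would use the functor $i^{\eta_j}: \fuk^{\ast}(M) \to \fuk^{\ast}_{1/2}(E)$ (equivalently the trail construction along a curve $\gamma_{j+2}$) together with the fact — implicit in the proof of Corollary~\ref{cor:dec-M} — that restriction $\mathcal{R}_j$ is a triangulated functor that is a one-sided inverse to $i^{\eta_j}$; one builds a cobordism-type object in $D\fuk^{\ast}(E)$ whose sequence of ends realizes $A, B, C$ (up to sign/order irrelevant mod $2$), so that the defining relation of $\Omega^{\ast}_{Alg}(M;E)$ forces $[A]+[B]+[C]=0$. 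For (ii), each vanishing sphere $\Sigma_i$ is the single end of a thimble, which is an object of $D\fuk^{\ast}(E)$, so $[\Sigma_i]=0$ already holds in $\Omega^{\ast}_{Alg}(M;E)$; hence $\Phi^E$ descends to the quotient.

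Next I would check that $\Phi^E$ and $\Theta^E_{Alg}$ are mutually inverse. The composite $\Phi^E \circ \Theta^E_{Alg}$ sends the generator of $\Omega^{\ast}_{Alg}(M;E)$ corresponding to an object $\mathcal{N}$ of $D\fuk^{\ast}(M)$ to its $K_0$-class and back; to see this is the identity one needs that the $K_0$-class of $\mathcal{N}$, viewed back in $\Omega^{\ast}_{Alg}(M;E)$, equals $[\mathcal{N}]$ — this is essentially tautological once one observes that $\Theta^E_{Alg}$ is defined precisely by $[\mathcal{N}] \mapsto [\mathcal{N}] \in K_0$. The composite $\Theta^E_{Alg} \circ \Phi^E$ is the identity on $K_0(D\fuk^{\ast}(M);E)$ for the same formal reason. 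The genuinely substantive content — that $\Theta^E_{Alg}$ is \emph{well-defined} as a homomorphism, i.e. that the relations $\sum_j [\mathcal{M}]_j = 0$ in $\Omega^{\ast}_{Alg}(M;E)$ map to $0$ in $K_0(D\fuk^{\ast}(M);E)$ — is where Theorem~\ref{thm:main-dec} (via Corollary~\ref{cor:dec-M}) is used crucially: for a \emph{geometric} cobordism $V$ this is Corollary~\ref{cor:dec-M}, which exhibits $L_1$ as an iterated cone over the other $L_j$ and the (vanishing-sphere-supported, hence zero in the quotient) thimble terms $T_i \otimes E_i$, so that $\sum_j [L_j] = 0$ in $K_0(D\fuk^{\ast}(M);E)$; for a general object $\mathcal{M}$ of $D\fuk^{\ast}(E)$ one reduces to the geometric case because $\mathcal{M}$ is an iterated cone of Yoneda modules of objects of $\fuk^{\ast}(E)$ and $\mathcal{R}_j$ is triangulated, so the relation $\sum_j [\mathcal{M}]_j = 0$ is a $\mathbb{Z}$-linear combination of the geometric ones. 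The equality $\Theta^E = \Theta^E_{Alg} \circ q$ is then immediate from the definitions, since $q$ sends the class of $L$ to the class of $\mathcal{Y}(L)$ and $\Theta^E_{Alg}$ sends that to $[L] \in K_0$.

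The main obstacle I anticipate is bookkeeping around the two different completions: $\Omega^{\ast}_{Alg}(M;E)$ is generated by \emph{all} objects of $D\fuk^{\ast}(M)$, and one must be careful that the map $\mathcal{N} \mapsto [\mathcal{N}]_{K_0}$ is well-defined on isomorphism classes and additive — additivity is not a relation imposed in $\Omega^{\ast}_{Alg}$, so one has to check that whenever $A \to B \to C$ is exact in $D\fuk^{\ast}(E)$ the induced relation among ends already follows, which again routes through $\mathcal{R}_j$ being triangulated. A secondary technical point is ensuring the restriction functors $\mathcal{R}_j$ of~\eqref{eq:functor-R-j} are genuinely triangulated (not merely defined on objects); this is asserted in Remark~\ref{rem:restr-funct1}, and I would invoke that. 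Everything else — the vanishing of thimble classes, the compatibility with $q$, the inverse computation — is formal once Corollary~\ref{cor:dec-M} is in hand. I expect the proof to be short, roughly paralleling the proof of Corollary~\ref{cor:basic-morphism} but promoted to the level of the full derived categories.
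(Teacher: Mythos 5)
Your architecture matches the paper's: define $\Theta^E_{Alg}$ as the identity on generators, check well-definedness via Theorem~\ref{thm:main-dec} and the triangulated restriction functors $\mathcal{R}_j$, note surjectivity is clear, and establish injectivity by showing every $K_0$-relation is already witnessed in $\Omega^{\ast}_{Alg}(M;E)$. You also correctly isolate what has to be shown for injectivity: for each exact triangle $\mathcal{M}\to\mathcal{M}'\to\mathcal{M}''$ in $D\fuk^{\ast}(M)$ one needs a \emph{single} object $T$ of $D\fuk^{\ast}(E)$ with $[T]_1=\mathcal{M}''$, $[T]_2=\mathcal{M}'$, $[T]_3=\mathcal{M}$, and $[T]_j=0$ for $j\geq 4$.

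This is exactly where the proposal stops short. You assert that ``one builds a cobordism-type object in $D\fuk^{\ast}(E)$ whose sequence of ends realizes $A,B,C$'' and gesture at the functor $i^{\eta_j}$, but this hint does not produce the construction. Applying $i^{\eta_j}$ to each term of the triangle yields three separate objects, each of which, by construction of $\eta_j$, is concentrated near a single height — it does not yield one object carrying all three modules as its distinct ends. What the paper actually builds (and this is where the work lives) is, for each $\fuk^{\ast}(M)$-module $\mathcal{N}$ and each $i$, an $(\N\otimes\fuk^{\ast}(M))$-module $\gamma_i\times\mathcal{N}$ satisfying the double-end property $[\gamma_i\times\mathcal{N}]_j\cong\mathcal{N}$ for $j\in\{1,i\}$ and $0$ otherwise; it then lifts a chain morphism $f:\mathcal{M}\to\mathcal{M}'$ representing the triangle to a module morphism $\bar f:\gamma_3\times\mathcal{M}\to\gamma_2\times\mathcal{M}'$ and sets $\tilde T=\textnormal{cone}(\bar f)$. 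It is the double-end property that makes $[\tilde T]_1=\textnormal{cone}(f)=\mathcal{M}''$ while the $2$nd and $3$rd ends stay $\mathcal{M}'$ and $\mathcal{M}$. None of this is a formal consequence of $\mathcal{R}_j$ being triangulated or of the identity $\mathcal{R}_j\circ i^{\eta_j}=\id$. A secondary omission: the paper proves injectivity first for the trivial fibration $\C\times M$ (where $\mathcal{S}_E=0$) and then reduces the general case via $\Theta_{Alg}(\mathcal{S}'_E)=\mathcal{S}_E$; your sketch does not explain how the non-trivial $\mathcal{S}_E$-quotient is handled in the inverse direction.
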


\begin{proof} Throughout the proof we abbreviate $K_0 =
   K_{0}(D\fuk^{\ast}(M); E)$.

   At the level of generators we define $\Theta^{E}_{Alg}$ to be the
   identity. The surjectivity of $\Theta^{E}_{Alg}$ is clear as well
   as the relation $\Theta^{E}=\Theta^{E}_{Alg}\circ q$. The only two
   things to check are that this map is well-defined and injective.

   To show that $\Theta^{E}_{Alg}$ is well-defined we need to prove
   that if $\mathcal{M}$ is an object of $D\fuk^{\ast}(E)$, then
   $\sum_{i} [\mathcal{M}]_{i}=0$ in $K_{0}(D\fuk^{\ast}(M);E)$.  To
   see this recall that, by the definition of $D\fuk^{\ast}(E)$, there
   are $V_{j}\in \mathcal{L}^{\ast}(E)$ so that:
   $$\mathcal{M}\cong (V_{m}\to V_{m-1}\to \ldots \to V_{2}\to V_{1})~.~$$
   By Theorem \ref{thm:main-dec}, in $K_{0}$ we have:
   $$\sum_{i}[V_{j}]_{i}=0 \ , \ \forall j ~.~$$
   Moreover, $\forall i$, we have the following cone decomposition of
   $[\mathcal{M}]_i$ in $D\fuk^*(M)$:
   $$[\mathcal{M}]_{i} \cong
   ([V_{m}]_{i}\to [V_{m-1}]_{i}\to \ldots \to [V_{2}]_{i}\to
   [V_{1}]_{i})$$ because the functor $\mathcal{R}_{i}$ is
   triangulated. This means that in $K_{0}$:
   $$\sum_{i}[\mathcal{M}]_{i}=\sum_{i,j}[V_{j}]_{i}=0~.~$$
   This concludes the proof of the well-definedness of the map
   $\Theta^{E}_{Alg}$.

   It remains to show that $\Theta^{E}_{Alg}$ is injective. We start
   by proving the injectivity in the case when $\pi$ is trivial and so
   $E=\C\times M$.  We omit $E$ from the notation of $\Theta_{Alg}$ in
   this case and, similarly, we put $\Omega_{Alg}(M)=\Omega_{Alg}(M;
   \C\times M)$.
   Assume that
   $$\mathcal{M}\to \mathcal{M}'\to \mathcal{M}''$$
   is an exact triangle of $\fuk^{\ast}(M)$-modules. The injectivity
   of $\Theta_{Alg}$ follows by constructing for each such triangle an
   object $T$ in $D\fuk^{\ast}(\C\times M)$ so that
   $[T]_{1}=\mathcal{M}''$, $[T]_{2}=\mathcal{M}'$ and
   $[T]_{3}=\mathcal{M}$. Indeed, this implies that all the relations
   that are used in the definition of $K_{0}$ also appear among the
   relations that define $\Omega^{\ast}_{Alg}(M)$ which means that
   $\Theta_{Alg}$ is invertible.

   To construct this object $T$ we proceed as follows.  We first
   recall that, by definition, $\mathcal{M}''$ is - up to isomorphism
   - the cone over a module map $f: \mathcal{M}\to \mathcal{M}'$.

   Now recall the $A_{\infty}$-category $\N\otimes \fuk^{\ast}(M)$ as
   in~\S\ref{subsubsec:ascent} (notice that now $m=0$). We first
   construct an object $\tilde{T}$ of $\N \otimes \fuk^{\ast}(M)$.
   This consists of two steps. First, for each $\fuk^{\ast}(M)$-module
   $\mathcal{N}$ and each curve $\gamma_{i}$ we define a
   $\N\otimes
     \fuk^{\ast}(M)$-module denoted by $\gamma_{i}\times
   \mathcal{N}$. On objects $\gamma_{j}\times L$ we put
   $(\gamma_{i}\times \mathcal{N}) (\gamma_{j}\times
   L)=\xi^{j,i}\mathcal{N}(L)$. The $A_{\infty}$-module operations are
   defined by a direct adaptation of the formulas giving the
   operations in $\N\otimes \fuk^{\ast}(M)$.  The second step is to
   define a morphism
   $$\bar{f}:\gamma_{3}\times \mathcal{M}\to \gamma_{2}\times 
   \mathcal{M}'~.~$$ We then define $\tilde{T}$ by
   $\tilde{T}=cone(\bar{f})$.  The morphism $\bar{f}$ is induced by
   $f$ and is given by a formula again perfectly similar to the
   formula of the multiplication in $\N\otimes \fuk^{\ast}(M)$, but
   using $f$ instead of $\mu_{k}$ and replacing $\mor (i_{k}\times
   L_{k}, i_{k+1}\times L_{k+1})$ by $(\gamma_{3}\times
   \mathcal{M})(\gamma_{i_{k}-2}\times L_{k+1})$ and $\mor(i_{1}\times
   L_{1}, i_{k+1}\times L_{k+1})$ by $(\gamma_{2}\times
   \mathcal{M}')(\gamma_{i_{1}-2}\times L_{1})$.  We now consider the
   sequence of functors, the first two being equivalences and the last
   a full and faithful embedding:
   \begin{equation}\label{eq:seq-eq}
      D(\N\otimes \fuk^{\ast}(M))\to D\mathcal{F}(\C\times M)\to 
      D\fuk^{\ast}(\C\times M)\to 
      D\fuk^{\ast}_{\frac{1}{2}}(\C\times M).
   \end{equation}
   Here, the $A_{\infty}$-category $D\mathcal{F} (\mathbb{C} \times
   M)$ is defined as in the proof of Corollary~\ref{cor:cat-eq}.  We
   now use the composition of the functors in~\eqref{eq:seq-eq} to
   define $[\mathcal{H}]_{j}=(i^{\eta_{j}})^{\ast}(\mathcal{H})$ for
   each module $\mathcal{H}$ in $D(\N\otimes \fuk^{\ast}(M))$ - see
   the proof of Corollary~\ref{cor:dec-M} for the definition of
   $i^{\eta_j}$.  We take $T$ to be the image of $\tilde{T}$ by the
   first two equivalences in~\eqref{eq:seq-eq} and we claim that:
   \begin{itemize}
     \item[a.] for each object $\mathcal{N}$ in $D\fuk^{\ast}(M)$ we
      have that $[(\gamma_{i}\times \mathcal{N})]_{j}\cong\mathcal{N}$
      if $i=j$ or $j=1$ and is $0$ otherwise. Moreover,
      $(i^{\eta_{1}})^{\ast}(\bar{f})\cong f$.
     \item[b.] $[T]_{1}=\mathcal{M}''$, $[T]_{2}=\mathcal{M}'$,
      $[T]_{3}=\mathcal{M}$ and $[T]_{i}=0$ whenever $i\geq 4$.
   \end{itemize}
   Notice that point b concludes the proof for $E=\C\times M$. Given
   that the equivalences in~\eqref{eq:seq-eq} are triangulated, point
   b follows directly from a. Thus, it remains to check a. For this we
   notice that pull-back respects triangles and as each object
   $\mathcal{N}$ is isomorphic to an iterated cone of objects $L\in
   \fuk^{\ast}(M)$ it is enough to verify the statement for the Yoneda
   modules $\gamma_{i}\times L$, $L\in\mathcal{L}^{\ast}(M)$. But for
   these modules the statement is obvious. The statement for $\bar{f}$
   follows in a similar fashion.

   We are left to show the more general statement for a Lefschetz
   fibration $\pi: E\to \C$ that is not trivial. For this we recall
   that, for each thimble $T_{i}$ we have
   $(i^{\eta_{1}})^{\ast}(T_{i})=\widetilde{\tau}^{-1}_{i+1,
       \ldots, m}S_{i}$. (The definition of the spheres $S_i$ appears
     in~\S\ref{sec:conseq}.) Thus, by the definition of the groups
   involved, we have a quotient map
   \begin{equation} \label{eq:Om-*-K_0}
      \Omega^{\ast}_{Alg}(M)/\mathcal{S}'_{E}\to
      \Omega^{\ast}_{Alg}(M; E)\xrightarrow{\; \Theta^{E}_{Alg} \;}
      K_{0}(D\fuk^{\ast}(M);E),
   \end{equation}
   where $\mathcal{S}'_{E}$ is the subgroup generated by the vanishing
   spheres of $\pi$.  To conclude the proof of the theorem it is
   enough to show that the composition of maps in~\eqref{eq:Om-*-K_0}
   is an isomorphism. Recall that
   $$K_{0}(D\fuk^{\ast}(M);E)=K_{0}(D\fuk^{\ast}(M))/\mathcal{S}_{E}$$
   and notice that the isomorphism $\Theta_{Alg}$ - associated to the
   trivial fibration $\C\times M$ - has the property that
   $\Theta_{Alg}(\mathcal{S}'_{E})=\mathcal{S}_{E}$.  Therefore the
   composition of maps in~\eqref{eq:Om-*-K_0} is an isomorphism and
   this concludes the proof.
\end{proof}

\subsubsection{Comparison with ambient quantum homology.}
There is an obvious morphism:
$$i:\Omega_{Lag}^{\ast}(M)\to QH(M)$$
that associates to each Lagrangian $L$ its homology class $[L] \in
H_n(M;\mathbb{Z}_2) \subset QH(M)$.  From the point of view of
Corollary~\ref{cor:alg-cob} it is natural to expect that $i$ factors
through a morphism:
$$i':\Omega_{Alg}^{\ast}(M)\to QH(M)~.~$$
This is indeed true as we will see below.

\begin{cor}\label{cor:quantum-incl}
Consider a module
   $\mathcal{M}\in \mathcal{O}b(D\fuk^{\ast}(M))$.  Such a module
   admits a cone-decomposition (up to quasi-isomorphism)
   $$\mathcal{M}\cong (L_{s}\to L_{s-1}\to \ldots \to L_{1})~.~$$ 
With this notation, the equation
   \begin{equation}
   \label{eq:i-prime}
   i'(\mathcal{M})=\sum_{j} [L_{j}]\in QH(M)
   \end{equation}
 provides a well-defined group morphism
 $$i':\Omega_{Alg}^{\ast}(M)\to QH(M)$$
 so that $i=i'\circ q$.
 \end{cor}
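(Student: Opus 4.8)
The plan is to establish Corollary~\ref{cor:quantum-incl} by reducing it, via Corollary~\ref{cor:alg-cob}, to a statement about the Grothendieck group $K_0 D\fuk^{\ast}(M)$ and then exhibiting the desired morphism as a composition with a well-known map out of $K_0$. First I would recall that a module $\mathcal{M} \in \mathcal{O}b(D\fuk^{\ast}(M))$ does admit a cone-decomposition $\mathcal{M} \cong (L_s \to \cdots \to L_1)$ with $L_j \in \mathcal{L}^{\ast}(M)$ by the very definition of the derived Fukaya category (it is the triangulated closure of the image of the Yoneda embedding). This shows the sum $\sum_j [L_j]$ makes sense; the only issue is that this sum must not depend on the chosen decomposition. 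The key observation is that the assignment $L \mapsto [L] \in H_n(M;\Z_2) \subset QH(M)$ extends to a group homomorphism $\kappa : K_0 D\fuk^{\ast}(M) \to QH(M)$. Indeed, for any exact triangle $A \to B \to C$ in $D\fuk^{\ast}(M)$, the objects being iterated cones of Lagrangians, one checks that the relation $[B] = [A] + [C]$ is compatible with taking fundamental classes: this is because an exact triangle of Lagrangian Floer modules induces a long exact sequence in Floer homology, and in the monotone setting the relevant degree count forces the homology-class relation (this is essentially the content of the fact, used already in \cite{Bi-Co:lcob-fuk}, that the Lagrangian Thom morphism $\Theta$ is compatible with the map $\Omega^{\ast}_{Lag}(M) \to QH(M)$).

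Next I would assemble the pieces. By Corollary~\ref{cor:alg-cob} there is an isomorphism $\Theta_{Alg} : \Omega^{\ast}_{Alg}(M) \xrightarrow{\;\cong\;} K_0 D\fuk^{\ast}(M)$ which on generators is the identity, i.e. sends the class of an object $\mathcal{M}$ of $D\fuk^{\ast}(M)$ to its $K_0$-class $[\mathcal{M}]$. Define $i' := \kappa \circ \Theta_{Alg}$. Then $i'$ is automatically a well-defined group morphism, and for a module with decomposition $\mathcal{M} \cong (L_s \to \cdots \to L_1)$ one has $[\mathcal{M}] = \sum_j [L_j]$ in $K_0$ (this is recorded in~\S\ref{sbsb:Groth-grp}, where such an iterated cone yields $L_s + \cdots + L_1 = 0$ relative to the cone object), so $i'(\mathcal{M}) = \kappa\big(\sum_j [L_j]\big) = \sum_j [L_j] \in QH(M)$, which is precisely formula~\eqref{eq:i-prime}. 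The well-definedness of the expression $\sum_j[L_j]$ independent of the decomposition is now a consequence rather than an input: any two decompositions give the same $K_0$-class, hence the same image under $\kappa$. Finally $i = i' \circ q$ because $q : \Omega^{\ast}_{Lag}(M) \to \Omega^{\ast}_{Alg}(M)$ sends a Lagrangian to itself viewed as an object of $D\fuk^{\ast}(M)$, and on such a generator $i'$ returns $[L] \in QH(M)$, matching the definition of $i$.

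The main obstacle is establishing that $\kappa : K_0 D\fuk^{\ast}(M) \to QH(M)$ is well-defined, i.e. that the fundamental-class assignment respects the Grothendieck relations coming from arbitrary exact triangles in $D\fuk^{\ast}(M)$, not just geometric cobordism relations. The clean way to handle this is to note that $\kappa$ factors as $K_0 D\fuk^{\ast}(M) \xrightarrow{\partial} HF_{\ast}(M) \xrightarrow{PSS} QH(M)$-type maps, or more directly, to use that every object of $D\fuk^{\ast}(M)$ is an iterated cone of Lagrangians and that the map on the free group generated by Lagrangians descends: one verifies additivity on a single cone attachment $C \to \textnormal{cone}(C \to B) $, which amounts to the statement that $[\textnormal{cone}(f)] = [B] + [C]$ maps consistently, and since $H_n(M;\Z_2)$ receives the fundamental classes compatibly (the Floer complexes are generated over $\mathcal{A}$ by intersection points whose signed count governs the homology class, and cones add these counts), additivity follows. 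I would cite the analogous verification in \cite{Bi-Co:lcob-fuk} for the construction of $\Theta$ and its compatibility with $QH(M)$, since the argument there already shows that passing from cobordism relations to the full triangulated structure does not obstruct landing in quantum homology.
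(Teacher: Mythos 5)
Your overall architecture is right: factor $i'$ as $\kappa \circ \Theta_{Alg}$ with $\Theta_{Alg}$ the isomorphism of Corollary~\ref{cor:alg-cob}, and reduce everything to constructing a group morphism $\kappa: K_0 D\fuk^{\ast}(M) \to QH(M)$ extending $L \mapsto [L]$. This is also how the paper proceeds (there $\kappa$ is called $\tilde{i}'$). The gap is in your treatment of the central step, namely showing that $\kappa$ is well-defined on $K_0$.

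You offer two ``direct'' justifications for the descent of $L \mapsto [L]$ to $K_0 D\fuk^{\ast}(M)$, and neither one works. The first (``an exact triangle of Lagrangian Floer modules induces a long exact sequence in Floer homology, and in the monotone setting the relevant degree count forces the homology-class relation'') conflates two unrelated things: a long exact sequence of Floer homology groups does not produce a relation in singular or quantum homology among fundamental classes. Moreover the objects $A,B,C$ in an arbitrary triangle of $D\fuk^{\ast}(M)$ are abstract modules, not Lagrangians, so there is no a priori class $[A] \in QH(M)$ to speak of until $\kappa$ is already defined — the argument is circular. The second (``the Floer complexes are generated over $\mathcal{A}$ by intersection points whose signed count governs the homology class, and cones add these counts'') does not give a proof either: intersection counts of Floer complexes do not determine, and are not additive with respect to, the homology classes of the underlying Lagrangians in $H_n(M;\Z_2)$. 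Your appeal to ``the analogous verification in \cite{Bi-Co:lcob-fuk}'' does not close the gap either, since what is verified there is compatibility on \emph{geometric} cobordism relations, not on arbitrary exact triangles of modules.

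What makes this particularly worth flagging is that the paper itself explicitly says that no direct argument is known: ``While this definition of $i'$ seems very simple the fact that $i'$ is a well-defined morphism of groups is somewhat surprising. We only know a proof of this fact which follows from the indirect construction that we give below.'' The actual proof defines $\tilde{i}'$ as a composite
$$K_0(D\fuk^{\ast}(M)) \xrightarrow{f_1} K_0(\mathcal{Y}(\fuk^{\ast}(M))^{\wedge}) \xrightarrow{f_2} HH_{\ast}(\mathcal{Y}(\fuk^{\ast}(M))^{\wedge}) \xrightarrow{f_3} HH_{\ast}(\fuk^{\ast}(M)) \xrightarrow{f_4} QH(M),$$
where $f_2$ sends a module to the Hochschild class of its unit endomorphism (this descends to $K_0$ by Seidel's Proposition~3.8, which gives $e_{\mathcal{M}} = e_{\mathcal{M}'} + e_{\mathcal{M}''}$ in $HH_*$ for any exact triangle), $f_3$ is the inverse of the Morita-invariance isomorphism, and $f_4$ is the open-closed map. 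You gesture in this direction with your ``factors as $K_0 \to HF_*(M) \to QH(M)$'' aside, and that is the idea you should have developed rather than abandoned; in particular, the additivity over triangles is obtained not by any direct geometric/degree count but by passing to Hochschild homology where it is an algebraic fact about units. Without this, the well-definedness of $\kappa$ — and hence of $i'$ — is unproven.

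A minor further point: formula~\eqref{eq:i-prime} is a \emph{consequence} of the construction, once one checks that the composite map sends the $K_0$-class of a Lagrangian $L$ to $[L] \in QH(M)$ (this is a property of the open-closed map applied to the unit of $HF(L,L)$). You state the formula as if it were the definition, but the whole point of the corollary is that this formula is consistent, and the paper's route is precisely designed to sidestep having to verify consistency directly.
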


 \begin{proof}
  While this definition of $i'$ seems very simple the fact that $i'$
  is a well-defined morphism of groups is somewhat surprising. 
  We only know a proof of this fact which follows
  from the indirect construction that we give below. 
   
  We will write $i'$ as a composition of two morphisms
  $i'=\tilde{i}'\circ \Theta_{Alg}$ where
  $\Theta_{Alg}:\Omega_{alg}^{\ast}(M)\to K_{0}(D\fuk^{\ast}(M))$ is
  the isomorphism in Corollary \ref{cor:alg-cob} and
$$\tilde{i}':  K_{0}(D\fuk^{\ast}(M))\to QH(M)$$ is a morphism 
that is known to experts, see for instance~\S~5
in~\cite{Se:homology-spheres}.  The definition of $\tilde{i}'$ is
somewhat subtle so we review it here.

 The morphism $\tilde{i}'$ is a composition of morphisms:
\begin{equation*}
   \begin{aligned}
      & K_{0}(D\fuk^{\ast}(M)) \stackrel{f_{1}}{\longrightarrow}
      K_{0}(\mathcal{Y}(\fuk^{\ast}(M))^{\wedge})
      \stackrel{f_{2}}{\longrightarrow} \\
      & \stackrel{f_{2}}{\longrightarrow} HH_{\ast}(\mathcal{Y}
      (\fuk^{\ast}(M))^{\wedge}) \stackrel{f_{3}}{\longrightarrow}
      HH_{\ast}(\fuk^{\ast}(M))\stackrel{f_{4}}{\longrightarrow}
      QH(M)~.~
   \end{aligned}
\end{equation*}
Here, the category $\mathcal{Y}(\fuk^{\ast}(M))$ is the Yoneda image of
$\fuk^{\ast}(M)$; $(\mathcal{Y}(\fuk^{\ast}(M))^{\wedge}$ is its
triangular completion (as $A_{\infty}$-category);
$HH_{\ast}(\mathcal{B})$ is the Hochschild homology of the
$A_{\infty}$-category $\mathcal{B}$ with values in itself (generally
denoted by $HH_{\ast}(\mathcal{B},\mathcal{B})$). The morphisms
involved are as follows: $f_{1}$ is an obvious isomorphism that
reflects the definition of the triangular structure of
$D\fuk^{\ast}(M)$, the morphism $f_{2}$ sends each module in
$\mathcal{M}\in\mathcal{Y}(\fuk^{\ast})^{\wedge}$ to the
  Hochschild homology class of its unit endomorphism $e_{\mathcal{M}}
  \in \hom(\mathcal{M}, \mathcal{M})$. The latter descends to $K_{0}$
because, as it follows from Proposition~3.8
in~\cite{Se:book-fukaya-categ}, if $\mathcal{M}'\to \mathcal{M}\to
\mathcal{M}''$ is an exact triangle in a triangulated
$A_{\infty}$-category $\mathcal{A}$, then $e_{\mathcal{M}} =
e_{\mathcal{M'}} + e_{\mathcal{M}''}$ in $HH_{\ast}(\mathcal{A})$.
The morphism $f_{3}$ comes from the fact that the natural inclusion
$$\fuk^{\ast}(M)\to \mathcal{Y}(\fuk^{\ast}(M))^{\wedge}$$ induces an 
isomorphism in Hochschild homology (this is sometimes referred to as a
form of Morita invariance. See~\cite{To:dg-cat} for the analogous
though different context of dg-categories); $f_{3}$ is the inverse of
this isomorphism.  Finally, $f_{4}$ is the open-closed map (see for
instance~\cite{Se:homology-spheres} where it is defined for in the exact case, the adaptation
to the monotone setting is immediate).

\end{proof}

\begin{rem}
   Assume that $\mathcal{M}'$ is another module in $D\fuk^{\ast}(M)$
   as in the statement of the corollary such that
   $\mathcal{M}'\cong\mathcal{M}$ and
   $$\mathcal{M}'=(L'_{r}\to L'_{r-1}\to  \ldots \to L'_{1})~.~$$
   The existence of $i'$ then implies that
   $\sum_{j}[L'_{j}]=\sum_{k}[L_{k}]$. It is interesting to note that
   the only way we know to show this fact is through the indirect
   method contained in the proof of the Corollary.
\end{rem}

\subsubsection{The periodicity isomorphism~\eqref{eq:K0}}

In view of Corollary \ref{cor:cat-eq} it is natural to expect that
$K_{0}(D\fuk^{\ast}(E))$ can be calculated in terms of
$K_{0}(D\fuk^{\ast}(M))$.  We will give here such a calculation but
only in the case when $E$ is the trivial fibration $E=\C\times M$. An
analogous statement for non-trivial fibrations is expected to also
hold, but would require further algebraic elaboration.

\begin{cor} \label{cor:periodicity} There exists a canonical
   isomorphism
   $$K_{0}(D\fuk^{\ast}(\C\times M))\cong \Z_{2}[t]\otimes 
   K_{0}(D\fuk^{\ast}(M))$$ induced by the map that sends $\mathcal{M}
   \in \mathcal{O}b(D\fuk^*(\mathbb{C} \times M))$ to $\sum_{i\geq 2}
   t^{i-2} \otimes \mathcal{R}_i(\mathcal{M})$, where $\mathcal{R}_i$
   is the restriction functor from~\eqref{eq:functor-R-j}.
\end{cor}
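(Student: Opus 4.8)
The plan is to construct the isomorphism directly from the restriction functors $\mathcal{R}_i$ and to prove it is an isomorphism by exhibiting an explicit inverse built from the ascent functors $i^{\gamma_{i+2}}$ (equivalently, from the equivalence $\mathcal{I}$ of Corollary~\ref{cor:cat-eq} with $m=0$). First I would define the map
$$\Phi: K_{0}(D\fuk^{\ast}(\C\times M)) \to \Z_{2}[t]\otimes K_{0}(D\fuk^{\ast}(M)), \qquad \Phi([\mathcal{M}]) = \sum_{i\geq 2} t^{i-2}\otimes [\mathcal{R}_i(\mathcal{M})].$$
This is well-defined because each $\mathcal{R}_i$ is a triangulated functor (see the construction in the proof of Corollary~\ref{cor:dec-M}), so it descends to $K_0$, and because for any object $\mathcal{M}$ of $D\fuk^{\ast}(\C\times M)$ only finitely many $\mathcal{R}_i(\mathcal{M})$ are non-zero (as noted just before Corollary~\ref{cor:alg-cob}), so the sum lies in $\Z_{2}[t]\otimes K_0(D\fuk^{\ast}(M))$ rather than a completion.

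Next I would construct the inverse. For each $j\geq 2$ there is the inclusion functor $i^{\gamma_{j}}:\fuk^{\ast}(M)\to \fuk^{\ast}(\C\times M)$, $L\mapsto \gamma_{j}\times L$ (here $m=0$, so there are no thimbles and the curves $\gamma_j$ are as in~\S\ref{subsubsec:atoms}); it is triangulated at the derived level, hence induces $\iota_{j,*}: K_0(D\fuk^{\ast}(M))\to K_0(D\fuk^{\ast}(\C\times M))$. Define $\Psi: \Z_{2}[t]\otimes K_0(D\fuk^{\ast}(M)) \to K_0(D\fuk^{\ast}(\C\times M))$ on generators by $\Psi(t^{k}\otimes [N]) = \iota_{k+2,*}([N])$. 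That $\Psi\circ\Phi = \mathrm{id}$ is the content of Theorem~\ref{thm:main-dec} read in $K_0$: for $V\in\mathcal{L}^{\ast}(\C\times M)$ with ends $(L_1,\dots,L_s)$ the decomposition $V\cong(\gamma_s\times L_s\to\dots\to\gamma_2\times L_2)$ gives $[V] = \sum_{j=2}^{s}[\gamma_j\times L_j]$ in $K_0$, while $\mathcal{R}_i(V) = L_i$ for $2\leq i\leq s$, $\mathcal{R}_1(V)=L_1$, and $\mathcal{R}_i(V)=0$ for $i>s$; combined with the decomposition one also reads off that the class of $L_1=\mathcal{R}_1(V)$ equals $\sum_{j\geq 2}[L_j]$, so the ``$t^0$''-component is consistent with the others, and $\Psi\Phi([V]) = \sum_{j\geq 2}[\gamma_j\times L_j] = [V]$. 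Since classes $[V]$ of honest cobordisms generate $K_0(D\fuk^{\ast}(\C\times M))$ — every object of $D\fuk^{\ast}(\C\times M)$ is an iterated cone of such $V$ and $K_0$ is generated by them — this suffices. For the other composition $\Phi\circ\Psi = \mathrm{id}$ I would use Remark~\ref{rem:restr-funct1}: at the derived level $\mathcal{R}_j\circ i^{\eta_j} = \mathrm{id}$, and more precisely $\mathcal{R}_i(\gamma_j\times L)$ is $L$ if $i=j$ (and also contributes to $i=1$) and $0$ otherwise, exactly as in point~a of the proof of Corollary~\ref{cor:alg-cob}; this computes $\Phi(\iota_{j,*}[N]) = t^{j-2}\otimes[N]$, giving $\Phi\Psi = \mathrm{id}$.

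The main obstacle I anticipate is bookkeeping around the $j=1$ end. The restriction $\mathcal{R}_1$ is not independent of the others — on a cobordism $V$ it returns $L_1$, whose $K_0$-class is forced to be $\sum_{i\geq 2}[L_i]$ — so one must be careful that $\Phi$ as defined (summing only over $i\geq 2$) is genuinely inverse to $\Psi$ and that no information is lost or double-counted; this is why the statement of the corollary indexes the polynomial variable starting from $i-2$ with $i\geq 2$ rather than including an $i=1$ term. A clean way to handle this is to observe that $K_0(D\fuk^{\ast}(\C\times M))$ has the presentation with generators $[N]$ for $N$ closed Lagrangian (one copy for each ``height'', formally) and relations coming from cobordisms, and that this presentation matches $\Z_{2}[t]\otimes K_0(D\fuk^{\ast}(M))$ term by term once one uses that a cobordism relation among the heights $2,\dots,s$ already forces the height-$1$ relation; I would phrase the argument at the level of these presentations to make the matching transparent. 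A secondary point to verify carefully is that the $\iota_{j,*}$ are independent of the choice of representative curve $\gamma_j$ (among curves of the required shape), which follows from the fact that such curves are Lagrangian-isotopic through objects of $\fuk^{\ast}(\C\times M)$, hence induce the same map on $K_0$.
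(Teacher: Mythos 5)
Your proposal is correct and follows essentially the same route as the paper: both define $\Phi$ by $\sum_{i\geq 2}t^{i-2}\otimes\mathcal{R}_i(-)$, construct the inverse $\Psi$ from the ascent functors $L\mapsto\gamma_{i+2}\times L$ (the paper phrases this via the modules $\gamma_{i+2}\times\mathcal{N}$ over $\mathbb{N}\otimes\fuk^{\ast}(M)$ after invoking Corollary~\ref{cor:cat-eq}), and verify the two compositions using point (a) in the proof of Corollary~\ref{cor:alg-cob} together with Theorem~\ref{thm:main-dec}. The additional bookkeeping you flag around the $j=1$ end and the choice of curves $\gamma_j$ is sound but not emphasized in the paper.
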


\begin{proof}
   From Corollary \ref{cor:cat-eq} it is enough to show that
   $$K_{0}(D(\N \otimes \fuk^{\ast}(M)))\cong \Z_{2}[t]\otimes 
   K_{0}(D\fuk^{\ast}(M))~.~$$ To simplify notation we denote
   $G_{1}=K_{0}(D(\N\otimes \fuk^{\ast}(M)))$ and $G_{2}=\Z_{2}[t]
   \otimes K_{0}(D\fuk^{\ast}(M))$.  Given a module $\mathcal{M}$
   which is an object of $D(\N\otimes \fuk^{\ast}(M))$ we use the
   composition in (\ref{eq:seq-eq}) to define the restriction modules
   $[\mathcal{M}]_{i}$ that are objects of $D\fuk^{\ast}(M)$ and
   define the sum $\phi(\mathcal{M})=\sum_{i\geq 2} t^{i-2} \otimes
   [\mathcal{M}]_{i}\in G_{2}$. Because the restriction functors
   $\mathcal{R}_{j}$ are triangulated it is easy to see that this map
   descends to a morphism $\phi:G_{1}\to G_{2}$.  The construction of
   the modules $\gamma_{i}\times \mathcal{N}$ in the proof of
   Corollary~\ref{cor:alg-cob}, in particular point~(a) in the course
   of that proof, shows that $\phi$ is surjective. To show that $\phi$
   is injective we construct an inverse $\psi :G_{2}\to G_{1}$. We
   define $\psi (t^{i}\otimes \mathcal{N})=\gamma_{i+2}\times
   \mathcal{N}$ for each object in $\mathcal{N}\in D\fuk^{\ast}(M)$,
   where we have used here the notation from the proof of
   Corollary~\ref{cor:cat-eq}. Once we show that $\psi$ is well
   defined (in other words, that it respects the relations giving
   $K_{0}$) it immediately follows that it is an inverse of $\phi$ by
   the point~(a) in the proof of Corollary \ref{cor:alg-cob}.  But
   again as in the proof of Corollary \ref{cor:alg-cob}, namely the
   construction of $\tilde{T}$, it is easy to see that the map
   $\mathcal{N}\mapsto \gamma_{i}\times \mathcal{N}$ respects
   triangles.  As a consequence, $\psi$ is well defined and this
   concludes the proof.
\end{proof}


\section{Examples} \label{S:examples}

The purpose of this section is to exemplify various aspects of the
machinery in the paper.  We start by making more explicit the
structure contained in the writing of the cone-decompositions in
Theorem \ref{thm:main-dec-gen0} and exemplify this in the simplest
possible setting consisting of cobordisms in $\C$.  We then indicate
how the cone-decompositions associated to cobordisms in our previous
paper \cite{Bi-Co:lcob-fuk} are a consequence of the results here.  We
pursue with some cobordism examples in non-trivial Lefschetz
fibrations.  We first consider a simple horse-shoe like curve in a
Lefschetz fibration with just one critical value and make explicit how
Seidel's exact sequence follows by applying our machinery to this
case.  Finally, and this is the novel and longest part of the section,
we discuss real Lefschetz fibrations and their relation to Lagrangian
cobordism.

\subsection{Unwrapping cone-decompositions.} The decompositions
provided by Theorem \ref{thm:main-dec-gen0} contain more structure
than it appears superficially in the writing:
$$V \cong (T_{1}\otimes E_{1}\to T_{2}\otimes E_{2}\to
\ldots \to T_{m}\otimes E_{m}\to \gamma_s L_s \to \gamma_{s-1} L_{s-1}
\to \ldots \to \gamma_2 L_2 )~.~$$ Namely, see also
\S\ref{sbsb:iter-cone}, writing
$$V\cong (C_{3}\to C_{2} \to C_{1})$$
actually means
$$V\cong \textnormal{cone}(C_{3}\stackrel{f_{2}}{\to} 
\textnormal{cone}(C_{2}\stackrel{f_{1}}{\to} C_{1}))$$ and the
attaching maps $f_{i}$ as well as the intermediate cones are, of
course, crucial in determining the result of the iterated cone.
   
This point is already in evidence in the simplest setting to which can
be applied the machinery of the paper: cobordisms in $\C$ without any
positive ends (and with the negative ends having integral imaginary
coordinates).  Obviously, these cobordisms are simply disjoint unions
of circles and arcs diffeomorphic to $\R$ with horizontal ends
pointing in the negative direction. Notice that due to the uniform
monotonicity condition all circles have to enclose the same area. At
the same time, circles do not play a significant role here since they
have vanishing quantum homology and thus they are not seen by Floer
and Fukaya category machinery.

Consider two Lagrangians $V$ and $V'$ as in
Figure~\ref{fig:planar-cob} below.

Namely, $V$ consists of two connected components: $V_{0}$ and $V_{1}$
with $V_{0}$ an arc with ends at height $2$ and $6$ and $V_{1}$ an arc
with ends at height $3$ and $5$; $V'$ has also two components $V'_{0}$
an arc with ends at height $2$ and $3$ and $V'_{1}$ again an arc with
ends at height $5$ and $6$.  It is easy to see that $V$ and $V'$ are
the results of the two types of surgery on the Lagrangians $W$ and
$W'$ in the middle part of Figure \ref{fig:planar-cob}.
\begin{figure}[htbp]
   \begin{center}
     \includegraphics[scale=0.65]{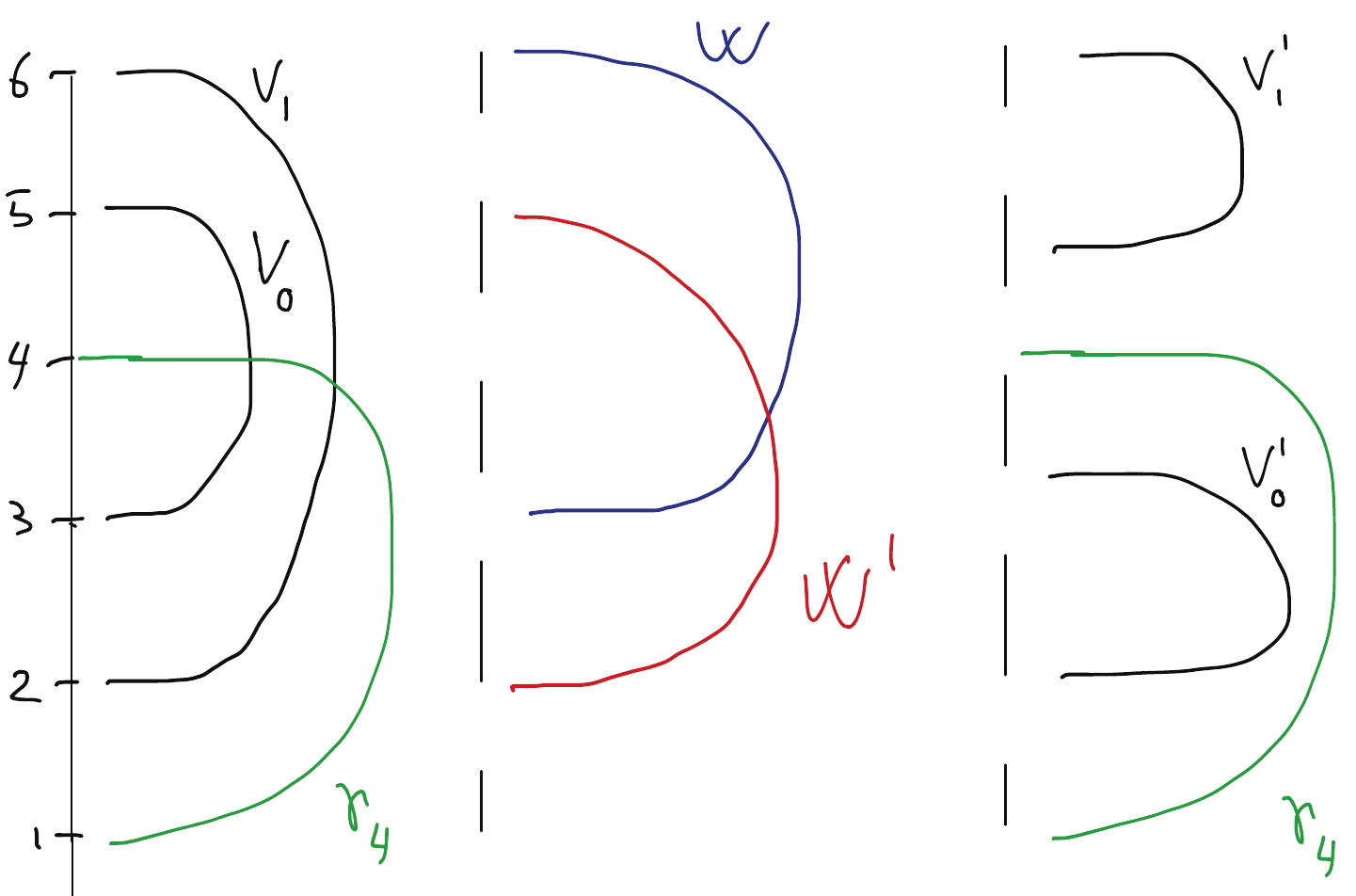}
   \end{center}
   \caption{The planar cobordisms $V=V_{0}\cup V_{1}$ and $V'=V'_{0}\cup V'_{1}$.  
     They are obtained through the two types of surgery on $W$ and $W'$. 
     We have $HF(\gamma_{4},V)\not=HF(\gamma_{4},V')$.
     \label{fig:planar-cob}}
\end{figure}
This means, in particular, as seen in \cite{Bi-Co:cob1} that $V$ and
$V'$ are themselves Lagrangian cobordant.

Theorem \ref{thm:main-dec-gen0} applied to $V$ and $V'$ produces
decompositions that, formally, in the writing of the statement of that
Theorem both look as:
$$(\gamma_{6}\to \gamma_{5}\to \gamma_{3}\to \gamma_{2})~.~$$
However, it is easy to see that $V$ and $V'$ are not isomorphic
objects in $D\fuk^{\ast}(\C)$. Indeed, $HF(\gamma_{4}, V)\not=0$ but
$HF(\gamma_{4}, V')=0$ and it is an easy exercise to see that the
actual two cone decompositions associated to $V$ and $V'$ by Theorem
\ref{thm:main-dec-gen0} are different: the intermediate cones and the
relevant attaching maps are not the same.

\

Other examples relevant in this context are associated to elementary
Lagrangian cobordisms $W:Q\cobto Q$, $W\subset \C\times M$ (here
$(M,\omega)$ is our fixed symplectic manifold). Examples of such
cobordisms are provided by Lagrangian suspension. To such a
$W$ we can easily associate a cobordism $V:\emptyset \cobto
(\emptyset, Q, Q)$. This can be done by first translating $W$ by using
$(z,x)\to (z+i,x)$ and then bending the positive end to the right and
extending it to $-\infty$ so that it has height $3$. The ends of $V$
have heights $2$ and $3$ - as in Figure \ref{fig:bending}.
\begin{figure}[htbp]
   \begin{center}
      \includegraphics[scale=0.65]{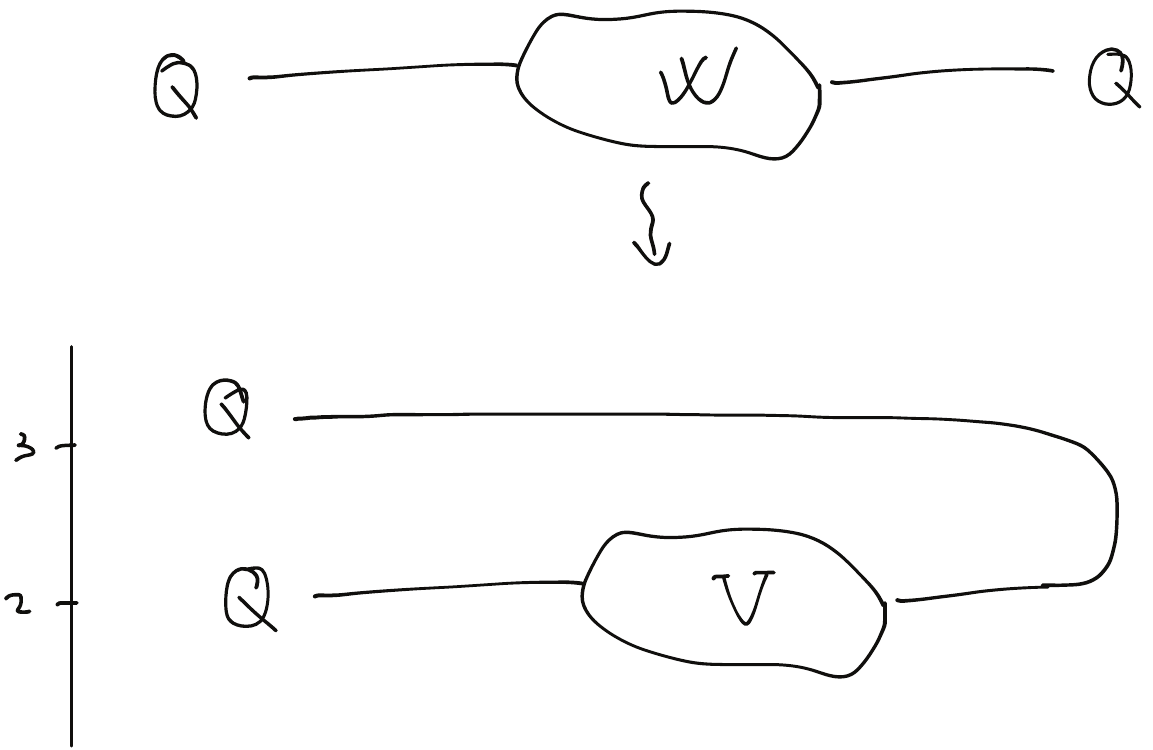}
   \end{center}
   \caption{The cobordism $V$ is obtained by bending the positive end
     of the elementary cobordism $W:Q\cobto Q$.
     \label{fig:bending}}
\end{figure}
Of course, the simplest such example, $V_{0}$, is associated to the
trivial cobordism $W_{0}=\R\times\{0\}\times Q$.

The first remark for this class of examples is that all such $V$'s are
isomorphic in $D\fuk^{\ast}(\C\times M)$ to $V_{0}$.  The reason is
that from Theorem \ref{thm:main-dec-gen0} we have a decomposition:
$$V\cong \textnormal{cone}(\gamma_{3}\times 
Q\stackrel{\bar{\varphi}_{V}}{\longrightarrow} \gamma_{2}\times
Q)~.~$$ The morphism $\bar{\varphi}_{V}$ can be identified with a
class $\varphi_{V}\in HF(Q,Q)$ which is given by the image of the
fundamental class $[Q]\in HF(Q,Q)$ under the morphism $\varphi$
defined as in Equation (\ref{eq:cone-tau-S-1}) - see also Figure
\ref{f:lags-bf-stretching} (of course, in our discussion here the
fibration is trivial so that both ends of $V$ in Figure
\ref{f:lags-bf-stretching} are equal to $Q$). Moreover, $\varphi_{V}$
is an invertible element (see also \cite{Bi-Co:cob1}). As a
consequence, the cone over $\bar{\varphi}_{V}$ is easily identified
with the cone over $\bar{\varphi}_{V_{0}}$, where
$\varphi_{V_{0}}=[Q]$. In short, the two decompositions are isomorphic
as in the diagram below
\begin{eqnarray}\label{eq:diag1}
   \begin{aligned}
      \xymatrix{ \gamma_{3}\times Q
        \ar[r]^{\bar{\varphi}_{V}}\ar[d]_{id} &
        \gamma_{2}\times Q \ar[r]\ar[d]^{\varphi_{V}^{-1}}& V\ar[d]\\
        \gamma_{3}\times Q \ar[r]^{\bar{\varphi}_{V_{0}}} &
        \gamma_{2}\times Q \ar[r] & V_{0} }
   \end{aligned}
\end{eqnarray}
but they are not identical.

\subsection{Decompositions in $D\fuk^{\ast}(M)$ induced from
  cobordisms in $\C\times M$}
Let $V'$ be a cobordism $V':\emptyset \cobto (L_{1},\ldots, L_{k})$,
$V'\subset (\C\times M, \omega_{0}\oplus \omega)$.  Theorem
\ref{thm:main-dec-gen0} and its Corollary \ref{cor:dec-M} associate to
$V'$ a cone decomposition
\begin{equation}\label{eq:new-dec} L_{1}\cong (L_{k}\to L_{k-1}\to
   \ldots \to L_{2})
\end{equation}

At the same time, the machinery in \cite{Bi-Co:lcob-fuk} applies to
cobordisms $V'': L\cobto (L_{1},\ldots, L_{k})$ and associates to such
a $V''$ another cone decomposition:
\begin{equation}\label{eq:old-dec}
   L\cong (L_{k}\to L_{k-1}\to\ldots L_{1})
\end{equation}

We want to briefly remark here that the decomposition
(\ref{eq:old-dec}) is a consequence of (\ref{eq:new-dec}). By
elementary manipulations, to see this it is sufficient to consider a
cobordism $V:\emptyset \cobto (L_{2}, L_{3},\ldots, L_{k})$ without
positive ends and with the first negative end, $L_{1}$, also empty and
show that the cone decompositions (\ref{eq:old-dec}) and
(\ref{eq:new-dec}), both associated to $V$, coincide.

For this, notice that, by following the proofs of Theorem
\ref{thm:main-dec-gen0} and Corollary \ref{cor:dec-M}, the cone
decomposition (\ref{eq:new-dec}) is deduced from the following exact
sequences of $\fuk^{\ast}(M)$ modules:
\begin{equation}\label{eq:ex-mod0}W'_{E,i-1}(r\times -)\to
   W'_{E,i}(r\times -)\to \mathcal{Y}(L_{i})~.~
\end{equation}
Here $W'_{E,i}$ are the $\fuk^{\ast}(\C\times M)$ modules that are
introduced at the Step 3 of the proof of Proposition
\ref{lem:decomp-remote}, $r$ is the horizontal line $r=\R\times \{1\}$
and $-$ stands for a variable $Y\in \mathcal{O}b(\fuk^{\ast}(M))$. The
first map in (\ref{eq:ex-mod0}) is an inclusion and the second a
quotient.  There is a slight abuse here as cobordisms of type $r\times
Y$ have obviously a positive end by contrast to the objects
considered in most of this paper, still the modules $W'_{E,i}(r\times
-)$ are well defined.  Indeed, as explained at the Step 3 of the proof
of Proposition \ref{lem:decomp-remote}, $W'_{E,i-1}(r \times Y)$ is
generated by the intersection points of $r\times Y$ with the first $i$
branches of $W'$ where $W'$ is, in our case, obtained from $V$ by a
Hamiltonian isotopy that keeps its ends fixed and moves the
non-cylindrical part of $V$ in the lower half-plane - see, for
instance, Figure \ref{fig:boxes}.  By inspecting
\cite{Bi-Co:lcob-fuk}, we see that the cone
decomposition~\eqref{eq:old-dec} follows from exact sequences of
$\fuk^{\ast}(M)$ modules:
$$\mathcal{M}_{V,i-1}\to \mathcal{M}_{V,i}\to \mathcal{Y}(L_{i})~.~$$
For the description of these modules see Figure 4 and Equation (4) in
\cite{Bi-Co:lcob-fuk}. It immediately, follows that
$\mathcal{M}_{V,i}=W'_{E,i}(r\times -)$ and thus (\ref{eq:new-dec})
and (\ref{eq:old-dec}) are identified.

\subsection{A simple cobordism in a Lefschetz fibration with a single
  critical point}\label{subsubsec:simple-cob}
Consider a Lefschetz fibration $\pi :E\to \C$ of fibre $(M,\omega)$
and with a single singularity $x_{1}$ of critical value $v_{1}$. We
assume that the fibration is tame outside a set $U\subset \C$ as in
Figure \ref{fig:example} and we consider a cobordism $V\subset E$ that
projects to the curve $\gamma\in \C$. As in the picture this curve
turns once around $v_{1}$.  Are also pictured there the curves
$\gamma_{2}$ and $t_1$ that appear in the statement of Theorem
\ref{thm:main-dec-gen0} as well as the ``mirror'' singularity $x'_{1}$
and the matching sphere $\hat{S}_{1}$ that appear in the proof of this
theorem (see \S\ref{subsec:prof-main-t}).

By the relation between the Dehn twist and the monodromy of Lefschetz
fibrations, the ends of $V$ are so that if the first end of $V$ is the
Lagrangian $L\subset M$, then the second end is $\tau_{S}L$ for $S$ an
appropriate vanishing sphere associated to $x_{1}$, this can be taken
to be the sphere over the end of the curve $t_{1}$.

\begin{figure}[htbp]
   \begin{center}
      \includegraphics[scale=0.85]{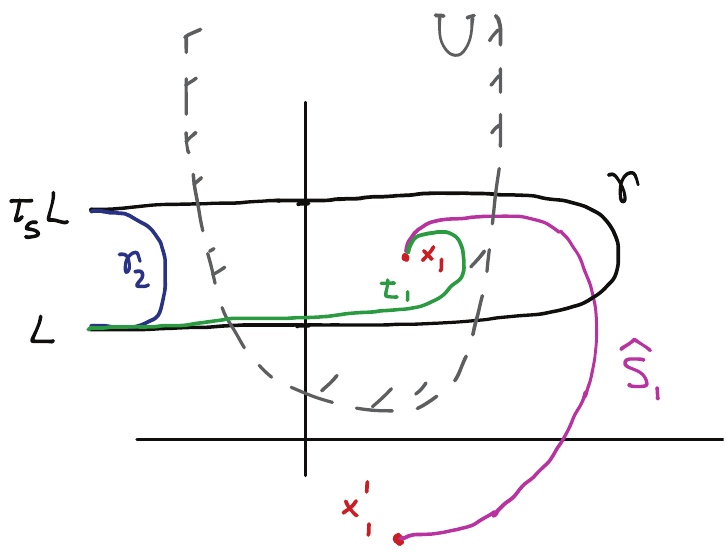}
   \end{center}
   \caption{The curves $\gamma$, $\gamma_{2}$, $t_{1}$, the region $U$ 
     outside which the fibration $\pi:E\to \C$ is tame and the matching 
     sphere $\hat{S}_{1}$ that is included in the extended
     fibration $\hat{\pi}: \hat{E}\to\C$.
     \label{fig:example}}
\end{figure}
Theorem \ref{thm:main-dec-gen0} applied to $V$ shows that:
\begin{equation}\label{eq:V-cone}
   V\cong \textnormal{cone} \/(T_{1}\otimes E_{1}\to \gamma_{2}\times \tau_{S}L )
\end{equation}
where, as in (\ref{eq:E-i-s}), $E_{1}=HF(\hat{S}_{1}, V)$. In this
case we easily see that $HF(\hat{S}_{1}, V)\cong HF(S,L)$. By applying
the restriction functor $\mathcal{R}_{1}$ to the equation
(\ref{eq:V-cone}) we obtain
$$L\cong \textnormal{cone}\/(S\otimes HF (S,L)\to \tau_{S}L)$$
which is just another way to express Seidel's exact triangle from
Proposition~\ref{t:ex-tr-compact}.

It is instructive to briefly discuss the case when the intersection
between $S$ and $L$ is a single point. In this case consider a
thimble $\hat{T}_{1}$ that is included in the initial fibration
$\pi:E\to \mathbb{C}$ and covers the curve that is given by the
projection of $\hat{S}_{1}$ in Figure \ref{fig:example} but extended
horizontally to $-\infty$.  (there is no added singularity $x'_{1}$ in
this case).  This thimble intersects $V$ in a single point and one can
surger $V$ and $\hat{T}_{1}$ at this point. The resulting manifold
$\hat{V} = \hat{T}_1 \# V$ is monotone and has cylindrical ends $S$,
$L$ and $\tau_{S}L$. Moreover, by the same arguments as
in~\S\ref{subsec:Dehn-disj}, $\hat{V}$ can be Hamiltonian isotoped
(with compact support) away from $U$.  That means that $\hat{V}$ can
be actually regarded as a cobordisms embedded in $\C\times M$ and thus
the decomposition result from~\cite{Bi-Co:lcob-fuk} (that applies to
cobordisms in $\C\times M$) implies already the existence of the exact
triangle $L\cong (S\to \tau_{S} L)$. This argument applies as well
when the initial cobordism $V$ is more general than the one discussed
till now but again under the restriction that $\hat{T}_{1}$ intersects
$V$ (transversely) in a single point.
 
Coming back to our $V$, pictured in Figure~\ref{fig:example}, there is
yet another equivalent approach to produce a cobordism $\hat{V}$ with
the properties mentioned above that is possibly even more direct.
This is pictured in Figure \ref{fig:example2}.
\begin{figure}[htbp]
   \begin{center}
\includegraphics[scale=0.65]{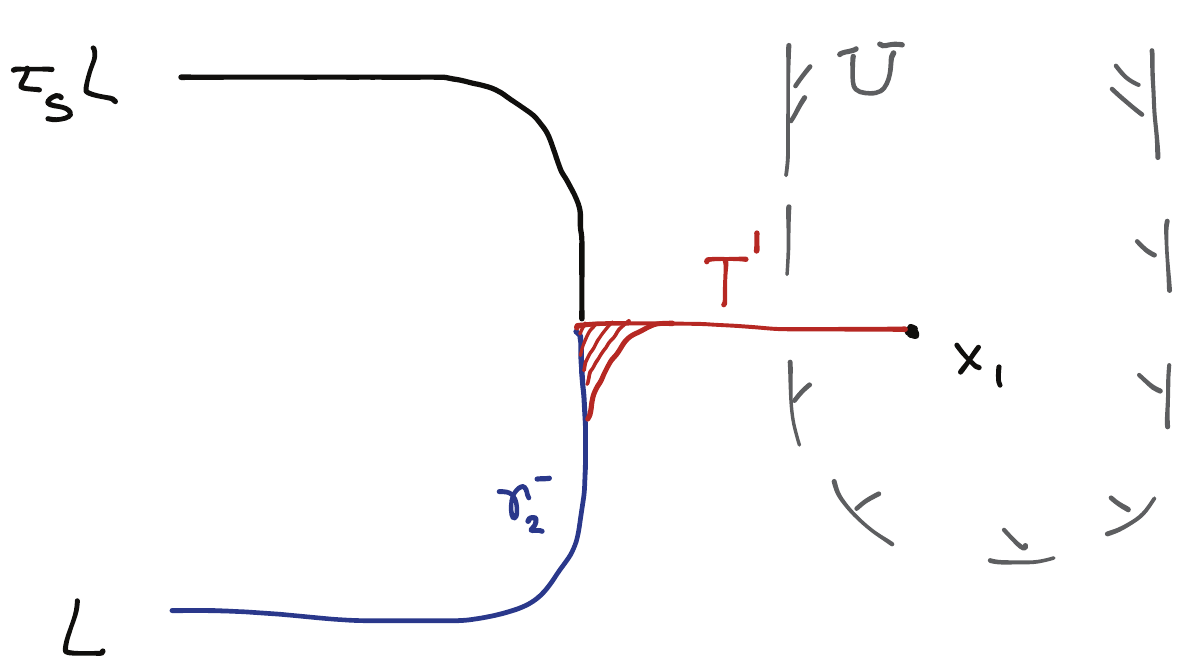}
   \end{center}
   \caption{$Y$-surgery between $\gamma_{2}^{-}\times L$ and the
     thimble $T'$ in case $L$ and $S$ have a single intersection
     point.
     \label{fig:example2}}
\end{figure}
In this case, we consider a thimble $T'$ that goes horizontally
towards $-\infty$ starting from $x_{1}$ and we do a $Y$-surgery in a
single point between $T'$ and $\gamma_{2}^{-}\times L$. Here
$\gamma_{2}^{-}$ is the first half of the curve $\gamma_{2}$ and
$Y$-surgery is the construction of the trace of the surgery as
Lagrangian cobordism as described in \cite{Bi-Co:cob1} \S 6.1. We can
then cut $T'$ outside of $U$ and thus obtain another cobordism which
can be regarded as embedded in $\C\times M$. Moreover the latter
cobordism will have $S, L$ and $\tau_{S}L$ as its ends.
  \begin{figure}[htbp]
   \begin{center}
\includegraphics[scale=0.65]{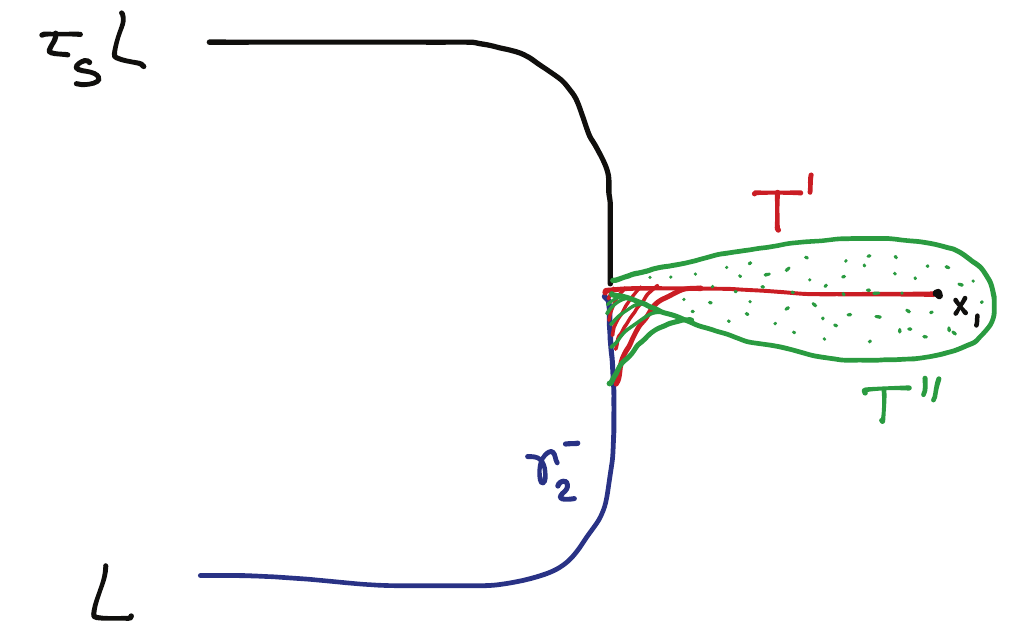}
   \end{center}
   \caption{Iterated $Y$-surgery with two thimbles $T'$ and a copy 
     $T''$ of $T'$. The projection of $T''$ fills the 
   green dotted area. 
   \label{fig:example3}}
\end{figure} 
Finally, it is useful to note that in case the number of intersection
points of $L$ and $S$ is at least two, both constructions above fail.
In both cases, it is still possible to do an iterated surgery with a
number of thimbles equal to the number of intersection points between
$L$ and $S$, basically by the same method as described in
\S\ref{subsubsec:multipl-surg}.  However when using these copies
either cylindricity at infinity is lost or the resulting manifold,
after surgery, is no longer embedded but only immersed.  As an
example, if we perform the $Y$-surgery in the case when there are two
intersection points and project the resulting manifold $\hat{V}$ onto
$\C$ the image of $\hat{V}$ is as in Figure \ref{fig:example3}: the
thimble $T'$ can be conserved as before - its projection is in red -
but the additional copy of it, $T''$, will project as the green dotted
region there, and it is not clear how to obtain a cobordism (which is
cylindrical at $\infty$) when passing to $\mathbb{C} \setminus U$.  As
a last remark, this $\hat{V}$, or a small perturbation thereof, can
also be viewed as obtained by stretching $V$ in the direction of
$-\nabla \textnormal{Re}(\pi))$.

\subsection{Changes of generators}
The generators appearing in Theorem~\ref{thm:main-dec-gen0}, in
particular, the $T_{i}$'s are not always the most convenient for
calculations even if they appear naturally in our proof. It is however
easy to change generators in case a different choice is preferable. We
exemplify this in the case of one Lefschetz fibration which we assume
to fit the setting of Theorem \ref{thm:main-dec} and with only three
critical points, of critical values $v_{1}, v_{2}, v_{3}$. In particular, $m=3$. 

We consider two families of thimbles $T_i$, $T'_i$, $i=1,2,3$, that
are like in the statement of Theorem \ref{thm:main-dec-gen0} and such
that the $T_i's$ cover curves $t_{i}$ and the $T'_i$'s cover curves
$t'_{i}$ as in in Figure~\ref{fig:var-thimbles}.

\begin{figure}[htbp]
   \begin{center}
\includegraphics[scale=0.7]{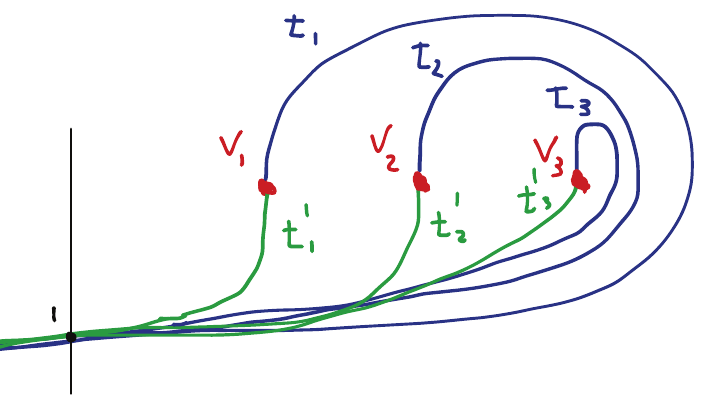}
   \end{center}
   \caption{The projections $t_{i}$ and respectively $t'_{i}$, 
     of the thimbles $T_{i}$, respectively $T'_{i}$, $i=1,2,3$ associated to the critical points
     $x_{1}, x_{2}, x_{3}$ of critical values $v_{1}, v_{2}, v_{3}$.
     \label{fig:var-thimbles}}
\end{figure}
It is easy to see that by applying Theorem \ref{thm:main-dec-gen0} to
the thimbles $T'_{i}$ we obtain first $T'_{3}\cong T_{3}$. Further,
$T'_{2}\cong \textnormal{cone}(T_{2}\to T_{3}\otimes E^{2}_{3})$, with
$E^{2}_{3}=HF(\hat{S}_{3}, \tau_{\hat{S}_{2}}\tau_{\hat{S}_{1}}
T'_{2})$. Notice also $\tau_{\hat{S}_{1}}T'_{2}= T'_{2}$ and
$\tau_{\hat{S}_{2}} T'_{2}$ is just the one point surgery between
$\hat{S}_{2}$ and $T'_{2}$. It follows $E^{2}_{3}\cong
HF(S_{3},S_{2})$ where $S_{i}$ are vanishing spheres associated to the
singularity $x_{i}$ (inside a fixed fibre
$(M,\omega)=\pi^{-1}(z_{0})$).  Thus
$$T_{2}\cong (T'_{3}\otimes HF(S_{3},S_{2})\to T'_{2})~.~$$ Similarly,
$T_{1}'\cong \textnormal{cone}(T_{1}\to T_{2}\otimes E^{1}_{2}\to
T_{3}\otimes E^{1}_{3})$ and we can again estimate:
$E^{1}_{2}=HF(\hat{S}_{2}, \tau_{\hat{S}_{1}}T'_{1})\cong HF(S_{2},
S_{1})$,
$E^{1}_{3}=HF(\hat{S}_{3},\tau_{\hat{S}_{2}}\tau_{\hat{S}_{1}}T'_{1})$.
Thus we get:
$$T_{1}\cong (T'_{3}\otimes HF(S_{3},S_{2})\otimes HF(S_{2},S_{1})
\to T'_{2}\otimes HF(S_{2},S_{1})\to T'_{3} \otimes E^{1}_{3}\to
T'_{1})~.~$$ This expression can be further simplified. For instance,
the second and third terms can be switched because
$\hom(T'_{2},T'_{3})$ is acyclic (i.e. $HF(T'_{2},T'_{3}) = 0$). In
conclusion, we can write
$$T_{1}\cong (T'_{3}\otimes E'_{3}\to T'_{2}\otimes E'_{2}\to T'_{1})$$
for appropriate $\mathcal{A}$-modules $E'_{3}$, $E'_{2}$.  Using these
arguments the decompositions given by Theorem \ref{thm:main-dec-gen0}
can be re-written in the generators $T'_{i}$: the sequence
$(T_{1}\otimes E_{1}\to \ldots T_{3}\otimes E_{3})$ inside the
cone-decomposition provided by that theorem will be replaced by
$(T'_{3}\otimes G_{3}\to T'_{2}\otimes G_{2}\to T'_{1}\otimes G_{1})$
for appropriate modules $G_{i}$.
 
The manipulations above can be extended to fibrations with more than
three singularities in a straightforward way. The main difficulty in
making these changes of generators explicit is in determining the
modules $G_{i}$. In this respect, it is useful to note that there
exists an alternative proof of the decompositions in Theorem
\ref{thm:main-dec-gen0} that avoids the geometric disjunction step
contained in \S\ref{subsec:Dehn-disj} and implements iteratively the
stretching argument in \S\ref{s:cob-vpt} to the case of more
singularities. While this method becomes quite involved for more than
a few singularities, it offers sometimes a more direct way to estimate
the relevant modules for specific generating families of thimbles.
 
\subsection{Real Lefschetz fibrations} \label{sb:real-lef} Real
Lefschetz fibrations have recently been studied from the topological
and real algebraic geometry viewpoints (see e.g.~\cite{De-Sa:products,
  Sal:real-elements, Sal:classif-tr, Sal:invariants-tr}).  Lagrangian
cobordism is naturally related to this notion and we describe this
relationship in the first subsection below. We then pursue with a
construction of such fibrations and, in the last subsection, with a
concrete example.
  
\subsubsection{Lagrangian cobordism and real Lefschetz fibrations} 
  
Let $\pi: E \longrightarrow \mathbb{C}$ be a Lefschetz fibration
endowed with a symplectic structure $\Omega$, as in
Definition~\ref{df:lef-fib}. Denote by $(M, \omega)$ the general fiber
of $(E, \Omega)$. Let $c_E: E \longrightarrow E$ be an anti-symplectic
involution, i.e. $c_E^* \Omega = -\Omega$ and $c_E \circ c_E = \id$.
Assume further that $c_E$ covers the standard complex conjugation
$c_{\mathbb{C}}: \mathbb{C} \longrightarrow \mathbb{C}$, namely $\pi
\circ c_E = c_{\mathbb{C}} \circ \pi$. Denote by $V =
\textnormal{Fix}(c_E)$ the fixed point locus of $c_E$.  Note that the
projection $\pi(V)$ of $V$ to $\mathbb{C}$ is a subset of
$\mathbb{R}$. The following proposition shows that $V$ is a Lagrangian
cobordism and also gives a criterion for its monotonicity.

\begin{prop} \label{p:real-lef} Under the above assumptions $V$ is a
   Lagrangian cobordism with at most one positive end and at most one
   negative one (but possibly without any ends at all). Its projection
   $\pi(V) \subset \mathbb{R}$ is of the form $\cup_{j \in
     \mathcal{S}} \overline{I}_j$, where $\mathcal{S}$ is a subset of
   the set of connected components of $\mathbb{R} \setminus
   \textnormal{Critv}(\pi)$, $I_j$ stands for the path connected
   component corresponding to $j$ and $\overline{I}_j$ is the closure
   of $I_j$.  Thus $\partial \, \pi(V)$ is a subset of
   $\textnormal{Critv}(\pi) \cap \mathbb{R}$.

   Moreover, for every $z \in \mathbb{R} \setminus
   \textnormal{Critv}(\pi)$ the part of $V$ lying over $z$, $V_z :=
   E_z \cap V$, coincides with the fixed point locus of the
   anti-symplectic involution $c_E|_{E_z}$ hence is either empty or a
   smooth Lagrangian submanifold of $E_z$ (possibly disconnected). In
   particular, the Lagrangians corresponding to the ends of $V$ (if
   they exist) are real with respect to restriction of $c_E$ to the
   regular fibers over the real axis at $\pm \infty$.

   \pbhl{
   If $(E, \Omega)$ is a monotone symplectic manifold then $V$ is a
   monotone Lagrangian submanifold of $E$. Further, denote by
   $c_1^{\min}(E)$ the minimal Chern number on spherical classes in
   $E$ and by $N_V$ the minimal Maslov number of $V$. If
   $c_1^{\min}(E)$ is odd then $c_1^{\min}(E) | N_V$, and if
   $c_1^{\min}(E)$ is even then $\tfrac{1}{2}c_1^{\min}(E) | N_V$.}

 \pbhl{If $\dim_{\mathbb{C}} M \geq 2$ and $(M, \omega)$ is monotone
   then $(E, \Omega)$ is monotone too and
   $c_1^{\min}(E) = c_1^{\min}(M)$, hence $V$ is a monotone Lagrangian
   cobordism.}
\end{prop}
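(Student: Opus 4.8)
The plan is to verify, in order, the three assertions of the last statement: (a) $V$ is a Lagrangian cobordism of the stated shape; (b) the monotonicity and minimal Maslov divisibility claims in the general monotone case; and (c) the sharpening when $\dim_{\mathbb{C}}M\geq 2$ and $M$ is monotone. For (a), first observe that since $c_E$ is anti-symplectic and involutive, $V=\textnormal{Fix}(c_E)$ is a (possibly disconnected, possibly empty) Lagrangian submanifold of $(E,\Omega)$; this is a standard fact. From $\pi\circ c_E = c_{\mathbb{C}}\circ\pi$ we get $\pi(V)\subset\textnormal{Fix}(c_{\mathbb{C}})=\mathbb{R}$. Over a regular real value $z\in\mathbb{R}\setminus\textnormal{Critv}(\pi)$, the restriction $c_E|_{E_z}$ is an anti-symplectic involution of the fiber $E_z$, so $V_z=\textnormal{Fix}(c_E|_{E_z})$ is either empty or a smooth Lagrangian of $E_z$. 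The key point is that, as $z$ varies over a connected component $I_j$ of $\mathbb{R}\setminus\textnormal{Critv}(\pi)$, the parallel transport maps (which are symplectomorphisms commuting with the conjugation, because $c_E$ preserves the horizontal distribution $\mathcal{H}$ — this follows since $\mathcal{H}$ is defined by $\Omega$-orthogonality and $c_E^*\Omega=-\Omega$) carry $V_z$ to $V_{z'}$; hence either $V_z=\emptyset$ for all $z\in I_j$ or for none, and in the latter case $V|_{I_j}$ is the trail of a fixed Lagrangian along $I_j$. So $\pi(V)=\bigcup_{j\in\mathcal{S}}\overline{I}_j$ for the subset $\mathcal{S}$ of components on which $V$ is nonempty, and $\partial\,\pi(V)\subset\textnormal{Critv}(\pi)\cap\mathbb{R}$. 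Since $\textnormal{Critv}(\pi)$ is finite and $V$ is cylindrical at infinity along the real axis (again because parallel transport is conjugation-equivariant and trivial in the appropriate sense there), $V$ has at most one unbounded component in each of the two real directions, giving at most one positive and at most one negative end; these ends are the real loci of $c_E$ in the fibers at $\pm\infty$.

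For (b), assume $(E,\Omega)$ is monotone. Since $V$ is the fixed locus of an anti-symplectic involution, a standard doubling argument applies: given $u:(D^2,\partial D^2)\to(E,V)$, the map $\bar u(z):=c_E(u(\bar z))$ is another such disk, and gluing $u$ with $\bar u$ along $\partial D^2$ produces a sphere $u\#\bar u:S^2\to E$. One checks, using $c_E^*\Omega=-\Omega$, that $\omega(u\#\bar u)=2\,\omega(u)$, and using that $c_E$ reverses orientation on normal directions while acting on $V$, that the Maslov index satisfies $\mu(u)=\tfrac12 c_1(u\#\bar u)$ or, depending on parity conventions, that $2\mu(u)=c_1(u\#\bar u)$ up to the contribution of the fixed frame — the clean statement being $\mu(u)\equiv 0$ modulo the appropriate divisor of $c_1^{\min}(E)$. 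Concretely: $c_1(u\#\bar u)\in c_1^{\min}(E)\mathbb{Z}$, so $2\mu(u)\in c_1^{\min}(E)\mathbb{Z}$; if $c_1^{\min}(E)$ is odd this forces $\mu(u)\in c_1^{\min}(E)\mathbb{Z}$ (as $2$ is invertible mod an odd number), while if $c_1^{\min}(E)$ is even it only gives $\mu(u)\in\tfrac12 c_1^{\min}(E)\mathbb{Z}$. Monotonicity of $V$ is then immediate: $\omega(u)=\tfrac12\omega(u\#\bar u)=\tfrac12\cdot 2\rho\, c_1(u\#\bar u)=2\rho\mu(u)$ (using monotonicity of $E$ with constant $\rho$), and $N_V\geq 2$ follows from the divisibility together with $c_1^{\min}(E)\geq 2$ — or one handles the remaining small cases directly. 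This gives the divisibility statements $c_1^{\min}(E)\mid N_V$ (odd case) and $\tfrac12 c_1^{\min}(E)\mid N_V$ (even case).

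For (c), when $\dim_{\mathbb{C}}M\geq 2$ the map $\pi_2(M)\to\pi_2(E)$ induced by inclusion of a fiber is surjective (as already used in Remark~\ref{r:S_k-monot}), and $c_1(E)|_{H^2(M)}=c_1(M)$; hence $(M,\omega)$ monotone implies $(E,\Omega)$ monotone with $c_1^{\min}(E)=c_1^{\min}(M)$. Applying part (b) then shows $V$ is a monotone Lagrangian cobordism, which is the final claim. The main obstacle I expect is getting the sign and parity bookkeeping in the doubling argument exactly right — in particular pinning down the precise relation between $\mu(u)$ and $c_1(u\#\bar u)$, which depends on how the real structure interacts with the trivialization of $u^*TE$ over $\partial D^2$; this is where care is needed, and it is also the source of the dichotomy between the odd and even cases of $c_1^{\min}(E)$. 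Everything else (Lagrangianity of $V$, equivariance of parallel transport, the shape of $\pi(V)$, and the passage from $M$ to $E$) is routine given the earlier results in the paper.
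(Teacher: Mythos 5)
Your argument tracks the paper's proof quite closely. For the shape of $\pi(V)$ and the cobordism structure you argue via $c_E$-equivariance of the connection $\mathcal{H}$ (which is correct: $c_E$ preserves both the vertical distribution and, since $c_E^*\Omega = -\Omega$, the $\Omega$-orthogonal complement). The paper instead uses the Lagrangian condition directly: since $T_xV$ is Lagrangian, $T_xV$ cannot be contained in the vertical tangent space at a regular $x$, so $D\pi_x|_{T_xV}\neq 0$; hence $\pi(V)\setminus\textnormal{Critv}(\pi)$ is open in $\mathbb{R}$, and closedness of $V$ forces any component $I$ meeting $\pi(V)$ to be contained in it, with $V|_I$ invariant under parallel transport because any Lagrangian projecting to a curve in the base is. Both routes are valid; the paper's is slightly more economical, needing only properties of Lagrangians in Lefschetz fibrations rather than equivariance of the horizontal distribution. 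Part (c) is identical to the paper.

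On the reflection argument, you flagged yourself the right place to be worried, and the worry is justified. The relation you propose, $2\mu(u)=c_1(u\#\bar u)$, is not the standard one. For a real Lagrangian $L=\textnormal{Fix}(c)$ and $u\in\pi_2(M,L)$, the doubled sphere satisfies $\mu(u)=\pm c_1(u\#\bar u)$ (no extra factor of $2$). A clean sanity check is $\mathbb{R}P^n\subset\mathbb{C}P^n$: the generator of $\pi_2(\mathbb{C}P^n,\mathbb{R}P^n)$ has $\mu=n+1$, and its double is a line, with $c_1=n+1$; so $\mu=c_1(\textnormal{double})$, not $2\mu=c_1$. Your relation would give $2(n+1)=n+1$, which is false. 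Using the correct relation gives the stronger divisibility $c_1^{\min}(E)\mid N_V$ outright, from which the paper's stated dichotomy (odd vs.\ even $c_1^{\min}$) follows as a weaker consequence. So your final conclusion is a true statement and matches the paper's, but the intermediate identity you used to reach it is wrong; the correct identity actually yields more. The monotonicity computation at the end of (b) goes through with either relation (the proportionality constant changes, but monotonicity does not), so that part is fine.
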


\begin{proof}
   That $V$ is a (smooth) Lagrangian submanifold follows from it being
   the fixed point locus of an anti-symplectic involution.

   We now show that $V$ is a cobordism and prove the other statements
   about the projection $\pi(V)$.  Since $V$ is Lagrangian,
   $D\pi_{x}|_{T_x V} \longrightarrow \mathbb{R}$ vanishes iff $x \in
   \textnormal{Crit}(\pi)$ (see e.g.  Chapter~16
   of~\cite{Se:book-fukaya-categ}). It follows that $\pi(V) \setminus
   \textnormal{Critv}(\pi)$ is an open subset of $\mathbb{R}$ and all
   the points in this subset are regular values of the projection
   $\pi|_V : V \longrightarrow \mathbb{R}$. By construction $V \subset
   E$ is a closed subset. Therefore if $I \subset \mathbb{R} \setminus
   \textnormal{Critv}(\pi)$ is a connected component and $\pi(V) \cap
   I \neq \emptyset$ then $I \subset \pi(V)$. Next, notice that since
   $V$ is Lagrangian it is invariant with respect to parallel
   transport along any intervals $I \subset \pi(V) \setminus
   \textnormal{Critv}(\pi)$.

   The statements about $V_z = \textnormal{Fix}(c_E|_{E_z})$ follow
   directly from the definitions.

   We now address the monotonicity of $V$. This follows from spherical
   monotonicity of $(E, \Omega)$, by a standard reflection argument
   based on the existence of the anti-symplectic involution $c_E$ and
   the fact that $V = \textnormal{Fix}(c_E)$.

   Finally, it remains to prove the statement relating the spherical
   monotonicity of $(M, \omega)$ with that of $E$. Let
   $E_{z_0} \subset E$ be a smooth fiber endowed with the symplectic
   structure induced by $\Omega$ (so that $(M, \omega)$ is
   symplectomorphic to $E_{z_0}$). Assume that
   $\dim_{\mathbb{C}} E_{z_0} \geq 2$ and that $E_{z_0}$ is
   monotone. It is easy to see that the inclusion,
   $\pi_2(E_{z_0}) \longrightarrow \pi_2(W)$ is surjective and this
   implies the monotonicity statement.
   \end{proof}

In the next subsection we will show how to construct real Lefschetz
fibrations out of Lefschetz pencils arising in real algebraic
geometry.

\subsubsection{Constructing real Lefschetz fibrations}
\label{sb:cnstr-real-lef}

Let $X$ be a smooth complex projective variety endowed with a real
structure, namely an anti-holomorphic involution $c_X: X
\longrightarrow X$. Let $\mathscr{L}$ be a very ample line bundle on
$X$ and assume further that it is endowed with a real structure
compatible with $c_X$. By this we mean an anti-holomorphic involution
$c_{\mathscr{L}} : \mathscr{L} \longrightarrow \mathscr{L}$ covering
$c_X$, i.e. $\textnormal{pr} \circ c_{\mathscr{L}} = c_X \circ
\textnormal{pr}$, where $\textnormal{pr}: \mathscr{L} \longrightarrow
X$ is the bundle projection.

Denote by $H^0(\mathscr{L})$ the space of holomorphic sections of
$\mathscr{L}$ and by $\mathbf{P} := \mathbb{P}\bigl( H^0(\mathscr{L})
\bigr)^*$ the projectivization of its dual (which can also be thought
of as the space of hyperplanes in $H^0(\mathscr{L})$). We denote by
$\mathbf{P}^* := \mathbb{P} H^0(\mathscr{L})$ the projectivization of
the space of sections itself. Note that $\mathbf{P}^*$ is the dual
projective space of $\mathbf{P}$, hence the notation.

The real structure of $\mathscr{L}$ induces a real structure $c_H$ on
$H^0(\mathscr{L})$ defined by $c_H(s) = c_{\mathscr{L}} \circ s \circ
c_X$. Denote by $H^0_{\mathbb{R}}(\mathscr{L}) \subset
H^0(\mathscr{L})$ the space of real sections of $\mathscr{L}$ (i.e.
sections $s$ with $c_H(s) = s$). The real structure $c_H$ descends to
real structures on $\mathbf{P}^*$ and $\mathbf{P}$ which, by abuse of
notation, we continue to denote both by $c_H$.  The fixed point locus
of $c_H$ on $\mathbf{P}$ will be denoted by $\mathbf{P}_{\mathbb{R}}$
and that on $\mathbf{P}^*$ by $\mathbf{P}^*_{\mathbb{R}}$.

Consider now the projective embedding defined using the sections of
$\mathscr{L}$, $X \hooklongrightarrow \mathbf{P}$.  This embedding is
real in the sense that it commutes with $(c_X, c_H)$. Furthermore,
there is an isomorphism between $\mathbf{P}$ and ${\mathbb{C}}P^N$
which sends $c_H$ to the standard real structure $c_{{\mathbb{C}}P^N}$
of ${\mathbb{C}}P^N$ (hence $\mathbf{P}_{\mathbb{R}}$ is sent under
this isomorphism to ${\mathbb{R}}P^N$). We fix once and for all such
an isomorphism.  Denote by $\omega_{{\mathbb{C}}P^N}$ the standard
symplectic structure of ${\mathbb{C}}P^N$ normalized so that the area
of $\mathbb{C}P^1$ is $1$. Since $c_{{\mathbb{C}}P^N}$ is
anti-symplectic with respect to $\omega_{{\mathbb{C}}P^N}$ the
previously mentioned isomorphism yields a K\"{a}hler form
$\omega_{\mathbf{P}}$ on $\mathbf{P}$ and therefore also a K\"{a}hler
form $\omega_X$ on $X$ so that $c_X$ is anti-symplectic with respect
to $\omega_X$.


Let $\Delta(\mathscr{L}) \subset \mathbf{P}^*$ be the discriminant
locus (a.k.a. the dual variety of $X$), which by definition is the
variety consisting of all section $[s] \in \mathbf{P}^*$ (up to a
constant factor) which are somewhere {\em non-transverse} to the
zero-section.  Denote by $\Delta_{\mathbb{R}}(\mathscr{L}) =
\Delta(\mathscr{L}) \cap \mathbf{P}^*_{\mathbb{R}}$ its real part.

Let $\ell \subset \mathbf{P}^*$ be a line which is invariant under
$c_H$ and intersects $\Delta(\mathscr{L})$ only along its smooth
strata and transversely. Fix an isomorphism $\ell \approx
\mathbb{C}P^1$ and endow $\ell$ with a standard K\"{a}hler structure
$\omega_{\ell}$ normalized so that its total area is $1$. Consider the
symplectic manifold $\ell \times X$ endowed with the symplectic
structure $\omega_{\ell} \oplus \omega_X$. For every $\lambda \in
\mathbf{P}^*$ denote by $\Sigma^{(\lambda)} = s^{-1}(0) \subset X$ the
zero locus corresponding to a section $s$ representing $\lambda$.
(The varieties $\Sigma^{(\lambda)}$ are sometimes called hyperplane
sections since they can also be viewed as the intersection of the
image of $X$ in $\mathbf{P}$ with linear hyperplanes.) Note that for
all $\lambda \not \in \Delta(\mathscr{L})$, the variety
$\Sigma^{(\lambda)}$ is smooth. We endow these varieties with the
symplectic structure induced from $\omega_X$. The complement of the
discriminant, $\mathbf{P}^* \setminus \Delta(\mathscr{L})$, is path
connected (since $\Delta(\mathscr{L})$, being a proper complex
subvariety of $\mathbf{P}^*$, has real codimension $\geq 2$).
Therefore all the symplectic manifolds $\Sigma^{(\lambda)}$, $\lambda
\in \mathbf{P}^* \setminus \Delta(\mathscr{L})$, are mutually
symplectomorphic.

For every $\lambda \in \mathbf{P}^*_{\mathbb{R}} \setminus
\Delta_{\mathbb{R}}(\mathscr{L})$ the manifold $\Sigma^{(\lambda)}$
has a real structure induced by $c_X$. Denote its real part by
$\Sigma_{\mathbb{R}}^{(\lambda)}$. We stress that {\em in contrast to
  $\mathbf{P}^* \setminus \Delta(\mathscr{L})$, its real part
  $\mathbf{P}^*_{\mathbb{R}} \setminus
  \Delta_{\mathbb{R}}(\mathscr{L})$ is in general disconnected and the
  topology of $\Sigma^{(\lambda)}_{\mathbb{R}}$ depends on the
  connected component $\lambda$ belongs to.} Define now
$$\widehat{E} = \{ (\lambda, x) \mid \lambda \in \ell, \; x \in
\Sigma^{(\lambda)} \} \subset \ell \times X.$$ Due to the
transversality assumptions between $\ell$ and $\Delta(\mathscr{L})$
the variety $\widehat{E}$ is smooth. We endow it with the symplectic
structure $\widehat{\Omega}$ induced by $\omega_{\ell} \oplus
\omega_X$.

The space $\widehat{E}$ comes with two ``projections'', $\pi:
\widehat{E} \longrightarrow \ell$ and $p_X: \widehat{E}
\longrightarrow X$, induced by the two projections from $\ell \times
X$ to its factors. The first one is a Lefschetz fibration (whose base
is $\ell \approx \mathbb{C}P^1$). The fact that the critical points of
$\pi$ are non-degenerate follows from the transversality assumptions
on the intersection of $\ell$ and $\Delta(\mathscr{L})$. The second
projection (which will not be used here) realizes $\widehat{E}$ as the
blow-up $\textnormal{Bl}_B(X) \longrightarrow X$ of $X$ along the base
locus $B$ of the pencil $\ell$ (i.e. $B = \{ x \in X \, |\, x \in
\Sigma^{(\lambda)} \; \forall \, \lambda \in \ell\}$). The involutions
$c_H$ and $c_X$ induce an anti-holomorphic involution on $\widehat{E}$
which is also anti-symplectic with respect to $\widehat{\Omega}$.

Let $D \subset \ell$ be a closed disk which is invariant under $c_H$.
Identify $\ell \setminus D$ with $\mathbb{C}$ via an orientation
preserving diffeomorphism which commutes with $(c_H, c_\mathbb{C})$,
where $c_{\mathbb{C}}$ is the standard conjugation on $\mathbb{C}$.
The real part $\ell_{\mathbb{R}} \setminus D$ of $\ell \setminus D$ is
sent by this diffeomorphism to $\mathbb{R}$.

By restricting $\pi$ to the complement of $D$ we obtain a Lefschetz
fibration $E = \pi^{-1}(\ell \setminus D)$ over $\ell \setminus D
\cong \mathbb{C}$.  We endow $E$ with the symplectic structure
$\Omega$ coming from $\widehat{\Omega}$ and by a slight abuse of
notation denote its projection by $\pi: E \longrightarrow \mathbb{C}$.
Restricting the preceding anti-symplectic involution of $\widehat{E}$
to $E$ we obtain an anti-symplectic involution $c_E$ on $E$ which
covers the standard conjugation $c_{\mathbb{C}}$ as
in~\S\ref{sb:real-lef}. The critical values of $\pi$ are precisely
$(\ell \setminus D) \cap \Delta(\mathscr{L})$. Some of them lie on
$\ell_{\mathbb{R}}$ (i.e. the real axis) and the others come in pairs
of conjugate points.

Note that $\ell_{\mathbb{R}} \setminus \Delta(\mathscr{L})$ might have
several connected components. If $\lambda', \lambda'' \in
\ell_{\mathbb{R}} \setminus \Delta(\mathscr{L})$ are in the same
component then $\Sigma_{\mathbb{R}}^{(\lambda')}$ and
$\Sigma_{\mathbb{R}}^{(\lambda'')}$ are diffeomorphic, but otherwise
not necessarily.  


Consider now the fixed point locus $V = \textnormal{Fix}(c_E) \subset
E$. By Proposition~\ref{p:real-lef}, $V$ is a Lagrangian cobordism.
Its ends correspond to $\Sigma_{\mathbb{R}}^{(\lambda_-)}$ and
$\Sigma_{\mathbb{R}}^{(\lambda_+)}$, where $\lambda_-, \lambda_+ \in
\ell_{\mathbb{R}} \setminus D$ are close enough to the two boundary
points of $\ell_{\mathbb{R}} \cap D$. As hinted above, any of the
$\Sigma^{(\lambda_{\pm})}$ might be disconnected. At the other
extremity any of these ends might also be void.

Finally we address the issue of monotonicity. Assume that
$\dim_{\mathbb{C}} X \geq 3$ and that the symplectic manifold
$(\Sigma^{(\lambda)}, \omega_X|_{\Sigma^{(\lambda)}})$,
$\lambda \not \in \Delta(\mathscr{L})$, is monotone. By
Proposition~\ref{p:real-lef} the Lagrangian cobordism $V$ is monotone.

Turning to more algebraic-geometric terms, here is a criterion that
assures monotonicity of the $\Sigma^{(\lambda)}$'s. For an algebraic
variety we denote by $-K_X$ its canonical class. The following follows
easily from adjunction.
\begin{prop} \label{p:Fano} Let $X$ be a Fano manifold with
   $\dim_{\mathbb{C}}X \geq 3$ and write $-K_X = r D$, with $r \in
   \mathbb{N}$ and $D$ a divisor class. Further, suppose that
   $\mathscr{L} = q D$ with $0<q \in \mathbb{Q}$ and $q<r$.  Then the
   symplectic manifolds $(\Sigma^{(\lambda)},
   \omega_X|_{\Sigma^{(\lambda)}})$, $\lambda \not \in
   \Delta(\mathscr{L})$, are monotone. In particular $V$ is a monotone
   Lagrangian cobordism.
\end{prop}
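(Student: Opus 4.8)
The plan is to prove Proposition~\ref{p:Fano} by a direct adjunction computation, reducing the monotonicity of the hyperplane sections $\Sigma^{(\lambda)}$ to a numerical comparison between the canonical class of $X$ and the class $\mathscr{L}$. First I would recall the adjunction formula: for a smooth hyperplane section $\Sigma = \Sigma^{(\lambda)} \subset X$ cut out by a section of $\mathscr{L}$ (viewed as a divisor in the class of $\mathscr{L}$), we have $K_\Sigma = (K_X + \mathscr{L})|_\Sigma$ by the adjunction formula for the smooth divisor $\Sigma$. Since $-K_X = rD$ and $\mathscr{L} = qD$ with $0 < q < r$, this gives $-K_\Sigma = (-K_X - \mathscr{L})|_\Sigma = (r - q)D|_\Sigma$, and the coefficient $r - q$ is strictly positive. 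The first step is therefore to observe that $-K_\Sigma$ is a positive rational multiple of the restriction to $\Sigma$ of the ample class $D$ (note $D$ is ample because $\mathscr{L} = qD$ is ample, $\mathscr{L}$ being very ample, and $q > 0$), so in particular $\Sigma$ is Fano and $-K_\Sigma$ is proportional to the polarization $\mathscr{L}|_\Sigma = qD|_\Sigma$ that induces the symplectic form $\omega_X|_\Sigma$.

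The second step is to pass from the algebraic statement to the symplectic one. The symplectic form $\omega_X$ on $X$ was constructed (see~\S\ref{sb:cnstr-real-lef}) by pulling back the Fubini--Study form under the projective embedding defined by $\mathscr{L}$, hence $[\omega_X]$ is a positive multiple of $c_1(\mathscr{L})$ in $H^2(X; \mathbb{R})$, and consequently $[\omega_X|_\Sigma]$ is a positive multiple of $c_1(\mathscr{L})|_\Sigma = q\,c_1(D)|_\Sigma$. By the first step $c_1(\Sigma) = -K_\Sigma = (r-q)\,c_1(D)|_\Sigma$ in $H^2(\Sigma;\mathbb{R})$, so $[\omega_X|_\Sigma]$ and $c_1(\Sigma)$ are positive multiples of the same class $c_1(D)|_\Sigma$, and therefore proportional with a positive proportionality constant. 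Evaluating on spherical classes $A \in \pi_2(\Sigma)$ gives $\omega_X(A) = 2\rho\, c_1(\Sigma) \cdot A$ for a fixed $\rho > 0$ (the constant $2\rho = \tfrac{q}{r-q}\cdot(\text{area normalization})$ computed from the previous relations), which is exactly the monotonicity condition for $(\Sigma^{(\lambda)}, \omega_X|_{\Sigma^{(\lambda)}})$. Since this holds for one $\lambda \notin \Delta(\mathscr{L})$ and all such $\Sigma^{(\lambda)}$ are mutually symplectomorphic (as noted in~\S\ref{sb:cnstr-real-lef}), it holds for all of them.

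For the final assertion, that $V = \textnormal{Fix}(c_E)$ is a monotone Lagrangian cobordism, I would simply invoke Proposition~\ref{p:real-lef}: since $\dim_{\mathbb{C}}X \geq 3$ implies $\dim_{\mathbb{C}} \Sigma^{(\lambda)} = \dim_{\mathbb{C}}X - 1 \geq 2$, and we have just shown $(\Sigma^{(\lambda)}, \omega_X|_{\Sigma^{(\lambda)}})$ is monotone, the last paragraph of Proposition~\ref{p:real-lef} gives that $(\widehat{E}, \widehat{\Omega})$ (and hence $(E, \Omega)$, which is an open piece of it obtained by removing $\pi^{-1}(D)$, with the induced symplectic structure) is monotone, and therefore $V$ is a monotone Lagrangian cobordism.

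I do not expect a serious obstacle here; the proposition is essentially a bookkeeping exercise in adjunction combined with the already-established Proposition~\ref{p:real-lef}. The only point requiring a little care is making sure all the proportionality constants line up with the correct signs --- in particular that $r - q > 0$ is exactly what forces $-K_\Sigma$ to be a \emph{positive} multiple of the polarization rather than trivial or negative --- and that the cohomological proportionality $[\omega_X] \sim c_1(\mathscr{L})$ used to transport monotonicity from the algebraic to the symplectic setting is applied on $\Sigma$ rather than on $X$ (where it would be false, since $X$ is Fano but $\mathscr{L} \neq -K_X$ in general). One should also note explicitly that the hypothesis $q < r$ cannot be dropped: if $q = r$ then $\Sigma$ is Calabi--Yau (weakly exact, $N_\Sigma = \infty$), which is still covered by the class $\mathcal{L}^{(0)}$ but is not ``monotone with $\rho > 0$'', and if $q > r$ the sections are of general type and the monotonicity fails entirely.
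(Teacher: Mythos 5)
Your proof is correct and matches the paper's (unspoken) intent exactly: the paper only remarks that the result ``follows easily from adjunction,'' and the adjunction computation $-K_{\Sigma} = (r-q)D|_{\Sigma}$ together with $[\omega_X|_{\Sigma}] \sim c_1(\mathscr{L})|_{\Sigma} = q\,c_1(D)|_{\Sigma}$, followed by an appeal to the last paragraph of Proposition~\ref{p:real-lef}, is precisely the intended argument. (One very small imprecision: Proposition~\ref{p:real-lef} applies directly to $E$, the fibration over $\mathbb{C}$, so the detour through $\widehat{E}$ is unnecessary, but this does not affect the validity of the conclusion.)
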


\subsubsection{A concrete example - real quadric surfaces}
\label{sb:exp-quad}

We present here a concrete example of a real Lefschetz fibration
associated to a pencil of complex quadric surfaces in
${\mathbb{C}}P^3$.  The example can be easily generalized to higher
dimensions.

Let $X = {\mathbb{C}}P^3$ and $\mathscr{L} =
\mathcal{O}_{{\mathbb{C}}P^3}(2)$, both endowed with their standard
real structures (induced by complex conjugation). Clearly
$\mathscr{L}$ is very ample and gives rise to the so called degree-$2$
Veronese embedding which we describe shortly.

Using coordinates $[X_0 : X_1 : X_2 : X_3]$ on ${\mathbb{C}}P^3$ we
identify the space $H^0(\mathscr{L})$ of sections of $\mathscr{L}$
with the space of quadratic homogeneous polynomials
$\lambda(\underline{X})$ in the variables $\underline{X} = (X_0,X_1,
X_2, X_3)$:
\begin{equation} \label{eq:quad-polyn} \lambda(\underline{X}) =
   \sum_{0 \leq i \leq j \leq 3} a_{i,j} X_i X_j.
\end{equation}
Taking $X_i X_j$, $0 \leq i \leq j \leq 3$, as a basis for this space
we obtain an identifications $\mathbf{P} \cong {\mathbb{C}}P^9$ under
which the projective embedding $X \hooklongrightarrow {\mathbb{C}}P^9$
is given by:
$$[z_0:z_1:z_2] \longmapsto [z_0^2: z_0z_1:\cdots : z_i z_j: \cdots : 
z_2z_3: z_3^2],$$ where the coordinates on the right-hand side go over
all $(i,j)$ with $0 \leq i \leq j \leq 3$.

The hyperplane section corresponding to the polynomial $\lambda$ is a
quadric surface $$\Sigma^{(\lambda)} = \bigl\{[z_0:z_1:z_2:z_3] \mid
\lambda(z_0, z_1, z_3, z_3) = 0\bigr\} \subset {\mathbb{C}}P^3.$$ A
straightforward calculation shows that $\lambda \in
\Delta(\mathscr{L})$ if and only if
\begin{equation} \label{eq:det=0} \det \left(
   \begin{matrix}
      2 a_{00} & \phantom{2}a_{01} & \phantom{2}a_{02} & \phantom{2}a_{03} \\
      \phantom{2}a_{10} & 2a_{11} & \phantom{2}a_{12} & \phantom{2}a_{13} \\
      \phantom{2}a_{20} & \phantom{2}a_{21} & 2a_{22} & \phantom{2}a_{23} \\
      \phantom{2}a_{30} & \phantom{2}a_{31} & \phantom{2}a_{32} &
      2a_{33}
   \end{matrix}
   \right) = 0.
\end{equation}
This shows that the discriminant $\Delta(\mathscr{L})$ is a variety of
degree $4$ in $\mathbf{P}^* \cong {\mathbb{C}}P^9$. The smooth stratum
of $\Delta(\mathscr{L})$ consists of those $\lambda$'s where the
matrix in~\eqref{eq:det=0} has rank $3$.

The real part $\Delta_{\mathbb{R}}(\mathscr{L})$ of the discriminant
consists of those polynomials $\lambda$ which in addition
to~\eqref{eq:det=0} have real coefficients (i.e. $a_{i,j} \in
\mathbb{R}$ for every $i,j$).

It is well known that for $\lambda \not \in \Delta(\mathscr{L})$ the
variety $\Sigma^{(\lambda)}$ is isomorphic to $\mathbb{C}P^1 \times
\mathbb{C}P^1$, and moreover when viewed as a symplectic manifold
(endowed with the structure induced from the projective embedding) it
is symplectomorphic to $(\mathbb{C}P^1 \times \mathbb{C}P^1,
2\omega_{\mathbb{C}P^1} \oplus 2\omega_{\mathbb{C}P^1})$, where
$\omega_{\mathbb{C}P^1}$ is normalized so that the area of
$\mathbb{C}P^1$ is $1$.

Consider now the following two sections
$$\lambda_0(\underline{X}) = X_0^2 + X_1^2 + X_2^2 - X_3^2, \qquad 
\lambda_1(\underline{X}) = X_0X_3 - X_1 X_2.$$

A simple calculation shows that $\lambda_0, \lambda_1 \not \in
\Delta(\mathscr{L})$. Denote the real part of $\Sigma^{(\lambda_i)}$
by $L^{(\lambda_i)}$, $i=0,1$. It is easy to see that
$L^{(\Lambda_1)}$ is a Lagrangian tours and moreover we can find a
symplectomorphism $\phi^{(\lambda_1)}: \Sigma^{(\lambda_1)}
\longrightarrow \mathbb{C}P^1 \times \mathbb{C}P^1$ so that
$\phi^{(\lambda_1)}(L^{(\lambda_1)})$ is the split torus $T =
{\mathbb{R}}P^n \times {\mathbb{R}}P^1$. We fix such a diffeomorphism
$\phi^{(\lambda_1)}$. Similarly, there is a symplectomorphisms
$\phi^{(\lambda_0)}: \Sigma^{(\lambda_0)} \longrightarrow
\mathbb{C}P^1 \times \mathbb{C}P^1$ that sends $L^{(\lambda_0)}$ to
the Lagrangian sphere $S = \{(z, \bar{z}) \mid z \in \mathbb{C}P^1\}
\subset \mathbb{C}P^1 \times \mathbb{C}P^1$ which is so-called the
anti-diagonal.

We now consider the pencil $\ell \subset \mathbf{P}^*$ that passes
through the two points $\lambda_0$ and $\lambda_1$. Clearly $\ell$ is
invariant under the anti-holomorphic involution $c_H$. We can
parametrize $\ell$ by $$\mathbb{C}P^1 \ni [t_0:t_1] \longmapsto
\lambda_{[t_0:t_1]} := t_0 \lambda_0 + t_1 \lambda_1.$$ A simple
calculation shows that the intersection points of $\ell$ with
$\Delta(\mathscr{L})$ occur for the following values of $[t_0:t_1]$:
\begin{equation} \label{eq:ell-Delta} [t_0:t_1] \in \bigl\{[1:2],
   [1:-2], [1:2i],[1:-2i] \bigr\},
\end{equation}
and that $\ell$ intersects $\Delta(\mathscr{L})$ only along the
regular stratum. Moreover this intersection is transverse. See the
left part of Figure~\ref{f:l-pencil}.

\begin{figure}[htbp]
   \begin{center}
\includegraphics[scale=0.75]{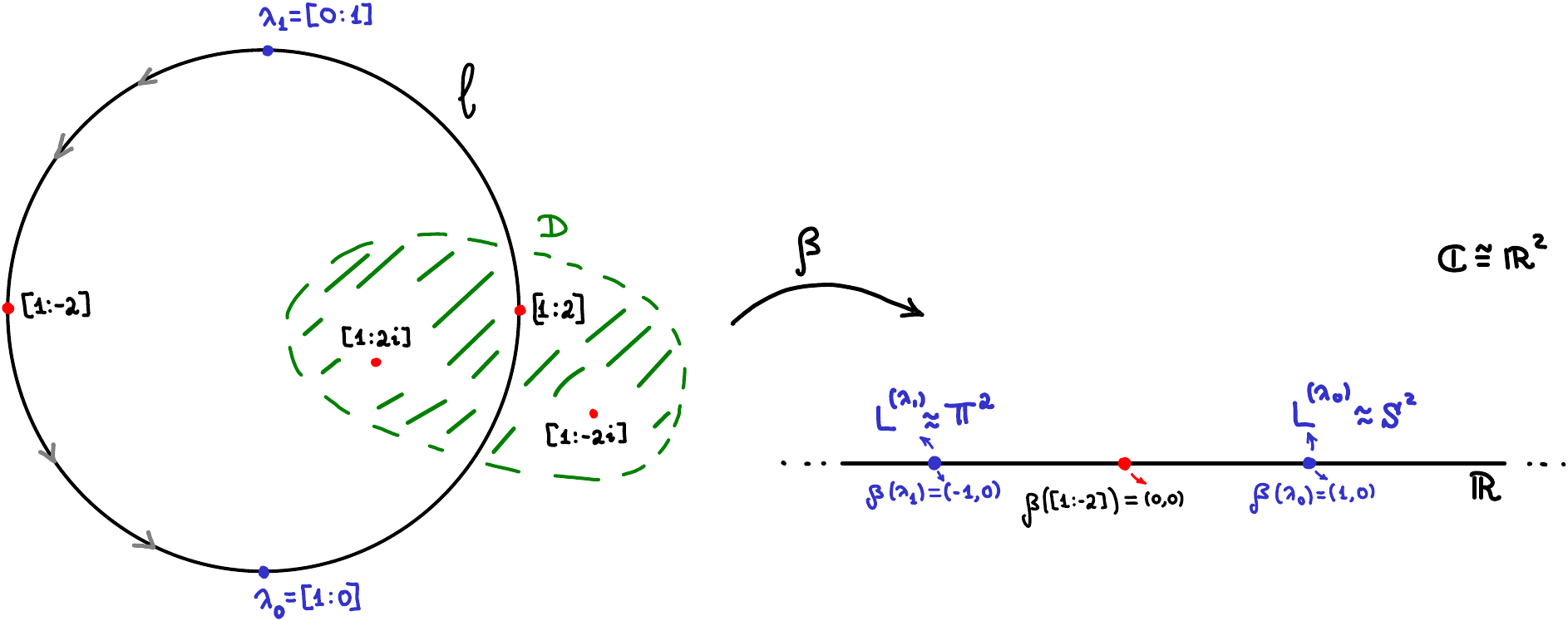}
   \end{center}
   \caption{The real pencil $\ell$ on the left, and the image of $\ell
     \setminus D$ under $\beta$ in $\mathbb{C}$.}
     \label{f:l-pencil}
\end{figure}

We now appeal to the construction in~\S\ref{sb:cnstr-real-lef}. Below
we will often identify $\mathbb{C} \cong \mathbb{R}^2$ in the obvious
way. Choose a disk $D \subset \ell$ which is invariant under $c_H$ and
contains the point $[1:2], [1:2i], [1:-2i]$ but not the point
$[1:-2]$. Fix an orientation preserving diffeomorphism $\beta: \ell
\setminus D \longrightarrow \mathbb{C} \cong \mathbb{R}^2$ such that:
$$\beta(\lambda_1) = (-1,0), \quad \beta(\lambda_0) = (1,0), 
\quad \beta([1:-2]) = (0,0).$$ See the right part of
Figure~\ref{f:l-pencil}. From now on we use the identification $\beta$
implicitly and simply write $\lambda_1 = (-1,0)$, $\lambda_0 = (1,0)$.

Restricting $\widehat{E}$ to $\ell \setminus D$ and applying a base
change via $\beta$ we obtain a Lefschetz fibration
$\pi: E \longrightarrow \mathbb{C}$ with general fiber
$\mathbb{C}P^1 \times \mathbb{C}P^1$ and with a real
structure. \pbhl{Since the minimal Chern number of the general fiber
  is $c_1^{\min} = 2$, $E$ is a strongly monotone Lefschetz fibration
  in the sense of} Definition~\ref{df:monlef}. \pbhl{Its monotonicity
  class is $*=(0)$.}

The projection $\pi$ has exactly one critical value at
$0 \in \mathbb{C}$ (corresponding to $[1:-2] \in \ell$). The real part
$V$ of $E$ is a cobordism with one negative end associated to
$L^{-} = L^{(\lambda_1)}$ which is a Lagrangian torus, and one
positive end associated to $L^+ = L^{(\lambda_0)}$ which is a
Lagrangian sphere. By Proposition~\ref{p:Fano} $V$ is monotone and a
simple calculation shows that it has minimal Maslov number $N_V =
2$. Interestingly we have $N_{L^{-}} = 2$ while $N_{L^{+}} = 4$. Note
also that $d_{L^-} = d_{L^{+}} = 0$, \pbhl{hence $V$ is of the right
  monotonicity class $*=(0)$.}

\subsubsection*{Transforming $V$ to a negative ended cobordism}
In order to obtain a cobordism with only negative ends (as considered
in the rest of the paper) we proceed as follows. Take the Lefschetz
fibration $\pi: E \longrightarrow \mathbb{C}$ and $V \subset E$ as
constructed above. Recall that $0 \in \mathbb{C}$ was the (single)
critical value of $\pi$. Consider a smooth embedding
$\alpha': [0, \infty) \longrightarrow \mathbb{R}^2$ so that:
\begin{enumerate}
  \item $\alpha'(t) = (t, 0)$ for every $0 \leq t \leq 1$.
  \item For $1 < t$, $\alpha'$ lies in the lower half plane and
   $\alpha'(2) = (0, -1)$.
  \item For every $2\leq t$, $\alpha'(t) = (2-t, -1)$. 
\end{enumerate}
Now take the part of the cobordism $V$ that lies over $(-\infty, 1]
\times \mathbb{R} \subset \mathbb{R}^2$ and glue to its right hand
side the trail of the Lagrangian sphere $L^{(\lambda_0)} = V|_{(1,
  0)}$ along the curve $\alpha'|_{[1,\infty)}$. Denote the result by
$W$. It is easy to see that $W$ is a smooth Lagrangian cobordism with
two negative ends. The lower end is a Lagrangian sphere and the upper
end is a Lagrangian torus, both living inside symplectic manifolds
that are symplectomorphic to $\mathbb{C}P^1 \times \mathbb{C}P^1$. See
Figure~\ref{f:S2T2-cob}.

\begin{figure}[htbp]
   \begin{center}
      \includegraphics[scale=0.75]{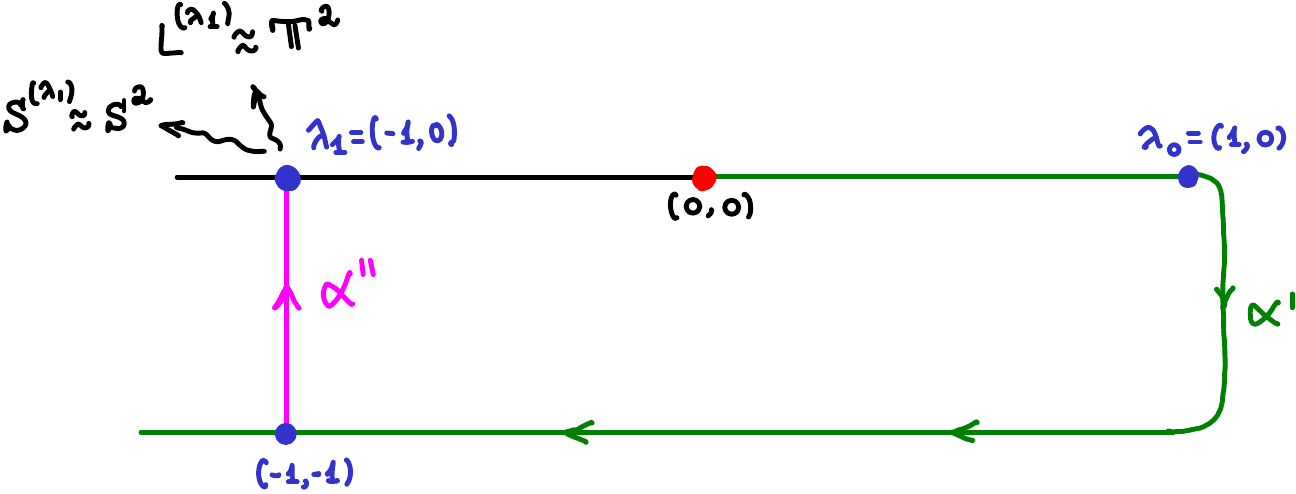}
         \end{center}
   \caption{The cobordism $W$ with two negative ends, and the parallel
     transport of the sphere $L^{(\lambda_0)}$ to the fiber over $\lambda_1$.}
     \label{f:S2T2-cob}
\end{figure}

Note that the Lefschetz fibration $E$ is not tame. Therefore In order
to apply the cone decomposition from Corollary~\ref{cor:dec-M} we need
to identify fibers over different ends. To this end, denote by
$\alpha''$ the straight segment connecting $\alpha'(3) = (-1, -1)$ to
$\lambda_1 = (-1,0)$. Denote by $\alpha = \alpha'|_{[1,3]} * \alpha''$
the concatenation of $\alpha'|_{[1,3]}$ with $\alpha''$. Denote by
$\Pi_{\alpha}: E_{\lambda_0} \longrightarrow E_{\lambda_1}$ the
parallel transport along $\alpha$. Let $S^{(\lambda_1)} =
\Pi_{\alpha}(L^{(\lambda_0)})$ be the parallel transport of the
Lagrangian sphere $L^{(\lambda_0)}$ to the fiber $\Sigma^{(\lambda_1)}
= E_{\lambda_1}$ of $E$ over $\lambda_1$.  See
Figure~\ref{f:S2T2-cob}. By Corollary~\ref{cor:dec-M} we have in
$D\fuk^*(\Sigma^{(\lambda_1)})$ an isomorphism:
\begin{equation} \label{eq:cone-CP1xCP1-1} S^{(\lambda_1)} \cong
   \textnormal{cone} \bigl(S_1 \otimes E \longrightarrow
   L^{(\lambda_1)} \bigr),
\end{equation}
where $S_1 \subset \Sigma^{(\lambda_1)}$ is the vanishing cycle
associated to the critical point of $\pi$ over $0$ and the path
$\alpha'|_{[0,3]} * \alpha''$. According to~\eqref{eq:E-i-s}, the
space $E$ is $HF(\hat{S}_1, W)$, where $\hat{S}_1$ is the matching
cycle emanating from $z_1$, which lies in a suitable extension of the
fibration $E$ (see~\S\ref{subsubsec:null-cob-remote}).

In our case, it is not hard to see that $\hat{S}_1$ intersects $W$ at
a single point and the intersection is transverse. Therefore $E$ is a
$1$-dimensional space. Applying $\phi^{(\lambda_1)}$
to~\eqref{eq:cone-CP1xCP1-1} we now obtain the following isomorphism
in $D\fuk^*(\mathbb{C}P^1 \times \mathbb{C}P^1)$:
\begin{equation*} \label{eq:cone-CP1xCP1-2}
   \phi^{(\lambda_1)}(S^{(\lambda_1)}) \cong \textnormal{cone}
   \bigl(\phi^{(\lambda_1)}(S_1) \longrightarrow T \bigr).
\end{equation*}

By a result of Hind~\cite{Hind:lag-S2xS2} all Lagrangian spheres in
$\mathbb{C}P^1 \times \mathbb{C}P^1$ are Hamiltonian isotopic. In
particular $\phi^{(\lambda_1)}(S^{\lambda_1})$ and
$\phi^{(\lambda_1)}(S_1)$ are both Hamiltonian isotopic to the
anti-diagonal $S$. It follows that:
\begin{equation} \label{eq:cone-STS-1} S \cong \textnormal{cone}
   \bigl(S \longrightarrow T \bigr).
\end{equation}
By rotating the exact triangle corresponding to~\eqref{eq:cone-STS-1}
we obtain the following result:

\begin{cor} Let $M = \mathbb{C}P^1 \times \mathbb{C}P^1$, endowed with
   the symplectic structure $\omega_{\mathbb{C}P^1} \oplus
   \omega_{\mathbb{C}P^1}$. Denote by $S = \{ (z, \bar{z}) \mid z \in
   \mathbb{C}P^1\} \subset M$ the anti-diagonal and by $T =
   \mathbb{R}P^1 \times \mathbb{R}P^1 \subset M$ the split torus. Then
   in $D\fuk^*(M)$ there is an isomorphism
   \begin{equation} \label{eq:cone-S-T-2} T \cong \textnormal{cone}
      \bigl(S \longrightarrow S \bigr).
   \end{equation}
\end{cor}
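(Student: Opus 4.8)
The plan is to follow exactly the thread of constructions already assembled in~\S\ref{sb:exp-quad}. The isomorphism~\eqref{eq:cone-S-T-2} is a purely formal rewriting of~\eqref{eq:cone-STS-1}, so the real content is to establish~\eqref{eq:cone-STS-1}, and for that the heavy lifting has been done: one just needs to apply Corollary~\ref{cor:dec-M} to the negatively-ended cobordism $W \subset E$ constructed just above. First I would recall the setup: $E \longrightarrow \mathbb{C}$ is the real Lefschetz fibration built from the pencil $\ell$ of real quadrics in $\mathbb{C}P^3$, with generic fiber symplectomorphic to $(\mathbb{C}P^1 \times \mathbb{C}P^1, \omega_{\mathbb{C}P^1} \oplus \omega_{\mathbb{C}P^1})$, a single critical value at $0 \in \mathbb{C}$, and $W$ the two-negative-ended cobordism obtained by bending the positive end of $V = \textnormal{Fix}(c_E)$ downward along the curve $\alpha'$. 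By Proposition~\ref{p:Fano} together with the monotonicity statement in Proposition~\ref{p:real-lef}, $W$ is a monotone cobordism, and since $d_{L^-} = d_{L^+} = 0$ it lies in $\mathcal{L}^{(0)}(E)$; the fibration is strongly monotone of class $* = (0)$ because the minimal Chern number of the fiber is $2$. Thus $W$ is a legitimate object of $\fuk^*(E)$ (after passing to a tame model via~\S\ref{sb:tame-vs-gnrl}, which does not change anything relevant here).

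Next I would invoke Corollary~\ref{cor:dec-M}: since $W$ has ends $(L_1, L_2)$ with $L_1$ the lower end and $L_2$ the upper end, and since $\pi$ has a single critical point, the cone decomposition~\eqref{eq:cone-dec-M} collapses to a single exact triangle in $D\fuk^*(\Sigma^{(\lambda_1)})$, namely the one displayed in~\eqref{eq:cone-CP1xCP1-1}: $S^{(\lambda_1)} \cong \textnormal{cone}(S_1 \otimes E \to L^{(\lambda_1)})$, where $S_1 \subset \Sigma^{(\lambda_1)}$ is the vanishing cycle over the relevant path and, by~\eqref{eq:E-i-s}, $E = HF(\hat{S}_1, W)$ for $\hat{S}_1$ the matching cycle in the auxiliary fibration from~\S\ref{subsubsec:null-cob-remote}. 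Here one must be a little careful to match the end of the thimble $T_1$ appearing abstractly in Theorem~\ref{thm:main-dec} with the concrete vanishing cycle $S_1$ lying over the $y$-coordinate $1$ axis, exactly as in the last paragraph of the proof of Corollary~\ref{cor:dec-M}; and one must record that the lower end $L^{(\lambda_1)}$, transported to $\Sigma^{(\lambda_1)}$, is the split torus $T$ after applying $\phi^{(\lambda_1)}$, while $S^{(\lambda_1)}$ is the parallel transport of the Lagrangian sphere $L^{(\lambda_0)}$.

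The one genuinely geometric input, and the step I expect to be the main obstacle (though a mild one), is the computation that $E = HF(\hat{S}_1, W)$ is one-dimensional. This is asserted in the excerpt to follow from the fact that $\hat{S}_1$ meets $W$ transversely in a single point; I would verify this by a direct picture, observing from Figure~\ref{f:S2T2-cob} that the matching cycle $\hat{S}_1$ emanating from $z_1$ and the bent end of $W$ intersect precisely once over the relevant region (this is the same kind of elementary transversality count used in~\S\ref{subsubsec:simple-cob}). Granting this, $S_1 \otimes E \cong S_1$. Finally I would apply the symplectomorphism $\phi^{(\lambda_1)}: \Sigma^{(\lambda_1)} \to \mathbb{C}P^1 \times \mathbb{C}P^1$ and Hind's theorem~\cite{Hind:lag-S2xS2} that all Lagrangian spheres in $\mathbb{C}P^1 \times \mathbb{C}P^1$ are Hamiltonian isotopic, so that both $\phi^{(\lambda_1)}(S^{(\lambda_1)})$ and $\phi^{(\lambda_1)}(S_1)$ are isomorphic in $D\fuk^*(M)$ to the anti-diagonal $S$, while $\phi^{(\lambda_1)}(T) = \mathbb{R}P^1 \times \mathbb{R}P^1$. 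Substituting into~\eqref{eq:cone-CP1xCP1-1} gives $S \cong \textnormal{cone}(S \to T)$, and rotating this exact triangle once yields $T \cong \textnormal{cone}(S \to S)$, which is~\eqref{eq:cone-S-T-2}. (Here one uses the ungraded, $2$-torsion conventions of~\S\ref{sbsb:iter-cone}, so the rotation introduces no sign or shift subtleties.)
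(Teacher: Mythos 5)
Your proposal follows essentially the same route as the paper: build the two-ended cobordism $W$ by bending $V=\textnormal{Fix}(c_E)$, apply Corollary~\ref{cor:dec-M} with a single critical point to get the exact triangle~\eqref{eq:cone-CP1xCP1-1}, check that $HF(\hat{S}_1,W)$ is one-dimensional via the single transverse intersection, use $\phi^{(\lambda_1)}$ together with Hind's uniqueness of Lagrangian spheres in $\mathbb{C}P^1\times\mathbb{C}P^1$ to identify both spheres with $S$, and then rotate the resulting triangle. The argument is correct and matches the paper's reasoning step for step.
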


\begin{remsnonum}
   \begin{enumerate}
     \item[a.] The existence of an isomorphism of the
      type~\eqref{eq:cone-S-T-2} could probably be derived also by the
      following construction whose details need to be precisely worked
      out. Consider a Hamiltonian isotopic copy $S'$ of $S$ so that
      $S'$ intersects $S$ transversely at exactly two points. By
      performing Lagrangian surgery of $S'$ and $S$
      at the intersection points (with appropriate choices of handles)
      one obtains a Lagrangian torus $T' \subset M$. Moreover, for a
      suitable choice of $S'$ and choices of handles the torus $T'$
      should be Hamiltonian isotopic to the split torus $T$. Applying
      the ``figure-Y'' surgery construction from~\cite{Bi-Co:cob1} we
      obtain a cobordism $V$ in $\mathbb{R}^2 \times M$ with two
      negative ends $S$, $S'$ and one positive end $T'$. The cobordism
      $V$ should also be monotone for suitable choices of handles in
      the figure-Y surgery. The cone decomposition
      in~\eqref{eq:cone-S-T-2} would now follow from the main results
      of~\cite{Bi-Co:lcob-fuk}.
     \item[b.] Our work does not provide much information about the
      precise morphism $S \longrightarrow S$
      from~\eqref{eq:cone-S-T-2}. It would be interesting to determine
      the precise map and also to figure out how~\eqref{eq:cone-S-T-2}
      behaves with respect to grading (in this case a
      $\mathbb{Z}_2$-grading).
   \end{enumerate}
\end{remsnonum}

\subsubsection*{A few variations on the same example}
One can alter the construction of $E$ and $V$ to obtain a Lefschetz
fibrations $\pi: E' \longrightarrow \mathbb{C}$ with more critical
values. This can be done for example by choosing the disk $D$ to
contain the point $[1:-2]$ and none of the other points
from~\eqref{eq:ell-Delta}. The result will then be a fibration with
three critical values - one lying on the $x$-axis and another pair of
critical points conjugate one to the other. The cobordism $V$ in this
case would still be between a Lagrangian sphere and a torus.

If one chooses the disk $D$ not to contain any of the points
in~\eqref{eq:ell-Delta} and its center to lie somewhere along the
interval $[1:x]$, $x \in [-2,2]$, then the fibration will have four
critical values, two real ones and to conjugate ones. The cobordism
$V$ will have a Lagrangian $S^2$ on its both ends, and the topology of
$V$ will still be non-trivial (i.e. $V$ will not be diffeomorphic to
$\mathbb{R} \times S^2$). A similar example with Lagrangian
$\mathbb{T}^2$'s on both ends can be constructed by taking the disk to
have its center somewhere along $[1:x]$, $x > 2$.


\bibliography{bibliography}

%
%
%

\end{document}